\def\CB{{\mathcal{CB}}}
\newcommand{\distH}{\operatorname{H-dist}}
 \newtheoremstyle{numberedstyle}
   {9pt}
   {9pt}
   {\normalfont}
   {}
   {\bfseries}
   {.}
   {\newline}
   {}
\newcommand{\T}{\mathcal{T}}
\newcommand{\sep}{\operatorname{sep}}
\numberwithin{equation}{section}
\newtheorem{thm}{Theorem}[section]%
\newtheorem{lem}[thm]{Lemma}%
\newtheorem{cor}[thm]{Corollary}%
\newtheorem{prop}[thm]{Proposition}%
\newcommand{\HH}{\mathbb{H}}
\newcommand{\id}{\operatorname{id}}
\theoremstyle{definition}
\newtheorem{rmk}[thm]{Remark}%
\newtheorem{question}[thm]{Question}%
\newtheorem{defn}[thm]{Definition}%
\newtheorem{obs}[thm]{Observation}%
\newcommand{\B}{\mathcal{B}}
\renewcommand{\H}{\mathbb{H}}
\newcommand{\Ima}{\operatorname{Im}}
\newcommand{\Rea}{\operatorname{Re}}
\newcommand{\real}{\operatorname{real}}
\newcommand{\Blog}{\mathcal{B}_{\log}}
\newcommand{\BlogP}{\Blog^{\operatorname{p}}}
\newcommand{\s}{\underline{s}}
\title[Cantor bouquet Julia sets]{Entire functions with Cantor bouquet Julia sets}
\begin{document} 

\author{Leticia Pardo-Sim\'on}
\address{\noindent Dep. de Matemàtiques i Informàtica\\ Universitat de Barcelona\\ Catalonia\\ Spain\\
		\newline  Centre de Recerca Matemàtica\\ Bellaterra\\ Catalonia\\ Spain.
	\textsc{\newline \indent 
		\href{https://orcid.org/0000-0003-4039-5556%
		}{\includegraphics[width=1em,height=1em]{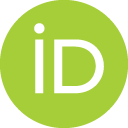} {\normalfont https://orcid.org/0000-0003-4039-5556}}
}}
\email{lpardosimon@ub.edu}

\author{Lasse Rempe} 
\address{\noindent Dept. of Mathematics \\ The University of Manchester \\ Manchester \\ M13 9PL \\ UK \\
	 \textsc{\newline \indent 
	   \href{https://orcid.org/0000-0001-8032-8580%
	     }{\includegraphics[width=1em,height=1em]{orcid2.png} {\normalfont https://orcid.org/0000-0001-8032-8580}}
	       }}
\email{lasse.rempe@manchester.ac.uk}
\subjclass[2020]{Primary  37F10; Secondary 30D05, 37B45, 54F15, 54H20.}
\thanks{This work was partially supported by the grant PID2023-147252NB from the Spanish State Research Agency.}

\begin{abstract}
A hyperbolic transcendental entire function with connected Fatou set is said to be of \textit{disjoint type}. It is known that 
 the Julia set of a disjoint-type function of finite order 
  is a \textit{Cantor bouquet}; in particular, it is a collection of arcs (``hairs''), each connecting a finite endpoint to infinity. 
 We show that the latter 
  property is equivalent to 
  the function being \emph{criniferous} in the sense of
  \cite{lasse_dreadlocks} (a necessary condition for having a Cantor bouquet 
  Julia set). On the other hand, we show that there is a criniferous
  disjoint-type entire function whose Julia set is \emph{not} a Cantor bouquet.
   We also provide a new characterisation of Cantor bouquet Julia sets in terms of the existence of certain absorbing sets for the set of escaping points, and use this to give a new intrinsic description of a class of entire functions previously introduced by the first author. Finally, the main known sufficient condition for Cantor bouquet Julia sets is the so-called \emph{head-start condition} of Rottenfu{\ss}er et al. Under a mild geometric assumption, we prove that this condition is also necessary. 
\end{abstract}

\maketitle

\section{Introduction}

In 1926, Fatou~\cite{Fatou} initiated the study of the iteration of transcendental entire functions $f\colon\C\to\C$. The locus of stable behaviour of such a function~-- more precisely, the set of points $z\in\C$ at which the family $\{f^n\}_{n=0}^{\infty}$ is equicontinuous with respect to the spherical metric~-- is today called the \emph{Fatou set} $F(f)$. Its complement $J(f) \defeq \C\setminus F(f)$ is known as the \emph{Julia set}.

 In~\cite[p.\ 369]{Fatou}, Fatou observes that the Julia sets of certain explicit transcendental entire functions, such as the family
\begin{equation}\label{eq_sin}
 f_a\colon z\mapsto a\sin(z), \qquad 0<a<1,
\end{equation}
contain infinitely many curves to infinity, obtained here as iterated preimages of a piece of  the imaginary axis.  In the 1980s, Devaney with a number of co-authors studied this phenomenon further, showing the existence of many more curves in the Julia set for a variety of transcendental entire functions. In particular, Devaney and Krych studied the family 
\begin{equation}\label{eq_exp}
	E_a\colon z\mapsto a\exp(z), \qquad 0<a<1/e,
\end{equation}
 and observed that $J(E_a)$ consists entirely of arcs, each of which
 connects a finite endpoint to infinity
   \cite[p. 50]{devaney_Krych}. These arcs 
 are called \textit{(Devaney) hairs}. 

Devaney and Goldberg~\cite[Proposition~3.6]{devaneygoldberg} show that these hairs are embedded in the plane in such a way that each finite endpoint is 
 accessible from the Fatou set $F(E_a)$. This was strengthened by Aarts and Oversteegen~\cite{aartsoversteegen},
 who proved that 
  the Julia set of any map $f$ of the form~\eqref{eq_sin} or~\eqref{eq_exp} is what is now called a
  \emph{Cantor bouquet}.\footnote{%
  In the case of the family~\eqref{eq_sin}, Aarts and Oversteegen prove this only for $0<a<\nu\approx 0.85$. However, all functions
  $f_a$ for $0<a<1$ are known to be topologically (and even quasiconformally) conjugate in the plane, and hence the result holds for $\nu\leq a < 1$ also.} 
  We refer to Definition~\ref{def_brush} for the definition of Cantor bouquets, and note here only that the key is to prove that $J(f)\cup \{\infty\}$ is 
    an \emph{arc-smooth fan}, meaning that there is a homeomorphism of the plane to itself that transforms each hair into a horizontal ray. Aarts and Oversteegen also show that any two Cantor bouquets are ambiently homeomorphic (i.e., related to each other
    by a self-homeomorphism of the plane), and hence their
   results provide a complete
   description of the embedding of the hairs in the plane, for these families.
  
 A function $f$ of the form~\eqref{eq_sin} or~\eqref{eq_exp} has the following properties: 
  \begin{enumerate}[(1)]
    \item $F(f)$ is connected and contains an attracting fixed point;\label{item:connectedfatou}
    \item the set $S(f)$ of \emph{singular values} is bounded and contained in the Fatou set.\label{item:singularvaluesinfatou}
  \end{enumerate}
  Here $S(f)$ is the closure of the
  set of critical and asymptotic values of $f$, or equivalently the smallest closed set
   such that $f\colon \C\setminus f^{-1}(S(f))\to \C\setminus S(f)$ is a covering map.
   (We have $S(f_a) = \{a,-a\}$ and $S(E_a)=\{0\}$.) 
     The class of transcendental entire functions
   with properties~\ref{item:connectedfatou} and~\ref{item:singularvaluesinfatou} 
   was first studied by Karpi\'nska in 2003~\cite{karpinskaaccessible}. Today
   these are called functions of \emph{disjoint type}.
    Karpi\'nska raised the question~\cite[p.~91]{karpinskaaccessible}
   under which conditions the properties of disjoint-type trigonometric and exponential functions carry over to this more general setting. In particular, one may ask
   whether the Julia set of a disjoint-type function is always a union of hairs, or a Cantor bouquet. 
   
  In \cite[Theorem~1.1]{rrrs} it is shown that this is not the case: there exists a disjoint-type function $f$ such that $J(f)$ contains no arcs. This also gives a negative
   answer to a well-known question of Eremenko from 1989~\cite{eremenko1989} and shows that the Julia sets of disjoint-type functions can be 
   far more complicated than suggested by the families~\eqref{eq_sin} and~\eqref{eq_exp}. (Further disjoint-type functions with pathological
   Julia sets were subsequently constructed in~\cite{lasse_arclike} and~\cite{pseudoarcs}.) On the other hand, if $f$ is a disjoint-type
   entire function of \emph{finite order}, then it was shown in~\cite[Theorem~C]{Baranski_Trees} and, independently,
    in~\cite[Theorem~5.10]{rrrs} that 
    every connected component of $J(f)$ is indeed an arc to infinity. 
    (The results of~\cite{rrrs} also
     apply to a larger class of disjoint-type entire functions; see the discussion of head-start conditions below.) 
    Bara\'nski proves additionally that
    the endpoints of these hairs are accessible from the Fatou set, 
    and in~\cite{lasseBrushing} it is shown that the Julia sets in question
    are indeed
    Cantor bouquets. 
    
  This suggests the following natural question: if $f$ is a disjoint-type function for which $J(f)$ is a union of hairs, does it follow that
    the endpoints of these hairs are accessible, or that $J(f)$ is a Cantor bouquet? We give a negative answer.
    \begin{thm}\label{thm_intro_crinNoCantorB}
      There is a disjoint-type entire function $f$ such that
       every connected component of $J(f)$ is an arc connecting a finite endpoint to infinity, but one of these connected components contains no point that is accessible from $F(f)$. 
      In particular, $J(f)$ is not a Cantor bouquet. 
    \end{thm}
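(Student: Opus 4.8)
The plan is to construct the desired disjoint-type entire function $f$ using the standard machinery of approximation by logarithmic tracts (the ``models'' of Bishop and of Rottenfu{\ss}er--Ruckert--Rempe--Schleicher). One starts with a disjoint-type entire function, or more conveniently a function in the class $\Blog$ of logarithmic transforms, whose tract structure can be prescribed combinatorially: the tract decomposes into infinitely many ``fingers'' accumulating on one another in a controlled way, with $f$ (after exponentiation) mapping each finger over a full fundamental domain. To make this precise one would use a result guaranteeing that a sufficiently tame model with finite-order (or at least logarithmically tame) growth is realized, up to quasiconformal equivalence, by an actual disjoint-type entire function; such realisation theorems are available in the literature and, by quasiconformal rigidity of disjoint-type maps on their Julia sets, the topological conclusions we want are conjugacy invariants. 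The first step, then, is to \emph{set up the combinatorial model}: describe the tract $T$ and the external addresses, and verify that the resulting model has finite order so that every path component of $J(f)$ is an arc to infinity (by~\cite[Theorem~C]{Baranski_Trees} or~\cite[Theorem~5.10]{rrrs}), and that it is disjoint type.

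The second step is to \emph{engineer one hair whose endpoint is inaccessible}. The idea is to choose a distinguished external address $\s$ and arrange the geometry of the tract so that the hair $h_{\s}$ with that address is ``buried'': there is a sequence of other hairs (with nearby addresses) that shield $h_{\s}$ from the Fatou set, each wrapping around $h_{\s}$ at a definite distance and coming arbitrarily close to every point of $h_{\s}$, including its endpoint. Concretely, one makes the fingers of the tract nest so that the dynamic rays approaching the endpoint of $h_{\s}$ are surrounded, in every neighbourhood, by a ``wall'' of rays of the bouquet, leaving no crosscut of $F(f)$ landing on $h_{\s}$. This is where the bulk of the work lies, and it is the \textbf{main obstacle}: one must quantify, using the expansion estimates and the hyperbolic-metric control that come with the logarithmic model (head-start-type inequalities, standard distortion bounds), that the shielding hairs genuinely accumulate on \emph{all} of $h_{\s}$ and that no access channel survives, while simultaneously keeping enough room that $J(f)$ remains a union of honest arcs and $F(f)$ stays connected.

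The third step is to \emph{deduce the two conclusions}. From Step~1, every connected component of $J(f)$ is an arc from a finite endpoint to $\infty$. From Step~2, the component $h_{\s}$ contains no point accessible from $F(f)$: indeed its endpoint is not accessible by construction, and the non-endpoint points of a hair are never accessible (a point of a hair other than the endpoint separates the hair, hence cannot lie on the boundary of an unbounded complementary domain in the appropriate local sense; this is the standard fact that only endpoints of hairs can be accessible, cf.~the Devaney--Goldberg/Aarts--Oversteegen picture). Finally, since in a Cantor bouquet \emph{every} endpoint is accessible from the complement (this is part of the ``smooth fan'' structure referenced after Definition~\ref{def_brush}), the existence of an inaccessible hair immediately shows $J(f)$ is not a Cantor bouquet, giving the ``in particular'' clause. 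The only genuinely delicate point beyond Step~2 is ensuring that the finger construction does not accidentally create extra path components or destroy the arc structure; this is handled by keeping the model within the finite-order regime throughout, so that the cited structure theorems apply verbatim.
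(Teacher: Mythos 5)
There is a genuine gap, and it sits exactly where your proposal leans hardest. Your plan is to obtain the arc structure of $J(f)$ by ``keeping the model within the finite-order regime throughout, so that the cited structure theorems apply verbatim''. But for disjoint-type functions of finite order (and more generally whenever a \emph{uniform} head-start condition holds, which is what the structure theorems of \cite{rrrs} and \cite{Baranski_Trees} exploit), the Julia set is automatically a Cantor bouquet \cite{lasseBrushing}, and every endpoint of a Cantor bouquet is accessible from the complement (Observation~\ref{obs_acc_endp}). So a construction confined to the finite-order regime can never produce the inaccessible hair you want in Step~2; your Step~1 and Step~2 are mutually exclusive as stated. The real difficulty of the theorem is precisely to secure the arc structure \emph{without} finite order or a uniform head-start condition: in the paper this is done by choosing a straight tract $T_0$, ``hooked'' tracts $T_n$, and thin straight strips $S_n$ nested inside the hooks, thinning the hooked tracts via Proposition~\ref{prop_V} and using the Ahlfors distortion theorem (Theorem~\ref{thm_ahlfors}) to verify a genuinely \emph{non-uniform} head-start condition (Propositions~\ref{prop_HSC} and~\ref{prop_newHSC}), which yields criniferousness and hence arcs by Theorem~\ref{thm_HS_then_crin} and Proposition~\ref{obs_disjoint_crin}. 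Your proposal flags this balancing act as the ``main obstacle'' but offers no mechanism for it, and the mechanism you do name would in fact make the goal unattainable.

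Two further points where the blind outline diverges from what is actually needed. First, the inaccessible hair cannot have a \emph{bounded} external address: by \cite[Theorem B]{BarKarp_codingtrees} (see also \cite[Proposition 3.10]{lasse_arclike}) endpoints of bounded addresses are always accessible; the paper therefore targets the unbounded address $\s = S_1S_2S_3\ldots$, and the inaccessibility is proved not by a vague ``wall of rays'' but by a concrete separation argument: the curves $\gamma_n$ (pullbacks of hairs through the hooks, closed up by short vertical segments $L_n$) separate the endpoint $e_{\s}$ from the Fatou set by Janiszewski's theorem, the lengths of $L_n$ shrink to $0$ by expansion \eqref{eq_expansion2}, yet the $L_n$ stay at definite distance from $e_{\s}$, so no access path can exist (Proposition~\ref{prop_nonaccess}). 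Second, the passage from the logarithmic model to an actual entire function is via Bishop's realisation theorem together with the rigidity results of \cite{lasseRigidity} (Theorem~\ref{thm_Bishop}); this part of your outline is fine, but it does not rescue the core issue above. Your Step~3 (only endpoints can be accessible, and Cantor bouquets have accessible endpoints) is correct and matches the paper.
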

 We remark that the existence of an entire function $f$ for which some connected component of $J(f)$ contains no accessible point follows 
  from~\cite[Theorems~2.3 and~5.10]{lasse_arclike}; however, in this example the component is not an arc but rather a much more complicated continuum. 
     
 It has long been known that Devaney hairs~-- i.e., curves in the Julia set of entire functions most of whose points converge to infinity~-- exist not only for disjoint-type functions.
   For exponential maps $E_a$ as in~\eqref{eq_exp}, but now allowing all 
   $a\in\C\setminus \{0\}$, 
   it was observed already in the 1980s \cite{dgh} that these hairs may be thought of as
   analogues
   of external rays 
   in polynomial dynamics (see \cite[\S~18]{milnor_book} and \cite{milnorrays}). For this reason, they
   are also called \emph{dynamic rays}; they have indeed
   found significant applications in the
   study of the dynamics of transcendental entire functions; see for example~\cite{bifurcations}. 

 Let us make precise what we mean when we say that the
   escaping set of a transcendental entire function $f$ consists of 
   hairs. A \emph{ray tail} 
  of $f$ is an injective curve $\gamma\colon [0,\infty)\to \C$ with the following properties: 
\begin{itemize}
  \item for each $n\geq 1$, $t \mapsto f^{n}(\gamma(t))$ is injective with $\lim_{t \rightarrow \infty} \lvert f^{n}(\gamma(t))\rvert =+\infty$, and
  \item $f^{n}(\gamma(t))\rightarrow \infty$ uniformly in $t$ as $n\rightarrow \infty$.
\end{itemize}
Following~\cite{lasse_dreadlocks}, we then say that $f$ is \emph{criniferous} if every point of the \emph{escaping set}
\[I(f)\defeq \{z\in\C : f^n(z)\to \infty \text{ as } n\to \infty\},\]
 is eventually mapped into a ray tail of $f$;  see also Definition~\ref{def_criniferous}. Criniferous functions most commonly arise in the \emph{Eremenko-Lyubich class} $\B$ of 
 transcendental entire functions with bounded set of singular values (see \cite{Dave_survey}), and we shall restrict to this class in the following. 
 
 Observe that disjoint-type functions belong to the class $\B$ by definition. Conversely, if $f\in\B$, then the function $\lambda f$ is of disjoint type 
  for sufficiently small $\lambda\in\C\setminus\{0\}$; see~\cite[p.~392]{BarKarp_codingtrees}. Moreover, it is shown in~\cite{lasseRigidity} that the functions
  $f$ and $\lambda f$ are topologically conjugate on the sets of points that remain sufficiently large under iteration. (See~\cite[Theorem~1.1]{lasseRigidity} for the precise 
  statement.) In particular, $f$ is criniferous if and only if this holds for $\lambda f$; see~\cite[Appendix~A]{lasseRigidity}. We prove that, for disjoint-type functions, the property of being criniferous is
  reflected in the structure of the entire Julia set.
  \begin{thm}[Characterisations of criniferous disjoint-type functions]\label{thm:criniferousdisjointtype}
   Let $f$ be a disjoint-type entire function. The following are equivalent. 
   \begin{enumerate}[(a)]
     \item $I(f)\cup \{\infty\}$ is path-connected;\label{item:Ipathconnected}
     \item every connected component of $J(f)$ is an arc connecting a finite endpoint to infinity;\label{item:juliaarcs}
     \item every $z\in I(f)$ can be connected to $\infty$ by a curve on which $f^n\to\infty$ uniformly;\label{item:uniformstrongeremenko}
     \item $f$ is criniferous.\label{item:fcriniferous}
  \end{enumerate}
  \end{thm}

 Here the key implication is \ref{item:juliaarcs} $\Rightarrow$ \ref{item:uniformstrongeremenko} (the other parts of the theorem 
    follow from previous work in~\cite{lasse_dreadlocks} and~\cite{lasse_arclike}; see Proposition~\ref{prop:disjoint_crin}). This implication
    is remarkable since it does not hold for individual components of the Julia set: by~\cite[Theorem~2.10]{lasse_arclike}, 
   there exists a disjoint-type entire function $f$ and a connected component $\gamma$ of its Julia 
   set such that $\gamma$ is an arc connecting a finite endpoint to infinity, $\gamma$ is entirely contained in the escaping set, but the iterates of $f$ do not converge
   \emph{uniformly} to infinity on $\gamma$. In particular, the finite endpoint of $\gamma$ is never mapped into a ray tail. So 
   Theorem~\ref{thm:criniferousdisjointtype} implies that, for such $f$, there must be a \emph{different} component of the Julia set that is not an arc. 
   
 An entire function satisfying~\ref{item:Ipathconnected} is said to have the \emph{strong Eremenko property}, in reference
  to the question from~\cite{eremenko1989} mentioned above.  It 
  satisfies the \emph{uniform Eremenko property} if $I(f)$ is a union of unbounded connected sets on each of which 
  $f^n\to\infty$ uniformly 
   (compare~\cite[Appendix~A]{lasseRigidity}). Thus Theorem~\ref{thm:criniferousdisjointtype} shows that, for disjoint-type functions, the 
   strong Eremenko property implies the uniform Eremenko property. It is interesting to ask whether the converse holds:
    \begin{question}
      Is there a non-criniferous disjoint-type entire function $f$ such that, for  every $z\in I(f)$, there is an unbounded connected
        set $A\ni z$ such that $f^n|_A\to\infty$ uniformly? 
    \end{question}
   
      By the comments before Theorem \ref{thm:criniferousdisjointtype}, the property of $f\in\B$ being criniferous can be expressed purely in terms of the Julia set of the disjoint-type function $\lambda f$.
    In circumstances such as in the dynamical study of functions $f\in\B$ with escaping singular values~\cite{mio_splitting}, in addition to the existence of hairs one also 
    requires control on how a hair is approximated on by limiting hairs, similarly to the
    structure of a Cantor bouquet. For this reason, the first author~\cite{mio_newCB}
    defined the class $\CB$ of those functions $f\in\B$ for which $g\defeq \lambda f$ is a Cantor bouquet for sufficiently small $\lambda$. (By~\cite{lasseBrushing}, all finite-order
    functions in $\B$ as well as their compositions belong to $\CB$.) One may ask whether, similarly to Theorem~\ref{thm:criniferousdisjointtype}, the property that $J(g)$ is a Cantor bouquet can also be described purely in terms of the dynamics of $g$ near infinity. 
    
  We show that this is indeed the case. Let us say that a set $A\subset J(f)$ is \emph{absorbing} if it is closed and forward-invariant, the image of any arc to infinity in $A$ is an arc to infinity,
   and every point of the escaping set $I(f)$ is eventually mapped into $A$. 
   \begin{thm}[Absorbing Cantor bouquets]\label{thm:cantor}
     Let $f$ be a disjoint-type entire function. The following are equivalent.
      \begin{enumerate}[(a)] 
        \item \label{item:Jcantor} $J(f)$ is a Cantor bouquet.
        \item \label{item:absorbing} For every $R>0$, $J(f)\cap \{\lvert z\rvert \geq R\}$ contains an absorbing Cantor bouquet.
        \item \label{item:abssmooth} $J(f)\cup\{\infty\}$ contains an absorbing arc-smooth fan. f
      \end{enumerate}
   \end{thm}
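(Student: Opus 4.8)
The plan is to establish the cycle of implications \ref{item:Jcantor} $\Rightarrow$ \ref{item:absorbing} $\Rightarrow$ \ref{item:abssmooth} $\Rightarrow$ \ref{item:Jcantor}. Here \ref{item:absorbing} $\Rightarrow$ \ref{item:abssmooth} is immediate, since a Cantor bouquet is in particular a topologically smooth fan and $J(f)\cap\{\lvert z\rvert\ge R\}\subseteq J(f)$. For \ref{item:Jcantor} $\Rightarrow$ \ref{item:absorbing}, fix $R>0$ and put
\[ A_R\defeq\overline{\bigl\{z\in I(f):\lvert f^n(z)\rvert\ge R\text{ for all }n\ge 0\bigr\}}. \]
It is routine that $A_R$ is closed, forward invariant, contained in $J(f)\cap\{\lvert z\rvert\ge R\}$, and that it absorbs $I(f)$ (an escaping point eventually has modulus at least $R$ and keeps it thereafter). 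Since $J(f)$ is a Cantor bouquet, $f$ is criniferous by Theorem~\ref{thm:criniferousdisjointtype}, so every unbounded arc in $J(f)$ is a sub-ray of a hair on which all iterates are injective with image escaping to infinity; this yields the ``unbounded image'' condition. Finally $A_R$ is again a Cantor bouquet: it is a closed sub-brush of $J(f)$ that contains a tail of every hair and whose endpoints one checks to be dense, and by the structure theory of Aarts--Oversteegen~\cite{aartsoversteegen} any such subset of a straight brush is itself ambiently homeomorphic to a straight brush.

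The heart of the theorem is \ref{item:abssmooth} $\Rightarrow$ \ref{item:Jcantor}. Let $A\subseteq J(f)$ be an absorbing topologically smooth fan. Replacing $A$ by the closure of the set of its escaping points whose forward orbit remains in $\{\lvert z\rvert>R\}$ --- which is again an absorbing topologically smooth fan --- we may assume $A$ lies in the region $\{\lvert z\rvert>R\}$ on which the logarithmic change of variable for $f$ operates; there $J(f)$ is contained in the logarithmic tracts of $f$, and $f$ maps each tract as a covering onto a set containing all of $A$. Consequently $f^{-1}(A)$ decomposes as a disjoint union, over the tracts, of lifts of $A$ --- in particular of topologically smooth fans --- and, $A$ being forward invariant, these nest as $A\subseteq f^{-1}(A)\subseteq f^{-2}(A)\subseteq\cdots$. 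Since $A$ absorbs $I(f)$ and $I(f)$ is dense in $J(f)$, it follows that $\overline{\bigcup_{n\ge 0}f^{-n}(A)}=J(f)$. Two things must now be extracted. First: using that unbounded arcs of $A$ have unbounded image, that $f^n$ is injective on each hair (a hair lies in a single tract), and that the escaping points on a hair form a tail, one shows that the hairs of $A$ are eventually ray tails; hence $f$ is criniferous, and Theorem~\ref{thm:criniferousdisjointtype} gives that every component of $J(f)$ is an arc to infinity. Second: one upgrades ``union of hairs'' to ``Cantor bouquet''. The lifts of $A$ constituting $f^{-n}(A)$ are images of $A$ under branches of $f^{-n}$, which contract uniformly in the hyperbolic-type metric of the tract region; hence, far down the pullback tree, these lifts are uniformly thin and their hairs deviate from radial rays by a uniformly small amount. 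This permits one to transport the straightening homeomorphism of $A$ across the sets $f^{-n}(A)$ and to pass to a limit, producing a homeomorphism of the plane that straightens every hair of $J(f)$. Combined with nowhere density of $J(f)$ (standard for disjoint-type maps), density of its endpoints (limits of preimages of endpoints of $A$), and the Cantor-set cross-section arising from the branching over countably many sheets, this exhibits $J(f)$ as a Cantor bouquet; equivalently, one verifies that $f$ satisfies the relevant sufficient condition of~\cite{lasseBrushing}.

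The delicate step, and the one where the hypothesis that $A$ is a \emph{smooth} fan (not merely a union of hairs) is indispensable, is the passage to the limit: one needs quantitative control, uniform in the depth $n$, on the geometry of the lifts of $A$ inside $f^{-n}(A)$, so that the finite-level straightening homeomorphisms converge to a single ambient straightening of $J(f)$ rather than merely to one for each approximation. This is precisely where bounded-distortion estimates for disjoint-type maps in logarithmic coordinates, together with the prescribed smooth shape of $A$, enter; establishing this uniformity is the main obstacle.
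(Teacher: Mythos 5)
The essential content of the theorem is the implication \ref{item:abssmooth}~$\Rightarrow$~\ref{item:Jcantor}, and on this point your argument has a genuine gap that you yourself flag: the passage from ``$J(f)$ is a union of arcs, densely filled by contracted lifts of the smooth fan $A$'' to ``$J(f)$ is a Cantor bouquet'' is only described as a plan, and the uniform control needed to make the finite-level straightening homeomorphisms converge is exactly what is \emph{not} supplied. Moreover, the mechanism you propose (individual pullbacks of $A$ are uniformly thin, hence hairs deviate little from rays) does not address the real danger. The obstruction to being a Cantor bouquet is not the geometry of any single lift of $A$, but the way \emph{distinct} hairs, passing through different tracts at intermediate levels, accumulate on a limiting hair: they can ``fold'' (the bad pairs of Definition~\ref{defn_badpair}), and Theorem~\ref{thm_intro_crinNoCantorB} in this very paper shows this can happen even when every component of $J(f)$ is an arc and expansion/contraction estimates are available. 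Smoothness of $A$ gives no direct bound on this relative accumulation at points of $J(f)$ that never enter $\bigcup_n f^{-n}(A)$ (non-escaping points), which lie only in the closure of that union. The paper's route is entirely different and avoids any straightening-by-limits: one reduces (Propositions~\ref{prop:smoothbouquets} and~\ref{prop:badbouquets}) to showing $J(F)$ has no bad sets in logarithmic coordinates, and then proves the key Theorem~\ref{thm:bad}: if $J(F)$ were not a Cantor bouquet, bad sets would be present in every relatively open subset of a suitable closed set $A\subset I(F)$ of fast-escaping points. Since the absorbing smooth fan $X$ and all its preimages $F^{-n}(X)$ contain no bad sets (Proposition~\ref{prop_properties_bad}), yet $A\subset\bigcup_n F^{-n}(X)$ by absorption, Baire's theorem yields a contradiction (Corollary~\ref{cor:main}). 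The substantial work is the ubiquity-of-bad-sets theorem, which your proposal has no analogue of.

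Two smaller points. For \ref{item:Jcantor}~$\Rightarrow$~\ref{item:absorbing} the paper simply cites \cite[Theorem~1.7]{mio_newCB}; your candidate set $A_R$ is problematic as stated, since the condition ``$\lvert f^n(z)\rvert\ge R$ for all $n$'' can disconnect a hair and leave bounded components, and a Cantor bouquet has only unbounded arc components (compare Proposition~\ref{prop_absJR}, where one keeps only the unbounded components of $J_R(f)\cap X$ and uses smoothness to recover absorption); also, the claim that any closed subset of a straight brush containing a tail of every hair with dense endpoints is again a straight brush is not a result of \cite{aartsoversteegen} and would itself need proof. Your derivation of criniferousness from the absorbing smooth fan is essentially the paper's Proposition~\ref{prop_abs_crin} and is fine in spirit, as is the trivial implication \ref{item:absorbing}~$\Rightarrow$~\ref{item:abssmooth}.
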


As mentioned above, the results of~\cite{lasseRigidity} imply that $f\in\B$ and $g = \lambda f$ have the same dynamics near infinity. Therefore Theorem~\ref{thm:cantor} leads to an intrinsic definition of the class $\CB$, without the need of passing to a disjoint-type function $g$.  

 \begin{thm}[The class $\CB$]\label{thm_CB}
Let $f\in\B$. The following are equivalent.
   \begin{enumerate}[(a)]
    \item $f\in\CB$;
    \item $J(f)$ contains an absorbing 
      set $X$ such that $X\cup\{\infty\}$ is an arc-smooth fan;\label{item:absorbingfan}
    \item  Let $E\subset I(f)$ be a compact set such that $f^n|_E\to\infty$ uniformly. Then there is $n_0\geq 0$ such that, for 
       every $z\in f^{n_0}(E)$, there is a ray tail $\gamma_z$ that connects $z$ to $\infty$. Moreover, 
       $\gamma_z\cup\{\infty\}$ depends continuously on $z$ in the Hausdorff metric on compact subsets of $\Ch$, and 
       $f^n\to\infty$ uniformly on $\bigcup_{z\in f^{n_0}(E)} \gamma_z$.\label{item:newCB}
 \end{enumerate}
 \end{thm}
 We can interpret condition~\ref{item:newCB} as stating that $f$ is criniferous, and moreover the ray tails in the definition of a criniferous function can be chosen
  in a ``uniform'' way when $z$ ranges over a set on which $f^n\to\infty$ uniformly. 

The main method known for proving that a given Julia set is a Cantor bouquet is via showing that the function $f$ satisfies a ``head-start condition''~\cite{rrrs}. We use the following version, which is slightly more general
  than the original definition from~\cite{rrrs}. 

\begin{defn}[Uniform head-start condition] \label{UHSCentire} Let $f$ be a disjoint-type entire function. 
We say that $f$ satisfies
a \emph{uniform head-start condition (with respect to $\lvert \cdot \rvert$) 
on its Julia set} if there is an
upper semicontinuous function $\phi \colon [0,\infty)\to [0,\infty)$ with $\phi(t)>t$ for all $t$ such that 
the following holds. Whenever $z$ and $w$ belong to the same component of $J(f)$,
\begin{enumerate}[(i)]
\item if $\lvert w \rvert > \phi(\lvert z\rvert)$, then
$\lvert f(w) \rvert > \phi( \lvert f(z)\rvert)$, and
\item if $z\neq w$, then there is $n \geq 0$ such that either 
$\lvert f^n(w) \rvert > \phi( \lvert f^n(z)\rvert)$ or
$\lvert f^n(z) \rvert > \phi( \lvert f^n(w)\rvert)$. 
\end{enumerate}
\end{defn}
We show that for many functions of disjoint type, having a Cantor bouquet Julia set is equivalent to satisfying a uniform head-start condition.
  Following~\cite{rrrs}, we say that a function $f$ has \emph{bounded slope} if there is a curve $\gamma:[0,\infty)\to\C\setminus\{0\}$ such that
    \begin{align*}
     \limsup_{t\to\infty} \lvert f(\gamma(t))\rvert &< \infty \quad\text{and}\\
    \limsup_{t\to\infty} \frac{\arg \gamma(t)}{\log\lvert \gamma(t)\rvert} &=\infty. \end{align*}

\begin{thm}\label{thm:headstart}
Let $f$ be of disjoint type. If $f$ satisfies a uniform head-start condition
on its Julia set, then $J(f)$ is a Cantor bouquet.

Conversely, if $J(f)$ is a Cantor bouquet, and additionally
$f$ has bounded slope, then $f$ satisfies
a uniform head-start condition on its Julia set. 
\end{thm}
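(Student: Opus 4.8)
I would prove the two implications separately; the first refines known work, the second is the main new content.

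\emph{From the head-start condition to a Cantor bouquet.} Here I would follow the strategy of~\cite{rrrs}. The function $\phi$ of Definition~\ref{UHSCentire} induces, on each connected component $C$ of $J(f)$, a relation: set $z\prec_C w$ if $\lvert f^n(w)\rvert>\phi(\lvert f^n(z)\rvert)$ for all sufficiently large $n$. Property~(i) makes $\prec_C$ a forward-invariant partial order, and property~(ii) makes it a linear order separating the points of $C$. The topological arguments of~\cite{rrrs} then show that every component of $J(f)$ is a ray tail together with its finite endpoint and that the collection of these arcs forms a brush, and the brushing techniques of~\cite{lasseBrushing} upgrade this to the statement that $J(f)\cup\{\infty\}$ is a topologically smooth fan. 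The one novelty is that our $\phi$ is merely upper semicontinuous; but upper semicontinuity is exactly what makes the set $\{(z,w):\lvert w\rvert\ge\phi(\lvert z\rvert)\}$ closed, and one checks that only this closedness, and not continuity of $\phi$, is used in the compactness arguments of~\cite{rrrs} and~\cite{lasseBrushing}. Finally, once $J(f)$ is known to be a union of ray tails it is automatically closed, forward invariant, contains $I(f)$, and maps unbounded arcs to unbounded arcs; hence it is an \emph{absorbing} topologically smooth fan, and condition~\ref{item:abssmooth} of Theorem~\ref{thm:cantor} gives that it is a Cantor bouquet.

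\emph{From a Cantor bouquet with bounded slope to a head-start condition.} Assume now that $J(f)$ is a Cantor bouquet and $f$ has bounded slope. Since $J(f)\cup\{\infty\}$ is a topologically smooth fan, there is a homeomorphism onto a straight brush carrying each component $C$ to a horizontal ray; fixing such an identification parametrises each component as an arc $\gamma_C\colon[0,\infty)\to\C$ with $\gamma_C(0)$ its finite endpoint and $\lvert\gamma_C(s)\rvert\to\infty$ as $s\to\infty$. This puts a linear order on each component, which $f$ respects: $f$ maps components to components, carries endpoints to endpoints, and, because $f$ is criniferous by Theorem~\ref{thm:criniferousdisjointtype}, satisfies $f^n(\gamma_C(s))\to\infty$ uniformly on the tail of each hair, so that (after discarding a segment near the endpoint and iterating if necessary) $f$ acts monotonically on these orders. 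Moreover $f$, being of disjoint type, is uniformly expanding on $J(f)$ in a suitable conformal metric, so the length of the hair-segment between two distinct points of a common component tends to infinity under iteration. I would then pass to the logarithmic coordinate $F=\log\circ f\circ\exp$, in which components of the Julia set become curves inside the logarithmic tracts and $\lvert z\rvert$ becomes $\operatorname{Re}z$, and use the bounded-slope hypothesis to control the tract geometry exactly as in~\cite{rrrs}: bounded slope bounds $\lvert\operatorname{Im}z\rvert$ linearly in $\operatorname{Re}z$ on each logarithmic tract, which forces the hairs to be non-spiralling, so that $\operatorname{Re}z$ is a faithful parameter along hairs up to a uniformly bounded distortion. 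With this in hand one constructs $\phi$ as in the finite-order case of~\cite{rrrs}: $\phi(t)$ is chosen large enough that $\lvert w\rvert>\phi(\lvert z\rvert)$ forces $w$ to lie a definite hyperbolic amount beyond $z$ along their common hair, with the ``definite amount'' picked so that the expansion of $f$ preserves it, which yields~(i); property~(ii) then follows because for distinct points the hair-distance between their images grows without bound.

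The principal difficulty — and, I expect, the main obstacle — is the \emph{uniformity} of all the estimates in the converse over the uncountably many hairs at once. On any single hair, bounded slope plainly rules out pathological spiralling, but one must convert this into bounds that depend only on $f$; this is where the smooth-fan structure is essential, since the family of hairs varies continuously over the compact parameter space of the straight brush, so that pointwise finiteness and upper semicontinuity of the auxiliary functions entering the construction of $\phi$ upgrade to the global bounds that make $\phi$ finite-valued and upper semicontinuous with $\phi(t)>t$. The interplay between the topological (smooth-fan) and the metric (bounded-slope) hypotheses is thus the heart of the argument: without bounded slope the intrinsic order on the hairs is still perfectly well behaved, but it need not be expressible through $\lvert z\rvert$, and no head-start condition in the sense of Definition~\ref{UHSCentire} can be expected.
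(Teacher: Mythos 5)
The first half of your argument (head-start $\Rightarrow$ Cantor bouquet) is essentially the paper's: one only needs to check that upper semicontinuity of $\phi$ suffices, and the paper does this directly by verifying the order characterisation of Cantor bouquets (Proposition~\ref{prop:bouquetordering}, Proposition~\ref{prop_UHSC_CB}); your detour through absorbing smooth fans and Theorem~\ref{thm:cantor} also works but invokes much heavier machinery than is needed for this implication.

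The converse, however, has a genuine gap, and it sits exactly where you lean on bounded slope. Bounded slope is a condition on the \emph{tracts} (they eventually lie in a sector), not on the \emph{hairs}: it does not make the hairs non-spiralling, and it does not make $\lvert z\rvert$ (or $\operatorname{Re}z$ in logarithmic coordinates) ``a faithful parameter along hairs up to uniformly bounded distortion''. Even when $J(f)$ is a Cantor bouquet, hairs inside bounded-slope tracts may fold back by amounts that grow without bound as one moves out along the tract; this is precisely why the theorem only asserts a head-start condition for some upper semicontinuous $\phi$, and not the linear head-start of the finite-order case of~\cite{rrrs} that your plan to ``construct $\phi$ as in the finite-order case'' would produce. (In~\cite{rrrs} the linear condition requires bounded \emph{wiggling} of the tracts in addition to bounded slope; neither is available here.) Since this false intermediate claim is what your constructions of both (i) and (ii) rest on — in particular your one-line justification of (ii), that hair-distance between images grows, only yields the required inequality on $\lvert f^n(\,\cdot\,)\rvert$ if moduli do track position along the hair — the converse as sketched does not go through. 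Your closing appeal to ``the compact parameter space of the straight brush'' also does not help: the base of a straight brush is a set of irrationals and is not compact.

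What the paper actually does is different in mechanism. The only use of bounded slope (generalised to the ``uniformly anguine'' condition of Definition~\ref{defn_anguine}, see Observation~\ref{obs_bslope}) is that two points at the same $\rho$-level within a fundamental domain lie at uniformly bounded hyperbolic distance $B$ in $H$. One then \emph{defines} $\phi(R)$ directly from the smooth-fan order: roughly, $\phi(R)$ is the largest value $\rho(w)$ over quadruples $z,z',w',w$ in the closure of a single tract with $\rho(z)=R$, $w'\preceq z'$, and $z,z'$ (resp.\ $w,w'$) joined by curves of hyperbolic diameter at most a slack $\alpha$. Finiteness, monotonicity and upper semicontinuity of $\phi$ come from the brush structure (order down-sets of compact sets are compact) together with the fact that only finitely many tracts modulo $2\pi i\Z$ meet a bounded $\rho$-level — not from compactness of the family of hairs. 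The slack is chosen self-consistently as $\alpha=B/(\Lambda-1)$, where $\Lambda>1$ is the uniform hyperbolic expansion constant, so that pulling a configuration back under $F$ returns slack $(\alpha+B)/\Lambda=\alpha$; this closure under pullback is what proves property (i) (Lemma~\ref{lem:hookbetweenphi}, Corollary~\ref{cor1}). Property (ii) is then obtained by contradiction: if neither inequality ever held, one could pull back nearby order-reversed pairs along the orbit and contradict the order-continuity (smoothness) of the bouquet (Observation~\ref{obs:secondpiece}, Corollary~\ref{cor2}). Your instinct that ``exceeding $\phi$ should force a definite order-separation preserved by expansion'' is the right germ, but without this definition of $\phi$ and the compactness coming from the order structure, the uniformity you flag as the main obstacle is not resolved by your sketch.
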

\begin{remark}
The first conclusion is \cite[Corollary 6.3]{lasseBrushing}\footnote{%
The result in \cite{lasseBrushing} uses the definition of head-start condition from~\cite{rrrs}, which is slightly more restrictive. However,
  the same proof goes through with our definition; compare Proposition~\ref{prop_UHSC_CB}.}
 and it is included here for completeness. In Section \ref{sec_CB_then_UHSC}, we prove a more general version of the second for a notion of \emph{uniformly anguine} tracts, see Definition \ref{defn_anguine}.
\end{remark} 

We do not currently know whether the condition of bounded slope can be removed. 

\begin{question}[Necessity of head-start conditions]
 Is there a disjoint-type entire function such that $J(f)$ is a Cantor bouquet, but $f$ does not satisfy a uniform head-start condition on $J(f)$?
\end{question}

\subsection*{Structure of the article and comments on the proofs} 
In Section \ref{sec_prelim}, we review the logarithmic change of variable, which is used throughout the remainder of the paper. We also review results and definitions
  for functions expressed in logarithmic coordinates, including the definition of criniferous functions and head-start conditions.  Theorem \ref{thm:criniferousdisjointtype} is proved in Section \ref{sec:crin_disj}. 
   Section \ref{sec_dendroids} is of a topological nature and includes the definition and fundamental properties of \textit{Cantor bouquets}. We 
    provide some characterisations of when the Julia set is a Cantor bouquet, and introduce the notion of
    a ``bad pair'', which prevents the Julia set of a disjoint-type entire function from being a Cantor bouquet.
   
With these preparations, we are ready to prove our main results. Section \ref{sec_abs} is dedicated to the proof of Theorem \ref{thm:cantor}. The key to the proof
 of this theorem is to  show the following. Suppose that $J(f)$ is not a Cantor bouquet; this means that there are some hairs that ``fold'' over themselves as they approach some
 limiting hair; see Proposition~\ref{prop_properties_bad}. Our goal is to show that this behaviour is then ubiquitous in a certain subset of the Julia set, at all scales.
 (See Theorem~\ref{thm:bad}.) The proof of this fact turns out to be rather involved, and requires a detailed understanding of the properties of disjoint-type functions. 
  We then deduce Theorem~\ref{thm_CB} in Section~\ref{sec_CB}.
  
 In Section \ref{sec_noCb}, we prove Theorem \ref{thm_intro_crinNoCantorB}. We first provide an example of a criniferous entire function whose Julia set is not a Cantor bouquet,
  and then indicate how to modify this construction to also ensure that some endpoint is not accessible. Constructing a disjoint-type function whose Julia set is not a Cantor bouquet 
  is achieved by introducing a sequence of tracts with sufficiently large ``hooks'' whose preimages will accumulate on an invariant ray in the Julia set in 
  a manner that is not compatible with the Julia set being a Cantor bouquet. The main
   difficulty lies in ensuring that this is nonetheless criniferous; i.e., that the 
  hooks themselves do not lead to the existence of non-arc components of the Julia set, as was the case in Theorem~\ref{thm:criniferousdisjointtype}. To this end, we use a 
  non-uniform version of the head-start condition, which we verify using careful choice of the tracts and estimates from geometric function theory that allow us to control the
  function we construct. Finally, in Section \ref{sec_CB_then_UHSC}, we prove (a generalisation of) Theorem \ref{thm:headstart}. 

\subsection*{Basic notation}
Recall from earlier in this section that the Julia and escaping set of an entire function $f$ are denoted by $J(f)$ and $I(f)$ respectively. The set of singular values of $f$ is $S(f)$. We denote the complex plane by $\C$, the Riemann sphere by $\widehat{\C}$, and the unit disc by $\D$. We will indicate the closure of a domain $U$ by $\overline{U}$, that must be understood to be taken in $\C$. For each $R\geq 0$, we let $\H_R\defeq \{z\in \C \colon \Rea z>R\}$ and abbreviate $\H\defeq \H_0$.
The Euclidean disc of radius $r>0$ with centre $z_0\in\C$ is denoted $D_r(z)$. Euclidean distance and diameter are respectively denoted by dist and diam. If $A,B\subset\C$ are compact, $\distH(A,B)$ denotes the Hausdorff distance between $A$ and~$B$:
$$\distH(A,B)\defeq \max \left\{\,\sup _{a\in A} \dist(a,B),\ \sup _{b\in B}\dist(A,b)\,\right\}. $$  
\subsection*{Acknowledgements}
We thank David Mart\'i-Pete and James Waterman for helpful discussions and the 
	referee for thoughtful and valuable feedback that helped us to improve
	the article. This work was partially conducted while the  authors worked at the University of Manchester and at the University of Liverpool, respectively. The second author gratefully acknowledges funds provided by the University of Liverpool for esearch visits to Manchester and Barcelona in support of this research. The first author is a Serra Húnter fellow.
\section{Preliminaries}\label{sec_prelim}

\subsection*{The logarithmic change of variable}  
Recall from the introduction that the \emph{Eremenko-Lyubich class} $\B$ consists of those transcendental entire functions whose set $S(f)$ of singular values is bounded. 
 Let us review the \emph{logarithmic change of coordinates} for $f\in\B$, a commonly used tool for studying the Eremenko-Lyubich class
 first introduced to holomorphic dynamics in \cite[\S2]{eremenkoclassB}; for more details, see \cite[\S5]{Dave_survey}, \cite[\S2]{rrrs}~or~\cite[~\S3]{lasse_arclike}. 

Let $f\in\B$ and fix a bounded Jordan domain $D\Supset S(f)\cup \{0,f(0)\}$. Let $H \defeq \exp^{-1}(\C\setminus \overline{D})$; note that $H$ contains a right half-plane. 
Let $\T_F\defeq \exp^{-1}(f^{-1}(\C\setminus \overline{D}))$. 
Since $f\colon \T_F\to \C\setminus\overline{D}$ is a covering map, and $f$ is transcendental, 
each connected component $T$ of $\T_F$ is a simply connected domain whose boundary is homeomorphic to $\R$. Moreover, by the action of the exponential map, both ``ends'' of the boundaries of $T$ have real parts converging to $+\infty$, and both $\T_F$ and $H$ are invariant under translation by $2\pi i$. Consequently, we can lift $f$ to a map $F\colon \T_F\to \H_{\log L}$ satisfying 
\begin{equation}\label{eq_commute_log}
\exp\circ F = f\circ\exp
\end{equation}
and such that $F$ is $2\pi i$-periodic. We call $F$ a \emph{logarithmic transform} of $f$. Moreover, we call each connected component of $\T_F$ a \emph{(logarithmic) tract} of $F$. Note that $F\colon T\to H$ is a conformal isomorphism for every tract $T$. 

For most part of the article, we will study the logarithmic transform of $f$. In fact, the map $F$ needs not have arisen from an entire function $f$, and we can work in this
 greater generality from now on.
\begin{defn}[The classes $\Blog$ and $\BlogP$ \cite{lasseRigidity,lasse_arclike}]\label{defn:Blog}
The class $\Blog$ consists of all holomorphic functions 
\[ F\colon \T\to H, \]
where $F$, $\T$ and $H$ have the following properties:
\begin{enumerate}[(a)]
\item $H$ is a $2\pi i$-periodic unbounded Jordan domain that contains
a right half-plane.   \label{item:H}
\item $\T\neq\emptyset$ 
is $2\pi i$-periodic and $\re z$ is bounded from below in
$\T$, but unbounded from above.
\item Every connected component $T$ of $\T$ is an 
unbounded Jordan domain that is disjoint
from all its $2\pi i\Z$-translates. 
For each such $T$, the restriction
$F\colon T\to H$ is a conformal 
isomorphism whose continuous extension to the closure
of $T$ in $\widehat{\C}$, which we also denote by $F$, satisfies $F(\infty)=\infty$, and whose
inverse we denote by $F_T^{-1} \defeq  (F|_T)^{-1}$.\label{item:tracts}%
\item The tracts of $F$ have pairwise disjoint closures and
accumulate only at $\infty$; i.e.,
if $z_n\in\T$ is a sequence of points all belonging to different
tracts of $F$, then $z_n\to\infty$.\label{item:accumulatingatinfty}%
\end{enumerate}

If, furthermore, $F$ is $2\pi i$-periodic, then we say that $F$ belongs to the class
$\BlogP$. We are mainly interested in the class $\BlogP$~-- which contains
all logarithmic transforms of functions $f\in\B$ as defined above~-- but some of
our results hold more generally for $F\in\Blog$. The \emph{Julia set} and \emph{escaping set} of $F\in \Blog$ are respectively defined by
\begin{equation}
\begin{split}
J(F) \defeq \ &\{z\in \overline{\T}\colon \colon F^n(z)\in \overline{\T}  \text{ for all $n\geq 0$} \} \quad \text{ and } \\
I(F) \defeq\  &\{z\in J(F)\colon \re F^n(z)\to\infty \text{ as $n\to\infty$}\}. 
\end{split} 
\end{equation}
\end{defn}

For $Q>0$, we also define
\begin{equation}\label{eq_JRF}
J_Q(F) \defeq \{z\in J(F) \colon \Rea F^n(z)\geq Q \text{ for all } n\geq 1\}.
\end{equation}
Note that if $F$ is a logarithmic transform of $f$, then $\exp(J_Q(F))=J_{e^Q}(f)$, where for any $R>0$, we define
\begin{equation*}
J_R(f)\defeq  \{z\in J(f) \colon \vert f^n(z)\vert >R \text{ for all } n\geq 1\}.
\end{equation*}

If $\cl{\T}\subset H$, then we say that $F$ is of \emph{disjoint type}, and we have
\begin{equation}\label{eq_JF_disj}
J(F)=\bigcap_{n\geq 0} F^{-n}(\overline{\T}).
\end{equation}
In particular, $J(F)$ is a union of uncountably many connected components, each of which is closed and unbounded, see \cite[Corollary 3.1]{lasse_arclike}. 

The following well-known fact (see e.g.\ \cite[Proposition 3.2]{lasse_arclike}) 
shows that disjoint-type entire functions have disjoint-type logarithmic transforms (possibly after applying an affine conjugacy that ensures that
$0\in F(f)$). 

\begin{prop}\label{prop_disjoint_type}
If $f\in\B$ is of disjoint type and $0\in F(f)$, then there exists a bounded Jordan domain $D \supset S(f)\cup\{0\}$ such that $f(\overline{D}) \subset D$. In particular,
if $F$ is a logarithmic transform of $f$ as above, defined using $D$, then $F$ is of disjoint type and $\exp(J(F))=J(f)$. 
\end{prop}

The following result shows that, in a certain sense, the converse also holds: For every disjoint-type $G\in\BlogP$, there is a 
disjoint-type entire function $f\in\B$ with a logarithmic transform that has the same dynamics as $G$. 
\begin{thm}[Realisation of disjoint-type models] \label{thm_Bishop} Let $G\in \BlogP$ be of disjoint type and let $g$ be defined by $g(\exp(z)) = \exp(G(z))$. Then there is a disjoint-type function $f\in \B$ such that $f\vert_{J(f)}$ is topologically (and, in fact, quasiconformally) conjugate to $g \vert_{\exp(J(G))}$.
\end{thm}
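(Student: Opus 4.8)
The plan is to invoke Bishop's quasiconformal folding construction, which produces entire functions in class $\mathcal{S}$ (and hence in $\mathcal{B}$) with prescribed tract behaviour, and to combine it with the rigidity of disjoint-type dynamics on large scales. The starting point is the observation that a disjoint-type $G \in \BlogP$ gives, via $g(\exp z) = \exp(G(z))$, a well-defined holomorphic map $g$ on a neighbourhood of infinity, but not a priori an entire function; Bishop's theorem from~\cite{bishopClassB} exactly fills this gap. First I would recall the precise formulation of Bishop's result: given a model of the required combinatorial type (a $2\pi i$-periodic disjoint-type $G$ whose tracts satisfy the axioms of $\BlogP$), there exists an entire function $f$, and a quasiconformal map $\Phi$ of the plane, such that $f \circ \Phi = \Phi \circ g$ on the set where $g$ is defined and the relevant iterates stay large — equivalently, $f$ agrees with a quasiconformal deformation of the model outside a bounded set. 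One must check that the hypotheses of $\BlogP$ (bounded-below real parts on tracts, disjoint $2\pi i\Z$-translates, accumulation only at $\infty$, conformal isomorphism onto a half-plane-containing Jordan domain $H$) match the input data Bishop's construction requires; this is essentially bookkeeping, since these axioms were designed to capture exactly the logarithmic tracts arising from class-$\mathcal{B}$ functions.

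The second step is to arrange that the resulting $f$ is itself of \emph{disjoint type}, not merely in $\mathcal{B}$. Here I would use the remark made earlier in the paper (following Observation~\ref{obs_disjoint_type} and the discussion around~\cite[p.~392]{BarKarp_codingtrees}): replacing $f$ by $\lambda f$ for sufficiently small $\lambda \neq 0$ produces a disjoint-type function, and by~\cite{lasseRigidity} the dynamics of $f$ and $\lambda f$ on the set of points that remain large agree up to topological conjugacy. Since the model $G$ is already of disjoint type, one can either push the smallness of $\lambda$ into the construction directly, or conjugate afterwards; either way one obtains a genuinely disjoint-type $f \in \mathcal{B}$. At this point, by Observation~\ref{obs_disjoint_type}, a logarithmic transform $F$ of $f$ is of disjoint type with $\exp(J(F)) = J(f)$, and $J(F) = \bigcap_{n \geq 0} F^{-n}(\overline{\T})$ by~\eqref{eq_JF_disj}.

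The third step is to identify the conjugacy on Julia sets. Both $G$ and $F$ are disjoint-type maps in $\Blog$ with combinatorially equivalent tract structures (this is what Bishop's folding preserves: a correspondence between the tracts of the model and those of $f$, compatible with the $2\pi i$-periodicity). For disjoint-type functions, the Julia set is a Cantor-bouquet-like object coded by the address sequence of tracts visited under iteration, and two disjoint-type maps with matching tract combinatorics are topologically conjugate on their Julia sets via the map that sends a point with a given itinerary to the unique point of the other map with the same itinerary — this is the standard symbolic-dynamics argument for disjoint type, and the quasiconformal map $\Phi$ from Bishop's construction realises it as the restriction of a global quasiconformal homeomorphism. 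Passing through $\exp$ (using $\exp(J(F)) = J(f)$ and $\exp(J(G)) = J(g)$) transports this to a conjugacy between $f|_{J(f)}$ and $g|_{\exp(J(G))}$, which is quasiconformal because $\Phi$ is.

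The main obstacle I would expect is not in the dynamical bookkeeping but in correctly invoking Bishop's construction with the right genericity of the model: Bishop's folding theorem is usually stated for models built from a specific type of tract data (e.g.\ unions of half-strips, or tracts with controlled geometry), and one must verify that an arbitrary $G \in \BlogP$ of disjoint type can be approximated by, or directly fed into, such a construction without altering the conjugacy class of the dynamics near infinity. In practice this is handled by the fact that~\cite{bishopClassB} is specifically designed to realise abstract disjoint-type models, so the cleanest route is to cite the relevant theorem of Bishop verbatim and then spend the bulk of the argument on the (routine but necessary) verification that a quasiconformal conjugacy of logarithmic transforms descends under $\exp$ to a quasiconformal conjugacy of the original maps on their Julia sets.
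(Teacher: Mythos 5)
Your proposal follows essentially the same route as the paper: the paper's proof consists of citing \cite[Theorem~1.2]{bishopClassB}, which is deduced there from Bishop's realisation Theorem~1.1 together with the rigidity theorem \cite[Theorem~3.1]{lasseRigidity} --- exactly the combination (quasiconformal folding of the model plus rescaling/rigidity of the dynamics near infinity) that you outline. The only minor caveat is that Bishop's realisation of arbitrary models lands in class $\B$ rather than in $\classS$ in general, but this does not affect your argument.
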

\begin{proof}
The statement is \cite[Theorem 1.2]{bishopClassB}, which follows from Theorem 1.1 in the same paper together with \cite[Theorem~3.1]{lasseRigidity}.
\end{proof}

In view of Theorem \ref{thm_Bishop}, to construct disjoint-type entire functions,  as e.g.\ in Theorem~\ref{thm_intro_crinNoCantorB}, we need only construct
 a disjoint-type function in $\BlogP$ with analogous properties.

Given $F\in \Blog$, we can assign symbolic dynamics to points whose orbit stays in $H$ using the partition that logarithmic tracts provide: 
\begin{defn}[External addresses] Let $F\in \Blog$. An \emph{(infinite) external address} is a sequence $\s = T_0T_1T_2\ldots$ of logarithmic tracts of $F$. If $\s$ is such an external address, then we denote
\[J_{\s} \defeq \{ z \in J(F)\colon F^n(z)\in \overline{T_n} \text{ for all $n\geq 0$}\}. \]
If $z\in J_{\s}$ for some $\s$, then we say that $z$ has external address $\s$. If $J_{\s}\neq \emptyset$, then we say that $\s$ is \textit{admissible}. 
For each $n\geq 0$, we denote
\[F_{\s}^n\defeq F\vert_{T_n}\circ F\vert_{T_{n-1}}\circ \cdots \circ F\vert_{T_0} \quad \text{ and } \quad F_{\s}^{-n}\defeq \left(F_{\s}^n\right)^{-1}.\]
\end{defn}
When $F$ is of disjoint type and $\s$ is admissible, $J_{\s}\cup \{\infty\}$ is called a \textit{Julia continuum} of $F$. The following result gives conditions for
 a Julia continuum to be an arc. It is a consequence of work of the second author in~\cite{lasse_arclike}, but is not explicitly stated there. (Compare also
 the discussion in~\cite[Remark 4.15]{lasse_dreadlocks}.) 

\begin{prop}\label{prop:disjoint_crin} 
  Let $F\in \Blog$ be of disjoint type, let $J_{\s} = J_{\s}(F)$ be a component of $J(F)$, and set $I_{\s} \defeq J_{\s}\cap I(F)$. The following are equivalent.
  \begin{enumerate}[(a)]
     \item $I_{\s}\cup \{\infty\}$ is path-connected;\label{item:BlogIpathconnected}
     \item $J_{\s}\cup \{\infty\}$ is path-connected;\label{item:BlogJpathconnected}
     \item $J_{\s}$ is an arc connecting a finite endpoint to infinity.\label{item:Blogjuliaarcs}
  \end{enumerate}
   Moreover, suppose that~\ref{item:Blogjuliaarcs} (and hence all three conditions) holds. Then for every $z\in J_{\s}$ with the possible exception of its
   finite endpoint, the iterates $F^n$ tend to infinity uniformly on the sub-arc of $J_{\s}$ connecting $z$ to $\infty$. 
 \end{prop}
\begin{proof}
 Clearly~\ref{item:Blogjuliaarcs} $\Rightarrow$~\ref{item:BlogJpathconnected}.
   To prove the remaining implications, we recall several facts:
     \begin{enumerate}[(1)]
       \item $J_{\s}$ does not separate the plane, since $J_{\s}\cup\{\infty\}$ is a full
           compact set. Moreover,
           $J_{\s}$ has empty interior; see e.g.~\cite[Lemma 2.3]{lasseRigidity}. 
           (We caution that, in \cite{lasseRigidity}, the result is stated only for what
           we call $\BlogP$. However, the proof applies equally to the class
           $\Blog$. Both facts mentioned 
           also follow immediately from~\cite[Theorem~5.6]{lasse_arclike}, since
           planar continua of span zero are one-dimensional and non-separating.)\label{item:Jnonseparating} 
       \item If $A\subsetneq J_{\s}$ is unbounded, closed and connected, then $F^n|_A\to\infty$ uniformly by~\cite[Proposition~6.4]{lasse_arclike}.\label{item:composantinI}
       \item The set of $A$ as in~\ref{item:composantinI} is linearly ordered by inclusion by \cite[Theorem~5.7]{lasse_arclike}.\label{item:linearlyordered}
       \item $J_{\s}\setminus I_{\s}$ is totally disconnected by~\cite[Proposition~5.9]{lasse_arclike}.\label{item:hausdorffzero}
     \end{enumerate}
     The final claim of the proposition follows directly from~\ref{item:composantinI}. 
In the following, let
      $A_{\s}$ be the union of all unbounded closed proper subsets of $J_{\s}$. 

  Now suppose that $J_{\s}\cup\{\infty\}$ is path-connected. Let 
      $z\in I_{\s}$ and let $\gamma\subset J_{\s}$ be an arc connecting $z$ to infinity. Then
      $\gamma\setminus\{z\}\subset A_{\s}$, and thus $\gamma\subset I_{\s}$
      by~\ref{item:composantinI}. So $I_{\s}\cup\{\infty\}$ is path-connected, and we have shown that \ref{item:BlogJpathconnected} $\Rightarrow$~\ref{item:BlogIpathconnected}. 
   
   Finally, assume that $I_{\s}\cup\{\infty\}$ is path-connected; we must prove~\ref{item:Blogjuliaarcs}. If 
     $J_{\s}\setminus A_{\s}$ contains an escaping point $z$, let $\gamma\subset J_{\s}$ be an arc connecting $z$ to infinity. 
     By definition of $A_{\s}$, we must have $J_{\s} = \gamma$, and $J_{\s}$ is an arc as desired. 
     
   Otherwise, we have $A_{\s} = I_{\s}$. The proof in this case
    is analogous to that of \cite[Theorem 2.12]{mio_signed_addr}, but we provide the 
    details for completeness. 
     By~\ref{item:BlogIpathconnected},~\ref{item:composantinI} and~\ref{item:linearlyordered}, $I_{\s}$ is an increasing union of arcs to infinity. 
     Thus $I_{\s}$ is the image of a continuous injective function $\gamma\colon (0,\infty)\to I_{\s}$ with $\gamma(t)\to\infty$ as $t\to\infty$. 
    Let $\Lambda$ be the accumulation set of $\gamma(t)$ as $t\to 0$, in $\C$. Then $\Lambda$ is closed and its closure in the sphere is
     connected; by~\ref{item:linearlyordered}, it follows that $\Lambda$ itself is connected. Also, $\Lambda$ is non-empty (otherwise $\gamma((0,1])$ would be a closed and unbounded 
     set, contradicting~\ref{item:linearlyordered}). 
     
     We claim that $\Lambda\cap I_{\s}=\emptyset$. 
    Otherwise, $\gamma(t)\in \Lambda$ for some $t>0$. For $\eps>0$, the set
\[        \Lambda \cup \gamma\left((0,\eps]\cup [t,\infty)\right)\]
      is closed and connected. So by~\ref{item:linearlyordered}, $\gamma((0,t])\subset \Lambda$. By~\ref{item:Jnonseparating} and a theorem
      of Curry~\cite[Theorem~8]{curry_continua}, $\Lambda$ is an indecomposable continuum. All points of $I_{\s}$ belong to the same composant of $\Lambda$,
         and hence by~\cite[Theorem~11.15]{nadler_continuum}, $\Lambda\setminus I_{\s} \subset J_{\s}\setminus I_{\s}$ contains continua. This contradicts~\ref{item:hausdorffzero}. 

    We conclude that $\Lambda$ is a connected subset of $J_{\s}\setminus I_{\s}$, and therefore must consist of a 
      single point by~\ref{item:hausdorffzero}. In other words, $\gamma(t)$ has a limit for $t\to 0$, and the proof is complete. 
\end{proof}

\subsection*{Criniferous functions in $\Blog$}
Recall that we defined \emph{ray tails} of entire functions in the introduction. For maps in $\Blog$, the definition simplifies.
\begin{defn}[Ray tails and dynamic rays]\label{def_ray}
Let $F\in \Blog$. A \emph{ray tail} of $F$ is an injective curve $\gamma \colon[t_0,\infty)\rightarrow I(F)$, with $t_0>0$, such that
   $F^{n} \circ \gamma \to \infty$ uniformly as $n\rightarrow \infty$.

 If $F$ is of disjoint type, then a \emph{dynamic ray} of $F$ is a maximal injective curve $\gamma \colon (0,\infty)\rightarrow I(F)$ such that the restriction $\gamma_{|[t,\infty)}$ is a ray tail for all $t > 0$. We say that such $\gamma$ \emph{lands} at $z\in \C$ if $\lim_{t \rightarrow 0^+} \gamma(t)=z$, and we call $z$ the \emph{endpoint} of $\gamma$.
\end{defn}

Similarly, we adapt the definition of criniferous functions to the class $\Blog$. 
\begin{defn}[Criniferous functions]\label{def_criniferous} A function $F\in \Blog$ is \textit{criniferous} if every $z\in I(F)$ is eventually mapped to a ray tail. That is, for every $z \in I(F)$, there exists $N \defeq N(z)\geq 0$ such that $F^n(z)$ belongs to a ray tail for all $n\geq N$.
\end{defn}

\subsection*{Hyperbolic geometry}
We frequently use the \textit{standard estimate} on the hyperbolic metric in a simply-connected domain $U \subsetneq \C $, \cite[Theorems 8.2 and 8.6]{beardon_minda}:  for $z\in U$,
\begin{equation}\label{eq_standard_est}
\frac{1}{2\dist(z, \partial U)}\leq \rho_U(z)\leq \frac{2}{\dist(z, \partial U)},
\end{equation}
where $\rho_U \colon U \to (0, \infty)$ denotes the density of the hyperbolic metric in $U$.

\begin{lem}\label{lem_distpreim}
There exists a universal constant $D$ with the following property. 
   Let $F\colon \T\to H$ be a function in  $\BlogP$ with $H=\H$, let $T$ be a tract of $F$ and let $z\in T$.
   Then for every $w\in \H$ with $1 \leq \Rea w < \Rea F(z)$, there is $m\in \Z$ such that
$$\dist(F_T^{-1}(w + 2\pi i m),z) \leq D.$$
\end{lem}
\begin{proof}
Connect $z$ and $\partial T$ by a vertical line segment $\gamma$. 
Then $F(\gamma)$ connects $F(z)$ to $\partial \H$. In particular, for any $w$ as in the statement, $F(\gamma)$ must intersect the vertical line at $\Rea w$. Recall that the hyperbolic distance in $\H$ between two points lying in the same vertical line is bounded above by the quotient of their Euclidean distance by their real part. 
Since we assumed that $\Rea w \geq 1$, it follows that there is $m\in \Z$ such that for $w_m = 2\pi i m + w$, $\dist_\H(w_m, F(\gamma))<\pi$. The 
restriction $F\colon T\rightarrow \H$ is a conformal isomorphism. So by Pick's theorem~\cite[Theorem 6.4]{beardon_minda}, $\dist_T(F_T^{-1}(w_m),\gamma)<\pi$. Then, by \eqref{eq_standard_est}, the Euclidean distance between $F_T^{-1}(w_m)$ and $\gamma$ is at most $2\pi^2$, and 
the length of $\gamma$ (which contains $z$) is at most $\pi$. So the statement follows taking $D\defeq 2\pi^2+\pi$.
\end{proof}

\section{Characterisation of criniferous disjoint-type functions}\label{sec:crin_disj}

To prove Theorem \ref{thm:criniferousdisjointtype}, we require the following result concerning plane topology. 
Suppose that $\alpha\subset\Ch$ is a curve, parameterised 
   as $\alpha\colon I\to\Ch$, where 
  $I$ is an interval. If $J\subset I$ is connected and non-empty, then
  we refer to $\alpha_0\defeq \alpha(J)$ 
  as a \emph{sub-piece} of $\alpha$. Observe that, when 
  $\alpha$ is not an arc, the notion of a sub-piece depends, strictly speaking,
  not only on the point-set $\alpha(I)$, but also on its parameterisation. 
  In all cases we consider, $\alpha$ is either an arc or comes implicitly with a
  parameterisation, so we our notation will suppress this subtlety. Observe that
  we allow the case where the sub-piece is degenerate; i.e.\ where $J$ and
  hence $\alpha_0$ consist of a single point.

\begin{lem}\label{lem:tracts}
 Suppose that $V$ is an unbounded Jordan domain, with real parts bounded from below
 but unbounded from above, and such that $\overline{V}$ is disjoint from its
  $2\pi i \Z$-translates. 
  
  Let $K\subset V$ be a compact connected set, and let $\alpha\subset V$ be a curve
  that connects a left-most point $z_0$ of $\partial V$ to $\infty$. Then there is a compact sub-piece
  $\alpha_0$ of $\alpha$ such that, for every $z\in K$, $\alpha_0$ intersects
  the vertical segment $I_z \defeq z + [-2\pi i,2\pi i]$, and conversely,
   for every $z\in \alpha_0$, $I_z$ intersects $K$. 
\end{lem}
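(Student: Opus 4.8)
My plan is to treat each vertical segment $I_z$, $z\in K$, as (essentially) a crosscut of $V$ separating the leftmost boundary point $z_0$ from the ``end at infinity'', so that $\alpha$ is forced to meet every such $I_z$; the arc $\alpha_0$ will then be extracted as a suitably minimal subarc of $\alpha$ meeting all of them.

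\emph{Crosscuts from disjointness of translates.} For $z\in V$, disjointness of $\overline V$ from $\overline V\pm 2\pi i$ gives $z\pm 2\pi i\notin\overline V$, so $I_z$ leaves $\overline V$ near both endpoints; let $J_z\ni z$ be the maximal closed subsegment of $I_z$ with $\interior J_z\subseteq V$. Then $J_z$ is a crosscut of $V$: its endpoints lie on $\partial V$, one with imaginary part in $(\Ima z,\Ima z+2\pi]$ and one in $[\Ima z-2\pi,\Ima z)$. Moreover $V$ contains no vertical segment of Euclidean length $>2\pi$ (such a segment would contain a point $w$ with $w+2\pi i\in\overline V$), so $J_z$ has length $\le 2\pi$; hence $J_z\pm 2\pi i$ are disjoint from $V$ while $I_z\subseteq (J_z-2\pi i)\cup J_z\cup (J_z+2\pi i)$. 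Consequently a curve contained in $V$ meets $I_z$ if and only if it meets $J_z$.

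\emph{The separation step.} I claim that $\alpha$ meets $I_z$ — equivalently $J_z$ — for every $z\in K$. Since $V$ is open with $\Rea$ bounded below on $\overline V$ by $\Rea z_0$, every $z\in K$ has $\Rea z>\Rea z_0$. The crosscut $J_z$ splits $V$ into two Jordan domains, exactly one of which, $V_z^{\mathrm{out}}$, is unbounded (a component with unbounded imaginary parts would contain a long vertical segment). The heart of the matter is to show $z_0\notin\overline{V_z^{\mathrm{out}}}$; granting this, $\alpha$ runs from $z_0\in\overline V\setminus\overline{V_z^{\mathrm{out}}}$ to $\infty\in\overline{V_z^{\mathrm{out}}}$ inside $V$, hence crosses $J_z$. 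One proves the claim by contradiction: if $\alpha$ avoided $I_z$, then $\alpha\big((0,\infty)\big)$ would lie in a single component $W$ of $V\setminus(I_z\cap V)$; this $W$ is unbounded (it contains $\alpha(t)\to\infty$) and has $z_0$ in its closure, yet $I_z\cap V$ is a finite union of parallel vertical crosscuts of $V$ at real part $\Rea z$, and removing them cannot leave an unbounded component whose closure contains the leftmost point $z_0$ — here one uses that $z_0$ is leftmost, so the horizontal ray $z_0+(-\infty,0]$ meets $\overline V$ only at $z_0$, together with disjointness of translates to control the vertical slices $V\cap\{\Rea=\Rea z\}$. \textbf{I expect this plane-topology argument — pinning down on which side of the crosscut $J_z$ the leftmost point lies — to be the main obstacle}, requiring careful bookkeeping of the components of $V\setminus J_z$ and of the slices of $V$.

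\emph{Extracting $\alpha_0$.} Put $M\defeq K+[-2\pi i,2\pi i]$, a compact connected set. The pairs $0\le a\le b$ for which $\alpha|_{[a,b]}$ meets every $I_z$ ($z\in K$) form a nonempty compact set — nonempty since $M$ is bounded and $\alpha(t)\to\infty$, so $[0,N]$ qualifies for $N$ large; compact since $z\mapsto I_z$ is continuous for the Hausdorff distance — so choose $\alpha_0\defeq\alpha|_{[a,b]}$ with $b-a$ minimal. Then the first required property (that $\alpha_0$ meets every $I_z$) holds by construction. For the second, namely $\alpha_0\subseteq M$: if $\alpha(t_0)\notin M$ for some $t_0\in(a,b)$, then minimality forces that neither $\alpha|_{[a,t_0]}$ nor $\alpha|_{[t_0,b]}$ meets all $I_z$; the sets $K_1\defeq\{z\in K:\alpha|_{[a,t_0]}\cap I_z\ne\emptyset\}$ and $K_2\defeq\{z\in K:\alpha|_{[t_0,b]}\cap I_z\ne\emptyset\}$ are then closed, proper, and cover the connected set $K$, so $K_1\cap K_2\ne\emptyset$; picking $z^{\ast}\in K_1\cap K_2$ and applying the separation step to $J_{z^{\ast}}$ — which $\alpha$ meets both before and after $t_0$ — should yield a contradiction with $\alpha(t_0)\notin M$. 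Checking that the endpoints of $[a,b]$ lie in $M$ and carrying out this last contradiction is where the connectedness of $K$ enters essentially.
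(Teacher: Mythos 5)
Your reduction from $I_z$ to the single crosscut $J_z$ through $z$ is where the argument breaks. Because $\overline{V}$ is disjoint from $\overline{V}\pm 2\pi i$, the crosscut $J_z$ has length strictly \emph{less} than $2\pi$, so the union $(J_z-2\pi i)\cup J_z\cup(J_z+2\pi i)$ leaves two gaps inside $I_z$, and points of $V$ can lie in those gaps; hence the inclusion $I_z\subseteq (J_z-2\pi i)\cup J_z\cup(J_z+2\pi i)$, and with it the equivalence ``a curve in $V$ meets $I_z$ if and only if it meets $J_z$'', is false. Concretely, take $V$ to be a thin tube of total height less than $2\pi$ (so disjointness from translates is automatic) running rightwards along height $0$ from real part $10$ to $100$, up to height $h$, back left along height $h$ to real part $5$, up to height $h'$, and then right to $+\infty$ along height $h'$, where $0<h<h'<2\pi$. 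For $z$ on the bottom run, $I_z\cap V$ has three components, only the tiny one through $z$ being $J_z$. In this same example your central separation claim fails: the left-most boundary point $z_0$ lies near real part $5$, on the closure of the \emph{unbounded} component of $V\setminus J_z$, and a curve from $z_0$ to $\infty$ along the top run never meets $J_z$ (it meets $I_z$ only in the component on the top run). So the step you flag as the main obstacle is not merely unproven: in the form you set it up ($z_0\notin\overline{V_z^{\mathrm{out}}}$, forcing $\alpha$ across the crosscut) it is false. The correct statement is that $\alpha$ must cross the whole segment $I_z$, in fact an odd number of times, for every $z\in V$; this is genuinely nontrivial for wiggling tracts (exactly the domains mentioned in the remark after the lemma), and the paper does not obtain it by locating $z_0$ relative to a crosscut but imports it from \cite[Corollary~5.4]{lasse_arclike}.

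The extraction step has a matching gap. Minimality plus connectedness of $K$ gives you $z^{\ast}$ with $\alpha$ meeting $I_{z^{\ast}}$ both before and after $t_0$, but you offer no argument that this contradicts $\alpha(t_0)\notin K+[-2\pi i,2\pi i]$, and the crossing fact alone does not give one: between two visits to the slice $I_{z^{\ast}}\cap V$ the curve is free to leave $K+[-2\pi i,2\pi i]$. The paper needs genuinely more structure here: with $a,b$ a left-most and a right-most point of $K$ it forms the connected compact set $A=I_a\cup I_b\cup(K+2\pi i)\cup(K-2\pi i)$, uses Janiszewski's theorem to show that points of $V$ just to the right of $I_a$ (and just to the left of $I_b$) lie in bounded complementary components of $A$, then uses the odd crossing numbers of $I_a$ and $I_b$ to select a subpiece $\alpha_0$ joining $I_a$ to $I_b$ inside one bounded complementary component of $A$ (this is what yields your second conclusion), and finally propagates the parity of the separation number $\sep_V(c,d;\cdot)$ along the connected set $A$ to force $\alpha_0$ to meet every $I_z$ (the first conclusion). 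Your proposal contains no substitute for these steps, so both conclusions of the lemma remain unestablished.
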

\begin{remark}
 The claim is trivial if the imaginary parts of any two points in $V$ that belong to the
  same vertical line differ by less than $2\pi$. We note that there exist
  domains $V$ that do not have this property and yet
  satisfy the hypotheses of the lemma; compare~\cite[Figure~1]{lasse_questionErem}.
\end{remark}
\begin{proof}
  We use the following fact~\cite[Corollary~5.4]{lasse_arclike}: There is an odd number $n$
    such that every curve connecting $z_0$ to $\infty$ in $V$ intersects $I_z$ at least $n$ times. In particular,
  $\alpha\cap I_z\neq \emptyset$. 

 Now let $a$ and $b$ be a left-most and a right-most point of $K$, respectively.
   If $\re a = \re b$, then $K$ is a vertical line segment (of length less than $2\pi$),
    and we may choose $\alpha_0$ to be a degenerate sub-piece
    consisting of a single
    point of $\alpha\cap I_z$, where $z\in K$. 

   Now suppose $\re a < \re b$. Consider the set 
    \[ A \defeq I_a \cup I_b \cup (K+2\pi i)\cup (K-2\pi i). \]
  Clearly, if $z$ belongs to a bounded connected component of $\C\setminus A$, then
    $I_z\cap A\neq \emptyset$. We claim that the points of $\C\setminus A$ that
    are close and to the right of $I_a$ belong to such a bounded connected component.
    More precisely, let $z\in V\cap I_a$ and let $\eps<\re b - \re a$ be small enough
    such that 
    \[ H_{\eps}(z) \defeq \{z + \zeta\colon \lvert \zeta\rvert<\eps\text{ and }
       \re \zeta > 0 \}\subset V\setminus I_b. \] Then we claim that
       $A$ separates $H_{\eps}(z)$ from $a-1$, and hence from
    infinity. Indeed,
    consider the compact connected subset of the Riemann sphere given by 
      \[ \tilde{A} \defeq I_b \cup \{\infty\} \cup \bigcup_{\sigma\in \{1,-1\}} ( K + \sigma 2\pi i ) \cup    
         (a + \sigma 2\pi i + (-\infty,a] ). \]
    Clearly $\tilde{A}$ does not separate any $w\in H_{\eps}(z)$ from $a-1$. On the other
     hand, $\tilde{A}\cup A$ clearly does separate $w$ from $a-1$, and 
     $\tilde{A}\cap A = I_b \cup \bigcup_{\sigma \in \{+,-\}} (K+\sigma 2\pi i)$ is
     connected. Hence, by Janiszewski's Theorem \cite[p. 110]{Newman},
     $A$ must also separate these points. The same argument applies to
     points just to the left of $I_b$. 

  It follows that, each time $\alpha$ crosses $I_a$ or $I_b$, it moves from
   a bounded component of $\C\setminus A$ to the unbounded 
   component or vice versa. Since $\alpha$ begins and ends in the unbounded component and 
   crosses both $I_a$ and $I_b$ an odd number of times, it follows that there is
   a sub-arc $\alpha_0$ that connects $I_a$ and $I_b$ and is contained (apart from its endpoints) in a bounded complementary component of $A$. 
   
   It remains to show that $\alpha_0$ intersects $I_z$ for every $z\in A$. To do so,
    we use the separation number $\sep_V(c,d;z)$, where $c$ and $d$ are the endpoints
    of $\alpha_0$, and $z\in A$. Here $\sep_V(c,d;z)$ denotes the smallest
    number of times that a curve connecting $c$ and $d$ in $V$ intersects the segment
    $I_z$. By~\cite[Proposition~5.3]{lasse_arclike}, the parity of
    $\sep_V(c,d;z)$ varies continuously for $z\notin I_c\cup I_d$, and it
    equals $1$ for $z\in A$ sufficiently close to either $I_c$ or $I_d$. Indeed,
    since $\alpha_0$ does not contain any points of real part less than
    $\re a = \re c$, we have $\sep_V(c,d;z)=0$ for $\re z < \re c$, and 
    by~\cite[Proposition~5.3]{lasse_arclike}, the separation number increases by $1$
    as $z$ crosses $I_c$. Since $A$ is connected, 
    it follows that $\sep_V(c,d;z)$ is odd for all $z\in A$. So $\alpha_0$ must
    intersect $I_z$, as claimed. 
\end{proof}

To show that disjoint-type functions whose Julia set is a collection of arcs are criniferous, we prove the following stronger result.

\begin{thm}\label{thm_arcs_infinity}Suppose that $F\in \BlogP$ is of disjoint type, and every connected component of $J(F)$ is an arc to infinity. If $z_n$ is a sequence in $J(F)$ with $z_n\to\infty$, and $\gamma_n$ is the arc  in $J(F)$ connecting $z_n$ to infinity, then $\gamma_n\to\infty$ uniformly. 
\end{thm}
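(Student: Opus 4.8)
The plan is to argue by contradiction, extracting from a failure of uniform convergence a configuration that violates the hypothesis that every component of $J(F)$ is an arc to infinity. Suppose the $\gamma_n$ do not converge to $\infty$ uniformly. Then there is $R>0$ and, after passing to a subsequence, points $w_n\in\gamma_n$ with $\Rea w_n\leq R$ while $z_n\to\infty$. Since $J(F)$ is contained in $\overline{\T}$ and, by disjoint type, $J(F)\cap\{\Rea z\leq R\}$ is compact modulo the $2\pi i\Z$-translation action (components are closed and the tracts accumulate only at $\infty$), we may further normalise by a $2\pi i\Z$-translation and pass to a subsequence so that $w_n\to w_\infty\in J(F)$. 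Let $C$ be the component of $J(F)$ containing $w_\infty$; by hypothesis $\hat C=C\cup\{\infty\}$ is an arc to infinity, so there is a well-defined subarc $\beta$ of $C$ joining $w_\infty$ to $\infty$.

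Next I would use Lemma \ref{lem:tracts} to control how $\gamma_n$ sits relative to a fixed curve. Fix a tract $T$ meeting $C$ and a curve $\alpha\subset T$ connecting a left-most point of $\partial T$ to $\infty$; for any compact connected $K\subset T$ the lemma produces a subpiece $\alpha_0$ of $\alpha$ whose vertical $4\pi i$-neighbourhood ``captures'' $K$ and vice versa. Applying this to larger and larger compact pieces of $C$, together with the facts that $w_n\to w_\infty$, that $z_n\to\infty$, and that $\gamma_n$ is connected and unbounded, I would show that $\gamma_n$ must run ``alongside'' arbitrarily long initial pieces of $\beta$: more precisely, for each fixed $S>0$, once $n$ is large the arc $\gamma_n$ contains a subarc whose real parts lie in $[\,\Rea w_\infty-1,S\,]$ and which stays within bounded Euclidean (hence bounded hyperbolic, by \eqref{eq_standard_est}) distance of $\beta\cap\{\Rea z\leq S\}$. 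Passing to a Hausdorff limit of these subarcs along a subsequence, standard plane-topology arguments (a Hausdorff limit of connected sets is connected; an increasing-real-part boundary forces the limit to be a curve) yield a subarc of $J(F)$ from a point near $w_\infty$ out to $\infty$ that coincides with, or is forced to accumulate on, $\beta$, while also being the limit of arcs $\gamma_n$ whose far endpoints $z_n$ escaped to $\infty$.

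The contradiction I would then extract is that this forces $C$ to contain a point that is approached by $C$ from ``two sides'' in an incompatible way, or — more cleanly — that the component $C$ is not an arc to infinity because the limiting behaviour of the $\gamma_n$ produces a second distinct arc in $C$ with endpoint $w_\infty$, violating that $\hat C$ is an arc. Concretely: $w_\infty\in\gamma_n$ up to small error, $\gamma_n$ reaches $\infty$, and the portion of $\gamma_n$ beyond where it leaves the neighbourhood of $\beta$ must, after taking limits, attach to $C$ a piece not lying on the arc $\beta$, so $\hat C$ would contain a triod — impossible for an arc. I would phrase this using the separation-number machinery of \cite{lasse_arclike} already invoked in Lemma \ref{lem:tracts}, so that the ``$\gamma_n$ runs alongside $\beta$'' statement is made quantitative through odd separation numbers across the vertical segments $I_z$.

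The main obstacle I anticipate is making the ``$\gamma_n$ runs alongside the whole of $\beta$'' step rigorous and uniform in $S$: a priori $\gamma_n$ could, after following $\beta$ for a while, peel off through a different tract long before reaching height $S$, and one must rule out (or exploit) this using disjoint type, the accumulation-only-at-$\infty$ property of tracts, and the fact that two distinct components of $J(F)$ cannot both be followed by a single connected arc without that arc crossing the Fatou set — which it cannot, being in $J(F)$. Handling the bookkeeping of which tracts $\gamma_n$ passes through, and ensuring the Hausdorff-limit arc genuinely lies in $J(F)$ (using \eqref{eq_JF_disj} and closedness of $J(F)$), is where the real work lies; the rest is compactness and plane topology.
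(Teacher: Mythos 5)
Your setup (contradiction hypothesis, extraction of $w_n\in\gamma_n$ with bounded real part, use of Lemma~\ref{lem:tracts} to show the offending arcs run alongside pieces of a limiting hair) is in the right spirit, but the step where you extract the contradiction does not work. You claim that the limiting behaviour of the $\gamma_n$ must ``attach to $C$ a piece not lying on the arc $\beta$'', producing a triod or a second arc at $w_\infty$. There is no reason for this: the $\gamma_n$ are (in general) \emph{different} components of $J(F)$, and any Hausdorff limit of subarcs of the $\gamma_n$ that meets $C$ is a subcontinuum of $J(F)$ contained in the component $C$, hence simply a subarc of the arc $C$ — no branch point arises. Hairs that ``fold'' can accumulate on an arc component without creating any triod; this is exactly the phenomenon exhibited by the construction of Section~\ref{sec_noCb} (see Proposition~\ref{prop_noCB}), where hooked hairs accumulate on an invariant ray while every component of $J(F)$ is still an arc. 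So the purely topological limit argument at $w_\infty$ cannot yield a contradiction; the statement genuinely needs a dynamical input beyond ``take a Hausdorff limit and find a triod''.

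The paper's proof supplies that input via a pullback--doubling mechanism which is absent from your proposal. The non-uniform escape gives, for every $Q>R$, a hair containing \emph{two disjoint} subarcs crossing the strip between real parts $R$ and $Q$. Given finitely many pairwise disjoint compact pieces $\gamma_1,\dots,\gamma_m$ of an arbitrary hair $\gamma$ and $\eps>0$, one pushes them forward by $F^n$, uses Lemma~\ref{lem:tracts} to see that each of the two crossing arcs passes within distance $2\pi$ (plus a bounded correction) of every $F^n(\gamma_k)$, and then pulls back along the branch of $F^{-n}$ sending $F^n(\gamma_k)$ to $\gamma_k$; the uniform expansion \eqref{eq_normalized2} shrinks the error below $\eps$ and yields $2m$ pairwise disjoint pieces, all on a \emph{single} hair, each $\eps$-close to one of the $\gamma_k$. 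Iterating this doubling with rapidly decreasing $\eps_k$ produces uncountably many pairwise disjoint limit continua of definite diameter inside one hair, which is impossible for an arc. Without some such iterative, expansion-based argument (or an equivalent global mechanism), the contradiction you aim for cannot be reached.
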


\begin{proof}
For convenience, we may assume without loss of generality that $F$ is \textit{normalized}, that is, $F\colon \T\to \H$ such that 
\begin{equation}\label{eq_normalized2}
	\vert F'(z)\vert>2 \quad  \text{ for all }z\in \T.
\end{equation}
Otherwise, we can pre-compose $F$ with a translation and restrict to a smaller domain. The resulting map is affine equivalent to $F$, and so they are conjugate on their Julia sets; see \cite[\S 2]{lasseRigidity} for details. 
 
We will call the components of $J(F)$ \textit{hairs}. Assume by contradiction that there exists $R>0$ and  a sequence of points $z_n \to \infty$ in $J(F)$ such that if $\alpha^n$ is the arc in $J(F)$ connecting $z_n$ to infinity, then $\alpha^n\not \subset \H_R$ for all $n>0$. In particular, for each $Q>R$ and all $n$ large enough, there exist at least two disjoint arcs in $\alpha^n$ connecting $\partial \H_R$ to $\partial \H_Q$.

\begin{claim}
 Let $\gamma_1,\dots,\gamma_m$ be pairwise disjoint compact sub-arcs of
 some hair $\gamma$ 
 of $F$, and let $\eps>0$. Then there exists a hair $\beta$ of $F$ and 
 $2m$ pairwise disjoint compact
 sub-arcs $\beta_1^1,\beta_1^2,\beta_2^1,\beta_2^2,\dots,\beta_m^1,\beta_m^2$
 such that the Hausdorff distance $\distH (\beta_k^j, \gamma_k)$ is
 less than $\eps$, for $k=1,\dots,m$ and $j=1,2$. 
\end{claim}
\begin{subproof}
  We may find an unbounded Jordan domain $V\supset \gamma$, 
  disjoint from its $2\pi i\Z$-translates,
  such that the closure of every tract of $F$ is contained in some $2\pi i\Z$-translate
  of $V$. Let $C>0$ be sufficiently large such that every point of $V$ at real part
  at most $R$ can be connected to a left-most point of $V$ by a curve of diameter
  at most $C$. 
  Let $\delta$ be smaller than $\eps$ and  the smallest Euclidean 
   distance between two of the arcs $\gamma_k$, and let $n$ be so large that $(C+2\pi)/2^n<\delta/2$. 
  
  Set $\tilde{\gamma}_k\defeq F^n(\gamma_k)$. Let $Q>R$ be so large that any point of $V$
  at
   real part $Q$ can be connected to infinity within $V$ without passing
   within distance $2\pi$ of any $\tilde{\gamma}_k$. 
   
   Recall that by assumption,
   there exists a hair of $F$ that contains two disjoint sub-arcs $\alpha^1$, $\alpha^2$,
   each connecting a point at real part $R$ to a point at real part $Q$. 
   Recall that $J(F)$ is periodic of period $2\pi i$, since $F$ is $2\pi i$-periodic. 
   Since $J(F)$ is 
   contained in the union of tracts of $F$, we may assume that the hair in question
   is contained in $V$, as one of its $2\pi i\Z$-translates must be.  
   We may extend each $\alpha^j$ to a curve $\tilde{\alpha}^j$ that connects
   a left-most point of $V$ to $\infty$, by adding to $\alpha^j$ a 
   curve of diameter at most $C$
   and another curve that does not pass within  distance $2\pi$ of any $\tilde{\gamma}_k$.
 
  By Lemma~\ref{lem:tracts}, for each $k$ each $\tilde{\alpha}^j$ contains a sub-piece
  $\tilde{\alpha}^j_k$ such that
   \[ \distH( \tilde{\alpha}^j_k, \tilde{\gamma}_k)\leq 2\pi.\]
    Moreover, set $\alpha^j_k\defeq \tilde{\alpha}^j_k \cap \alpha^j$ if this
     intersection is non-empty; otherwise let $\alpha^j_k$ consist of just the endpoint of
     $\alpha^j$ at real part $R$. Then 
     $\alpha^j_k$ is a (possibly degenerate) sub-piece of $\alpha^j$ and
     $\distH( \alpha^j_k, \tilde{\alpha}^j_k)\leq C$. Hence
     $\distH( \alpha^j_k, \tilde{\gamma}_k)\leq C+2\pi$.
 
Let $\beta_k^j$ be the image of $\alpha^j_k$ under the branch of
 $F^{-n}$ that maps each $\tilde{\gamma}_k$ to $\gamma_k$. 
  By the expansion assumption on $F$, \eqref{eq_normalized2},
    \[ \distH( \beta_k^j , \gamma_k) \leq (C+2\pi )/2^n < \delta/2.\] In particular, $\beta_k^j$ and $\beta_{k'}^{j'}$ are disjoint for $k\neq k'$, and $\beta_k^1$ and $\beta_k^2$ are 
  disjoint by choice of $\alpha^1$ and $\alpha^2$. This concludes the proof. 
\end{subproof}

Now let $\gamma_0$ be an arc in $J(F)$, and set $\Gamma_0 \defeq \{ \gamma_0\}$ and $\eps_0 \defeq \diam(\gamma_0)/4$. We now inductively define sets $\Gamma_{k+1}$
 of arcs 
  in $J(F)$ and numbers $\eps_{k+1}$ as follows. If $\Gamma_k$ and $\eps_k$
  have been defined, apply the claim to the curves in 
  $\Gamma_k$, with $\eps=\eps_k$. Then choose $\eps_{k+1}<\eps_k/2$ so that
  the Euclidean distance between any two arcs in $\Gamma_{k+1}$ is greater than
  $4\eps_{k+1}$. 
  
  Then, for all $n\geq 0$:
\begin{enumerate}
	\item \label{item:Gamma_n} All arcs in $\Gamma_n$ belong to the same hair of $F$;
	\item\label{item:two_arcs}  For 
	    every arc in $\gamma_{n}\in \Gamma_{n}$, there are two arcs 
	      $\gamma\in\Gamma_{n+1}$ such that the Hausdorff distance
	      between $\gamma_{n}$ and $\gamma$ is at most $\eps_{n+1}$. 
\end{enumerate}

Consider a sequence $(\gamma_n)_{n=0}^{\infty}$, 
where $\gamma_n\in \Gamma_n$, and $\gamma_{n+1}$ 
is one of the two arcs from \eqref{item:two_arcs} for $\gamma_n$. Then
for $m> n$, 
\[ \distH(\gamma_m,\gamma_n) \leq
  \sum_{j=n+1}^{m} \eps_j < 
\sum_{j=n+1}^{\infty} \eps_j < 2\eps_{n+1}. \]
In particular, the sequence $(\gamma_n)_{n=0}^{\infty}$ has an accumulation point
$L$ with respect to the Hausdorff metric, which also satisfies
\[ \distH(L,\gamma_n) < 2\eps_{n+1} < \eps_0 \]
for all $n\geq 0$. In particular, $\diam(L)\geq \diam(\gamma_0)/2>0$. Moreover,
by choice of $\eps_m$, the distance of $L$ to any element of
$\Gamma_n\setminus\{\gamma_n\}$ is greater than 
$2\eps_{n+1}$. Hence, if $\tilde{L}$ is a limit arising from a different sequence, then $L$ 
and $\tilde{L}$ are disjoint. 

Clearly, by \eqref{item:two_arcs} there exist uncountably many eligible
sequences $(\gamma_n)_{n=0}^{\infty}$, and hence uncountably many, 
pairwise disjoint, limit sets $L$, all having diameter at least $\diam(\gamma_0)/2$. By \eqref{item:Gamma_n}, these all must belong to the same hair, but this is impossible, as an arc cannot contain uncountably many pairwise disjoint proper sub-arcs.  
We have thus obtained the desired contradiction.
\end{proof}

We can now prove Theorem~\ref{thm:criniferousdisjointtype}, restated for the class $\BlogP$ as follows.
\begin{thm}[Characterisation of criniferous disjoint-type maps in $\BlogP$]\label{thm:BlogP-criniferousdisjointtype}
  Let $F\in\BlogP$ be of disjoint type. The following are equivalent.
  \begin{enumerate}[(a)]
    \item $I(F)\cup\{\infty\}$ is path-connected;\label{item:BlogP-Ipathconnected}
    \item $J(F)\cup\{\infty\}$ is path-connected;\label{item:BlogP-Jpathconnected}
    \item every connected component of $J(F)$ is an arc connecting a finite endpoint to $\infty$;\label{item:BlogP-Jarcs}
    \item every $z\in I(F)$ can be connected to $\infty$ by a curve on which $F^n\to\infty$ uniformly.\label{item:BlogP-uniformstrongeremenko}
    \item $F$ is criniferous. \label{item:BlogP-criniferous}
  \end{enumerate}
\end{thm}
\begin{proof}
  The equivalence of~\ref{item:BlogP-Ipathconnected}--\ref{item:BlogP-Jarcs} follows from Proposition~\ref{prop:disjoint_crin}. 
    If $F$ is of disjoint type, then a preimage of any ray tail is again a ray tail, so the equivalence of~\ref{item:BlogP-uniformstrongeremenko} and~\ref{item:BlogP-criniferous}
       is immediate. Clearly~\ref{item:BlogP-uniformstrongeremenko} implies~\ref{item:BlogP-Ipathconnected}. 
       Finally, suppose that~\ref{item:BlogP-Jarcs} holds. Let $z\in I(F)$, set $z_n\defeq F^n(z)$, and let $\gamma_n$ be the arc in $J(F)$ connecting $z_n$ to $\infty$. 
       Then $F(\gamma_n)=\gamma_{n+1}$ for each $n$, and $\gamma_n\to\infty$ uniformly by Theorem~\ref{thm_arcs_infinity}. We have
       established~\ref{item:BlogP-uniformstrongeremenko}, which completes the proof of the theorem. 
\end{proof}

\begin{proof}[Proof of Theorem \ref{thm:criniferousdisjointtype}]
Let $f\in \B$ be of disjoint type. Recall from Proposition \ref{prop_disjoint_type} that $f$ (possibly after applying an affine conjugacy) has a disjoint-type
 logarithmic transform $F\in\BlogP$, and that $\exp(J(F))=J(f)$. Theorem~\ref{thm:criniferousdisjointtype} thus follows from Theorem~\ref{thm:BlogP-criniferousdisjointtype}. 
\end{proof}


\section{Cantor bouquets, arc-smoothness and orderings}\label{sec_dendroids}

Following \cite[Definition 1.2]{aartsoversteegen} and \cite[Definition 2.1]{lasseBrushing}, we define Cantor bouquets as follows.

\begin{defn}[Straight brush, Cantor bouquet] \label{def_brush}
A subset $B$ of $[0,+\infty) \times (\R\setminus\Q)$ is a \emph{straight brush} if the following properties are satisfied:
\begin{itemize}
\item The set $B$ is a closed subset of $\R^2$.
\item For each $y\geq 0$ such that 
   $(\R\times\{y\})\cap B \neq \emptyset$, there is $t_y \geq 0$ so that $(\R\times\{y\})\cap B = [t_y, +\infty)\times\{y\}$. 
The latter set is called the \textit{hair} attached at $y$, and the point $(t_y, y)$ is called its \textit{endpoint}.
\item The set $\{y\colon (\R\times\{y\})\cap B \neq \emptyset\}$ is dense in $\R\setminus\Q$. 
Moreover, for every such $y$, there exist two sequences of hairs attached respectively at $\beta_n, \gamma_n \in \R\setminus\Q$ such that $\beta_n < y < \gamma_n$, $\beta_n, \gamma_n \to y$ and $t_{\beta_n}, t_{\gamma_n} \to t_y$ as $n\to\infty$.
\end{itemize}
\noindent A \emph{Cantor bouquet} is the image of a straight brush under a homeomorphism $\R^2\to\C$. The image of a hair resp.\ endpoint of the
  straight brush under this homeomorphism is called a \textit{hair} (resp. \textit{endpoint}) of the Cantor bouquet. 
\end{defn}

\begin{obs}[Cantor bouquets have accessible endpoints] \label{obs_acc_endp} By definition, any endpoint of a straight brush $B$ is accessible from its complement $\R^2\setminus B$, and so the same applies to Cantor bouquets.
\end{obs}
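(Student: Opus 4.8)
\textbf{Proof plan for Observation \ref{obs_acc_endp}.} The plan is to argue directly from the definition of a straight brush, and then transport the conclusion via the ambient homeomorphism. First I would fix a straight brush $B \subset [0,\infty) \times (\R \setminus \Q)$ and an endpoint $(t_y, y)$ of the hair attached at $y$. I want to produce a curve $\sigma\colon [0,1) \to \R^2 \setminus B$ with $\sigma(s) \to (t_y, y)$ as $s \to 1^-$. The natural candidate is to approach $(t_y,y)$ ``from the left'', i.e. through points of the form $(t_y - \eta, y)$ with $\eta > 0$: since $\{x : (x,y) \in B\} = [t_y, \infty)$, every such point lies outside $B$, and they clearly converge to $(t_y, y)$. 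So the segment $s \mapsto (t_y - (1-s), y)$ (truncated to stay in the region where $t_y - (1-s) \geq 0$, or extended arbitrarily to the left if $t_y = 0$ we instead approach from below) already does the job once we check it misses $B$; the only subtlety when $t_y = 0$ is that we cannot move left, so there I would instead use the density/approximation clause: pick $\gamma_n \to y$ with $t_{\gamma_n} \to 0 = t_y$ and note that for $x$ slightly less than $\max(t_{\gamma_n}, t_y) \cdots$ — actually it is cleanest to handle both cases uniformly by approaching along a path that decreases the second coordinate through the gaps between hairs. Let me describe that unified approach next.

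The robust argument is: the complement $\R^2 \setminus B$ is open, and I claim there is a path inside it limiting onto $(t_y,y)$ obtained by combining a small leftward/downward move with the limiting hairs guaranteed by the third bullet of Definition \ref{def_brush}. Concretely, the third property gives hairs attached at $\gamma_n \in \R \setminus \Q$ with $\gamma_n > y$, $\gamma_n \to y$, and $t_{\gamma_n} \to t_y$. For each $n$, consider the point $p_n \defeq (t_{\gamma_n} - \tfrac1n,\, \tfrac{y + \gamma_n}{2})$ if $t_{\gamma_n} > 1/n$, and otherwise $p_n \defeq (0, \tfrac{y+\gamma_n}{2})$; here the second coordinate is irrational off a countable set, but we only need rationality to guarantee being off $B$ — in fact any point whose $x$-coordinate is strictly less than the endpoint-height of \emph{every} hair near it lies outside the closed set $B$, because $B$ meets the vertical line at abscissa $< t_y$ near height $y$ only in hairs with smaller $t$-value, and there are none with $t$-value $\leq t_{\gamma_n} - 1/n < t_{\gamma_n}$ in a neighbourhood by continuity of $n \mapsto t_{\gamma_n}$ along the approaching sequence... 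I would tidy this by instead choosing $p_n$ to have a \emph{rational} second coordinate in the interval $(y, \gamma_n)$: then $p_n \notin B$ automatically since $B$'s second coordinates are all irrational, and $p_n \to (t_y, y)$ since the first coordinate can be taken to converge to $t_y$ and the second to $y$. Concatenating short segments from $p_n$ to $p_{n+1}$ — which I can keep inside the open set $\R^2 \setminus B$ once the $p_n$ are close enough together, shrinking them if necessary — yields the desired curve. This shows every endpoint of a straight brush is accessible from $\R^2 \setminus B$.

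Finally, I would transport the statement to an arbitrary Cantor bouquet $X$: by Definition \ref{def_brush} there is a homeomorphism $h$ of the plane with $h(B) = X$ for some straight brush $B$, and by definition the endpoints of $X$ are exactly the images under $h$ of endpoints of $B$. Since $h$ is a homeomorphism of $\R^2$ (equivalently of $\widehat{\C}$, extending to fix $\infty$), it maps $\R^2 \setminus B$ homeomorphically onto $\R^2 \setminus X$ and maps a curve accessing an endpoint $e$ of $B$ to a curve accessing $h(e)$; accessibility is thus preserved, completing the proof.

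\textbf{Main obstacle.} The only genuinely delicate point is verifying that the approximating points $p_n$ (and the short segments joining consecutive ones) actually lie in $\R^2 \setminus B$; once one commits to choosing their second coordinates rational this becomes immediate from the brush living at irrational heights, so the ``hard part'' is really just bookkeeping rather than substance — one must be slightly careful in the corner case $t_y = 0$, where no leftward room exists, which is exactly why the argument is routed through the limiting hairs $\gamma_n$ and rational intermediate heights rather than a naive horizontal segment.
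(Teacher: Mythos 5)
Your first, ``naive'' argument is in fact the whole proof, and it is what the paper's ``by definition'' is pointing at: if $(t_y,y)$ is the endpoint of the hair at height $y$, then the second bullet of Definition~\ref{def_brush} says that the abscissas of $B$ at height $y$ are \emph{exactly} $[t_y,+\infty)$, so the horizontal segment $\{(x,y)\colon t_y-1\le x<t_y\}$ lies in $\R^2\setminus B$ and lands at the endpoint. Your worry about the corner case $t_y=0$ is unfounded: accessibility is from $\R^2\setminus B$, and $B\subset[0,+\infty)\times(\R\setminus\Q)$, so the open left half-plane (and indeed every point $(x,y)$ with $x<t_y$, rational height or not) is automatically in the complement~--- there is always ``leftward room''. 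The transport to an arbitrary Cantor bouquet via the ambient homeomorphism is correct and is the only other ingredient needed.

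The detour you then take through the limiting hairs $\gamma_n$ and points $p_n$ at rational heights is not only unnecessary but, as written, has a gap: the assertion that the segments joining consecutive $p_n$ stay in $\R^2\setminus B$ ``once the $p_n$ are close enough together'' does not follow from closeness alone~--- two nearby points of an open set need not be joined by a segment inside it, and a priori there could be hairs at heights between $y$ and $\gamma_n$ whose endpoints have much smaller abscissa than $t_y$, which your segments would then cross. To repair that route you would have to invoke closedness of $B$ together with the structure of the hair at height $y$: if hairs at heights $\beta_k\to y$ had $t_{\beta_k}\le t_y-\varepsilon$, their endpoints would accumulate at a point $(s,y)\in B$ with $s<t_y$, contradicting $\{x\colon(x,y)\in B\}=[t_y,+\infty)$; your appeal to ``continuity of $n\mapsto t_{\gamma_n}$ along the approaching sequence'' does not control hairs outside that sequence. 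Since the one-line horizontal argument covers every case, I would simply delete the detour.
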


In order to discuss when the Julia set of a disjoint-type function $F\in\Blog$ is 
a Cantor bouquet, it will be useful to have a characterisation of Cantor bouquets in
terms of their topological properties as subsets of the plane. 
To this end, we recall the following 
continuum-theoretic concepts.

\begin{defn}[Arc-smooth fans and dendroids] Let $X$ be a continuum. 
\begin{enumerate}
\item $X$ is \textit{hereditarily unicoherent} if, for all subcontinua $A, B\subset X$,
    the intersection $A\cap B$ is connected. \label{item:her_uni}
\item $X$ is \emph{uniquely arcwise connected} if for all distinct $x,y\in X$,
  there is a unique arc $[x,y]$ in $X$ connecting $x$ and $y$. When $x=y$, we set
    $[x,y]\defeq \{x\}$. If $X$ is hereditarily unicoherent and arc-connected, then
    it is uniquely arcwise connected. \label{item:arc_con}
\item A continuum satisfying \eqref{item:her_uni} and \eqref{item:arc_con} is
  called a \emph{dendroid}. 
\item \label{item:branch_point}A common endpoint of at least three different arcs in $X$ that are otherwise pairwise disjoint is called a \emph{branch point} of $X$.
\item A dendroid with a unique branch point $x_0$ is called 
 a \emph{fan} (with \emph{top} $x_0$). 
 \item 
A space $X$ is \emph{arc-smooth}\footnote{%
In the topology literature, such spaces are usually called simply ``smooth.'' Here we use the term arc-smooth to avoid confusion with smoothness in the sense of analysis. 
Arc-smoothness is a topological concept introduced in~\cite{arcsmooth}, which reduces to
the usual topological 
concept of smoothness for uniquely arcwise connected spaces, which is the only
context in which we shall apply it.}
 if it is uniquely arcwise connected and there is some $x_0\in X$ such that for any sequence $y_n$ converging to a point $y$, the arcs $[x_0, y_n]$ converge to $[x_0, y]$ in the Hausdorff metric. If we wish to specify the point $x_0$, we say that $X$ is 
   \emph{arc-smooth with respect to $x_0$.}
\item 
 If $x \in X $ is an endpoint of every arc containing it, then $x$ is called an \textit{endpoint} of $X$. 
\item 
 An
  arc-smooth fan $X$ whose endpoints are dense in $X$ is a \textit{Lelek fan}.
  \end{enumerate}
\end{defn}

\begin{rmk}
 In the introduction, we stated that, for a disjoint-type function $f$, the set 
    $J(f)\cup\{\infty\}$ is arc-smooth if and only if there is a homeomorphism of the plane that maps each component of $J(f)$ to a horizontal ray. This
    follows from Proposition~\ref{prop:smoothbouquets} below, together with the definition of a Cantor bouquet. It is true more generally that, if $X\subset\C$ is closed and $X\cup\{\infty\}$ is an arc-smooth fan
    with top infinity, then $X$ is ambiently homeomorphic to a union of horizontal rays. This follows by similar arguments as the proof 
    of~\cite[Theorem~4.1]{aartsoversteegen}, but we do not require this fact. 
\end{rmk}

\begin{rmk}\label{rmk:topinfty}
 If a fan $X$ is arc-smooth with respect to any point $y$, then it is also arc-smooth with respect to its top $x_0$. (This follows easily from the fact that the
   hair containing $y$~-- that is, the connected component of $X\setminus\{x_0\}$ containing $y$~-- cannot be accumulated on by other hairs.) 
\end{rmk}

If $X$ is a Cantor bouquet, then clearly $X\cup\{\infty\}$ is a Lelek fan. Conversely, 
 if $X\cup \{\infty\}$ is a Lelek fan, then $X$ is a Cantor bouquet if and only if every point of $X$ that is accessible from $\C\setminus X$ is an endpoint of $X$ \cite[Theorem 2.8]{nada}.

In order for the Julia set $X$ of a disjoint-type function $F\in\Blog$ to be 
 a Cantor bouquet, clearly it is necessary that $X\cup\{\infty\}$ is arc-smooth. 
 It turns out that this condition is also sufficient.
 
\begin{prop}\label{prop:smoothbouquets}
Suppose that $F\in\Blog$ is of disjoint type and set $\hat{J}(F)\defeq J(F)\cup\{\infty\}$. 
The following are equivalent.
\begin{enumerate}[(a)]
 \item $I(F)\cup \{\infty\}$ is arcwise connected.  \label{item:Iarcwise}
 \item $\hat{J}(F)$ is arcwise connected. \label{item:Jarcwise}
 \item $\hat{J}(F)$ is a fan with top $\infty$, and endpoints are dense in $\hat{J}(F)$.
    \label{item:Jfan}
\end{enumerate}
In particular, $J(F)$ is a Cantor bouquet if and only if $\hat{J}(F)$ is arc-smooth. 
\end{prop}
\begin{proof}
 The equivalence between~\ref{item:Iarcwise} and~\ref{item:Jarcwise} is 
  Proposition~\ref{prop:disjoint_crin}. Clearly~\ref{item:Jfan} implies~\ref{item:Jarcwise}.
  Now suppose that~\ref{item:Jarcwise} holds. 
  
  By Proposition~\ref{prop:disjoint_crin}, every connected component of $J(F)$ is an arc. 
    In particular, $\hat{J}(F)$ is arc-wise connected, and 
    no point of $J(F)$ is a branch point of $\hat{J}(F)$. 
    Since $J(F)$ has infinitely many components,
    $\infty$ is a branch point of $\hat{J}(F)$. 
    Any subcontinuum of $\hat{J}(F)$ is either a bounded sub-arc of one of the
    components of $J(F)$, or a union of several arcs to infinity in different
    components; it follows easily that $\hat{J}(F)$ is hereditarily unicoherent.
    In summary, we have shown that $\hat{J}(F)$ 
    a fan with top $\infty$. 
  
%
 
By \cite[Theorem 2.3]{lasse_arclike}, if $x \in J(F)$ is accessible from $\C\setminus J(F)$, 
  then $x$ is a terminal point of the corresponding Julia continuum, and therefore
  an endpoint of the fan $\hat{J}(F)$. 
   Since $J(F)$ is a closed set with empty interior, points accessible 
   from $\C\setminus J(F)$ are dense in $J(F)$. This establishes~\ref{item:Jfan}. 
  
  The final claim follows from~\cite[Theorem~2.8]{nada}, which was
   mentioned above. Indeed, if $\hat{J}(F)$ is 
  arcwise connected and arc-smooth, then by the above $\hat{J}(F)$ is a Lelek fan. As we just noted, the only 
  accessible points of $J(F)$ are endpoints of $\hat{J}(F)$, and thus 
  $\hat{J}(F)$ is a Cantor bouquet. 
%
\end{proof}
\begin{rmk}\label{rmk:arcconnectedsubsetisfan}
 The proof also shows  the following: if $F$ is of disjoint type and 
    $X\subset \hat{J}(F)$ is a non-degenerate and arcwise connected continuum, then $X$ is either an arc or a fan with top $\infty$. 
\end{rmk}

Suppose now that $F\in\Blog$ is of disjoint type and as in the preceding
proposition; i.e., such that $\hat{J}(F)$ 
is a fan with top $\infty$. Then 
we may introduce
a partial order on $\hat{J}(F)$ as follows: 
\begin{equation}\label{eq_order}
z\preceq w  \quad \text{ if and only if } \quad w\in [z,\infty],
\end{equation}
where $[z,\infty]$ is the unique arc in $\hat{J}(F)$ connecting $z$ and $\infty$.
We now use this order to investigate obstructions to
 the arc-smoothness of $\hat{J}(F)$. 
\begin{defn}[Bad pairs and sets] \label{defn_badpair} Let $F\in\Blog$ be of disjoint type and criniferous. We call an ordered pair of $(z,w)$ of points
$z,w\in J(F)$ a \emph{bad pair} if $z\prec w$, but there are sequences $z_n\to z$, $w_n\to w$ such that $w_n\prec z_n$ for all $n\geq 0$. In this case, we call $(z_n)_{n\geq 0}$ and $(w_n)_{n\geq 0}$ \emph{approximating sequences} for the bad pair. Moreover, the union of a bad pair with corresponding approximating sequences is called a \emph{bad set}.
\end{defn}

\begin{prop}[Properties of bad pairs]\label{prop_properties_bad}
Let $F\colon \T\to H$ be of disjoint type and criniferous, and let $(z,w)$ be a bad pair. Then, the following hold.
\begin{enumerate}[(a)]
\item \label{item:approx_arcs} There are approximating sequences $z_n\to z$, 
$w_n\to w$ such that $[w_n,z_n]\to [z,w]$ in the Hausdorff metric as $n\to \infty$. 
Moreover, if $z\preceq \zeta \prec \omega \preceq w$, then $(\zeta,\omega)$ 
is also a bad pair. 
\item \label{item:preimofBad} $(F(z),F(w))$ and $(F_T^{-1}(z),F_T^{-1}(w))$ are also
bad pairs, for any tract $T$ in $\T$. Moreover, if $X\subset J(F)$ is a bad set, then so 
are $F(X)$ and $F_T^{-1}(X)$ for any tract $T$ in $\T$.
\end{enumerate}
\end{prop}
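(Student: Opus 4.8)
The plan is to prove the two items in order, using the defining behaviour of disjoint-type criniferous maps together with the rigidity of the partial order $\preceq$ under the dynamics. Throughout, recall that for such $F$ each Julia continuum is an arc to infinity (Proposition~\ref{obs_disjoint_crin}), so the arcs $[z,w]$, $[z,\infty]$, etc., are well defined, and that $F$ maps each tract conformally onto $H$, hence $F$ and each inverse branch $F_T^{-1}$ are homeomorphisms between the relevant pieces of $J(F)$ preserving the component structure.

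\textbf{Item (1).} Start from an arbitrary pair of approximating sequences $z_n\to z$, $w_n\to w$ with $w_n\prec z_n$. The goal is to upgrade these to sequences whose connecting arcs converge to $[z,w]$ in the Hausdorff metric. First I would observe that, since $z\prec w$ lie on a common hair, the arc $[z,w]$ is compact, and that for $n$ large the points $z_n,w_n$ lie on a common hair (the one through $z_n$), with $w_n\prec z_n$, so the arc $[w_n,z_n]$ is defined. The hairs of $J(F)$ vary ``lower semicontinuously'' in the sense that any subcontinuum limit of the $[w_n,z_n]$ must be a subcontinuum of $J(F)$ containing both $z$ and $w$ (using closedness of $J(F)$ and continuity of endpoints along converging hairs); by hereditary unicoherence and unique arcwise connectedness of each Julia continuum, the only such subcontinuum of minimal type is $[z,w]$ itself. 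To force actual convergence to $[z,w]$ rather than to something larger, I would pass to the ``innermost'' choice: replace $z_n$ by the point of $[w_n,z_n]$ closest to $z$ along the hair that still satisfies $z_n\to z$ (e.g.\ intersect with a shrinking neighbourhood of $[z,w]$ and take the boundary point), and similarly shrink $w_n$ toward $w$. A compactness/diagonal argument then yields $[w_n,z_n]\to[z,w]$. The ``in particular'' clause is then immediate: given $z\preceq\zeta\prec\omega\preceq w$, the arc $[\zeta,\omega]\subset[z,w]$, and the sub-arcs of $[w_n,z_n]$ running between the points of $[w_n,z_n]$ nearest to $\zeta$ and $\omega$ converge to $[\zeta,\omega]$ while still being ``reversed'' relative to $\zeta,\omega$ for large $n$; extracting their endpoints gives approximating sequences exhibiting $(\zeta,\omega)$ as a bad pair.

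\textbf{Item (2).} This is where the dynamical content enters. For the forward image: since $F$ restricted to the hair through $z$ (equivalently, to the relevant tract) is a homeomorphism onto the hair through $F(z)$ that preserves the order $\preceq$ — because $w\in[z,\infty]$ implies $F(w)\in[F(z),\infty]$, the image arc being again an arc to infinity — we get $F(z)\prec F(w)$. Applying $F$ to the approximating sequences and using continuity of $F$ and of the order under $F$ on each hair, $F(z_n)\to F(z)$, $F(w_n)\to F(w)$ and $F(w_n)\prec F(z_n)$; here one must check $w_n,z_n$ lie in a single tract for large $n$, which holds because $z_n,w_n\to z\in J(F)$ and the tracts have disjoint closures accumulating only at $\infty$ (Definition~\ref{defn:Blog}(d)). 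Hence $(F(z),F(w))$ is a bad pair. For the backward image, fix a tract $T$ and set $z'\defeq F_T^{-1}(z)$, $w'\defeq F_T^{-1}(w)$; since $F_T^{-1}$ is a homeomorphism of $J(F)\cap H$ onto $J(F)\cap\overline T$ preserving $\preceq$, the same argument with $F$ replaced by $F_T^{-1}$ gives $z'\prec w'$ and approximating sequences $F_T^{-1}(z_n)\prec F_T^{-1}(w_n)$ — wait, the order is reversed correctly: $w_n\prec z_n$ maps to $F_T^{-1}(w_n)\prec F_T^{-1}(z_n)$, exhibiting $(z',w')$ as a bad pair. The statements about bad sets follow at once, since a bad set is by definition the union of a bad pair with a choice of approximating sequences, and $F$ (resp.\ $F_T^{-1}$) carries such a union to such a union by what we have just shown.

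\textbf{Main obstacle.} The routine-looking part — that $F$ and $F_T^{-1}$ preserve $\preceq$ — is genuinely the crux, and the subtle point in Item (1) is the same phenomenon in disguise: one must rule out that the connecting arcs $[w_n,z_n]$ ``fold away'' to a limit strictly larger than $[z,w]$, or that the order degenerates in the limit (so that $w_n\prec z_n$ passes to $z\preceq w$ with possible equality). Handling this cleanly requires the span-zero/atriodicity of Julia continua (as in the proof of Proposition~\ref{prop:smoothbouquets}) to guarantee that arcs between two points on a hair are unique and behave continuously, together with the fact, from criniferousness, that hairs are honest arcs to infinity so that ``nearest point of an arc to a given point'' is well defined and continuous. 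I expect the write-up to spend most of its length making this continuity-of-arcs statement precise; everything else is bookkeeping with the tract structure.
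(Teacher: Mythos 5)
Your argument is correct and follows essentially the same route as the paper: for item (1) you pass to a Hausdorff limit of the connecting arcs, argue that it contains $[z,w]$, and then extract points on the arcs converging to $z$ and $w$; for item (2) you use that $F$ and each inverse branch $F_T^{-1}$ send arcs to infinity to arcs to infinity, hence preserve $\preceq$, and transfer approximating sequences by continuity. The one refinement to make is that the limit continuum must be localised to the single Julia continuum through $z$ and $w$ (not merely to $J(F)$: an unbounded limit passing through $\infty$ could a priori pick up pieces of other hairs, and then containing $z$ and $w$ would not force it to contain $[z,w]$); this is exactly the assertion the paper makes at the corresponding step, and it follows because each arc $[w_n,z_n]$ lies in a single component of $J(F)$ and tract closures are pairwise disjoint, so all finite points of the limit share the external address of $z$.
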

\begin{proof}
To show \ref{item:approx_arcs}, let $z\prec w$ be a bad pair, 
let $z_n\to z$, $w_n\to w$ be approximating sequences, and let 
$z\preceq \zeta \prec \omega \preceq w$. Let $A$ be the Julia continuum containing
$[z,w]$ and recall that $A$ is an arc. Therefore there are open sets 
$U_{\zeta}$ and $U_{\omega}$ with disjoint closures
such that $U_{\zeta}\cup U_{\omega}$ is 
a neighbourhood of $A\setminus [\zeta,\omega]$, and such that 
$\overline{U_{\zeta}}\cap [\zeta,\omega]=\{\zeta\}$ (and similarly for $U_\omega$).
Disregarding finitely many entries, we may assume that 
$z_n\notin U_{\omega}$ and $\omega_n\notin U_{\zeta}$ for all $n$.

 Let $\omega_n$ be the last point on $[w_n,z_n]$ with $w_n\in\overline{U_{\omega}}$, and
 $\omega_n = w_n$ if no such point exists. Similarly, let
 $\zeta_n$ be the first point on $[\omega_n,z_n]$ with $z_n\in\overline{U}_{\zeta}$,
 or $\zeta_n = z_n$ if no such point exists. We claim that 
 $(\omega_n)$ and $(\zeta_n)$ are approximating sequences for $(\zeta,\omega)$ such
 that 
 $[\omega_n,\zeta_n]$ converges to $[\zeta,\omega]$ in the Hausdorff metric.
 
 Indeed, note that we can have $\omega_n = w_n$ for infinitely many $n$ only 
   if $w = \omega$; it follows that $\omega_n\to\omega$ and similarly $\zeta_n\to \zeta$.
   Moreover, any limit point of $[\omega_n,\zeta_n]$ in the Hausdorff metric 
   is contained in $[\zeta,\omega]$ by construction. Hence $[\omega_n,\zeta_n]\to[\zeta,\omega]$, as desired. 
   
%

Let $a\prec b$. In particular, $a,b\in J(F)$ and $F(a)$ and $F(b)$ belong to a same tract of $F$. Recall that for each tract $T$, $F_T^{-1}$ is continuous and $F_T^{-1}(\infty)=\infty$. Consequently, $F_T^{-1}([a,\infty])$ is an arc connecting $F_T^{-1}(a)$ to infinity, i.e., $F_T^{-1}([a,\infty]) = [F_T^{-1}(a),\infty]$. This implies that
\begin{equation}\label{eq_preim_order}
F_T^{-1}(a)\prec F_T^{-1}(b) \iff a\prec b \iff F(a)\prec F(b),
\end{equation}
and so, using continuity of $F$ and $F^{-1}_T$, \ref{item:preimofBad} follows.
\end{proof}

\begin{cor} \label{cor:badbouquets}
Suppose that $F\in\Blog$ is of disjoint type and criniferous. Then $J(F)$ is a Cantor bouquet if and only if it contains no bad set.
\end{cor}
\begin{proof}
By Proposition \ref{prop:smoothbouquets}, $\hat{J}(F)$ is a fan with top $\infty$,
 and $J(F)$ is a Cantor bouquet if and only if this fan is arc-smooth. If the fan is not arc-smooth,
  then there exist sequences $z_n\to z$ such that $[z_n,\infty]$ does not converge to
  $[z,\infty]$ in the Hausdorff metric. In particular, there are points $w_n\in [z_n,\infty)$
  that converge to a point $w\notin [z,\infty]$. Since $w$ must belong to the 
  Julia continuum containing $z$, we have that $(w,z)$ is a bad pair. The converse follows from Proposition \ref{prop_properties_bad}\ref{item:approx_arcs}.
\end{proof}

Recall from the introduction that, for an entire function $f$, 
a set $A\subset J(f)$ is \emph{absorbing} if it is closed and forward-invariant, the image of any arc to infinity in $A$ is also an arc to infinity, and every point in $I(f)$ is eventually mapped into $A$. The first condition is immediate for maps in $\Blog$, so the 
definition simplifies in this setting. 

\begin{defn}[Absorbing sets]
Let $F\in \Blog$. We say that a subset $X\subset J(F)$ is \textit{absorbing} if $F(X)\subset X$ and
\begin{equation}\label{eq_absorbing}
I(F)\subset \bigcup_{n=0}^{\infty} F^{-n}(X).
\end{equation}
\end{defn}
\begin{prop}\label{prop_abs_crin}
Suppose that $F\in\Blog$ is of disjoint type and $J(F)$ contains an absorbing set $X$ such that $\hat{X}\defeq X\cup \{\infty\}$ is arc-smooth. Then $F$ is criniferous.
\end{prop}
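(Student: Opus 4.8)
The plan is to prove that $F$ is criniferous by producing, for each escaping point, an explicit ray tail onto which some iterate of the point is mapped; the natural candidate is the arc joining (an iterate of) the point to $\infty$ inside the dendroid $\hat X$.

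First I would reduce. Let $z\in I(F)$. Since $X$ is forward invariant and absorbing, and $I(F)$ is forward invariant, $w\defeq F^{N}(z)\in X\cap I(F)$ for some $N$. As the class of ray tails is closed under applying $F$ and under passing to a tail, it suffices to show that every $w\in X\cap I(F)$ is eventually mapped onto a ray tail; then $z$ is too. Fix such a $w$, and since $\hat X$ is a dendroid let $\gamma\defeq[w,\infty]\setminus\{\infty\}$, where $[w,\infty]$ is the unique arc in $\hat X$ from $w$ to $\infty$; this is an injective curve to $\infty$ contained in $X$.

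Next I would analyse the iterates of $\gamma$. Being connected and contained in $J(F)\subset\overline{\T}$, $\gamma$ lies in the closure of a single tract $T_0$, on which $F$ restricts to a homeomorphism onto $\overline H$ in $\widehat{\C}$ with $F(\infty)=\infty$; hence $F(\gamma)\cup\{\infty\}$ is an arc from $F(w)$ to $\infty$, which lies in $\hat X$ because $X$ is absorbing (the image of an arc to $\infty$ in $X$ is again an arc to $\infty$ in $X$), so unique arcwise connectedness gives $F(\gamma)=[F(w),\infty]\setminus\{\infty\}$. Iterating, $F^{n}(\gamma)=[F^{n}(w),\infty]\setminus\{\infty\}$ with $F^{n}$ injective on $\gamma$ for all $n$ (each $F^{n}|_{\gamma}$ being a composition of injective restrictions $F|_{\overline{T_k}}$), and $F$ preserves the natural order along $\gamma$, as in the proof of Proposition~\ref{prop_properties_bad}. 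By atriodicity of the Julia continua of $F$ (\cite[Theorem~5.6]{lasse_arclike}; compare the proof of Proposition~\ref{prop:smoothbouquets}), $\hat X$ has no branch point other than $\infty$, so $\hat X$ is an arc or a smooth fan with top $\infty$, and in either case $\hat X$ is smooth at the point $\infty$. Since $w\in I(F)$ we have $F^{n}(w)\to\infty$, so smoothness at $\infty$ yields $F^{n}(\gamma)\cup\{\infty\}=[\infty,F^{n}(w)]\to\{\infty\}$ in the Hausdorff metric of $\widehat{\C}$; equivalently $F^{n}(\gamma)\to\infty$ uniformly as $n\to\infty$. In particular, for large $m$ the curve $\gamma_m\defeq F^{m}(\gamma)$ lies in a region $\{|\zeta|>R_m\}$ with $R_m\to\infty$, $F^{n}$ is injective on $\gamma_m$ for all $n$, and $F^{n}(\gamma_m)\to\infty$ uniformly.

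It remains to upgrade this to the statement that $\gamma_m$ is a ray tail: since $F^{m}(w)\in\gamma_m$ and images of ray tails under $F$ are ray tails, this makes $F^{n}(w)$ lie on a ray tail for all $n\geq m$, completing the proof. Here one leaves topology and uses the quantitative structure of disjoint-type maps: the Eremenko--Lyubich expansion estimate in logarithmic coordinates shows that, after possibly one more iterate, $\gamma_m$ lies in a half-plane $\H_{R_0}$ on which every point of $J(F)$ whose entire forward orbit stays in $\H_{R_0}$ escapes with real parts tending to $+\infty$; hence $\gamma_m\subset I(F)$ and $\Rea F^{n}(\gamma_m(t))\to+\infty$ as $t\to\infty$ for each $n$, so $\gamma_m$ meets Definition~\ref{def_ray} (compare \cite[Remark~4.15]{lasse_dreadlocks} and \cite[Corollary~6.6]{lasse_arclike}). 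This final step is the one I expect to be the main obstacle: everything before it is soft plane topology, whereas converting uniform \emph{spherical} escape of the iterated arcs, together with injectivity, into control of \emph{real parts} — both along each iterated arc and along individual orbits — genuinely requires the distortion and expansion estimates for disjoint-type functions. A secondary point to watch is the justification that the point of smoothness of $\hat X$ may be taken to be $\infty$, which is precisely where atriodicity of the Julia continua enters.
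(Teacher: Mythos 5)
Your reduction and the topological core of your argument coincide with the paper's proof: forward invariance plus the absorbing property give $F^n([w,\infty))=[F^n(w),\infty)$ inside $\hat X$, and smoothness of the dendroid together with $F^n(w)\to\infty$ forces $[F^n(w),\infty]\to\{\infty\}$, i.e.\ uniform escape along the arc. Your extra care about why the base point of smoothness may be taken to be $\infty$ (atriodicity of Julia continua, so $\hat X$ is an arc or a fan with top $\infty$) is in the spirit of Proposition~\ref{prop:smoothbouquets}; you assert rather than prove that a fan smooth at some point is smooth at its top, but this can be argued and the paper itself does not dwell on it.

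The genuine gap is your last step. To conclude that $\gamma_m=[F^m(w),\infty)$ is a ray tail you must show $\gamma_m\subset I(F)$, and you derive this from the claim that every point of $J(F)$ whose entire forward orbit stays in a far right half-plane $\H_{R_0}$ escapes with real parts tending to $+\infty$. This is false for disjoint-type maps in $\Blog$, and it fails even under the hypotheses of the proposition: already for a disjoint-type exponential map (whose Julia set is a Cantor bouquet, so $X=J(F)$ works), for every $R$ there are non-escaping points whose whole orbit remains in $\H_R$ --- for instance endpoints of hairs with constant external address given by a $2\pi i N$-translate of the tract, whose orbits have \emph{bounded} real parts of size roughly $\log N$, which exceeds $R$ once $N$ is large. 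Thus $J_{R}(F)\not\subset I(F)$ in general; expansion gives no escape, and the references you invoke (\cite[Remark~4.15]{lasse_dreadlocks}, \cite[Corollary~6.6]{lasse_arclike}) concern criniferous maps and do not supply the missing fact. The paper closes exactly this gap with a different input: \cite[Theorem~5.8]{lasse_arclike}, which guarantees that each connected component of $X$, apart possibly from its endpoint, lies in $I(F)$; combined with the uniform convergence obtained from smoothness, each component of $X$ is a ray tail or a dynamic ray with its endpoint, and \eqref{eq_absorbing} then yields that $F$ is criniferous. Your proof becomes correct if the half-plane trapping claim is replaced by this (or an equivalent) statement about non-escaping points of Julia continua; the related point that real parts tend to $+\infty$ \emph{along} each iterated arc is harmless, since an unbounded connected subset of a tract has real parts tending to $+\infty$ by \cite[Observation~3.4]{lasse_arclike}.
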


\begin{proof} 
  By Remark~\ref{rmk:arcconnectedsubsetisfan}, $\hat{X}$ is a fan with top $\infty$. (It cannot be an arc because an absorbing set must intersect infinitely many  
    different components of $J(F)$.) So by Remark~\ref{rmk:topinfty}, 
       $\hat{X}$ is arc-smooth with respect to infinity. 
       
If $z\in X\cap I(F)$, then the arcs $[F^n(z), \infty]$ must converge to $[\infty,\infty]=\{\infty\}$ by arc-smoothness. Hence
$[z, \infty]$ is a ray tail. By~ \eqref{eq_absorbing}, this means that every point $z\in I(f)$ is eventually mapped to a ray tail; i.e., $F$ is criniferous.
\end{proof}

\begin{prop} \label{prop_absJR} Let $f\in \B$ and suppose that $J(f)\cup \{\infty\}$ contains an absorbing arc-smooth dendroid $\hat{X}\defeq X\cup \{\infty\}$. Then, for all $R>0$, the set $\hat{J}_R(f)\defeq J_R(f) \cup \{\infty\}$ contains an absorbing arc-smooth dendroid with top $\infty$.
\end{prop}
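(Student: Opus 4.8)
The plan is to cut $\hat{X}$ down to a subdendroid of $\hat{J}_R(f)$ by retaining only those \emph{tails} of arcs of $X$ that already lie in the (closed, forward invariant) set of points whose entire forward orbit is large. Fix $R>0$ and put
\[
\hat{Z}\defeq\bigl\{z\in J(f):\lvert f^n(z)\rvert\geq 2R\text{ for all }n\geq 0\bigr\}\cup\{\infty\}.
\]
Then $\hat{Z}$ is closed in $\widehat{\C}$ and $\hat{Z}\subset\hat{J}_R(f)$, so it suffices to exhibit an absorbing smooth dendroid with top $\infty$ inside $\hat{Z}$. Writing $[z,\infty]$ for the unique arc of the dendroid $\hat{X}$ joining $z\in\hat{X}$ to $\infty$, I would take
\[
\hat{Y}\defeq\overline{\bigcup\bigl\{[z,\infty]:z\in X\text{ and }[z,\infty]\subset\hat{Z}\bigr\}},
\]
the closure being taken in $\widehat{\C}$, and set $Y\defeq\hat{Y}\setminus\{\infty\}$.

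The first task is to show that $\hat{Y}$ is a smooth dendroid with top $\infty$. It is compact, and being the closure of a union of arcs all passing through $\infty$ it is connected, hence a subcontinuum of $\hat{X}$; in particular it is hereditarily unicoherent. The structural observation that makes everything work is that $\hat{Y}$ is \emph{full towards} $\infty$: if $\zeta\in\hat{Y}$ then $[\zeta,\infty]\subset\hat{Y}$. For $\zeta$ in the union this is immediate, since $[\zeta,\infty]$ is then a subarc of an admissible arc; and if $\zeta=\lim\zeta_k$ with $\zeta_k$ in the union, then $[\zeta_k,\infty]\to[\zeta,\infty]$ by smoothness of $\hat{X}$ at $\infty$, so $[\zeta,\infty]\subset\hat{Y}$ because $\hat{Y}$ is closed. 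Fullness implies that $\hat{Y}$ is arcwise connected (join two of its points through $\infty$), hence a dendroid, and that the arcs in $\hat{Y}$ agree with those of $\hat{X}$; combined with smoothness of $\hat{X}$ at $\infty$ this yields that $\hat{Y}$ is smooth with top $\infty$. By construction $\hat{Y}\subset\hat{Z}\subset\hat{J}_R(f)$.

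Next one checks that $Y$ is absorbing. For forward invariance, let $\zeta\in[z,\infty]\setminus\{\infty\}$ with $[z,\infty]\subset\hat{Z}$. By the absorbing property of $\hat{X}$, the image under $f$ of the arc to infinity $[z,\infty]\setminus\{\infty\}\subset X$ is an arc to infinity in $X$ with endpoint $f(z)$; every finite point $\xi$ of this image equals $f(\eta)$ for some $\eta\in[z,\infty]$, so $\lvert f^n(\xi)\rvert=\lvert f^{n+1}(\eta)\rvert\geq 2R$ for all $n\geq 0$, and hence the closure $[f(z),\infty]$ of the image is contained in $\hat{Z}$. Thus $[f(z),\infty]$ is one of the arcs in the union defining $\hat{Y}$, so $f(\zeta)\in Y$; for a general $\zeta\in Y$ one concludes by continuity of $f$ at finite points together with closedness of $Y$. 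The same computation shows that the image of any arc to infinity in $Y$ is an arc to infinity in $Y$. Finally, for the covering property, let $w\in I(f)$; since $\hat{X}$ is absorbing there is $m$ with $z_0\defeq f^m(w)\in X$, and $z_0\in I(f)$. Then $f^n(z_0)\to\infty$, so by smoothness of $\hat{X}$ at $\infty$ the arcs $[f^n(z_0),\infty]=\overline{f^n([z_0,\infty]\setminus\{\infty\})}$ converge to $\{\infty\}$; hence $f^n\to\infty$ uniformly on the arc to infinity $\eta_0\defeq[z_0,\infty]\setminus\{\infty\}$, which is therefore a ray tail (cf.\ the proof of Proposition~\ref{prop_abs_crin}). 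Choose $N_0$ with $\lvert f^n\rvert\geq 2R$ on $\eta_0$ for all $n\geq N_0$. Then every point $\xi$ of $f^{N_0}(\eta_0)=[f^{m+N_0}(w),\infty]\setminus\{\infty\}$ satisfies $\lvert f^j(\xi)\rvert\geq 2R$ for all $j\geq 0$ (as $N_0+j\geq N_0$), so $[f^{m+N_0}(w),\infty]\subset\hat{Z}$ and $f^{m+N_0}(w)\in Y$. Hence $w\in f^{-(m+N_0)}(Y)$, proving $I(f)\subset\bigcup_{n\geq 0}f^{-n}(Y)$. This exhibits $\hat{Y}$ as an absorbing smooth dendroid with top $\infty$ inside $\hat{J}_R(f)$.

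The crux, and the point requiring the most care, is the choice of $\hat{Y}$. One cannot simply intersect $\hat{X}$ with $\{\lvert z\rvert\geq 2R\}$, or with $\hat{Z}$: arcs of $X$ may repeatedly enter and leave such a region, so the intersection is typically disconnected and fails to be a dendroid. Taking instead the closure of the union of all tails already contained in $\hat{Z}$ automatically produces a set that is full towards $\infty$ --- hence arcwise connected and smooth --- and the only further input needed, namely uniform escape along ray tails, is supplied by the smoothness of $\hat{X}$ at $\infty$. A small preliminary point, handled by passing from $R$ to $2R$, is to arrange at the same time that the target set is closed and that its points lie in $J_R(f)$ rather than merely on its closure.
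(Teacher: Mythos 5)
Your proof is correct in substance, but it follows a genuinely different route from the paper's. The paper simply takes $Y$ to be the union of the unbounded connected components of $J_R(f)\cap X$ (plus $\infty$), and verifies directly that this is a closed smooth dendroid which is forward invariant and absorbs escaping orbits; you instead keep only those tails $[z,\infty]$ of $\hat{X}$ that lie entirely in the closed forward-orbit region $\{\lvert f^n\rvert\geq 2R \text{ for all } n\geq 0\}$ and pass to the closure. Both arguments hinge on the same key mechanism, namely that smoothness of $\hat{X}$ at $\infty$ together with $f^n(z)\to\infty$ forces the entire tail $[z,\infty]$ (and its forward orbit) to eventually lie in the large-modulus region --- this is exactly how the paper rules out escape through bounded components of $J_R(f)\cap X$, and your ``ray tail'' step is the same assertion made in the proofs of Propositions~\ref{prop_abs_crin} and~\ref{prop_absJR}, so it introduces no gap beyond the paper's own level of detail. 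What your construction buys is that closedness, arcwise connectedness and smoothness of $\hat{Y}$ come almost for free from the ``full towards $\infty$'' structure, sidestepping the (tersely asserted) point that the paper's union of unbounded components is closed; what it costs is extra bookkeeping with images of arcs, and there you overstate slightly: $f$ need not be injective on an arc of $X$, so the image of $[z,\infty]\setminus\{\infty\}$ need not be an arc, and its closure need not equal $[f(z),\infty]$. This is harmless, because all you actually use are containments: the image is an unbounded connected subset of the closed set $Z$, so its closure in $\widehat{\C}$ is a subcontinuum of $\hat{X}$ containing $f(z)$ and $\infty$, and hence contains the arc $[f(z),\infty]$ by (hereditary) unique arcwise connectedness of the dendroid --- which is all that is needed for $[f(z),\infty]\subset\hat{Z}$ and likewise in the covering step. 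You should also record the trivial points that $\hat{Y}\neq\emptyset$ (via $I(f)\neq\emptyset$ and your covering argument) and that unboundedness of images of unbounded arcs in $Y$ is inherited directly from $X$.
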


\begin{proof} 
Let us fix some $R>0$ and let $Y$ be the set of unbounded components of $J_R(f)\cap X$. We claim that $\hat{Y}\defeq Y\cup \{\infty\}$ is an absorbing arc-smooth dendroid. Indeed, since $\hat{X}$ is a dendroid, $\hat{Y}$ is a collection of unbounded curves together with infinity. Moreover, $\hat{Y}$ is a closed subset of $\hat{X}$, as both $\hat{J}_R(f)$ and $\hat{X}$ are closed. Hence, $\hat{Y}$ is an arc-smooth dendroid.

Let $\gamma$ be a component of $Y$. Then, $f(\gamma)\subset J_R(f)$ and $f(\gamma)$ must be an unbounded curve in $X$, by the definition of absorbing set. Hence, $f(\gamma)\subset Y$, and so $f(Y)\subset Y$. If $z\in I(f)$, then there are $N_0, N_1$ such that $f^{n}(z)\in J_R(f)$ for all $n\geq N_0$ and  $f^{n}(z)\in X$ for all $n\geq N_1$. Hence $f^{n}(z)\in J_R(f)\cap X$ for all $n\geq \max\{N_0, N_1\}$. Moreover, by arc-smoothness of $\hat{X}$, if $z\in X\cap I(f)$, then $[z, \infty]$ is a ray tail, as the arcs $[f^n(z), \infty]$ must converge to $[\infty,\infty]$.  Hence, $z$ cannot escape to infinity through bounded components of $J_R(f)\cap X$, and so $I(f)\subset \bigcup_{n=0}^{\infty} f^{-n}(Y)$.
\end{proof}

Finally, the following  characterization of Cantor bouquets will allow us to show in Proposition \ref{prop_UHSC_CB} that disjoint type maps in $\Blog$ satisfying a uniform head-start condition have Cantor bouquet Julia sets.

\begin{prop}\label{prop:bouquetordering}
Let $A\subset\C$ be a closed set such that $\hat{A} \defeq A\cup\{\infty\}$ is connected.
Then $A$ is a Cantor bouquet if and only if there is a partial strict ordering
$\prec$ on $\hat{A}$ with the following properties.
\begin{enumerate}[(a)]
\item \label{item:aprecinfty} $a\prec \infty$ for all $a\in A$.
\item \label{item:comparable} Two points $z,w\in A$ are comparable under $\prec$ if and only if
they belong to the same connected component
of $A$.
\item \label{item:ordercontinuity} Suppose $z,w\in \hat{A}$ with $z\prec w$. Then there
are neighbourhoods $U_z$ and $U_w$ of $z$ and $w$ such that 
$\omega \nprec \zeta$ for all $\zeta \in U_z$ and $\omega \in U_w$.
\item \label{item:accessibility} If $x\in A$ is accessible from $\C\setminus A$, then $x$ is a minimal element of $A$.
\end{enumerate}
\end{prop}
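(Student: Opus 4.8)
The plan is to prove both implications by relating the abstract ordering axioms to the structure of a smooth fan, so that the equivalence follows from the topological characterisation of Cantor bouquets already established (via Lelek fans with accessible endpoints, as in Proposition~\ref{prop:smoothbouquets}).

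\textbf{The ``only if'' direction.} Suppose $A$ is a Cantor bouquet. By Observation~\ref{obs_acc_endp} and the discussion in Section~\ref{sec_dendroids}, $\hat A$ is a smooth fan (in fact a Lelek fan) with top $\infty$, every component of $A$ is an arc to infinity, and the accessible points of $A$ are exactly the finite endpoints. I would then \emph{define} $\prec$ exactly as in~\eqref{eq_order}: for $z,w\in A$ in the same component, set $z\prec w$ iff $w$ lies on the subarc of that component between $z$ and $\infty$, and declare $a\prec\infty$ for all $a\in A$; points in different components are incomparable. Axioms~\ref{item:aprecinfty} and~\ref{item:comparable} are then immediate, and~\ref{item:accessibility} holds because an accessible $x$ is an endpoint of its arc, hence minimal. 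The only nontrivial point is~\ref{item:ordercontinuity}, which is precisely the \emph{smoothness} of the fan expressed in terms of the order: if $z\prec w$ but there were $\zeta_n\to z$, $\omega_n\to w$ with $\omega_n\prec\zeta_n$, then the arcs $[\omega_n,\zeta_n]\subset[\,\cdot\,,\infty]$ would, after passing to a Hausdorff-convergent subsequence, produce a subcontinuum of $\hat A$ containing $z$ and $w$ with the ``wrong'' orientation, contradicting either smoothness or unique arcwise connectedness; this is the same mechanism used in Proposition~\ref{prop_properties_bad}\eqref{item:approx_arcs}. (When $w=\infty$ the statement is vacuous since nothing is $\succ\infty$.)

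\textbf{The ``if'' direction.} Now suppose such an ordering $\prec$ exists. First I would show each connected component $C$ of $A$, together with $\infty$, is an arc: by~\ref{item:comparable} the relation $\prec$ restricted to $C\cup\{\infty\}$ is a total strict order with maximum $\infty$ (using~\ref{item:aprecinfty}); axiom~\ref{item:ordercontinuity} makes $\prec$ a \emph{closed}-type order compatible with the topology, and a standard argument (compare the proof of~\cite[Proposition~4.4]{rrrs}, which goes through for upper semicontinuous data as remarked after Definition~\ref{defn_HSC_addr}) shows that a compact, connected, order-complete space that is totally ordered with this continuity property is homeomorphic to an interval; hence $C\cup\{\infty\}$ is an arc to infinity. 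This forces $\hat A$ to be a dendroid: hereditary unicoherence and unique arcwise connectedness follow because two points lie on a common subcontinuum only if comparable (again~\ref{item:comparable}), so subcontinua are order-intervals, which intersect in order-intervals. Next, $\hat A$ is \emph{smooth} with top $\infty$: given $y_n\to y$, the arcs $[y_n,\infty]$ Hausdorff-converge to a subcontinuum containing $y$ and $\infty$, hence to $[y,\infty]$; if they did not, a subsequential limit would contain a point $w$ with $y\prec w$ or $w\prec y$ violating~\ref{item:ordercontinuity} at the appropriate pair. Thus $\hat A$ is a smooth fan with top $\infty$. Since $A$ is closed with empty interior (it is a union of arcs to infinity in the plane, so nowhere dense), accessible points are dense, and by~\ref{item:accessibility} they are minimal elements, i.e.\ endpoints; so $\hat A$ is a Lelek fan whose accessible points are endpoints, and by~\cite[Theorem~2.8]{nada} (exactly as in the proof of Proposition~\ref{prop:smoothbouquets}), $A$ is a Cantor bouquet.

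\textbf{Main obstacle.} The delicate step is the ``if'' direction's claim that a totally ordered compact connected set satisfying the local incomparability condition~\ref{item:ordercontinuity} is an arc; one must check that the order topology agrees with the subspace topology and that there are no ``jumps'' or ``long lines''. Connectedness of $C\cup\{\infty\}$ rules out jumps, compactness rules out missing endpoints, and second countability (inherited from the plane) rules out non-metrizable order types, so the classical characterisation of the arc applies — but writing this cleanly, and simultaneously handling the interaction between the order on individual components and the global topology near $\infty$, is where the real work lies. I expect to lean on the cited results of~\cite{lasse_arclike} and~\cite{rrrs} to keep this self-contained rather than reproving the order-theoretic lemma from scratch.
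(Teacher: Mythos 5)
Your proposal is correct and takes essentially the same route as the paper: in the forward direction you define $\prec$ via the arcs $[z,\infty]$ and obtain condition~\ref{item:ordercontinuity} from smoothness, and in the converse you show the order topology agrees with the subspace topology on each component together with $\infty$, invoke the classical characterisation of the arc (as in \cite[Proposition~4.4]{rrrs}), identify minimal points with endpoints, and conclude via \cite[Theorem~2.8]{nada}, exactly as in the paper. One small slip to fix in the smoothness step: a limit point $w$ with $y\prec w$ is not a contradiction (such points lie on $[y,\infty]$); as in the paper's argument, condition~\ref{item:ordercontinuity} is used precisely to exclude accumulation of the arcs $[y_n,\infty]$ at points $w\prec y$.
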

\begin{proof}
 Let us fix $A$ as in the statement. Then, \cite[Theorem 2.8]{nada} states that $A$ is a Cantor bouquet if and only if
\begin{enumerate}[(i)]
\item \label{item_Lelek} $\hat{A}$ is a Lelek fan with top $\infty$, and
\item \label{item_accessible_enpoints} if $x\in A$ is accessible from $\C\setminus A$, then $x$ is an endpoint of $A$.
\end{enumerate}
If $\hat{A}$ is a Lelek fan with top $\infty$, then each connected component of $A$ is an arc to infinity. We can then define the partial order ``$\prec$'' in $\hat{A}$ as follows. For $z,w\in \hat{A}$,
\begin{equation}
z\prec w  \quad \text{ if and only if } \quad w\in [z,\infty],
\end{equation}
where $[z,\infty]$ is the unique arc in $\hat{A}$ connecting $z$ and $\infty$. Then, \ref{item:aprecinfty} and \ref{item:comparable} hold by definition of the order, and \ref{item:ordercontinuity} follows from the arc-smoothness of $\hat{A}$. Moreover, minimal points of $\hat{A}$ are the endpoints of the Lelek fan, and so, using \ref{item_accessible_enpoints}, we obtain \ref{item:accessibility}.

The converse proceeds essentially as the proof of \cite[Proposition 4.4]{rrrs}. First we note that if $\gamma$ is a component of $A$, then the topology of $\hat{\gamma}\defeq \gamma \cup \{\infty\}$ as a subset of $\widehat{\C}$ agrees with the order topology induced by $\prec$: the map $\id\colon \hat{\gamma} \to (\hat{\gamma}, \prec)$ is continuous, since, by \ref{item:ordercontinuity}, the sets 
$$ U_a^-\defeq \{w\in \gamma \colon w\prec a\} \quad \text { and } \quad  U_a^+ \defeq \{w\in \gamma \colon a\prec w\},$$
that form a subbasis for the order topology, are open in $\hat{\gamma}$ for every $a\in \hat{\gamma}$. Then, since $\hat{\gamma}$  is compact and the order topology on $\hat{\gamma}$ is Hausdorff, we have that $\id$ is a homeomorphism and both topologies agree. By this and since $\hat{\gamma}$ is a compact connected set, it follows from a well-known characterization of the arc that $\hat{\gamma}$ is either a point or an arc; see \cite[Theorems 6.16, 6.17]{nadler_continuum} and compare to \cite[Theorem~A.5]{rrrs}. Then, $\hat{A}$ is clearly a fan with top $\infty$ and minimal points are endpoints of the fan. In particular, by \ref{item:accessibility}, we have \ref{item_accessible_enpoints}.

To show arc-smoothness of $A$, fix $z_0\in A$ and given a sequence $z_n\to z_0$, denote by $I_n$ the arc connecting $z_n$
to $\infty$ in $A$ for all $n\geq 0$; that is, $I_n \defeq \{ w\colon z_n\prec w \}$. Then, by~\ref{item:ordercontinuity}, the arcs $\{I_n\}_{n=0}^{\infty}$ cannot accumulate at any point $w\prec z_0$, and so $\hat{A}$ is an arc-smooth fan. Since $A$ has empty interior, points accessible from $\C\setminus A$ are dense in $A$, and so by \ref{item_accessible_enpoints}, $\hat{A}$ is a Lelek fan, which shows \ref{item_Lelek} and concludes the proof.
\end{proof}

\begin{obs}\label{obs_arc}
We have shown in the proof of Proposition \ref{prop:bouquetordering} that if $A\subset \C$ is a closed set such that $A\cup 
\{\infty\}$ is connected and for which \ref{item:aprecinfty}-\ref{item:ordercontinuity} hold for a partial strict ordering $\prec$ on $A$, then each connected component of $A$ is an arc to infinity, and minimal points are the endpoints of the arcs.
\end{obs}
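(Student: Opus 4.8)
The plan is to read the statement off directly from the argument already carried out in the proof of Proposition~\ref{prop:bouquetordering}: the observation records exactly the portion of that proof which uses only properties \ref{item:aprecinfty}--\ref{item:ordercontinuity}, before accessibility \ref{item:accessibility} is invoked. So concretely I would argue as follows.

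Fix a connected component $\gamma$ of $A$ and set $\hat\gamma\defeq\gamma\cup\{\infty\}$. By comparability \ref{item:comparable} all points of $\gamma$ are pairwise $\prec$-comparable, and by \ref{item:aprecinfty} the point $\infty$ lies above each of them; since $\prec$ is a partial \emph{strict} order, its restriction to $\hat\gamma$ is thus a linear order with greatest element $\infty$. The key step is to check that the order topology on $\hat\gamma$ coincides with the subspace topology inherited from $\widehat{\C}$. This is where \ref{item:ordercontinuity} is used: each sub-basic set $U_a^-=\{w\in\hat\gamma:w\prec a\}$ and $U_a^+=\{w\in\hat\gamma:a\prec w\}$ of the order topology is open in the subspace topology, because \ref{item:ordercontinuity}, applied to the comparable pairs $(w,a)$ and $(a,w)$ and combined with \ref{item:comparable} and a harmless use of the Hausdorff property to separate $a$ from the point under consideration, produces around every point of $U_a^\pm$ a subspace-open neighbourhood contained in $U_a^\pm$. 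Hence $\id\colon\hat\gamma\to(\hat\gamma,\prec)$ is continuous; and since $\hat\gamma$ is closed in $\widehat{\C}$ (as $A$, hence $\gamma$, is closed in $\C$, so $\widehat{\C}\setminus\hat\gamma=\C\setminus\gamma$ is open) it is compact, while the order topology is Hausdorff, so this continuous bijection is a homeomorphism and the two topologies agree.

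Next I would note that $\hat\gamma$ is connected, via the standard fact that in a continuum $\widehat A$, for any $p\in\widehat A$ and any component $C$ of $\widehat A\setminus\{p\}$ one has $\overline{C}=C\cup\{p\}$, applied with $p=\infty$. Thus $\hat\gamma$ is a nondegenerate continuum (it contains $\infty$ and at least one point of $\gamma$) whose topology is the order topology of a linear order; by the classical order-theoretic characterisation of the arc (\cite[Theorems~6.16 and 6.17]{nadler_continuum}; compare \cite[Theorem~A.5]{rrrs}) it is an arc, whose endpoints are its $\prec$-least and $\prec$-greatest elements. The greatest element is $\infty$, so deleting it shows that $\gamma$ itself is a half-open arc escaping to infinity whose finite endpoint is the unique $\prec$-minimal point of $\gamma$; and since points of distinct components are $\prec$-incomparable, such a point is precisely a minimal element of $A$. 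This yields both assertions at once.

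The only genuinely delicate point is the coincidence of the order and subspace topologies on $\hat\gamma$ — in particular the bookkeeping showing that \ref{item:ordercontinuity} makes each $U_a^\pm$ subspace-open, together with the (routine) compactness argument promoting $\id$ to a homeomorphism. Everything else is elementary manipulation of linear orders, and since all of it already appears inside the proof of Proposition~\ref{prop:bouquetordering}, the observation requires no further work.
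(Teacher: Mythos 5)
Your argument is correct and is essentially the paper's own: the observation simply records the converse half of the proof of Proposition~\ref{prop:bouquetordering}, where \ref{item:ordercontinuity} makes the sub-basic order-open sets $U_a^{\pm}$ open in $\hat\gamma$, compactness of $\hat\gamma$ plus Hausdorffness of the order topology upgrades the identity to a homeomorphism, and the order-theoretic characterisation of the arc then shows each $\hat\gamma$ is an arc with top $\infty$ and minimal (finite) endpoint. Your extra justification that $\hat\gamma=\gamma\cup\{\infty\}$ is a (nondegenerate) continuum via boundary bumping, and that incomparability across components turns componentwise minima into minimal points of $A$, only makes explicit what the paper leaves implicit, so nothing further is needed.
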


\section{Absorbing Cantor bouquet implies global Cantor bouquet}\label{sec_abs}

Our main goal in this section is to prove the following theorem, that is key to the proof of Theorem~\ref{thm:cantor}.

\begin{thm}\label{thm:bad}
Let $F\in\Blog$ of disjoint type. Suppose that $F$ is criniferous, but $J(F)$ is not a Cantor bouquet. Then there exists a closed subset $A\subset I(F)$ such that every relatively open subset of $A$ contains a bad set $X$ such that $\re F^n|_X\to \infty$ uniformly. 
\end{thm} 

\begin{cor}\label{cor:main}
Suppose $F\in\Blog$ is of disjoint type and $\hat{J}(F)$ contains an absorbing arc-smooth dendroid $\hat{X}\defeq X\cup \{\infty\}$. Then $J(F)$ is a Cantor bouquet.
\end{cor}
\begin{proof}[Proof of Corollary~\ref{cor:main}, using Theorem~\ref{thm:bad}] By Proposition \ref{prop_abs_crin}, $F$ is criniferous. Since $\hat{X}$ is arc-smooth, by Proposition~\ref{prop_properties_bad}\ref{item:approx_arcs}, $X$ contains no bad set, and so, by Proposition~\ref{prop_properties_bad}\ref{item:preimofBad}, $F^{-n}(X)$ also contains no bad set, for every $n\geq 0$.
	
Suppose, by contradiction, that $J(F)$ is not a Cantor bouquet. Then, by Theorem~\ref{thm:bad}, there exists a closed subset $A\subset I(F)$ such that every relatively open subset of $A$ has a bad set. This means that the closure of $F^{-n}(X)\cap A$ in $A$ cannot contain any non-empty relatively open subset of $A$; in other words,  $F^{-n}(X)\cap A$ is nowhere dense for all $n\geq 0$. But since $A\subset I(F)$ and $X$ is by assumption an absorbing set,
\[ A = \bigcup_{n=0}^{\infty} F^{-n}(X)\cap A. \] 
However, $A$ being a countable union of nowhere-dense sets is a contradiction to Baire's theorem. 
\end{proof}

\begin{proof}[Proof of Theorem \ref{thm:cantor}, using Corollary \ref{cor:main}]
The implication \ref{item:Jcantor}$\implies$\ref{item:absorbing} follows from \cite[Theorem~1.7]{mio_newCB}, and since by definition any Cantor bouquet together with infinity is clearly an arc-smooth dendroid, \ref{item:absorbing}$\implies$\ref{item:abssmooth} is trivial. Let $f$ be a disjoint type map with an absorbing arc-smooth dendroid  $X\cup \{\infty\}\subset J(f)\cup \{\infty\}$. Then, possibly after applying an affine conjugacy, $f$ has a disjoint type logarithmic transform $F$, see Proposition \ref{prop_disjoint_type}. By \eqref{eq_commute_log} and since $\exp(J(F))=J(f)$, $\exp^{-1}(X)\cup \{\infty\}$ is an absorbing arc-smooth dendroid contained in $J(F) \cup \{\infty\}$. By Corollary~\ref{cor:main}, $J(F)$ is a Cantor bouquet, and so, it is easy to see that $\exp(J(F))=J(f)$ is also a Cantor bouquet; see the proof of \cite[Corollary 6.3]{lasseBrushing} for details. We have shown that \ref{item:abssmooth}~$\implies$~\ref{item:Jcantor}.
\end{proof}

For the remainder of the section we assume the hypotheses of Theorem~\ref{thm:bad}. That is, $F\in\Blog$ is criniferous and of disjoint type, but $J(F)$ is not a Cantor bouquet. 
In particular, by Corollary \ref{cor:badbouquets}, $J(F)$ contains a bad pair.

Roughly speaking, the structure of the proof of Theorem \ref{thm:bad} is as follows: 

\begin{enumerate}[(i)]
\item Following a series of technical observations, we show in Proposition \ref{prop:smallbadsets} that for any bad pair $(z,w)$ so that $[z,w]$ has big enough diameter, we can find bad pairs of points in $[z,w]$ belonging to bad sets $Y_k$ with arbitrarily small diameter that escape to the right faster than some prescribed real sequence $\alpha_n \to \infty$.
\item \label{item_ii} Next, we choose another sequence $a_n \to \infty$ with $a_n<\alpha_n$, and construct bad sets $X_n$ of small diameter that escape to the right much faster than $(a_n)_n$, but on which we have control on the maximum of their real parts. These will be defined as preimages of translates of some of the bad sets $Y_k$.
\item Finally, the set $A$ we are looking for will be the closure of all points that escape to the right much faster than $(a_n)_n$. We will prove that every relatively open subset of $A$ contains an $n$-th preimage of some set $X_n$ from \ref{item_ii}.
\end{enumerate}

We may assume without loss of generality that $F$ is \textit{normalized}, that is, $F\colon \T\to \H$ such that 
\begin{equation}\label{eq_normalized}
\vert F'(z)\vert>2 \quad  \text{ for all }z\in \T.
\end{equation}
We also assume that 
\begin{equation}\label{eq_tractstoright}
\T\subset \{z\in \C \colon \Rea z>2\pi +2\}.
\end{equation}
Otherwise, we can pre-compose $F$ with a translation and restrict to a smaller domain. The resulting map is affine equivalent to $F$, and so they are conjugate on their Julia sets; see \cite[\S 2]{lasseRigidity} for details.

We define
\begin{equation}\label{eq_Et}
L\colon [0,\infty) \to [0,\infty); \qquad t \mapsto \min( t/2, \max(2 , 8\pi \cdot \log( t ) ) ).
\end{equation}
Observe that $L$ is strictly increasing, and therefore has an inverse $E\colon [0,\infty)\to [0,\infty)$. In particular, for all sufficiently large $t$, we have
\begin{equation}\label{eq_LE}
L(t) = 8\pi \cdot \log(t) \quad \text{ and } \quad E(t) = \exp(t/(8\pi)).
\end{equation}

\begin{prop}\label{prop_diam}
For any connected set $B \subset \overline{\H}_1$ with $s\leq \diam(B)\leq t$ and any tract $T$, $\diam(F_T^{-1}(B)) \leq \min(\diam(B)/2 ,L(\diam(B)))\leq t/2$. If, in addition, $F(B) \subset \overline{\H}_1$, then $\diam(F(B))\geq \max(2s, E(s)).$
\end{prop}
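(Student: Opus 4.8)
The plan is to estimate the hyperbolic metric in each tract $T$ against the hyperbolic metric of $\H$, and then convert hyperbolic distances into Euclidean ones using the standard estimate \eqref{eq_standard_est}. Since $F|_T\colon T\to\H$ is a conformal isomorphism, Pick's theorem gives that $F_T^{-1}\colon\H\to T$ is a hyperbolic contraction, i.e. $\dist_T(F_T^{-1}(x),F_T^{-1}(y))\le\dist_\H(x,y)$ for all $x,y\in\H$. By \eqref{eq_tractstoright} we have $\overline{T}\subset\H_{2\pi+2}$, so $\dist(z,\partial T)\ge 2\pi+2$ for... no — rather, the point is that for $z\in T$ we have $\dist(z,\partial\H)=\Rea z> 2\pi+2$, hence by \eqref{eq_standard_est} the Euclidean and hyperbolic metrics in $T$ are comparable with a controlled constant on the relevant region; I will need that the hyperbolic metric in $T$ dominates $1/(2\dist(z,\partial T))$, and $T$ lies deep in the right half-plane so this density is small, giving the Euclidean contraction by a factor at least $2$ — but here \eqref{eq_normalized} already gives $|F'|>2$ directly, hence $F_T^{-1}$ contracts Euclidean distances by a factor at least $2$. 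That immediately yields $\diam(F_T^{-1}(B))\le\diam(B)/2$, which is the easy half of the first bound.

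For the sharper bound $\diam(F_T^{-1}(B))\le L(\diam(B))$ I would argue as follows. Let $d\defeq\diam(B)$ and assume $d$ large (else $d/2$ already dominates $L(d)$, since $L(d)\le d/2$ always by definition \eqref{eq_Et}). Because $B\subset\overline{\H}_1$ is connected, all of its points have real part at least $1$, and two points of $B$ in the same vertical line have Euclidean distance at most $d$; so the hyperbolic diameter of $B$ in $\H$ is at most (roughly) $\log$ of the ratio of the largest to smallest real part plus a bounded angular contribution, which is $O(\log d)$ — concretely I expect $\dist_\H(x,y)\le 2\log d + C$ for $x,y\in B$ when $d$ is large, using that $\dist_\H$ between points with real parts in $[1,1+d]$ and imaginary parts within $d$ is controlled. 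By Pick, $\dist_T(F_T^{-1}(x),F_T^{-1}(y))\le\dist_\H(x,y)=O(\log d)$; then by \eqref{eq_standard_est} applied in $T$, and since $F_T^{-1}(B)\subset T\subset\H_{2\pi+2}$ so that $\dist(\cdot,\partial T)$ is bounded below by a constant but could be large — actually I want an \emph{upper} bound on Euclidean distance, which \eqref{eq_standard_est} gives as $2\,\dist_T$-distance only when the domain is a half-plane-like; more carefully, a hyperbolic ball of radius $r$ around a point $p\in T$ has Euclidean diameter at most $(e^{2r}-1)\cdot 2\dist(p,\partial T)$ by comparison with a disc, but since tracts are thin this needs the standard estimate in the form that relates hyperbolic length to $\int 2\,\deriv s/\dist(\cdot,\partial T)$. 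I will instead route through $\H$ more directly: cover $B$ by a hyperbolic geodesic of $\H$ of length $O(\log d)$, pull it back to a curve in $T$, and bound its Euclidean length using $\rho_T\ge\rho_\H\circ(F|_T)\cdot|(F|_T)'|$ won't help; rather use that $\rho_T(w)\ge\tfrac12/\dist(w,\partial T)$ and that the pulled-back geodesic, parametrized, satisfies $\int\rho_T\le O(\log d)$, hence $\int \deriv s/\dist(w,\partial T)\le O(\log d)$, and on the other hand the curve starts in the region $\{\Rea\le L(d)\}$ if the claim is to hold, forcing a contradiction if it ventured too far right — essentially the argument of \cite[\S5]{lasse_arclike}. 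The cleanest route, which I will adopt, is: the pullback of a vertical-ish geodesic segment has the property that whenever it reaches real part $x$, going further right by a factor contributes $\ge\Omega(\tfrac1x\cdot\text{(horizontal displacement)})$... this is getting long; the bottom line is a standard-estimate computation giving the $8\pi\log$ constant.

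For the lower bound: if in addition $F(B)\subset\overline{\H}_1$, write $B'\defeq F(B)$, so $B=F_T^{-1}(B')$ for the tract $T$ containing $B$, and apply the \emph{already proved} first statement to $B'$: $\diam(B)=\diam(F_T^{-1}(B'))\le\min(\diam(B')/2,L(\diam(B')))$. Rearranging $\diam(B)\le\diam(B')/2$ gives $\diam(B')\ge2\diam(B)=2s$, and rearranging $\diam(B)\le L(\diam(B'))$ gives $\diam(B')\ge E(s)$ since $E=L^{-1}$ is increasing and $L$ is increasing, so $L(\diam(B'))\ge s$ forces $\diam(B')\ge E(s)$. Hence $\diam(F(B))\ge\max(2s,E(s))$, as required. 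The main obstacle is pinning down the exact constant $8\pi$ in the $L(t)=8\pi\log t$ estimate: this requires a careful application of the standard estimate \eqref{eq_standard_est} and Pick's theorem in a tract, keeping track of factors of $2$ and the $2\pi i$-periodicity, and is of the same flavor as (and can cite) the estimates in \cite[\S5]{lasse_arclike}; everything else is bookkeeping with the definitions \eqref{eq_Et}--\eqref{eq_LE}.
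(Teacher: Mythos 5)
Your factor-$2$ contraction (via \eqref{eq_normalized}) and your derivation of the lower bound by applying the first statement to $B'=F(B)$ and inverting ($\diam(F(B))\geq 2\diam(B)\geq 2s$ and $L(\diam(F(B)))\geq \diam(B)\geq s$, hence $\diam(F(B))\geq E(s)$) are both correct, and the latter is exactly how the claimed inequality is meant to follow. The problem is the logarithmic bound $\diam(F_T^{-1}(B))\leq \max(2,8\pi\log\diam(B))$, which is the entire nontrivial content of the proposition: your sketch of it never closes. The paper simply invokes \cite[Lemma~3.1]{rrrs}, which is verbatim the needed statement; you instead attempt to reprove it and stop at ``the bottom line is a standard-estimate computation,'' while citing only a vaguely related source. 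That is a genuine gap, not bookkeeping.

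Concretely, the missing idea is the following. Since each tract $T$ is disjoint from its $2\pi i\Z$-translates, every $z\in T$ satisfies $\dist(z,\partial T)\leq 2\pi$ (the segment $[z,z+2\pi i]$ must meet $\partial T$), so by the standard estimate \eqref{eq_standard_est} the hyperbolic density of $T$ satisfies $\rho_T\geq 1/(4\pi)$ \emph{everywhere}; hence Euclidean lengths in $T$ are at most $4\pi$ times hyperbolic lengths. On the other side, a connected set $B\subset\overline{\H}_1$ with $\diam(B)=d$ has hyperbolic diameter in $\H$ at most roughly $2\log d+O(1)$, and by conformal invariance of the hyperbolic metric under $F|_T$ this bounds the hyperbolic diameter of $F_T^{-1}(B)$ in $T$; combining the two gives the $8\pi\log$ bound. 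In your text you assert the opposite of the key geometric fact (``$\dist(\cdot,\partial T)$ is bounded below by a constant but could be large''~-- it is bounded \emph{above} by $2\pi$, and that upper bound is precisely what converts the hyperbolic estimate into a Euclidean one), and your subsequent attempts (hyperbolic balls, a contradiction about the pulled-back geodesic ``venturing too far right'') are abandoned before reaching a conclusion. Either carry out the argument above or cite \cite[Lemma~3.1]{rrrs} directly, as the paper does; as written, the central inequality is unproved.
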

\begin{proof}
By \cite[Lemma 3.1]{rrrs}, if $A\subset T$ for some tract $T$ and $F(A) \subset  \overline{\H}_1$, then
\begin{equation*}
\diam(A) \leq \max(2 , 8\pi \cdot \log(\diam(F(A)) ).
\end{equation*}
By this and \eqref{eq_normalized} the proposition follows. 
\end{proof}

Next we define the sequence $(\ell_n)_{n\geq 0}$ with  
\[ \ell_n \defeq \left(\frac{3}{2}\right)^n. \]

Let $(\alpha_n)_{n\geq 0}$ be an increasing sequence of positive numbers with $\alpha_n\to\infty$ such that for every tract $T$, 
\begin{equation}\label{eq_alphan}
\diam(\{ z\in \overline{T} \colon \re z \leq \alpha_n \}) \leq \ell_n.
\end{equation}
Clearly such a sequence exists, as for each $r>0$, there is $M\defeq M(r)>0$ such that $\diam(T\setminus \H_r)<M$ for all tracts $T$, and $M(r)\to \infty$ as $r\to \infty$; see \cite[Observation~3.4]{lasse_arclike}. Moreover, $\alpha_0$ can be chosen greater than $2\pi+2$ by the assumption \eqref{eq_tractstoright} on~$\T$.

\begin{lem}\label{lem:sum} For each $n\geq 0$, let
\[ M_n \defeq \sum_{k=1}^{\infty} L^k( \ell_{n+k}). \] 
Then there is a constant $C>2$ such that $M_n < C\cdot (n+1)$ for all $n\geq 0$. 
\end{lem}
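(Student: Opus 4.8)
The plan is to play off the two competing bounds on $L$ that are built into its definition: $L(t)\le t/2$ for every $t\ge 0$, and $L(t)\le\max(2,8\pi\log t)$. The first bound, iterated, contracts at a geometric rate; the second converts the exponential growth of $\ell_m=(3/2)^m$ into linear growth in $m$ after just one application of $L$. Combining them should show that $L^k(\ell_{n+k})$ decays in $k$ fast enough to be summed, with a sum that is linear in $n$.

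First I would record two elementary facts. Since $L$ is monotonically increasing and satisfies $L(t)\le t/2$, an immediate induction gives $L^j(x)\le x/2^j$ for all $j\ge 0$ and $x\ge 0$. Second, because $\ell_m=(3/2)^m$ and the elementary inequality $8\pi\log(3/2)>2$ holds, we have $\max(2,8\pi\log\ell_m)=8\pi m\log(3/2)$ already for $m=1$, and hence for all $m\ge 1$; consequently $L(\ell_m)\le C_2\,m$ for $m\ge 1$, where $C_2\defeq 8\pi\log(3/2)$. Then, using monotonicity of $L$ together with the first fact, for each $k\ge 1$,
\[ L^k(\ell_{n+k}) = L^{k-1}\bigl(L(\ell_{n+k})\bigr) \le L^{k-1}\bigl(C_2(n+k)\bigr) \le \frac{C_2(n+k)}{2^{k-1}}. \]

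Summing over $k\ge 1$ and using $\sum_{k\ge 1}(n+k)/2^{k}=n+2$, this yields
\[ M_n = \sum_{k=1}^{\infty} L^k(\ell_{n+k}) \le C_2\sum_{k=1}^{\infty}\frac{n+k}{2^{k-1}} = 2C_2(n+2) \le 4C_2(n+1), \]
since $n+2\le 2(n+1)$ for $n\ge 0$; the claim then holds with, for instance, $C\defeq 4C_2+1$, which exceeds $2$ because $C_2>0$. There is no substantive obstacle here: the only point needing any care is the numerical inequality $8\pi\log(3/2)>2$, which guarantees that the logarithmic term (not the constant $2$) controls $L(\ell_m)$ from $m=1$ onward, and everything else is a routine geometric-series estimate together with the contraction property of $L$.
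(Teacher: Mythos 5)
Your proof is correct and follows essentially the same route as the paper: one application of $L$ turns $\ell_{n+k}$ into the linear bound $8\pi\log(3/2)\cdot(n+k)$ via the logarithmic branch, the remaining $k-1$ applications contract by the factor $1/2$ branch, and the resulting geometric series is linear in $n$. Your version is in fact slightly tidier, since evaluating $\sum_{k\ge 1}(n+k)2^{-(k-1)}=2(n+2)$ exactly handles all $n\ge 0$ uniformly, whereas the paper treats $M_0$ and $M_1$ separately.
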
      
\begin{proof}
By definition of $L$ in \eqref{eq_Et}, for all  $n\geq 0$ we have
\begin{equation}\label{eq_Lelln}
L(\ell_n) \leq 8   \pi   n   \log(3/2) = c\cdot n,
\end{equation}
with $c\defeq  8 \pi  \log(3/2) $. 
Also by definition of $L$, 
\begin{equation}
L^k(\ell_n) \leq c \cdot n \cdot    2^{-(k-1)} \quad\text{ for all } \quad n\geq k \geq 1.
\end{equation}
Now, for $n\geq 1$,
\[  M_n = \sum_{k=1}^{\infty} L^k(\ell_{k+n}) 
\leq 
\sum_{k=1}^{\infty} c\cdot \frac{(k+n)}{2^{k-1}} \leq
c\cdot n \cdot \sum_{k=1}^{\infty} \frac{1+k/n}{2^{k-1}}. \] 
For $k\geq n>1$, we have $1+k/n < (3/2)^{k}$, and so
\[ M_n \leq 2c\cdot n \cdot \sum_{k=1}^{\infty} \left(\frac{3}{4}\right)^{k-1} = 8c\cdot n. \] 
Choosing $C>\max\{M_0,M_1, 8c\}$, the claim follows.
\end{proof}

\begin{lem}\label{lem:uniformescape}
Let $n\geq 0$, and let $A\subset J(F)\cap \H_{\alpha_n}$ be a compact and connected set with $\diam A > M_n$ and such that $F^j(A)$ is totally contained in some tract for each $j\geq 0$. Then there is $z \in A$ such that $ F^j(z) \in \H_{\alpha_{j+n}}$ for all $j\geq 0$. 
\end{lem}
\begin{proof}
For each $j\geq 0$, let $T_j$ be the tract containing $F^j(A)$, let
\begin{equation*}
D_j^j\defeq \{ z\in T_j \colon \re z \leq \alpha_{j+n} \},
\end{equation*}
and note that by \eqref{eq_alphan}, $\diam(D^j_j)\leq \ell_{j+n}.$
For each $0\leq i<j$, we define inductively $D^i_j\defeq F_{T_i}^{-1}(D_j^{i+1})$. By Proposition \ref{prop_diam},
$\diam D_j^i\leq L(\diam D_j^{i+1})$, and, in particular, 
\[ \diam D_j^0 \leq L^j(\ell_{n+j}). \] 
Then, by definition of $M_n$, we have  
\[ \sum_{j=1}^{\infty} \diam D_j^0 \leq M_n < \diam A. \]
Since $A$ is connected, we have 
\[ A \setminus \bigcup_{j=1}^{\infty} D_j^0 \neq \emptyset, \]
which proves the claim. 
\end{proof}

\begin{cor}\label{cor:fastbadpair}
Let $C$ be the constant from Lemma~\ref{lem:sum}, let $n\geq 0$, and suppose that $(z,w)$ is a bad pair such that $\diam([z,w])\geq 3 C\cdot (n+1)$ and $[z,w]\subset \H_{\alpha_n}$. Then there exists a bad set $X$ containing a bad pair $(a,b)$ with $z\preceq a \prec b \preceq w$ and such that $F^j(X)\subset \H_{\alpha_{j+n}}$ for all $j\geq 0$.
\end{cor}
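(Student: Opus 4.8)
The plan is to combine Proposition~\ref{prop:smallbadsets}-style control (here available through the already-established Proposition~\ref{prop_properties_bad} and Observation~\ref{obs:uniformescape}) with the diameter-expansion estimates of Proposition~\ref{prop_diam}. Since $(z,w)$ is a bad pair, the arc $[z,w]$ together with the approximating arcs $[w_n,z_n]$ from Proposition~\ref{prop_properties_bad}\eqref{item:approx_arcs} forms a bad set $Y$ lying in $J(F)$; moreover, shrinking to a subarc $[a,b]\subseteq[z,w]$ with $z\preceq a\prec b\preceq w$ still yields a bad pair by the same proposition. The hypothesis $\diam([z,w])\geq 3C\cdot(n+1)$ with $C$ as in Lemma~\ref{lem:sum} is designed precisely so that $\diam([z,w])\geq 3M_n$ (using $M_n<C\cdot(n+1)$), leaving room both to apply Observation~\ref{obs:uniformescape} and to retain a genuine (nondegenerate) subpair afterwards.

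First I would show that the hypotheses of Observation~\ref{obs:uniformescape} are met for a suitable compact connected subset of $[z,w]$. Since $[z,w]\subset\H_{\alpha_n}\subset J(F)$ and $F$ is of disjoint type, every forward image $F^j([z,w])$ is connected and, being a subset of a Julia continuum, is contained in a single tract; this gives the ``totally contained in some tract'' requirement. The arc $[z,w]$ itself has diameter $\geq 3C(n+1)>M_n$, so Observation~\ref{obs:uniformescape} produces a point $\zeta\in[z,w]$ with $F^j(\zeta)\in\H_{\alpha_{j+n}}$ for all $j\geq0$. The point of asking for diameter $\geq 3C(n+1)$ rather than just $>M_n$ is that I want not merely one such escaping point but a whole escaping \emph{subarc}: I would apply the observation twice, to the two halves of $[z,w]$ obtained by splitting at a point of diameter roughly $\tfrac12\diam([z,w])\geq \tfrac32 C(n+1)>M_n$ — actually it is cleaner to split $[z,w]$ into two subarcs each of diameter $>M_n$, obtain escaping points $\zeta_1$ on one side and $\zeta_2$ on the other, set $a\defeq\zeta_1$ and $b\defeq\zeta_2$ (so $z\preceq a\prec b\preceq w$), and take $X$ to be the bad set $[a,b]$ together with the corresponding sub-approximating arcs $[w_m',z_m']$ cut down to connect approximants of $a$ and $b$. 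Then both $a$ and $b$ — and hence, by the ordering-and-escape argument, the entire arc $[a,b]$ and the approximating arcs once $m$ is large — have all forward iterates in the required half-planes; more precisely, for $j\geq0$, $F^j([a,b])$ is a connected subset of the tract $T_j$ meeting $\H_{\alpha_{j+n}}$ at $F^j(a)$ and $F^j(b)$, so by \eqref{eq_alphan} its diameter is controlled and, since $\alpha_{j+n}>\alpha_0>2\pi+2$, one checks $F^j([a,b])\subset\H_{\alpha_{j+n}}$ outright. (If one is more careful, it suffices that $F^j$ of the two endpoints lie in $\H_{\alpha_{j+n}}$ and that the whole arc lies in $\H_1$, which holds by \eqref{eq_tractstoright}; the diameter bound \eqref{eq_alphan} then does not even enter for $[a,b]$ itself, only Observation~\ref{obs:uniformescape} does.)

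Next I would verify that $X$ is genuinely a bad set satisfying $F^j(X)\subset\H_{\alpha_{j+n}}$ for all $j$. Being a bad set is immediate from Proposition~\ref{prop_properties_bad}\eqref{item:approx_arcs}: $(a,b)$ is a bad pair because $z\preceq a\prec b\preceq w$, and $X$ is the union of $[a,b]$ with approximating arcs, hence a bad set by Definition~\ref{defn_badpair}. For the half-plane condition on the approximating arcs $z_m\to a$, $w_m\to b$, I would argue as follows: the escaping condition $F^j(\zeta)\in\H_{\alpha_{j+n}}$ is generally \emph{not} stable under small perturbations, so instead of the original approximating sequences I would pass to preimages. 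That is, having the bad pair $(a,b)$ with $F^j(a),F^j(b)\in\H_{\alpha_{j+n}}$, I would not try to force the approximating arcs themselves to escape fast, but rather observe that Corollary~\ref{cor:fastbadpair} as stated only requires $F^j(X)\subset\H_{\alpha_{j+n}}$, and choose the approximating sequences after the fact: since $(a,b)$ is a bad pair, Proposition~\ref{prop_properties_bad}\eqref{item:approx_arcs} furnishes approximating arcs $[w_m,z_m]\to[a,b]$ in Hausdorff metric; these arcs lie in the same Julia continuum as $[a,b]$, hence in the same tract-itinerary, but may exit $\H_{\alpha_{j+n}}$. To fix this I would instead \emph{build} $X$ by taking the bad set $Y=[a,b]\cup\bigcup_m[w_m,z_m]$ and replacing $Y$ by $F^{N}\circ F_T^{-N}$-adjusted copies — but the cleanest route, and the one I expect the authors take, is simply to intersect: let $X$ be the union of $[a,b]$ with the arcs $[w_m,z_m]\cap\bigcap_{j\geq0}F^{-j}(\H_{\alpha_{j+n}})$, and note that because $z_m\to a$, $w_m\to b$ and the latter two points satisfy the strict inequalities $\re F^j(a)>\alpha_{j+n}$ etc. for \emph{every} $j$, combined with the fact that only finitely many constraints are ``active'' near any given scale (the diameters $\diam(F^j([a,b]))$ grow, forcing the orbit deep into $\H_{\alpha_{j+n}}$ with room to spare), the truncated arcs still converge to $[a,b]$ and still give approximating sequences with $w_m'\prec z_m'$.

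\textbf{Main obstacle.} The genuinely delicate point is this last one: ensuring the \emph{approximating} arcs of the new bad pair $(a,b)$ can be taken to escape as fast as required, i.e. that $X$ — not just its ``spine'' $[a,b]$ — is contained in $\bigcap_j F^{-j}(\H_{\alpha_{j+n}})$. The spine is handled by Observation~\ref{obs:uniformescape}, but that observation is a pigeonhole argument about a single arc and says nothing about nearby arcs in other Julia continua. I expect the resolution is to run Observation~\ref{obs:uniformescape} not on $[a,b]$ but on a slightly larger compact connected set inside $\H_{\alpha_n}$ that already contains both $[a,b]$ and tails of the approximating arcs — possible because the Hausdorff convergence $[w_m,z_m]\to[a,b]$ means that for large $m$ these arcs lie in an arbitrarily small neighbourhood of $[a,b]$, hence within an arc of $J(F)$ of diameter still comfortably exceeding $M_n$ once we started from diameter $3C(n+1)=3M_n$-ish — so that the escaping point supplied by the observation can be chosen to lie on, or cut, each approximating arc, and then one takes $X$ to be the portion of the bad set lying ``above'' those escaping points in the order $\prec$. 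Getting the bookkeeping of diameters, the factor $3$, and the two-sided splitting to line up so that the resulting pair $(a,b)$ is still strictly ordered and still inside $[z,w]$ is the part that requires care; everything else is a direct appeal to Propositions~\ref{prop_properties_bad} and~\ref{prop_diam}, Observation~\ref{obs:uniformescape}, and Lemma~\ref{lem:sum}.
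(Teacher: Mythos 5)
There is a genuine gap, and it sits exactly at the point you flag as the ``main obstacle''. Two concrete problems. First, your intermediate claim that $F^j([a,b])\subset\H_{\alpha_{j+n}}$ ``outright'' because the two endpoints $F^j(a),F^j(b)$ lie in $\H_{\alpha_{j+n}}$ is unjustified: a connected subset of a tract with both endpoints far to the right can still dip into the region $\{\re z\leq \alpha_{j+n}\}$ (tracts may have hooks, as in Section~\ref{sec_noCb}); the bound \eqref{eq_alphan} only limits the diameter of that low region, it does not keep the arc out of it. Your parenthetical fallback (endpoints in $\H_{\alpha_{j+n}}$, arc in $\H_1$) proves a weaker statement than the corollary asserts. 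This problem is self-inflicted, though: by Definition~\ref{defn_badpair} a bad set is only the pair together with its approximating \emph{sequences} of points, so there is no need to put the arc $[a,b]$, or any arcs, into $X$ at all. Second, for the approximating part your proposed fixes do not close the gap: truncating the approximating arcs by $\bigcap_j F^{-j}(\H_{\alpha_{j+n}})$ gives no reason that the truncated sets contain points converging to $a$ and $b$ (as you yourself note, the escape conditions are not open), and running Observation~\ref{obs:uniformescape} on ``a compact connected set containing $[a,b]$ and tails of the approximating arcs'' is impossible inside $J(F)$, since the approximating arcs lie in different connected components of $J(F)$ and the observation requires a connected subset of $J(F)$ with a single tract itinerary. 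Fixing $a,b$ first and then looking for fast-escaping approximants converging to them is precisely the step that does not work.

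The paper resolves this by reversing the order of quantifiers: the fast-escaping points are found on the approximating arcs first, and $(a,b)$ is \emph{defined} as their limit. Concretely, after normalising $\diam([z,w])=3C(n+1)$, one cuts $[z,w]$ into three pieces with the outer two, $[z,\zeta]$ and $[\omega,w]$, of diameter $C(n+1)>M_n$ (this is where the factor $3$ is used, and it also forces the reversed order $\omega_k\prec\zeta_k$ on the approximating arcs for large $k$). Since $[\zeta_k,z_k]\to[z,\zeta]$ and $[w_k,\omega_k]\to[\omega,w]$ in the Hausdorff metric, for large $k$ these approximating arcs are connected subsets of $J(F)$ inside $\H_{\alpha_n}$ of diameter $>M_n$, so Observation~\ref{obs:uniformescape} applies to \emph{each of them}, producing points $a_k\in[\zeta_k,z_k]$ and $b_k\in[w_k,\omega_k]$ with $\re F^j(a_k),\re F^j(b_k)\geq\alpha_{j+n}$ for all $j$. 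Passing to subsequential limits $a_k\to a\in[z,\zeta]$, $b_k\to b\in[\omega,w]$, the escape bounds survive (they are closed conditions), $b_k\prec a_k$ while $a\prec b$ makes $(a,b)$ a bad pair with $(a_k),(b_k)$ as approximating sequences, and $X\defeq\{a,b\}\cup\bigcup_k\{a_k,b_k\}$ is the desired bad set --- no arcs, hence no need for the false arc-containment step. Your sketch has the right ingredients (Lemma~\ref{lem:sum}, Observation~\ref{obs:uniformescape}, Proposition~\ref{prop_properties_bad}) and correctly locates the difficulty, but the argument as proposed does not go through.
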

\begin{proof}
We may assume without loss of generality that $\diam([z,w]) = 3C\cdot (n+1)$, since otherwise, by Proposition \ref{prop_properties_bad}\ref{item:approx_arcs}, we can replace $w$ by the smallest point in $[z,w]$ for which this equality holds. Also by Proposition \ref{prop_properties_bad}\ref{item:approx_arcs}, we can choose approximating sequences $z_k \to z$ and $w_k \to w$ such that $[w_k,z_k]\to [z,w]$ in the Hausdorff metric. 

Again by Proposition \ref{prop_properties_bad}\ref{item:approx_arcs}, we can choose a bad pair $(\zeta,\omega)$ with $z\prec \zeta \prec \omega \prec w$ and such that $\diam([z,\zeta]) = \diam([\omega,w]) = C\cdot (n+1)$. In particular, there exist approximating sequences $\zeta_k \to \zeta$ and $\omega_k \to \omega$ with $\zeta_k,\omega_k \in [w_k,z_k]$, and such that $[\zeta_k,z_k]\to [z,\zeta]$ and $[w_k,\omega_k]\to [\omega, w]$. Note that by the choice of $\zeta$ and $\omega$, $[z,\zeta] \cup [\omega,w]\subsetneq [z,w]$, and hence, $[\zeta_k,z_k] \cup [w_k,\omega_k] \subsetneq [w_k,z_k]$ for $k\geq k_0$ with $k_0$ large enough. Consequently, $\omega_k \prec \zeta_k$ for all $k\geq k_0$. 

By Lemma \ref{lem:sum}, $\diam([z,\zeta]) = C\cdot (n+1) > M_n$, and, by assumption, $[z,\zeta]\subset \H_{\alpha_n}$.
Since $[\zeta_k,z_k]\to [z,\zeta]$ and any connected subset of $J(F)$ must be totally contained in some single tract, we can apply Lemma~\ref{lem:uniformescape} to $[\zeta_k,z_k]$ for all $k\geq k_1$, with $k_1\geq k_0$ large enough such that $[\zeta_k,z_k] \subset \H_{\alpha_n}$ and $\diam([\zeta_k,z_k])>M_n$. We obtain
$a_k\in [\zeta_k,z_k]$ with $\re F^j(a_k) \geq \alpha_{j+n}$ for all $j\geq 0$. Likewise, we can find $b_k\in [w_k,\omega_k]$ with analogous properties. Passing to subsequences, we may assume
that $a_k$ and $b_k$ converge to some $a\in [z,\zeta]$ and $b\in [\omega,w]$, respectively. By Proposition \ref{prop_properties_bad}\ref{item:approx_arcs}, $(a,b)$ is a bad pair, and by construction, $a_k\to a$ and $b_k \to b$ are approximating sequences. Thus,\[ X \defeq \{a,b\} \cup \bigcup_{k=k_1}^{\infty} \{a_k,b_k\} \]
is the desired bad set.
\end{proof}

\begin{obs}\label{obs_C2C3}
There are constants $C_2>1$ and $C_3>3C$ such that for all $n\geq 0$, $L(C_2\cdot \ell_{n+1}) < C_3\cdot (n+1)$ and 
$(C_2-1)\cdot \ell_{n} > 2 \cdot C_3 \cdot (n+1)$. 
\end{obs}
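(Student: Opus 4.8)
The plan is to choose the two constants in the right order, exploiting that the function $L$ of~\eqref{eq_Et} grows at most logarithmically while $\ell_n=(3/2)^n$ grows exponentially. Note first that $L(t)\le\max(2,8\pi\log t)$ for every $t\ge 0$. The two requested inequalities pull $C_2$ in opposite directions. The inequality $(C_2-1)\ell_n>2C_3(n+1)$ forces $C_2$ to be large compared with $C_3$: since the quotient $\ell_n/(n+1)$ is bounded below by a positive constant — concretely $\ell_n\ge 3(n+1)/4$ for all $n\ge 0$, which is checked by hand for $n\le 2$ and follows by an immediate induction thereafter — it suffices that $C_2-1>8C_3/3$. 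The inequality $L(C_2\ell_{n+1})<C_3(n+1)$, conversely, forbids $C_2$ from being too large: it will be enough that $C_2<(2/3)e^{C_3/(8\pi)}$. These two requirements on $C_2$ are compatible once $C_3$ is large, because an exponential function of $C_3$ eventually dominates a linear one.

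First I would fix $C_3>3C$ large enough that, in addition, $8\pi\log(3/2)<C_3$ and $1+8C_3/3<(2/3)e^{C_3/(8\pi)}$; the latter is possible since the right-hand side grows exponentially and the left-hand side only linearly in $C_3$. Then I would take any $C_2$ in the non-empty interval $\bigl(1+8C_3/3,\ (2/3)e^{C_3/(8\pi)}\bigr)$, so in particular $C_2>1$. For the second inequality this gives, for every $n\ge 0$,
\[ (C_2-1)\ell_n\ \ge\ (C_2-1)\cdot\frac{3(n+1)}{4}\ >\ \frac{8C_3}{3}\cdot\frac{3(n+1)}{4}\ =\ 2C_3(n+1). \]

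For the first inequality I would fix $n\ge 0$ and distinguish two cases. If $8\pi\log(C_2\ell_{n+1})\le 2$, then $L(C_2\ell_{n+1})\le 2<3C<C_3\le C_3(n+1)$, using $C>2$ from Lemma~\ref{lem:sum}. Otherwise $L(C_2\ell_{n+1})\le 8\pi\log(C_2\ell_{n+1})=8\pi\log C_2+(n+1)\cdot 8\pi\log(3/2)$; here $C_2<(2/3)e^{C_3/(8\pi)}$ yields $8\pi\log C_2+8\pi\log(3/2)<C_3$, and combining this with $8\pi\log(3/2)<C_3$ gives
\[ 8\pi\log C_2+(n+1)\cdot 8\pi\log(3/2)=\bigl(8\pi\log C_2+8\pi\log(3/2)\bigr)+n\cdot 8\pi\log(3/2)<C_3+nC_3=C_3(n+1). \]
In either case $L(C_2\ell_{n+1})<C_3(n+1)$. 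The only real point of the argument is the exponential-versus-linear compatibility of the two bounds on $C_2$ in terms of $C_3$; everything else is routine bookkeeping with the definition of $L$.
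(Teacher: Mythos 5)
Your argument is correct and essentially the paper's: both bound $L(C_2\cdot\ell_{n+1})$ logarithmically via \eqref{eq_LE} and play the exponential growth of $\ell_n=(3/2)^n$ against the linear target $C_3\cdot(n+1)$, reducing everything to the compatibility of a lower bound on $C_2$ that is linear in $C_3$ with an upper bound that is exponential in $C_3$. The only divergence is the final bookkeeping: you fix $C_3$ first and then take $C_2\in\bigl(1+8C_3/3,\ (2/3)e^{C_3/(8\pi)}\bigr)$, which is in fact the more careful route, since the case $n=0$ of the second inequality forces $C_2>2C_3+1$ (and $n=1,2$ force $C_2>1+8C_3/3$, exactly your threshold via $\ell_n\geq 3(n+1)/4$), so the paper's closing choice $C_2=C_3$ cannot be taken literally, whereas your explicit range verifiably satisfies both inequalities.
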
 
\begin{proof}
Note that for $C_2$ large enough, by \eqref{eq_LE}, $L(C_2\cdot \ell_{n+1})\leq 8\pi (n+1)\log(C_2 \cdot 3/2)$. Thus, for both inequalities to hold, we choose $C_2$ and $C_3$ such that 
\[8\pi\log(C_2\cdot 3/2)<C_3<(3^n(C_2-1))/(2^{n+1}(n+1)) \text{ for all } n\geq 0 . \]
Choosing any $C_2\geq \max\{3C,450\}$ and $C_2=C_3$, the claim follows. 
\end{proof}

\begin{obs}\label{obs:longimagecurve} Let $C_2$ and $C_3$ be the constants from Observation \ref{obs_C2C3} and suppose that $\gamma\subset J(F)$ is a curve of diameter at least $C_2\cdot \ell_n$, for some $n\geq 0$. Then there is a compact subcurve $\gamma_0\subset \gamma\cap \H_{\alpha_n}$ such that $\diam(\gamma_0) = C_3\cdot (n+1)$. 
\end{obs}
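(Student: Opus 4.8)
The plan is to confine $\gamma$ to a single logarithmic tract, use \eqref{eq_alphan} to bound the diameter of the portion of $\gamma$ lying outside $\H_{\alpha_n}$, deduce from $\diam(\gamma)\ge C_2\cdot\ell_n$ that $\gamma\cap\H_{\alpha_n}$ must contain a connected piece of diameter larger than $C_3\cdot(n+1)$, and finally trim that piece down to diameter exactly $C_3\cdot(n+1)$ by a continuity argument.

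\textbf{Step 1 (localisation).} Since $\gamma\subset J(F)$ is connected and $F$ is of disjoint type, $\gamma$ is contained in the closure $\overline{T}$ of a single tract $T$, because distinct tracts have disjoint closures. Writing $A\defeq\gamma\cap\{z\colon\re z\le\alpha_n\}$, property \eqref{eq_alphan} gives $\diam(A)\le\diam(\{z\in\overline{T}\colon\re z\le\alpha_n\})\le\ell_n$.

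\textbf{Step 2 (a large component).} Consider the relatively open subset $\gamma\cap\H_{\alpha_n}$ of the arc $\gamma$; since $\gamma$ is locally connected, the connected components $\{\gamma^{(i)}\}_i$ of $\gamma\cap\H_{\alpha_n}$ are open in $\gamma$, and each $\gamma^{(i)}$ is a subcurve of $\gamma$ contained in $\H_{\alpha_n}$. If $A=\emptyset$ then $\gamma\subset\H_{\alpha_n}$, and we may take $\gamma^{(i_0)}\defeq\gamma$, which already has $\diam(\gamma)\ge C_2\cdot\ell_n>C_3\cdot(n+1)$ by Observation \ref{obs_C2C3}. Assume therefore $A\neq\emptyset$. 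Then the closure in $\gamma$ of every $\gamma^{(i)}$ must meet $A$: otherwise $\gamma^{(i)}$ would be both open and closed in the connected set $\gamma$, forcing $\gamma^{(i)}=\gamma$ and hence $A=\emptyset$, a contradiction. Consequently each point of $\gamma^{(i)}$ lies within $\diam(\gamma^{(i)})$ of $A$, so every point of $\gamma$ lies within distance $D\defeq\sup_i\diam(\gamma^{(i)})$ of $A$. Therefore $C_2\cdot\ell_n\le\diam(\gamma)\le\diam(A)+2D\le\ell_n+2D$, and combining this with the inequality $(C_2-1)\cdot\ell_n>2C_3\cdot(n+1)$ from Observation \ref{obs_C2C3} yields $D\ge(C_2-1)\cdot\ell_n/2>C_3\cdot(n+1)$. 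Hence some component $\gamma^{(i_0)}\subset\gamma\cap\H_{\alpha_n}$ has $\diam(\gamma^{(i_0)})>C_3\cdot(n+1)$.

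\textbf{Step 3 (trimming).} Parametrise $\gamma^{(i_0)}$ as an injective continuous curve $\delta$ on an interval, choose parameters $a<b$ with $\lvert\delta(a)-\delta(b)\rvert>C_3\cdot(n+1)$, and note that $t\mapsto\diam(\delta([a,t]))$ is continuous on $[a,b]$, vanishes at $t=a$, and exceeds $C_3\cdot(n+1)$ at $t=b$; by the intermediate value theorem it attains the value $C_3\cdot(n+1)$ at some $t_0\in(a,b]$. Then $\gamma_0\defeq\delta([a,t_0])\subset\gamma^{(i_0)}\subset\gamma\cap\H_{\alpha_n}$ is a compact subcurve with $\diam(\gamma_0)=C_3\cdot(n+1)$, as required. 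The only delicate part is the topological bookkeeping in Step 2 — that $\gamma\cap\H_{\alpha_n}$ is locally connected and that each of its components abuts $A$ — together with checking that the quantitative estimates of Observation \ref{obs_C2C3} are applied with the right constants; the remainder is routine.
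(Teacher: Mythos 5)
Your argument is correct and follows essentially the same route as the paper: confine $\gamma$ to one tract closure, use \eqref{eq_alphan} to get $\diam(\gamma\setminus\H_{\alpha_n})\leq\ell_n$, bound $\diam\gamma\leq 2D+\ell_n$ where $D$ is the largest diameter of a component of $\gamma\cap\H_{\alpha_n}$, and conclude via Observation~\ref{obs_C2C3}. Your Step~3 merely makes explicit the trimming to exact diameter $C_3\cdot(n+1)$, which the paper leaves implicit.
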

\begin{proof}
If $\gamma\subset \H_{\alpha_n}$, then we take $\gamma_0\defeq \gamma$, since $C_2\cdot \ell_n > C_3 \cdot (n+1)$. Otherwise, by definition of $\alpha_n$ in \eqref{eq_alphan}, $\diam(\gamma\setminus \H_{\alpha_n})\leq \ell_n$. Let $\delta$ be the largest diameter of a component of $\gamma\cap \H_{\alpha_n}$, and let $\gamma_0$ be such component. Since $\gamma$ is connected, any point in $\gamma\cap \H_{\alpha_n}$ can be joined to the set $\gamma\setminus\H_{\alpha_n}$ by a subcurve of $\gamma$ of length at most $\delta$, and so
\[ C_2\cdot \ell_n \leq \diam \gamma \leq 2\delta + \ell_n. \]
It follows from Observation \ref{obs_C2C3} that $\delta > C_3\cdot (n+1)$, as required. 
\end{proof} 

\begin{prop}\label{prop:smallbadsets}
Suppose that $(z,w)$ is a bad pair such that $\diam([z,w])\geq C_2$. Then, for every $\eps>0$, there is a bad set $Y$ of diameter less than $\eps$, with a bad pair of points in $[z,w]$ and so that $F^n(Y)\subset \H_{\alpha_n}$ for all $n\geq 0$. 
\end{prop}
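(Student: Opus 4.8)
\emph{Proof plan.} The strategy is to \emph{push the bad pair forward, then pull back}. The diameter of $[z,w]$ is bounded below but may be large and the arc need not sit far out to the right, so $(z,w)$ itself cannot be fed into Corollary~\ref{cor:fastbadpair} to produce a \emph{small} bad set. Instead I would iterate $F$ forward until $F^{j}([z,w])$ is long enough to apply the machinery of Observation~\ref{obs:longimagecurve} and Corollary~\ref{cor:fastbadpair}, extract from the resulting fast-escaping bad set a piece lying over $[z,w]$, and pull it back along the orbit. Since each tract branch contracts diameters of connected subsets of $\overline{\H}_1$ by a factor at least $2$ (Proposition~\ref{prop_diam}), the pull-back after $j$ steps of a set of bounded diameter has diameter $\lesssim 2^{-j}$, which produces the small bad set $Y$.

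In more detail: as in the standing assumptions of the section I take $F$ normalised and $\T\subset\{\Rea z>2\pi+2\}$, so that $J(F)$ and all its forward iterates lie in $\overline{\H}_1$; I would also strengthen the choice of the sequence $(\alpha_n)$ from~\eqref{eq_alphan} by additionally requiring $\alpha_{n+1}>\sup\{\lvert F(\zeta)\rvert:\zeta\in\overline T,\ \Rea\zeta\le\alpha_n,\ T\text{ a tract}\}$ for all $n$ (this supremum is finite, since only finitely many tracts reach real parts at most $\alpha_n$). The point of this is the implication: \emph{if $z\in F(T)$ with $\lvert z\rvert>\alpha_{n+1}$, then $\Rea F_T^{-1}(z)>\alpha_n$} (otherwise $F_T^{-1}(z)$ lies in the base $\{\zeta\in\overline T:\Rea\zeta\le\alpha_n\}$, on which $\lvert F\rvert\le\alpha_{n+1}$, a contradiction). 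Now by Proposition~\ref{prop_properties_bad}\eqref{item:preimofBad} the pairs $(F^{j}(z),F^{j}(w))$ are bad, and by Proposition~\ref{prop_diam}, $\diam F^{j}([z,w])\ge 2^{j}\diam([z,w])\ge 2^{j}C_2\ge C_2\ell_j$. Fixing $j$ large (to be specified), Observation~\ref{obs:longimagecurve} with parameter $j$ yields a compact subcurve $[p,q]\subset F^{j}([z,w])\cap\H_{\alpha_j}$ with $\diam[p,q]=C_3(j+1)$, whose endpoints form a bad pair by Proposition~\ref{prop_properties_bad}\eqref{item:approx_arcs}.

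Since $C_3>3C$ (Observation~\ref{obs_C2C3}) and $[p,q]\subset\H_{\alpha_j}$, Corollary~\ref{cor:fastbadpair} applied with parameter $j$ gives a bad set $X$ with a bad pair $(a,b)$, $p\preceq a\prec b\preceq q$, satisfying $F^{i}(X)\subset\H_{\alpha_{i+j}}$ for all $i\ge0$; in particular $X\subset\H_{\alpha_j}$. Inspecting the proof of that corollary, after discarding finitely many terms of the approximating sequences one may also assume $X\subset B([p,q],\delta)$ for a prescribed small $\delta<1$, so $\diam X\le C_3(j+1)+1$. Let $S_i$ be the tract containing $F^{i}([z,w])$ and let $G\defeq F_{S_0}^{-1}\circ\cdots\circ F_{S_{j-1}}^{-1}\colon H\to S_0$ be the branch of $F^{-j}$ inverting $F^{j}$ along $[z,w]$, so $G(F^{j}([z,w]))=[z,w]$ and $F^{j}\circ G=\id$. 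I claim $Y\defeq G(X)$ works: it is a bad set because each $F_T^{-1}$ maps bad sets to bad sets and respects $\prec$ (Proposition~\ref{prop_properties_bad}\eqref{item:preimofBad} and~\eqref{eq_preim_order}), and its bad pair $(G(a),G(b))$ lies in $G(F^{j}([z,w]))=[z,w]$ since $a,b\in[p,q]$. Iterating the contraction estimate of Proposition~\ref{prop_diam} along the connected set $B([p,q],\delta)\subset\overline{\H}_1\cap H$ gives $\diam Y\le 2^{-j}(C_3(j+1)+1)$, which is $<\eps$ once $j$ is large. For the escape property, $n\ge j$ is immediate: $F^{n}(Y)=F^{n-j}(F^{j}(Y))=F^{n-j}(X)\subset\H_{\alpha_{(n-j)+j}}=\H_{\alpha_n}$. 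For $0\le n<j$ I would argue by downward induction: $X\subset\H_{\alpha_j}$, and if $F_{S_m}^{-1}\circ\cdots\circ F_{S_{j-1}}^{-1}(X)\subset\H_{\alpha_m}$, then every point of it has modulus $>\alpha_m$, so by the strengthened property of $(\alpha_n)$ one more pull-back stays in $\H_{\alpha_{m-1}}$; after $j-n$ steps this gives $F^{n}(Y)\subset\H_{\alpha_n}$.

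\textbf{Main obstacle.} Everything except the intermediate escape is bookkeeping around Observations~\ref{obs:longimagecurve}, \ref{obs_C2C3} and Corollary~\ref{cor:fastbadpair}. The delicate part is exactly that $F^{n}(Y)\subset\H_{\alpha_n}$ for $0\le n<j$: knowing that the forward orbit of $Y$ lands deep in the half-planes only from time $j$ onwards gives a priori no control over the first $j$ iterates, and the arc $F^{n}([z,w])$ itself need not lie in $\H_{\alpha_n}$. My resolution is the strengthened choice of $(\alpha_n)$ together with the fact that the downstream bad set $X$ from Corollary~\ref{cor:fastbadpair} already sits inside $\H_{\alpha_j}$, which lets the inclusion propagate backwards down the pull-back orbit. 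I expect the main work of the write-up to be making this propagation precise, pinning down the ``discard finitely many terms'' step, and checking the diameter accounting and the compatibility of all the smallness requirements on $j$.
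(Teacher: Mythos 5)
Your forward-push/pull-back skeleton matches the paper's, and you correctly isolate the delicate point: controlling $F^m(Y)$ for the intermediate times $0\le m<j$. But your proposed resolution does not work. You want to re-choose $(\alpha_n)$ so that additionally $\alpha_{n+1}>\sup\{\lvert F(\zeta)\rvert:\zeta\in\overline T,\ \Rea\zeta\le\alpha_n\}$, and then propagate the inclusion backwards. This extra requirement is a lower-bound recursion on $(\alpha_n)$, while \eqref{eq_alphan} is an upper-bound constraint: since every truncated tract $\{z\in\overline T:\Rea z\le\alpha_n\}$ contains points with real part near the (uniformly bounded) left end of $T$ as well as points with real part $\alpha_n$, condition \eqref{eq_alphan} forces $\alpha_n\le \ell_n+O(1)=(3/2)^n+O(1)$. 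On the other hand, for typical tracts (already for the logarithmic transform of a disjoint-type exponential map, where $\lvert F(\zeta)\rvert\asymp e^{\Rea\zeta}$ on the tract) your condition forces $\alpha_{n+1}\gtrsim e^{\alpha_n}$, and since $\alpha_0>2\pi+2$ this produces tower-type growth incompatible with the cap $(3/2)^n+O(1)$ after one or two steps. You cannot simply drop \eqref{eq_alphan} either: every tool you invoke (Lemma~\ref{lem:sum}, Observation~\ref{obs:uniformescape}, Corollary~\ref{cor:fastbadpair}, Observations~\ref{obs_C2C3} and~\ref{obs:longimagecurve}) is calibrated to a sequence satisfying \eqref{eq_alphan} with $\ell_n=(3/2)^n$, and the proposition is later used (Proposition~\ref{prop:bad2}) with that same sequence. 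So the key step of your argument—the downward induction giving $F^m(Y)\subset\H_{\alpha_m}$ for $m<j$—rests on a sequence that in general does not exist.

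The paper handles the intermediate times differently, and this is the idea missing from your write-up: instead of producing the long curve only at one late time $j$, one builds a whole nested sequence of compact curves $\gamma_m\subset F^m([z,w])\cap\H_{\alpha_m}$ with $\diam\gamma_m=C_3(m+1)$ and $\gamma_{m+1}\subset F(\gamma_m)$, by iterating Observation~\ref{obs:longimagecurve} together with the expansion estimate $\diam F(\gamma_m)\ge E(C_3(m+1))>C_2\ell_{m+1}$ from Proposition~\ref{prop_diam} and Observation~\ref{obs_C2C3}. Applying Corollary~\ref{cor:fastbadpair} to the endpoints of $\gamma_n$ and pulling the resulting bad pair back along this nest, the $m$-th iterates of the pulled-back pair lie on $\gamma_m\subset\H_{\alpha_m}$ \emph{by construction} for all $m<n$; the approximating sequences are then pushed into $\H_{\alpha_m}$ by continuity after discarding finitely many initial terms (so $Y$ is taken to be the pulled-back pair plus these tails, not the full pull-back $G(X)$ of the downstream bad set). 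With that modification the rest of your bookkeeping (the choice of $n$ via $L^n(C_3(n+1))<\eps/3$, or equivalently your $2^{-j}$ contraction, and the escape for $m\ge n$) goes through.
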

\begin{proof}
We claim that there is a sequence of compact curves $\gamma_n \subset F^n([z,w])\cap \H_{\alpha_n}$ such that $\diam \gamma_n = C_3\cdot (n+1)$ and $\gamma_{n+1}\subset F(\gamma_n)$ for all $n\geq 0$. Indeed, if we let $\gamma\defeq [z,w]$, this follows inductively from the assumption $\diam([z,w])\geq C_2$, Observation~\ref{obs:longimagecurve} and the fact that $\diam(F(\gamma_n)) \geq E(\diam \gamma_n) = E(C_3\cdot (n+1)) > C_2 \cdot \ell_{n+1}$, where we have used Proposition \ref{prop_diam} and Observation \ref{obs_C2C3}.

Let us fix $\eps>0$, and some $n>0$ such that
\begin{equation}\label{eq_nforeps}
L^n(C_3\cdot (n+1)) < \eps/3.
\end{equation}
Recall that by Proposition \ref{prop_properties_bad}, $(F^n(z),F^n(w))$ is a bad pair, and moreover, the endpoints of $\gamma_n$ are also a bad pair. Then, Corollary \ref{cor:fastbadpair} provides a bad set $X$ that contains a bad pair $(a,b)$ of points in $\gamma_n$, along with approximating sequences
$a_k\to a$, $b_k\to b$, and so that $F^j(X)\subset \H_{\alpha_{j+n}}$ for all $j\geq 0$. 

Let $\zeta$ and $\omega$ be the respective $n$-th preimages of $a$ and $b$ in $\gamma_0$. In particular, by Proposition \ref{prop_properties_bad}\ref{item:preimofBad}, for each $0\leq j<n$, $(F^j(\zeta),F^j(\omega))$ is a bad pair of points in $\gamma_j$. Let $\zeta_k\to \zeta$ and $\omega_k\to \omega$ be corresponding approximating sequences,
with $F^n(\zeta_k)=a_k$ and $F^n(\omega_k)=b_k$. Note that, by construction, 
\[F^j(\zeta),F^j(\omega)\in \gamma_j\subset \H_{\alpha_j} \quad \text{ for all } \quad 0\leq j\leq n-1.\]
By continuity, we can choose $k_0$ large enough so that all points $(F^j(\zeta_k))_{k\geq k_0}$, $(F^j(\omega_k))_{k\geq k_0}$ are also in $\H_{\alpha_j}$. Moreover, we take $k_0$ sufficiently large so that
\[\diam(\{\zeta_k \colon k\geq k_0\})<\eps/3 \quad \text{and}\quad \diam(\{\omega_k \colon k\geq k_0\})<\eps/3. \]
In addition, by Proposition \ref{prop_diam} and \eqref{eq_nforeps}
\[ |\zeta - \omega| \leq L^n(|a-b|) \leq L^n(\diam \gamma_n) = L^n(C_3\cdot (n+1)) < \eps/3. \]
Consequently, 
\[ Y \defeq \{\zeta,\omega\} \cup \bigcup_{k=k_0}^{\infty} \{\zeta_k,\omega_k\} \]
is the desired bad set.
\end{proof} 

\begin{prop}\label{prop:bad2}
There is an increasing sequence $(a_n)_{n\geq 0}$  of positive numbers, tending to infinity, such that for infinitely many $n\geq 0$, there is a bad set $X_n\subset J(F)$ with the following properties.
\begin{enumerate}[(a)]
\item $\min_{x\in X_n} \re F^j(x) - a_{n+j}$ is positive for all $j$, and tends to infinity as $j\to\infty$;\label{item:fastescape}
\item $X_n\subset \H_{a_n}\setminus \H_{2a_n}$.\label{item:nottoolarge}
\item $\diam X_n \leq 1$. \label{item:smalldiameter}
\end{enumerate}
\end{prop} 
\begin{proof}
Recall that, by assumption, $F$ is criniferous and of disjoint type, and $J(F)$ is not a Cantor bouquet. Then, by Corollary \ref{cor:badbouquets}, there is a bad pair $(z,w)$ in $J(F)$. By passing to a forward iterate and using Proposition \ref{prop_diam}, we may assume that $|z-w|\geq C_2$. 

Let us choose a sequence of positive $\epsilon_k\to 0$, and for each $k\geq 0$, let $Y_k$ be the bad set provided by Proposition~\ref{prop:smallbadsets}, that contains a bad pair in $[z,w]$ and so that  $\diam(Y_k)<\epsilon_k$. Moreover, recall that 
\begin{equation}\label{eq_FjY}
F^j(Y_k)\subset \H_{\alpha_j} \text{ for all }j\geq 0.
\end{equation}
Since we have chosen $\alpha_0>2\pi+2$, we can take an increasing sequence $a_n\to \infty$ such that 
\begin{equation}\label{eq_an}
a_0>\pi+1 \quad \text{ and } \quad  a_{2n} < \alpha_n/2 \quad \text{ for all } n\geq 0.
\end{equation}
Let us fix some $m\geq 0$ sufficiently large so that $[z,w]\subset \C\setminus \H_{a_m-2}$. Moreover, for each $j\geq 0$, let $T_j$ be the tract that contains $F^j([z,w])$. We can then choose $k\defeq k(m)$ large enough so that if $Y\defeq Y_k$, then 
\begin{equation} \label{eq_diamFjY}
\diam(F^j(Y))\leq 1 \quad \text{ and } \quad F^j(Y)\subset T_j \quad\text{ for each } 0
\leq j\leq 2m,
\end{equation}
since by \eqref{eq_FjY}, the sets $F^j(Y)$ do not accumulate to the boundary of tracts. In particular, since $Y \cap [z,w]\neq\emptyset$, $Y\subset \C\setminus \H_{a_m}$. Moreover, $F^j(Y)\subset \H_{\alpha_j}\subset \H_{a_{2j}}\subseteq \H_{a_{m+j}}$ for all $j\geq m$. Consequently, there exists
\[j_0\defeq \max\{0 \leq j< m \colon  F^{j}(Y)\subset \C\setminus \H_{a_{m+j}} \}. \]
We set $n\defeq n(m) = m + j_0$.
\begin{claim}
There is a bad set $X_n\subset \H_{a_n}\setminus \H_{a_n+\pi +1}$ such that $F(X_n)=F^{j_0+1}(Y)+2\pi i K$ for some $K\in \Z$. 
\end{claim}
\begin{subproof} Let $T\defeq T_{j_{0}}$ be the tract that contains $F^{j_0}(Y)$. Since $F\vert_T\colon T\to \H$ is a conformal map, by $T$ being a tract, \eqref{eq_diamFjY} and Proposition \ref{prop_diam}, $F^{-1}_T(\{F^{j_0+1}(Y)+2\pi i K\}_{K\in \Z})$ consists of a collection $\mathcal{C}$ of bad sets of diameter at most one, that accumulate at $+\infty$. In addition, since $F$ is normalized, \eqref{eq_normalized}, for each $K\in \Z$,
\begin{equation}\label{eq_dist_preim}
\dist(F^{-1}_T(F^{j_0+1}(Y)+2\pi i K), F^{-1}_T(F^{j_0+1}(Y)+2\pi i (K-1)))<\pi.
\end{equation}
Recall that $F^{j_0}(Y)\in \mathcal{C}$ and $F^{j_0}(Y)\subset \C \setminus \H_{a_{n}}$. In particular, not all sets in $\mathcal{C}$ are in $\H_{a_{n}}$, and so, there is $K\in \Z$ such that 
$X_n\defeq F^{-1}_T(F^{j_0+1}(Y)+2\pi i K)\subset \H_{a_{n}}$ but $F^{-1}_T(F^{j_0+1}(Y)+2\pi i (K-1))\nsubset \H_{a_{n}}$. Then, by \eqref{eq_dist_preim} and since $\diam(X_n)\leq 1$, the claim follows. 
\end{subproof}

In particular, by $2\pi i $-periodicity of $F$ and the definition of $j_0$, for all $j\geq 1$,
\begin{equation*}
F^j(X_n) =F^{j-1}(F^{j_0+1}(Y)+2\pi i K) \subset \H_{a_{m + j_0 + j}} = \H_{a_{n+j}}.
\end{equation*}
In addition, by \eqref{eq_FjY}, we also have that 
\[F^j(X_n) \subset \H_{\alpha_{j_0+j}}, \]
and when $j \geq m-j_0=n - 2j_0$,
\[ \alpha_{j_0+j} - a_{n+j} > 2 a_{2j_0 + 2j} - a_{n+j} \geq  a_{n+j} \to \infty,  \]
so \ref{item:fastescape} holds for $X_n$. By the claim and \eqref{eq_an}, we also have \ref{item:nottoolarge}. Finally,~\ref{item:smalldiameter} holds as $X_n$ is a preimage of a set of diameter at most $1$, see \eqref{eq_diamFjY} and Proposition \ref{prop_diam}.
\end{proof} 

\begin{proof}[Proof of Theorem~\ref{thm:bad}]
Let $(a_n)_n$ be the sequence from Proposition \ref{prop:bad2}, and set 
\begin{align*} A_1 &\defeq \{z\in J(F)\colon \re F^n(z) > a_n \text{ for all $n$, and } \re F^n(z) - a_n \to\infty\}; \\ 
A &\defeq \overline{A_1}. \end{align*}
Clearly, if $z\in A$, then $\re F^n(z) \geq a_n$ for all $n$, and so $A$ is a closed subset of $I(F)$. Let us fix any $z\in A_1$ and $\epsilon>0$. For each $n\geq 0$, let $T_n$ be the tract containing $z_n \defeq F^n(z)$.
\begin{claim} There is a constant $D$, that does not depend on $z$, such that for infinitely many $n \geq 0$, there is a bad set $\mathcal{Z}_n$ such that 
\begin{enumerate}[(a)]
\item $\min_{x\in \mathcal{Z}_n} \re F^j(x) - a_{n+j}$ is positive for all $j$ and tends to infinity as $j\to\infty,$ \label{item:newa}
\item $\dist(\mathcal{Z}_n, z_n)\leq D$;
\item $\diam(\mathcal{Z}_n)\leq 1$.
\end{enumerate}
\end{claim}
\begin{subproof} 
By Proposition~\ref{prop:bad2}, there is an infinite set $S\subset \N_0$  such that for all $n\in  S$ there exists a bad set $X_n \subset \H_{a_n}\setminus \H_{2a_n}$ with the properties on the statement of the proposition. By Lemma~\ref{lem_distpreim}, for every $n$ such that $n+1\in S$ and large enough so that $\Rea z_{n+1}\geq 2a_{n+1}$, there is $m\in\Z$ such that the distance between $\mathcal{Z}_n\defeq F_{T_{n}}^{-1}(X_{n+1}+ 2 \pi i m)$ and $z_{n}$ is bounded by a universal constant~$D$. Since $X_{n+1}$ has diameter at most $1$, so does $\mathcal{Z}_n$ by Proposition \ref{prop_diam}. Then, by Proposition~\ref{prop:bad2}\ref{item:fastescape} applied to $X_{n+1}(=F(\mathcal{Z}_n))$, we are only left to check that $\mathcal{Z}_n\subset \H_{a_n}$ to conclude \ref{item:newa}. This holds for all $n$ sufficiently large so that $\Rea z_n-a_n>D+1.$
\end{subproof}

Let $J\defeq \{j\geq 0 \colon \re z_j -a_j<D+1\}$, and set $k\defeq \max J$ if $J\neq\emptyset$, and $k=0$ otherwise, noting that since $z\in A_1$, the maximum exists in the first case. Let us choose any $n$ as in the claim and large
 enough so that 
\begin{equation}\label{eq_choicek}
(D+1)/2^{n-j}<\min\{\epsilon, \re z_j -a_j\} \quad \text{ for all } j\in \{0,\dots, k\}.
\end{equation}
For each $j<n$, let 
$$W_j \defeq F^{-1}_{T_j}\circ F^{-1}_{T_{j+1}} \cdots \circ F^{-1}_{T_{n-1}}( \mathcal{Z}_n),$$
which, by Proposition \ref{prop_properties_bad}\ref{item:preimofBad}, is a bad set. Then, 
by Proposition~\ref{prop_diam}, the previous claim and \eqref{eq_choicek},
\[\diam(W_j\cup \{z_j\})<(D+1)/2^{n-j} \quad \text{ and } \quad  (W_j\cup \{z_j\})\subset \H_{a_j} .\]
Thus, $W_0\subset \{\zeta \in A_1\colon \lvert \zeta -z\rvert < \eps\}$, and the statement follows.
\end{proof}

\section{Proof of Theorem \ref{thm_CB}}\label{sec_CB}
It follows from \cite[Theorem~1.6]{mio_newCB} that if $f\in \CB$, then there exists an absorbing Cantor bouquet $X\subset J(f)$ with $J_{Q}(f)\subset X$ for some $Q\geq 0$. In particular,~\ref{item:absorbingfan} holds. Property~\ref{item:newCB} also follows: let $n_0$ be so large that $f^{n_0}(E)\subset X$, and let 
$\gamma_z$ be the arc connecting $z$ to $\infty$ in $X$.

Now let $f\in\B$, choose $Q$ such that $S(f)\cup\{f(0)\}$ is contained in the disc of radius $e^Q$ centred
  at the origin, and let $F\colon \T\to \H_Q$ be a logarithmic transform of $f$. Also choose $\lambda\in \C^{\ast}$ so
small that $g\colon z\mapsto f(\lambda z)$ is of disjoint type and $0\in F(g)$. By Proposition \ref{prop_disjoint_type},  $g$ has a disjoint-type logarithmic transform $G$. It follows from results in \cite[\S 3]{lasseRigidity}, see \cite[Theorem~4.4]{mio_newCB}, that for all sufficiently large $R$, there is $Q'>0$ and a continuous map $\Theta\colon J_R(G)\rightarrow J(F)$ that is a homeomorphism to its image so that 
\begin{align}\label{eq_commute_cor}
	&\Theta \circ G =F\circ \Theta, \quad \Theta(J_R(G))\subset J_Q(F), \\
	   \notag &\Theta(I_R(G))\subset I(F) \quad \text{ and } \quad  J_{Q'}(F)\subset \Theta(J_R(G)).
\end{align}

Suppose that~\ref{item:absorbingfan} holds. Then, by Proposition~\ref{prop_absJR}, for each $R>0$ the set $J_{e^{Q'}}(f)\cup \{\infty\}$ contains an arc-smooth dendroid $\hat{X}_R\defeq X_R\cup \{\infty\}$.  For all $R\geq Q$,
\begin{equation} \label{eq_Y}
	Y=Y_R\defeq \exp^{-1}(X_R) \subset J_{Q'}(F), 
\end{equation}
see \eqref{eq_JRF}. Note that by \eqref{eq_commute_log}, $\hat{Y}\defeq Y\cup \{\infty\}$ is an absorbing arc-smooth dendroid. 

Fix $R$ large enough such that \eqref{eq_Y} and \eqref{eq_commute_cor} hold, and let $\hat{Y}$ be the corresponding arc-smooth dendroid. Let $Z\defeq \Theta^{-1}(Y)$, and note that $\hat{Z}\defeq Z\cup \{\infty\}$ is an arc-smooth dendroid contained in $J_R(G)\cup \{\infty\}$. In particular, by \eqref{eq_commute_cor} and since $F(Y)\subset Y$,
$$G(Z)=(G\circ \Theta^{-1})(Y)= (\Theta^{-1}\circ F)(Y) \subseteq \Theta^{-1}(Y)=Z. $$

We are left to show that if $z\in I(G)$, then some iterate of $z$ belongs to $Z$. Let us fix $z\in I(G)$, and let $m>0$ such that $G^m(z)\in J_R(G)$. Then, by \eqref{eq_commute_cor}, $w\defeq \Theta(G^m(z))\in I(F)$, and so there exists $p\geq 0$ such that $F^p(w)\in Y$. Again by \eqref{eq_commute_cor}, 
$$(F^p\circ\Theta \circ G^m)(z)=(\Theta \circ G^{p+m})(z) \in Y,$$
and thus $G^{p+m}(z)\in Z$. It follows that $\hat{Z}$ is an absorbing arc-smooth dendroid. By  Corollary~\ref{cor:main}, $J(G)$ is a Cantor bouquet, and hence so is $J(g)=\exp(J(G))$. We have shown that~\ref{item:absorbingfan} implies~$f\in\CB$.

Finally, suppose by way of contradiction that~\ref{item:newCB} holds but $f\notin \CB$. Fix
$R>0$ so 
  large that the function $\Theta$ is defined and satisfies~\eqref{eq_commute_cor}. By~\ref{item:newCB}, the 
    function $f$ is criniferous, and 
 hence so are $g$ and $G$. (Indeed, the pre-image of a ray tail of $F$ in $J_{Q'}(F)$ under $\Theta$ is a ray tail of
   $G$.) For every $z\in J(G)$, denote the unique ray tail connecting $z$ to $\infty$ in $J(G)$ by $\gamma_z^G$. 
   
    By assumption, $J(G)$ is not
   a Cantor bouquet. By Theorem~\ref{thm:bad}, there is a bad set $X^G\subset I(G)$ such that $\re G^n|_{X^G}\to\infty$ uniformly. 
   Let $(z,w)$ be the 
  corresponding bad pair and $(z_k)_{k=0}^{\infty}$ and $(w_k)_{k=0}^{\infty}$ the approximating sequences.
   Passing to a forward iterate, we may assume that $X^{G}\cup \gamma^G_z \subset J_R(G)$. (Recall that the image of
   a bad set is again a bad set by Proposition~\ref{prop_properties_bad}.) 
   Set $X^F \defeq \Theta(X^G)$ and $E\defeq \exp(X^F)$. Then $f^n\to\infty$ 
   uniformly on $E$. 
   We would like to use the map $\Theta$ to relate the ray tails of $G$ to $F$ and $f$, but 
   Theorem~\ref{thm:bad} does not ensure that $\Rea G^n\to\infty$ uniformly on the union of the 
   $\gamma_{w_k}^G$.
   
   Instead, we will apply condition~\ref{item:newCB} to $E$. 
   Let $n_0$ be as in~\ref{item:newCB}; again we may pass to a forward iterate 
   and assume that $n_0=0$. For $\zeta\in E$, let $\gamma^f_{\zeta}$ be the ray tail of $f$ connecting $\zeta$ to $\infty$
   whose existence is ensured by~\ref{item:newCB}.
   By~\ref{item:newCB}, $f^n\to\infty$ uniformly on the union of these
   ray tails, so there is $n\geq 0$ such that $f^n(\gamma^f_{\zeta})\subset J_{e^{Q'}}(f)$ for all $\zeta\in E$.
   
 For each $x\in X^F$, there is a corresponding ray tail $\gamma^F_{F^n(x)}\subset J_{Q'}(F)$ connecting
  $F^n(x)$ to infinity, obtained from $f^n(\gamma^f_{e^{x}})$ via a branch of the logarithm. 
   Applying $\Theta^{-1}$, we obtain a ray tail of $G$ connecting
   $\Theta^{-1}(F^n(x))=G^n(\Theta^{-1}(x))$ to $\infty$ in $J_R(G)$.  
     But $G$ is of disjoint type, so the ray tail is unique and hence agrees with 
  $\gamma^G_{G^n(\Theta^{-1}(x))}$. Recall that $G^n(X^G)$ is a bad set, so $G^n(z)$ is an accumulation point of 
  the curves
  $\gamma^G_{G^n(w_k)}$. Applying $\exp\circ\Theta$, we conclude that, for 
  $\omega_k \defeq \exp(\Theta(w_k))\in E$, the ray tails
  $f^n(\gamma^f_{\omega_k})$ accumulate on $f^n(\zeta)$, where $\zeta=\exp(\Theta(z))$.
  
  On the other hand, since $z\prec w$, we have $G^n(z)\notin \gamma^G_{G^n(w)}$, and therefore
     \[ f^n(\zeta)\notin \exp(\Theta( \gamma^G_{G^n(w)})) = f^n(\gamma^f_{\omega}), \]
     where $\omega\defeq \exp(\Theta(w))$. 
     In summary, we have $\omega_k\to \omega$, but the ray tails $f^n(\gamma^f_{\omega_k})$ accumulate on 
     $f^n(\zeta)$, which is not in $f^n(\gamma^f_{\omega})$. So the 
     ray tails ending at points of $f^n(E)$ do not depend continuously on their endpoints. To complete the proof,
     we should translate this back to the original set $E$, leading to a contradiction with~\ref{item:newCB}.
  
  Recall that $\gamma^G_{z}\subset J_R(G)$. The curve $\exp(\Theta(\gamma^G_{z}))\subset J_{e^Q}(f)$ is a 
    ray tail of $f$ that connects $\zeta$ to infinity, and its image under $f^n$ is $f^n(\gamma^f_{\zeta})$. 
    We conclude that this curve is precisely $\gamma^f_{\zeta}$, which consequently lies in 
    $J_{e^Q}(f)$. Let $\tilde{Q}$ be slightly smaller than $Q$ so that 
	    the disc of radius $e^{\tilde{Q}}$ centred at the origin still contains $S(f)\cup \{f(0)\}$, 
	    and let $U$ be the connected component 
	    of \[ \{\xi\in \C\colon \lvert f^m(\xi)\rvert > e^{\tilde{Q}} \text{ for $0\leq m\leq n+1$} \} \]
    containing $\gamma^f_{\zeta}$. Then $U$ is simply connected and 
    $f^n\colon U\to f^n(U)\eqdef V$ is univalent; let $\phi$ denote its inverse.
    We have $\phi(f^n(\zeta))=\zeta$ and $\phi(f^n(\omega))=\omega$, and 
    similarly for $\omega_k$ and $\zeta_k = \exp(\Theta(z_k))$ when $k$
    is large enough.

  By assumption, 
     \[ \gamma^f_{\omega_k}\to \gamma^f_{\omega}\subset \gamma^f_{\zeta} \]
     in the Hausdorff metric on $\Ch$. Moreover, if $\xi\in \partial V$ is sufficiently large, then 
     $\lvert f(\xi)\rvert = e^{\tilde{Q}}$. (Indeed, otherwise $\xi$ is the image of a point of modulus $e^{\tilde{Q}}$ under 
     $f^m$ for some $m\in\{0,\dots,n\}$; this is not possible of $\xi$ is sufficiently large.)
     Since $f^n(\gamma^f_{\omega_k})\subset J_{e^{Q'}}(f)$, and 
     $f^n(\xi)\to\infty$ as $\xi\to\infty$ in $V$, we conclude that 
     $\gamma^f_{\omega_k}\subset U$ for sufficiently large $k$. 
     But then $\gamma^f_{\omega_k} = \phi(f^n(\gamma^f_{\omega_k}))$ accumulates on
     $\phi(f^n(\zeta)) = \zeta \notin \gamma^f_{\omega}$, which is the desired contradiction.\qed

\section{A criniferous function with non-Cantor bouquet Julia set}\label{sec_noCb}
In this section we prove Theorem \ref{thm_intro_crinNoCantorB}. For clarity of exposition, we first construct a criniferous function $F\in \BlogP$ of disjoint type such that $J(F)$ is not a Cantor bouquet. Then, we will indicate in Subsection  \ref{subsec_51} how this construction is modified to ensure that an endpoint of an arc of $J(F)$ is not accessible from $\C\setminus J(F)$, which by Theorem \ref{thm_Bishop} implies Theorem  \ref{thm_intro_crinNoCantorB}. 

The idea of the construction is as follows: we show that $J(F)$ is not a Cantor bouquet by constructing a bad pair $(a,b)$ in a fixed ray in $\R^+$. To this end, we shall describe a collection of tracts, consisting of a straight half-strip $T_0$ and 
``hooked'' tracts $\{T_n\}_{n=1}^{\infty}$, contained in the intersection of the right half-plane $\H$ and a strip of height $2\pi$. Then we define a disjoint type function $F\in \BlogP$ by considering their $2\pi i$-translates; see Figure~\ref{figure_noCB}. By controlling the thickness of the tracts, we make sure that $F$  satisfies a head-start condition, and so all components of $J(F)$ are arcs to infinity.

More precisely, set
\begin{equation}\label{eq_T0}
	T_0\defeq \{z\in \C \colon \Rea z >4 \text{ and } \vert \Ima z \vert <\pi/2\}, \quad \hat{T}_0\defeq T_0,
\end{equation}
and let $F_0$ be the conformal isomorphism from $T_0$ to $\H$ with
\begin{equation}\label{eq_preim1}
	F_0(5)=5 \quad \text{ and } \quad F_0'(5)>0. 
\end{equation}

In particular, since $T_0$ is symmetric with respect to the real axis, this implies that $F_0([5,\infty))= [5,\infty)$. Moreover, by the expanding property of $F_0$, we have $F_0(t)> t$ for $t > 5$. Let us take some interval $[a,b]\subset \R^+$ with $a\geq 6$ and $b\defeq a +2<F(a)$. We set 
\[ [a_n,b_n] \defeq [F_0^n(a),F_0^n(b)] \quad \text{ for all } \quad n\geq 0.\]
Then $a_n\to\infty$ as $n\to \infty$. 

In order to define the rest of the tracts, we require the following technical result.
\begin{prop} \label{prop_V} Let $(K_n)^\infty_{n=0}$ be a sequence of compact sets, $K_n\subset \overline{\H}\setminus \D$, such that $\lim_{n\to \infty} \min_{z\in K_n}\vert z \vert=\infty$. Let $0<\delta<\pi$. Then there is a continuous non-increasing function $\rho \colon [0,\infty)\to (0,\pi)$ with $\rho(0)\leq \delta$ such
	that the following holds. Let $V$ be the domain
	\[ V \defeq \{ x + iy \colon x > 0, \lvert y \rvert < \rho(x) \}, \]
	and let $\psi\colon V\to\H$ be the conformal isomorphism satisfying $\psi(1)=1$ and $\psi(\infty)=\infty$. Then $\diam(\psi^{-1}(K_n))\leq \delta$ for all $n\geq 0$. 
\end{prop}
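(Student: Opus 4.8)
The plan is to construct $\rho$ by a careful inductive procedure, decreasing to $0$, so that for each $n$ the half--annulus $A_n\defeq\{\mu_n\le\lvert w\rvert\le M_n\}\cap\overline{\H}$ that contains $K_n$ — where $\mu_n\defeq\min_{z\in K_n}\lvert z\rvert$ and $M_n\defeq\max_{z\in K_n}\lvert z\rvert$ — is pulled back by $\psi^{-1}$ into a region of $V$ that is short in the real direction and thin in the imaginary direction.

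\textbf{Step 1 (reduction to control of real parts).} Note that $\widehat V\defeq V\cup\{\infty\}$ is a Jordan domain of $\widehat\C$ and $\psi$ extends to a homeomorphism of closures with $\psi(\infty)=\infty$; since $\psi$ commutes with complex conjugation it maps the real segment of $V$ onto $(0,\infty)$ and the two boundary graphs $\{y=\pm\rho(x)\}$ onto the two halves of the imaginary axis. For $\mu\ge 1$ the semicircle $\{\lvert w\rvert=\mu\}\cap\overline\H$ is a hyperbolic geodesic of $\H$, so $\Sigma_\mu\defeq\psi^{-1}(\{\lvert w\rvert=\mu\}\cap\overline\H)$ is a geodesic crosscut of $V$ with endpoints a conjugate pair of boundary points (hence of equal real part) and crossing $(0,\infty)$ at $\psi^{-1}(\mu)$. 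Writing $a(\mu)\defeq\min_{\Sigma_\mu}\Rea$ and $b(\mu)\defeq\max_{\Sigma_\mu}\Rea$, the nesting of the semicircles in $\overline\H$ together with the fact that each boundary graph is a graph over $[0,\infty)$ (so the arc of $\partial V$ cut off by $\Sigma_\mu$ has real parts in $[0,b(\mu)]$, resp.\ $[a(\mu),\infty)$) yields $\psi^{-1}(\{\lvert w\rvert\le\mu\}\cap\overline\H)\subset\{\Rea z\le b(\mu)\}$ and $\psi^{-1}(\{\lvert w\rvert\ge\mu\}\cap\overline\H)\subset\{\Rea z\ge a(\mu)\}$. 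Hence $\psi^{-1}(K_n)\subset\{a(\mu_n)\le\Rea z\le b(M_n)\}$, and since $\rho$ is non-increasing, $\lvert\Ima z\rvert\le\rho(a(\mu_n))$ on this set. Moreover the same compactness argument shows $a(\mu)\to\infty$ as $\mu\to\infty$ (a sequence of crosscut points with bounded real parts would subconverge in $\overline V$ to a point whose $\psi$--image is finite, contradicting that the modulus of its image tends to infinity), so $a(\mu_n)\to\infty$ as $n\to\infty$. It therefore suffices to arrange, for every $n$, that $b(M_n)-a(\mu_n)\le\delta/2$ and $\rho(a(\mu_n))\le\delta/4$.

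\textbf{Step 2 (quantitative tools).} Two estimates drive the construction. First, for $z\in V$ one has $\dist(z,\partial V)\le 2\rho(\Rea z)$, so by \eqref{eq_standard_est} the hyperbolic density of $V$ at $z$ is at least $\tfrac14\rho(\Rea z)^{-1}$; integrating along real parts, any curve in $V$ joining $\Sigma_\mu$ to $\Sigma_M$ ($\mu<M$) has hyperbolic length at least $\int_{b(\mu)}^{a(M)}\frac{dt}{4\rho(t)}$ when $b(\mu)<a(M)$. Since $\psi$ is a hyperbolic isometry and the two semicircles lie at hyperbolic distance $\log(M/\mu)$, this gives $\int_{b(\mu)}^{a(M)}\frac{dt}{4\rho(t)}\le\log(M/\mu)$; in particular, if $\rho\le\varepsilon$ on $[b(\mu),a(M)]$ then $a(M)-b(\mu)\le 4\varepsilon\log(M/\mu)$. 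Second, if $\rho$ is taken piecewise linear with small slopes, the crosscut $\Sigma_\mu$ stays close to the vertical segment through $\psi^{-1}(\mu)$, so the width $b(\mu)-a(\mu)$ can be made arbitrarily small (by comparison with the conformal map of a genuine half--strip, or directly from \eqref{eq_standard_est} applied along $\Sigma_\mu$). Combining these, $b(M)-a(\mu)$ is as small as we like once $\rho$ is small and slowly varying on the corresponding real--part window. Thus it remains to choose a non-increasing $\rho\downarrow0$ so that for each $n$ the function $\rho$ is small (relative to $\log(M_n/\mu_n)$) and slowly varying on $[a(\mu_n),b(M_n)]$, with $\rho(a(\mu_n))\le\delta/4$.

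\textbf{Step 3 (inductive construction) and the main obstacle.} The genuine difficulty is that $\psi$, hence the location of the window $[a(\mu_n),b(M_n)]$, depends on all of $\rho$ — including the tail not yet chosen. The device for breaking this circularity is a Schwarz--Pick comparison: if $\rho$ is replaced by its \emph{constant continuation} $\rho^{\ast}$ beyond a checkpoint $\xi$, giving a fatter domain $V\subset V^{\ast}$, then $\psi_{V^{\ast}}\circ\psi_{V}^{-1}\colon\H\to\H$ fixes $1$ and $\infty$ and is $1$--Lipschitz for the hyperbolic metric, so on the real axis $\psi_{V}^{-1}(\mu)\le\psi_{V^{\ast}}^{-1}(\mu)$; that is, thinning $\rho$ downstream only pushes the level crosscuts $\Sigma_\mu$ further left and strengthens the compression of $\psi^{-1}$, while its effect on $\psi$ over the already--fixed left part decays because later slabs will sit at vastly larger radii. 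Concretely: choose $\rho(0)$ so small that $\rho(0)\le\delta/4$ and $4\rho(0)\log(M_n/\mu_n)\le\delta/8$ for the finitely many $n$ with $\mu_n$ below a fixed threshold (possible since $\mu_n\to\infty$); then define $\rho$ on successive intervals $[\xi_{n-1},\xi_n]$, non-increasing, piecewise linear with slopes decreasing to $0$: at stage $n$, with $\rho\restriction_{[0,\xi_{n-1}]}$ already fixed, use the fat truncation $V^{\ast}$ to locate a bound $\xi_n$ for where $A_n$ pulls back, and make $\rho$ on $[\xi_{n-1},\xi_n]$ so small (it is only bounded above by $\rho(\xi_{n-1})$) and so slowly varying that $\xi_n-\xi_{n-1}\le\delta/2$, $\rho(\xi_{n-1})\le\delta/4$, and $\rho\le\delta/(32\log(M_n/\mu_n))$ throughout; the estimates of Steps 1--2 then give $\diam\psi^{-1}(K_n)<\delta$. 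I expect the main work (beyond the clean reduction in Step 1) to be precisely this bookkeeping — showing that the stage-$n$ choices, made on the basis of $\rho\restriction_{[0,\xi_{n-1}]}$ alone, are never spoiled by the later, further-out stages, together with the (small but nonzero) control of the individual crosscut widths $b(\mu)-a(\mu)$ via slow variation of $\rho$.
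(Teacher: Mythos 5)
Your Step 1 reduction to real parts via the crosscuts $\Sigma_\mu$ and the length estimate of Step 2 are sound, but the construction in Step 3 has a genuine gap, and it is exactly at the point you defer as ``bookkeeping''. Your stage-$n$ requirement is that $\rho\le\delta/\bigl(32\log(M_n/\mu_n)\bigr)$ on the cell $[\xi_{n-1},\xi_n]$ prepared for $K_n$; but this controls $\diam\psi^{-1}(K_n)$ only if the window $[a(\mu_n),b(M_n)]$ actually lies in that cell, and nothing in the hypotheses or in your scheme forces this. The data satisfy only $\mu_n\to\infty$: there is no monotonicity, and $\mu_n$ may be far smaller than $M_{n-1}$, so the left end $a(\mu_n)$ of the window can sit many cells back, in territory where $\rho$ was frozen at earlier stages with a smallness tailored to the (possibly much smaller) earlier ratios $\log(M_k/\mu_k)$. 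Your Schwarz--Pick comparison is one-sided: it prevents windows from drifting to the right of the fat-truncation prediction, but it cannot move $a(\mu_n)$ to the right of $\xi_{n-1}$, and making $\rho$ smaller on the new cell does not help. This is not merely a lossy estimate: taking, say, $K_n=\{\mu_n,M_n\}$ with $\mu_n$ growing slowly and $M_n$ growing so fast that each stage forces a minuscule $\rho$ on its cell, the points $\psi^{-1}(\mu_n)$ and $\psi^{-1}(M_n)$ end up many cells apart and $\diam\psi^{-1}(K_n)$ exceeds $\delta$ for the function your recipe produces. The paper's proof avoids this by a preliminary re-organisation of the radii: it first reduces to intervals $[\alpha_n,\beta_n]$ on the real axis and then replaces them by a consecutive tiling $[\tilde\alpha_j,\tilde\beta_j]$ of $[1,\infty)$ with $\tilde\alpha_{j+1}=\tilde\beta_j$, $\tilde\beta_{j+1}\ge\tilde\beta_j^{10}$ and $\tilde\beta_j\ge\max\{\beta_k\colon \alpha_k<\tilde\beta_{j-1}\}$, so that every original radial range meets at most two tiles; $\rho$ is then defined in one shot on a grid of mesh $\delta/3$ by $\rho(x_j)=\hat\delta/(2\ell_j)$, where $\ell_j$ is the hyperbolic length in $\H$ of the $j$-th tile, and the two-sided estimate (hyperbolic length of $[x_j,x_{j+1}]$ in $V$ lies between $\ell_j$ and $8\ell_{j+1}$) pins $\psi(x_{j+1})$ between $\tilde\beta_j$ and $\tilde\beta_{j+2}$. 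There is no induction on $\psi$ and hence no circularity; some such coordination of the radial ranges across all $n$ (not per-$n$ cells) is indispensable for your approach as well.

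A second, smaller gap is the crosscut width $b(\mu)-a(\mu)$. You assert it can be made small ``by comparison with a genuine half-strip, or directly from \eqref{eq_standard_est} applied along $\Sigma_\mu$'', but $\Sigma_\mu$ is a complete geodesic of $V$ with endpoints on $\partial V$ and infinite hyperbolic length, so the standard estimate alone gives no bound on its Euclidean diameter. You need a genuine boundary tool here: either the Gehring--Hayman theorem (the endpoints of $\Sigma_\mu$ are conjugate boundary points at equal real part, joinable inside $V$ by a vertical segment of length at most $2\rho$), or the harmonic-measure estimate \cite[Corollary 4.18]{pommerenke_boundary} that the paper uses for precisely this purpose, namely to control the angular spread of $\psi^{-1}(K_n)$ by geodesics of Euclidean length $O(\tilde\delta)$. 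This part is fixable, but it must be supplied, and uniformly in the not-yet-chosen tail of $\rho$.
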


\begin{proof} We start with the particular case when $K_n$ are intervals:
	\begin{claim} The proposition holds when $K_n=J_n\subset [1,\infty)$, $n\geq 0$, is a sequence of compact intervals $J_n \defeq[\alpha_n,\beta_n]$ such that $\alpha_n\to\infty$. 
	\end{claim}
	\begin{subproof}    
		First, note that we may suppose without loss of generality that $\alpha_0 = 1$, that  $\alpha_{n+1} = \beta_n$ for all $n$, and that $\beta_0 \geq 2$ and $\beta_{n+1} \geq (\beta_n)^{10}$. To see this, set $\tilde{\alpha}_0\defeq 1$ and $\tilde{\alpha}_1 \defeq \tilde{\beta}_0 \defeq 2$. For $n\geq 1$, we inductively choose $\tilde{\beta}_n = \tilde{\alpha}_{n+1} \geq (\tilde{\beta}_{n-1})^{10}$ such that
		\[ \tilde{\beta}_n \geq \max\{ \beta_j \colon \alpha_j < \tilde{\beta}_{n-1} \}. \]
		Then every one of the original intervals $J_n$ is contained in at most two intervals of the form $[\tilde{\alpha}_j , \tilde{\beta}_j]$,  so if we can prove the claim for these intervals, with $\tilde{\delta} = \delta/2$, we are done. 
		
		Let us assume in the following that the intervals $J_n$ were
		chosen with these additional properties. 
		Define $\ell_n \defeq \diam_{\H}(J_n) = \log \beta_n - \log \alpha_n$. 
		Our assumptions ensure that $(\ell_n)_{n=0}^{\infty}$ is an increasing sequence and 
		$\ell_0 = \log \beta_0 \geq \log 2\geq 1/6$.   
		
		We set 
		$\hat{\delta}\defeq \delta/3$ and $x_j\defeq 1 + \hat{\delta}\cdot j$ for
		$j\geq 0$. Let $\rho \colon [0, \infty)\to (0,\pi]$ be the function such that 
		$\rho(t) = \hat{\delta}/(2\ell_0)$ for $t\leq 1$, 
		$\rho(x_j) = \hat{\delta}/(2\ell_j)$, and such that $\rho$ is defined by linear
		interpolation between $x_j$ and $x_{j+1}$. Then $\rho$ is non-increasing,
		and $\rho(0) \leq \delta$. 
		
		Define $V$ and $\psi$ as in the statement of the proposition. Note that
		$\psi([1,\infty)) = [1,\infty)$ by symmetry. 
		For $x_j \leq x\leq x_{j+1}$, we have
		\[ \frac{\hat{\delta}}{2\ell_j} \geq \dist(x,\partial V) \geq \frac{\hat{\delta}}{4\ell_{j+1}}. \]
		By the standard estimate on the hyperbolic metric, \eqref{eq_standard_est}, we see that
		the hyperbolic length in $V$ of the interval $[x_j,x_{j+1}]$ is 
		at least $\ell_j$ and at most
		$8\ell_{j+1}$. Hence, 
		\[      \log \beta_j = \sum_{k=0}^{j} \ell_k \leq \dist_V(1,x_{j+1}) 
		\leq 8\sum_{k=1}^{j+1} \ell_k 
		\leq 8\sum_{k=0}^{j+1}\ell_k =
		8 \log \beta_{j+1} < \log\beta_{j+2} . \]
		Since $\psi(1)=1$, $\dist_V(1, x_{j+1})= \dist_{\H}(1,\psi(x_{j+1}))=\log \psi(x_{j+1})$. It follows that $\beta_j\leq \psi(x_{j+1})\leq \beta_{j+2}$. Hence,
		\[ \psi^{-1}(J_j) =\psi^{-1}([\alpha_{j}, \beta_j]) \subset [ x_{\max(j-2,0)} , x_{j+1} ], \]
		and so $ \psi^{-1}(J_j)$ has length at most $3\hat{\delta} = \delta$, as claimed. 	
	\end{subproof}
	
	Let $(K_n)^\infty_{n=0}$ be a sequence of compact sets as in the statement. For each $n\geq 0$, set $\alpha_n\defeq \min_{z\in K_n}\vert z \vert$,  $\beta_n\defeq \max_{z\in K_n}\vert z \vert$ and $J_n\defeq [\alpha_n, \beta_n]$. Let us apply the claim with $\hat{\delta}$ and $(J_n)^\infty_{n=0}$, obtaining the domain $V$ and the map $\psi$. We wish to show that the conclusion of the proposition holds if $\tilde{\delta}<\delta/5$ is chosen sufficiently small. To do so, we use considerations concerning harmonic measure, which yield the following:
	
	\begin{claim} There exists a universal constant $C>0$ such that for every $n\geq 0$, there are geodesics $\gamma^{\pm}_{\alpha_n}$ and $\gamma^{\pm}_{\beta_n}$ in $\H$ such that 
		\begin{enumerate}[(a)]
			\item $\gamma^{+}_{\alpha_n}$ connects $\alpha_n$ to $i(0, \alpha_n)\subset \partial \H$ and $\gamma^{-}_{\alpha_n}$ connects $\alpha_n$ to $i(-\alpha_n,0)\subset \partial \H$.
			\item $\gamma^{+}_{\beta_n}$ connects $\alpha_n$ to $i(0, \beta_n)\subset \partial \H$ and $\gamma^{-}_{\beta_n}$ connects $\alpha_n$ to $i(-\beta_n,0)\subset \partial \H$.
			\item $\psi^{-1}(\gamma^{\pm}_{\alpha_n})$ and $\psi^{-1}(\gamma^{\pm}_{\beta_n})$ have length at most $C\cdot \tilde{\delta}$.
		\end{enumerate}
	\end{claim}	
	\begin{subproof}
		We show how to construct $\gamma^{+}_{\alpha_n}$; the construction of the other curves is analogous. 
		Let $M\colon \H \to \D$ be the Möbius transformation that sends $0$ to $1$, $\infty$ to $-1$ and $\alpha_n$ to $0$. That is, $M(z)=\frac{\alpha_n-z}{\alpha_n+z}.$ Then, $M(i(0, \alpha_n))$ is the arc in $\partial \D$ from $1$ to $-i$, and hence has harmonic measure $1/4$ viewed from $0$. By \cite[Corollary 4.18]{pommerenke_boundary}, there is a universal constant $C>0$ and $\theta\in (0, -\pi/2)$ such that the image of the radius $e^{2\pi i\theta}\cdot [0,1)$ under $(M\circ \psi)^{-1}$ has length at most $C\cdot \dist(\psi^{-1}(\alpha_n), \partial V)\leq C\cdot \tilde{\delta}.$ Setting $\gamma_{\alpha_n}\defeq M^{-1}(e^{2\pi i\theta}\cdot [0,1))$, we have the desired conclusion.
	\end{subproof}	
	Let $A^{\pm}$ be the endpoints of $\psi^{-1}(\gamma^{\pm}_{\alpha_n})$ and $B^{\pm}$ be the endpoints of $\psi^{-1}(\gamma^{\pm}_{\beta_n})$. Note that $A^{+}$ and $B^{+}$ bound an arc $\sigma^+$ on $\partial V$ consisting of points of positive imaginary part, and similarly, $A^{-}$ and $B^{-}$ bound an arc $\sigma^-$ on $\partial V$ of points of negative imaginary part. By definition of $V$, $\Rea A^\pm\leq \Rea z \leq \Rea B^\pm$ for $z\in \sigma^\pm$. Consider the subset $V_n$ of $V$ bounded by the arcs $\sigma^\pm, \psi^{-1}(\gamma^{\pm}_{\alpha_n}), \psi^{-1}(\gamma^{\pm}_{\beta_n})$, and observe that its closure contains $\psi^{-1}(K_n)$. If $z,w\in \overline{V}_n$, then $\vert \Ima z - \Ima w \vert\leq 2\tilde{\delta}$ and 
	\[\vert \Rea z - \Rea w \vert \leq 2C\cdot \tilde{\delta} +\vert  \psi^{-1}(\beta_n)- \psi^{-1}(\alpha_n)\vert \leq (2C+1)\cdot \tilde{\delta}.\]
	Hence, $\diam V_n\leq (2C+3)\cdot \tilde{\delta}$. Letting $\tilde{\delta}\defeq \delta/(2C+3)$, the proposition follows.
\end{proof}

For each $k\geq 0$ and $m\in \Z$, let
\begin{equation}\label{eq_Ik}
	I_{k,m} \defeq [ \max(5, a_k/10) , 100 a_{k+1}]+2\pi i m \quad \text{ and } \quad J_{k,m}\defeq I_{k,m}/5.
\end{equation}  

Let $V$ be the domain and $\psi\colon V\to \H$ be the conformal isomorphism provided by Proposition \ref{prop_V}, with $\delta = 1$ and the collection of intervals $\{J_{k,m} \colon k\geq 0 \text{ and } m\in \Z\}$.

We are now ready to define our tracts $\{T_n\}_{n\geq 1}$. For each $n\geq 1$, let 
\begin{align*} \hat{T_n} \defeq &\{ x + iy \colon x > a_n \text{ and } 1/(3n+1) < y/\pi-1 < 1/(3n) \}  \; \cup \\
	&\{ a_n<x<a_n +1\text{ and } 1/(3n+2)  \leq y/\pi-1 \leq 1/(3n+1) \}  \; \cup \\
	&  \{ a_n<x<b_n \text{ and } 1/(3n+3) < y/\pi-1 < 1/(3n+2) \}. 
\end{align*}
Let $\Sigma$ denote the strip $\Sigma \defeq \{ x + iy \colon \lvert y\rvert < \pi \}$ and let $\phi_n\colon \Sigma\to \hat{T}_n$ be the conformal isomorphism with $\phi_n(-\infty)=b_n+\pi(1+\frac{1}{3n+3})i$, $\phi_n(\infty)=\infty$ and $\phi_n(1)=\zeta_n$, where $\zeta_n$ is a point in $\hat{T}_n$ at real part $b_n-1$ and imaginary part less than $\pi(1+1/(3n+2))$ that sits in the geodesic that joins $\phi_n(-\infty)$ to $\phi_n(\infty)$; see Figure~\ref{figure_noCB}. 

We define 
\begin{equation}\label{eq_Tn_Fn}
	T_n \defeq \phi_n(V) \quad \text{ and } \quad F_n\colon T_n\to \H; \quad F_n(z) \defeq 5\cdot \psi(\phi_n^{-1}(z)).
\end{equation}
Note that by the choice of $V$, the closures of all tracts $\{T_n\}_{n\geq 0}$ are pairwise disjoint and
\begin{equation}\label{eq_preim2}
	F_n(\zeta_n)= 5\cdot\psi(1)=5.
\end{equation}

\begin{figure}[htb]
\begingroup%
\makeatletter%
\providecommand\color[2][]{%
	\errmessage{(Inkscape) Color is used for the text in Inkscape, but the package 'color.sty' is not loaded}%
	\renewcommand\color[2][]{}%
}%
\providecommand\transparent[1]{%
	\errmessage{(Inkscape) Transparency is used (non-zero) for the text in Inkscape, but the package 'transparent.sty' is not loaded}%
	\renewcommand\transparent[1]{}%
}%
\providecommand\rotatebox[2]{#2}%
\newcommand*\fsize{\dimexpr\f@size pt\relax}%
\newcommand*\lineheight[1]{\fontsize{\fsize}{#1\fsize}\selectfont}%
\ifx\svgwidth\undefined%
\setlength{\unitlength}{394.01574803bp}%
\ifx\svgscale\undefined%
\relax%
\else%
\setlength{\unitlength}{\unitlength * \real{\svgscale}}%
\fi%
\else%
\setlength{\unitlength}{\svgwidth}%
\fi%
\global\let\svgwidth\undefined%
\global\let\svgscale\undefined%
\makeatother%
\begin{picture}(1,0.49640288)%
	\lineheight{1}%
	\setlength\tabcolsep{0pt}%
	\put(0.02331849,0.2069055){\color[rgb]{0,0,0}\makebox(0,0)[lt]{\lineheight{1.25}\smash{\begin{tabular}[t]{l}$T_0$\end{tabular}}}}%
	\put(0.04094679,0.07143817){\color[rgb]{0,0,0}\makebox(0,0)[lt]{\lineheight{1.25}\smash{\begin{tabular}[t]{l}$a$\end{tabular}}}}%
	\put(0.16208333,0.07239958){\color[rgb]{0,0,0}\makebox(0,0)[lt]{\lineheight{1.25}\smash{\begin{tabular}[t]{l}$b$\end{tabular}}}}%
	\put(0.91824044,0.47547945){\color[rgb]{0,0,0}\makebox(0,0)[lt]{\lineheight{1.25}\smash{\begin{tabular}[t]{l}$\hat{T}_n$\end{tabular}}}}%
	\put(0.57957248,0.36615838){\color[rgb]{0,0,0}\makebox(0,0)[lt]{\lineheight{1.25}\smash{\begin{tabular}[t]{l}$T_n$\end{tabular}}}}%
	\put(0.20051995,0.35482423){\color[rgb]{0,0,0}\makebox(0,0)[lt]{\lineheight{1.25}\smash{\begin{tabular}[t]{l}\fontsize{9pt}{1em}$\pi(1+\frac{1}{3n+3})$\end{tabular}}}}%
	\put(0.4829252,0.33506097){\color[rgb]{0,0,0}\makebox(0,0)[lt]{\lineheight{1.25}\smash{\begin{tabular}[t]{l}$\zeta_n$\end{tabular}}}}%
	\put(0.84653374,0.22591543){\color[rgb]{0,0,0}\makebox(0,0)[lt]{\lineheight{1.25}\smash{\begin{tabular}[t]{l}$\zeta_{n+1}$\end{tabular}}}}%
	\put(0.22113484,0.46076637){\color[rgb]{0,0,0}\makebox(0,0)[lt]{\lineheight{1.25}\smash{\begin{tabular}[t]{l}\fontsize{9pt}{1em}$\pi(1+\frac{1}{3n})$\end{tabular}}}}%
	\put(0,0){\includegraphics[width=\unitlength,page=1]{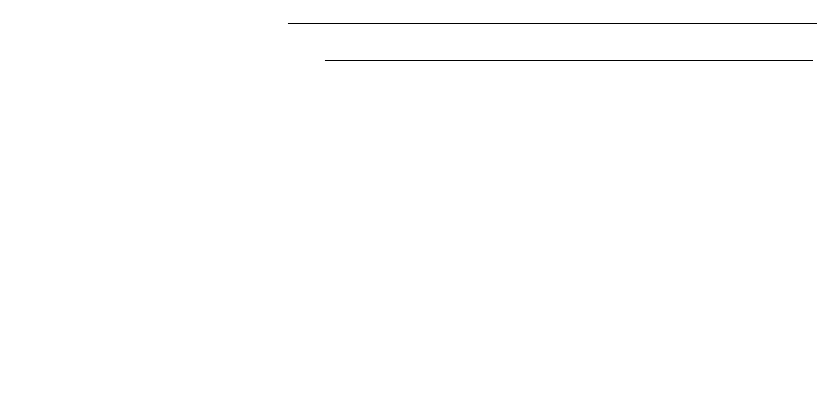}}%
	\put(0.92978224,0.36738433){\color[rgb]{0,0,0}\makebox(0,0)[lt]{\lineheight{1.25}\smash{\begin{tabular}[t]{l}$\hat{T}_{n+1}$\end{tabular}}}}%
	\put(0.94025108,0.2598378){\color[rgb]{0,0,0}\makebox(0,0)[lt]{\lineheight{1.25}\smash{\begin{tabular}[t]{l}$T_{n+1}$\end{tabular}}}}%
	\put(0.5259,0.24){\color[rgb]{0,0,0}\makebox(0,0)[lt]{\lineheight{1.25}\smash{\begin{tabular}[t]{l}\fontsize{9pt}{1em}$\pi(1+\frac{1}{3(n+2)})$\end{tabular}}}}%
	\put(0,0){\includegraphics[width=\unitlength,page=2]{NewFigure3NoCB.pdf}}%
	\put(0.33405183,0.07275786){\color[rgb]{0,0,0}\makebox(0,0)[lt]{\lineheight{1.25}\smash{\begin{tabular}[t]{l}$a_n$\end{tabular}}}}%
	\put(0.5465976,0.07250801){\color[rgb]{0,0,0}\makebox(0,0)[lt]{\lineheight{1.25}\smash{\begin{tabular}[t]{l}$b_n$\end{tabular}}}}%
	\put(0.66737432,0.07498542){\color[rgb]{0,0,0}\makebox(0,0)[lt]{\lineheight{1.25}\smash{\begin{tabular}[t]{l}$a_{n+1}$\end{tabular}}}}%
	\put(0.91286287,0.07773618){\color[rgb]{0,0,0}\makebox(0,0)[lt]{\lineheight{1.25}\smash{\begin{tabular}[t]{l}$b_{n+1}$\end{tabular}}}}%
	\put(0,0){\includegraphics[width=\unitlength,page=3]{NewFigure3NoCB.pdf}}%
\end{picture}%
\endgroup%
\caption{The tracts $\{T_n\}_{n\geq 0}$ as defined in \eqref{eq_T0} and \eqref{eq_Tn_Fn}.}
	\label{figure_noCB}
\end{figure}

The following proposition will aid us when showing in Proposition \ref{prop_HSC} that, in a rough sense, preimages of the ``hooks'' of the tracts have uniformly bounded diameter, and so the components of $J(F)$ will be arcs. We formalize this using the intervals $\{I_{k,m}\}$ defined in \eqref{eq_Ik}.
\begin{prop} \label{prop_preimIk} For each $n\geq 1$, $k\geq 0$ and $m\in \Z$,  $F_n^{-1}(I_{k,m})$ has diameter at most~$1/2$.
\end{prop}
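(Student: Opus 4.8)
The plan is to rewrite $F_n^{-1}(I_{k,m})$ as the image under the conformal map $\phi_n$ of a set of diameter at most $1$, and then to show that $\phi_n$ contracts Euclidean distances by a factor of at least $2$ on the part of the strip $\Sigma$ in which that set lies. By~\eqref{eq_Tn_Fn} we have $F_n(z)=5\,\psi(\phi_n^{-1}(z))$, so $F_n^{-1}(w)=\phi_n(\psi^{-1}(w/5))$, and since $J_{k,m}=I_{k,m}/5$ by~\eqref{eq_Ik} this gives
\[ F_n^{-1}(I_{k,m}) \;=\; \phi_n\bigl(\psi^{-1}(J_{k,m})\bigr). \]
Now $J_{k,m}$ is one of the compact intervals fed to Proposition~\ref{prop_V} (applied with $\delta=1$), so the set $S\defeq\psi^{-1}(J_{k,m})$ satisfies $\diam(S)\le 1$. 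Moreover $S\subset V$, and because $\rho$ is non-increasing with $\rho(0)\le 1$, the domain $V$ is contained in the sub-strip $\Sigma'\defeq\{z\colon\lvert\Ima z\rvert<1\}$ of $\Sigma$; in particular $S\subset\Sigma'$. It therefore suffices to prove that $\diam(\phi_n(S))\le\tfrac12$ for any $S\subset\Sigma'$ with $\diam(S)\le 1$.

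To estimate $\phi_n$ on $\Sigma'$ I would invoke Pick's theorem: since $\phi_n\colon\Sigma\to\hat T_n$ is a conformal isomorphism, $\lvert\phi_n'(z)\rvert=\rho_\Sigma(z)/\rho_{\hat T_n}(\phi_n(z))$ for every $z\in\Sigma$. The map $z\mapsto e^{z/2}$ carries $\Sigma$ conformally onto a half-plane, whence $\rho_\Sigma(z)=1/(2\cos(\Ima z/2))$, so $\rho_\Sigma(z)<3/5$ for $z\in\Sigma'$. On the other hand, the three pieces making up $\hat T_n$ all lie in the horizontal strip $\{\pi(1+\tfrac1{3n+3})<\Ima z<\pi(1+\tfrac1{3n})\}$, whose width is $\pi/(3n(n+1))\le\pi/6$ for all $n\ge1$; consequently $\dist(w,\partial\hat T_n)\le\pi/12$ for every $w\in\hat T_n$, and the standard estimate~\eqref{eq_standard_est} yields $\rho_{\hat T_n}(w)\ge 6/\pi>9/5$. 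Combining the two bounds, $\lvert\phi_n'(z)\rvert<(3/5)/(9/5)=1/3<\tfrac12$ for all $z\in\Sigma'$. Since $\Sigma'$ is convex, for $z,w\in S$ the segment $[z,w]$ lies in $\Sigma'$, so $\lvert\phi_n(z)-\phi_n(w)\rvert\le\sup_{\Sigma'}\lvert\phi_n'\rvert\cdot\lvert z-w\rvert\le\tfrac12\lvert z-w\rvert\le\tfrac12\diam(S)\le\tfrac12$; taking the supremum over $z,w\in S$ completes the proof.

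The only delicate point I anticipate is the numerology in the middle step: the contraction factor must beat $2$, which forces $\hat T_n$ to be thin relative to $\Sigma$, and this is precisely why $\hat T_n$ was built inside a horizontal strip of height $O(1/n^{2})$. The case $n=1$, where that strip has height $\pi/6$, is the tightest and must be checked to go through; the computation above shows it does, with room to spare. If one wanted additional slack, one could instead apply Proposition~\ref{prop_V} with a value of $\delta$ smaller than $1$, but this turns out to be unnecessary.
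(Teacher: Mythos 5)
Your proof is correct and follows essentially the same route as the paper: write $F_n^{-1}(I_{k,m})=\phi_n\bigl(\psi^{-1}(J_{k,m})\bigr)$, invoke Proposition~\ref{prop_V} (with $\delta=1$) for the diameter bound, and then show $\lvert\phi_n'\rvert\leq 1/2$ on the substrip of $\Sigma$ containing $V$ via Pick's theorem, concluding by integrating along straight segments. The only cosmetic difference is that you obtain the lower bound on $\rho_{\hat{T}_n}$ from the standard estimate \eqref{eq_standard_est} (giving $6/\pi$), whereas the paper uses monotonicity of the hyperbolic metric with respect to a horizontal strip; both comfortably yield the required factor-two contraction.
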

\begin{proof}
	The hyperbolic density of $\Sigma$ is given by
	\[ \rho_{\Sigma}(z) = \frac{1}{2\cos(\im z/2)}, \]
	see \cite[Example 7.9]{beardon_minda}. Since $\hat{T}_n$ is contained in a horizontal strip of height $\pi/3$, by  monotonicity of the hyperbolic metric, the density of the hyperbolic metric in $\hat{T}_n$ satisfies $\rho_{\hat{T}_n}(z) \geq 2$ for all $z\in \hat{T}_n$. Using Pick's Theorem, \cite[Theorem 6.4]{beardon_minda}, it follows that for all $n\geq 1$ and all $z\in \Sigma$ with $\lvert \im z\rvert \leq \pi/2$, and so for all $z\in V$,
	\[ \lvert \phi_n'(z)\rvert = \frac{\rho_{\Sigma}(z)}{\rho_{\hat{T}_n}(\phi_n(z))} 
	\leq \frac{1}{4 \cos(\pi/4)} = \frac{1}{2\sqrt{2}} \leq \frac{1}{2}. \] 
	By this and Proposition \ref{prop_V}, the claim follows.
\end{proof}

Let $\T$ be the union of all $T_n$, $n\geq 0$, together with their $2\pi i $-translates, and define $F\colon \T\to \H$ such that if $z\in (T_n+2\pi im)$ for some $m\in \Z$ and $n\geq 0$, then  $F(z)\defeq F_n(z+2\pi i m)$. Since $\overline{\T}\subset \H$, $F$ is by definition of disjoint type, and $F\in \BlogP$. Moreover, by our choice of tracts, for any $z\in \T$ such that $F(z)\in \T$,
\begin{equation}\label{eq_expansion2}
	\vert F'(z) \vert \geq \frac{\Rea F(z)}{2}\geq 2;
\end{equation}
see \cite[Theorem 1.1]{lasse_constant}.


Recall that, in Definition~\ref{UHSCentire}, we defined what it means for a disjoint-type entire function to satisfy a \emph{uniform} head-start condition
 on its Julia set. This condition implies that~$J(f)$ is a Cantor bouquet (see Proposition~\ref{prop_UHSC_CB}). To show that our function $F$, whose Julia set turns out \emph{not} to be a Cantor bouquet,
   is criniferous, we shall use 
  a \emph{non-uniform} notion of head-start conditions, as was previously defined in 
  \cite[Definition 4.1]{rrrs}. 

\begin{defn}[Head-start condition for addresses] \label{defn_HSC_addr} Let $F \in \BlogP$, let $T, T'$ be tracts of $F$ and let $\phi \colon [0,\infty)\to [0,\infty)$ be a monotonically increasing  upper semicontinuous function with $\phi(x)>x$. We say that the pair $(T,T')$ satisfies the \textit{head-start condition for} $\phi$, if, for all $z,w \in \overline{T}$ with $F(z),F(w)\in \overline{T'}$, 
\begin{equation}\label{eq_hs1}
\Rea(w)> \phi(\Rea(z)) \text{ implies }
\Rea(F(w)) > \phi( \Rea(F(z)). 
\end{equation}
An external address $\s$ satisfies the head-start condition for $\phi$ if all consecutive pairs of tracts $(T_k, T_{k+1})$ satisfy the head-start condition for $\phi$, and if for all distinct $z, w \in J_{\s}$ there is $n \geq 0$ such that either $\Rea( F^n(w) ) > \phi( \Rea( F^n(z)))$ or $\Rea( F^n(z) ) > \phi( \Rea(F^n(w)))$. 
\end{defn}

\begin{obs} In fact, \cite[Definition 4.1]{rrrs} required the function $\phi$ to be continuous, rather than upper semicontinuous. However, 
  the results of that paper also hold with the above weaker assumption. Indeed, the assumption of continuity is used only in 
  \cite[Proposition 4.4]{rrrs}, to show that the unbounded connected component of 
  $J_{\s}$ is an arc to infinity whenever $\s$ satisfies a head-start condition. The same conclusion holds 
  when $\phi$ is upper semicontinuous (see the argument used in the proof of Proposition~\ref{prop:bouquetordering} below). 
\end{obs}

\begin{obs} Observe that~\eqref{eq_hs1} is required to hold for any pair of points whose images are in the same tract, in contrast to Definition~\ref{UHSCentire}, where we
 asked only that the conditions holds for points in the Julia set. 
\end{obs}

\begin{thm}[{\cite[Theorem 4.2]{rrrs}}] \label{thm_HS_then_crin}If every external address of $F\in \BlogP$ satisfies a head-start condition, then $F$ is criniferous.
\end{thm}

 To establish a head-start condition for $F$, 
   we use the following version of the Ahlfors distortion theorem, see for example \cite[Corollary to Theorem~4.8]{Ahlfors_conf_inv73}.

\begin{thm}\label{thm_ahlfors}
	Let $V \subset \mathbb{C}$ be a simply connected domain, and
	let $\gamma_t, \gamma_{t'} \subset V$ be two maximal vertical line segments at real parts $t<t'$ respectively. Set $\Sigma \defeq \{ x + iy \in \C \colon |y| < \pi \},$	and let $\phi \colon V \rightarrow \Sigma$ be a conformal isomorphism such that $\phi(\gamma_t)$
	and $\phi(\gamma_{t'})$ both connect the upper and lower boundaries of the strip $\Sigma$, and such that $\phi(\gamma_t)$ separates $\phi(\gamma_{t'})$ from $-\infty$ in $\Sigma$. For $t\leq s \leq t'$, let $\theta(s)$ denote the shortest length of a vertical line segment at real part $s$ that separates $\gamma_t$ from $\gamma_{t'}$ in $V$. If $\int_{t}^{t'}ds/\theta(s) \geq 1/2$, then 
	\begin{equation}
		\min_{z \in \gamma_{t'}} \Rea \phi(z)- \max_{w \in \gamma_{t}} \Rea \phi(w)\geq 2\pi\int_{t}^{t'}\frac{ds}{\theta(s)}-2\ln 32.
	\end{equation}
\end{thm}

\begin{prop} \label{prop_HSC}The map $F$ satisfies a head-start condition. 
\end{prop}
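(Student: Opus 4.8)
The plan is to verify the head-start condition of Definition~\ref{defn_HSC_addr} by producing, for every external address $\s = T_{i_0}T_{i_1}T_{i_2}\dots$ of $F$, a monotonically increasing upper semicontinuous $\phi$ with $\phi(x) > x$ such that every consecutive pair $(T_{i_k}, T_{i_{k+1}})$ satisfies \eqref{eq_hs1} and such that distinct points of $J_{\s}$ are eventually separated. In fact, because of the very rigid structure of the tracts $\{T_n\}_{n\ge 0}$, I expect to be able to choose the \emph{same} $\phi$ for all addresses, i.e.\ $F$ will satisfy a uniform head-start condition; but Definition~\ref{defn_HSC_addr} only requires a per-address choice, which gives a little more room. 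The key observation driving the argument is Proposition~\ref{prop_preimIk}: under any branch $F_n^{-1}$, each interval $I_{k,m}$ has preimage of diameter at most $1/2$. Since $I_{k,m}$ was defined in \eqref{eq_Ik} to cover the ``dangerous'' real-part window $[\max(5,a_k/10), 100\,a_{k+1}]$ where the hook of $\hat T_n$ lives at height $\ge \pi(1+\tfrac{1}{3n+2})$, this bounds how badly two points on the same Julia component can swap their real-part ordering when one lands in a hook.

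Concretely, first I would fix $z,w\in\overline{T_n}$ (some $n\ge 0$) with $F(z),F(w)\in\overline{T_m}$ (some $m$), and analyze the two regimes. \textbf{(1) Large real parts.} If both $\operatorname{Re} F(z),\operatorname{Re} F(w)$ exceed some threshold depending only on $m$ — large enough that both preimages lie on the ``straight'' part $\{x>a_m\text{, } 1/(3m+1)<y/\pi-1<1/(3m)\}$ of $\hat T_m$ rather than in its hook — then, by the expansion estimate \eqref{eq_expansion2} (so $|F'|\ge \operatorname{Re} F(\cdot)/2$) together with the fact that on that straight part $F$ acts essentially like a real-linear expansion along the positive real direction, an inequality $\operatorname{Re} w > \phi(\operatorname{Re} z)$ forces the gap $\operatorname{Re} F(w)-\operatorname{Re} F(z)$ to grow by the expansion factor, hence $\operatorname{Re} F(w) > \phi(\operatorname{Re} F(z))$ provided $\phi$ grows at least linearly. \textbf{(2) Small real parts / hooks.} If $\operatorname{Re} F(z)$ (or $\operatorname{Re} F(w)$) is at most that threshold, then $F(z)$ and $F(w)$ both lie in a bounded region of $\overline{T_m}$, so $z$ and $w$ both lie in $F_m^{-1}(I_{k,\cdot})$ for a bounded range of $k$ (governed by $\operatorname{Re} z,\operatorname{Re} w$), and Proposition~\ref{prop_preimIk} forces $|z-w|\le 1$; choosing $\phi(t)\ge t+2$ makes the hypothesis $\operatorname{Re} w>\phi(\operatorname{Re} z)$ impossible in this regime, so \eqref{eq_hs1} is vacuously true. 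Taking $\phi(t)\defeq \max(2t, t+2, \text{(linear function absorbing the bounded hook region)})$, made upper semicontinuous and increasing, handles both cases; one uses Theorem~\ref{thm_ahlfors} (Ahlfors distortion) to make the linear lower bound in case (1) quantitative, estimating $\theta(s)$ via the widths $\tfrac{\pi}{3n}-\tfrac{\pi}{3n+1}$ of the strips making up $\hat T_n$ and the geometry of $V$ from Proposition~\ref{prop_V}.

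For the separation clause — that distinct $z,w\in J_{\s}$ get eventually separated by $\phi$ — I would argue that two distinct points on the same Julia continuum $J_{\s}$ satisfy $\operatorname{Re} F^n(z),\operatorname{Re} F^n(w)\to\infty$ but with unbounded difference: if instead $|F^n(z)-F^n(w)|$ stayed bounded for all $n$, then (using \eqref{eq_expansion2} and that $F$ is a disjoint-type map with $|F'|>2$) the hyperbolic distance between them in the tracts would contract under pulling back, forcing $z=w$ — standard for expanding maps; more precisely one shows $|z - w| \le 2^{-n}\cdot(\text{bounded})\to 0$. Combined with the monotonicity in case (1), once the difference of real parts exceeds the bounded hook scale it stays large and grows, so one of $\operatorname{Re} F^n(w)>\phi(\operatorname{Re} F^n(z))$ or the reverse holds for some (hence all large) $n$.

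The main obstacle I anticipate is case (1): making precise that on the ``straight'' portion of the hooked tract $\hat T_n$ the map $F_n$ (equivalently $\psi\circ\phi_n^{-1}$, up to the factor $5$) genuinely behaves like a uniformly expanding real-linear map on real parts, with the right uniform constants across all $n$, and checking that the threshold separating case (1) from case (2) can be described purely in terms of $\operatorname{Re} F(z)$ (so that it fits the form of Definition~\ref{defn_HSC_addr}, where $\phi$ may not depend on the tract). This is exactly where the careful bookkeeping with the intervals $I_{k,m}$ and $J_{k,m}$ from \eqref{eq_Ik}, the normalization $F_n(\zeta_n)=5$ from \eqref{eq_preim2}, and the Ahlfors distortion theorem come together; the remaining estimates are routine applications of \eqref{eq_standard_est}, Pick's theorem, and \eqref{eq_expansion2}.
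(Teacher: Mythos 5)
Your toolbox overlaps with the paper's (Ahlfors distortion, Proposition~\ref{prop_preimIk}, expansion, a pull-back argument for separation), but the case analysis you build on it has genuine gaps. First, your dichotomy is false: under the hypothesis $\Rea(w)>\phi(\Rea(z))$ the typical configuration is the \emph{mixed} one, where $\Rea F(z)$ is small (e.g.\ $F(z)$ lies in or near a hook) while $\Rea F(w)$ is huge, and this falls into your case (2), where the inference ``$\Rea F(z)$ below the threshold $\Rightarrow$ $F(z)$ and $F(w)$ both lie in a bounded region $\Rightarrow$ $|z-w|\le 1$'' is a non sequitur ($\Rea F(w)$ is in no way confined by $\Rea F(z)$ being small). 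Moreover Proposition~\ref{prop_preimIk} only controls the branches $F_n^{-1}$ for $n\ge 1$; the $T_0$-branch preimages of the segments $I_{k,m}$ have unbounded diameter, so the case in which $z,w$ lie in a translate of $T_0$ cannot be handled this way at all (the paper treats it by an explicit estimate on the map $\zeta\mapsto 5\cosh(\zeta-4)/\cosh(1)$, its Claim~1). You have also misread the role of the intervals \eqref{eq_Ik}: they are not the ``hook window'' (the hook of $\hat T_n$ occupies real parts in $(a_n,a_n+2)$ only); they are long, heavily overlapping windows $[\max(5,a_k/10),100a_{k+1}]$ calibrated so that $[x,\phi(x)]$ for the paper's step function $\phi$ in \eqref{eq_phi_1} always fits inside a single $I_k$, which is what turns failure of \eqref{eq_hs1} at one step into the statement that both image real parts lie in one $I_k$, whence Proposition~\ref{prop_preimIk} bounds $|z-w|$ and contradicts the large gap.

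Second, the mechanism you offer in case (1) is invalid as stated: on the ``straight'' part of a tract $F$ is nothing like a real-linear expansion — $\Rea F(z)$ is exponentially large in $\Rea z$ — so additive growth of the gap $\Rea F(w)-\Rea F(z)$ (even by a factor from \eqref{eq_expansion2}) cannot preserve a multiplicative-linear head-start such as $\phi(t)=2t$; what is needed is a \emph{multiplicative} bound of the form $|F(w)|\ge e^{c(\Rea w-\Rea z)}|F(z)|$ with uniform constants, plus the bookkeeping converting moduli back to real parts within a fixed tract translate. This is exactly what the paper's Claim~2 extracts by applying Theorem~\ref{thm_ahlfors} twice (in $\hat T_j$ and in $V$), and it can afford the crude Ahlfors constants only because its non-linear $\phi$ guarantees an enormous gap $\Rea w-\Rea z\ge 99\,a_{n_x+1}$; the paper's closing remark, that the hooked part of $F$ does not satisfy a linear uniform head-start condition in the sense of \cite{rrrs}, should at least give you pause about a single linear $\phi$. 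Your separation argument has the same defect: the assertion that distinct points of $J_{\s}$ both have $\Rea F^n\to\infty$ is unjustified (and unneeded), and ``unbounded Euclidean difference at some time'' does not yield $\Rea$ of one point exceeding $2\,\Rea$ of the other. The paper instead assumes both failure inequalities hold for all $n$, deduces that at each time both real parts lie in a common $I_{k,m}$, and at the infinitely many hooked times applies Proposition~\ref{prop_preimIk} together with \eqref{eq_expansion2} to force $|z-w|<3/2^{n_j}\to 0$. Finally, note the paper disposes of addresses eventually consisting of $T_0$-translates separately, via bounded slope and bounded wiggling and \cite[Proposition~5.4]{rrrs}; your unified scheme would have to replace that step as well.
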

\begin{proof}
	We must show that each admissible external address of $F$ satisfies a head-start condition for some (not necessarily the same) map $\phi$.
	
	First, suppose that $\s=s_0s_1s_2\ldots$ is an admissible external address such that $s_j$ is a translate of $T_0$ for all but finitely many $j$. Then, the restriction $\tilde{F}$ of $F$ to 
	$$\bigcup_{j=0}^\infty \bigcup_{m=-\infty}^\infty s_j +2\pi i m$$
	has bounded slope and uniformly bounded wiggling in the sense of \cite[Definition~5.3]{rrrs}. By \cite[Proposition~5.4]{rrrs}, $\tilde{F}$ satisfies a head-start condition, and consequently, so does $\s$.
	
	Suppose now that $\s=s_0s_1s_2\ldots $ is an admissible external address such that for infinitely many $k\geq 0$, $s_k\neq T_0+2\pi i m$ for all $m\in \Z$. Let us define $\phi \colon [0,\infty)\to [0,\infty)$ as
	\begin{equation}
		\label{eq_phi_1}
		\phi(x) \defeq y_{n_x} \quad \text{ for } \quad n_x\defeq \max\{n \geq 0 \colon x\geq \alpha_n\}, 
	\end{equation}
	where $\{y_n\}_{n=0}^{\infty}$ and $\{\alpha_n\}_{n=0}^{\infty}$ are two monotonically increasing sequences. In this case, we choose
	\begin{equation}\label{eq_param1}
		\alpha_n\defeq a_n/2 \quad \text{ and } \quad y_n\defeq 100 a_{n+1}, \quad \text{ for each } n\geq 0.
	\end{equation}
	We aim to show that $\s$ satisfies the head-start condition for $\phi$. 
	
	Let us fix $z,w\in \overline{s_k}$ with $F(z),F(w)\in s_{k+1}$ for some $k\geq 0$.  In particular, $s_k=T_{j}+2\pi i {m_1}$ and $s_{k+1}=T_{\ell}+2\pi i {m_2}$ for some $j,\ell \geq 0$ and $m_1,m_2\in \Z$.  Suppose that $\Rea(w) > \phi( \Rea(z))$. Let
	\[x\defeq \Rea(z) \quad \text{ and } \quad y\ \defeq \Rea(w).\]
	In particular, 
	\begin{equation}\label{eq_bounds_xy}
		x<\alpha_{n_x+1}, \quad \quad y\geq y_{n_x} \quad \text{ and } \quad n_x\geq j.
	\end{equation}

		\begin{claim}[Claim 1] If $s_k= T_0+2\pi i m$ for some $m\in \Z$, then $\Rea F(z)<\alpha_{n_x+2}$.
	\end{claim}
	\begin{subproof}
		By \eqref{eq_standard_est} and using that $F(5)=5$, we have
		\begin{equation*} 
			\log\frac{F(x)}{5}=\dist_\H(5, F(x))=\dist_T(5,x)\leq 2(x-5), 
		\end{equation*}
		and so 
		\begin{equation} \label{eq_upperFx}
			F(x) \leq 5 \exp(2(x-5)). 
		\end{equation}
		Observe that $F$ restricted to $s_k$ is the map $\zeta\mapsto 5\cosh(\zeta-4)/\cosh(1)$, which sends vertical lines to ellipses with real major axis, and so, we have that $\Rea F(z)<F(x)$. Consequently,
		\begin{equation} 
			\Rea F(z)<F(x)<F(a_{n_x+1}/2)<a_{n_x+2}/2=\alpha_{n_x+2},
		\end{equation}
		as needed. \qedhere 
	\end{subproof}	
	
	\begin{claim}[Claim 2]
		We have that $\Rea(F(w))\geq \exp(5a_{n_x+1})\Rea (F(z))\geq y_{n_x+1}.$
	\end{claim}
	\begin{subproof}
		Observe that any vertical segment that separates $z$ from $w$ in $\hat{T}_j$ has length at most $\pi$, and that $y\geq b_j$. Let $\Sigma = \{\zeta\in\C\colon \lvert \im\zeta\rvert \leq \pi \}$, and let $\phi_j^{-1}\colon \hat{T_j}\to \Sigma$, where the map $\phi_j$ is defined after \eqref{eq_Ik} if $j\neq 0$, and $\phi^{-1}_0\defeq 2\cdot\id$. By the geometry of $\hat{T_j}$ and the definition of $\phi_j$, there exist vertical segments $\gamma_x, \gamma_{y}$ at real parts $x+1$ and $y-1$ respectively such that $\gamma_x$ separates $z$ from all points at real parts at least $a_{n_{x+1}}$,  $\gamma_y$ separates $w$ from all points at real parts at most $a_{n_{x+1}}/2$ and $\phi_j^{-1}(\gamma_x)$ separates $\phi_j^{-1}(\gamma_{y})$ from $-\infty$ in $\Sigma$.  Let
		\begin{equation*}
			\beta \defeq \min_{z \in \gamma_{y}} \Rea \phi_j^{-1}(z) \quad \text{ and } \quad \alpha\defeq \max_{w \in \gamma_{x}} \Rea \phi_j^{-1}(w),
		\end{equation*}
		and note that  $\beta,\alpha>1$. Then,  since any vertical line segment in $\hat{T}_j$ has length at most $\pi$, $a_{n_x+1}\geq 6$ and by \eqref{eq_bounds_xy}, $y-x-2>100a_{n_x+1}-a_{n_x+1}/2-2\geq 99 a_{n_x+1}$, by Theorem~\ref{thm_ahlfors},
		\begin{equation}\label{eq_Ahl1}
			\beta-\alpha\geq2\cdot99a_{n_x+1}-2\ln32 \geq2\cdot 99a_{n_x+1}-2a_{n_x+1}\geq  196 a_{n_x+1}.
		\end{equation}
		If $j=0$, we have that 
		\begin{equation}\label{eq_j0}
			\vert F(w)\vert=\vert\exp(\phi^{-1}_0(w)/2)\vert \geq \exp(50 a_{n_x+1})\vert F(z)\vert.
		\end{equation}
		If $j\neq 0$, let $\ell_\beta$ and $\ell_\alpha$ be the vertical segments in $V$ at real parts $\beta$ and $\alpha$ respectively, consider $2\cdot\Log (5\cdot\psi)\colon V\to \Sigma$ and recall that by definition of $V$, the length of $\ell_\beta$ and $\ell_\alpha$ is bounded by a constant $\delta<\pi$. Let
		\begin{equation*}
			M^+ \defeq \min_{z \in \ell_\beta} \Rea(\Log (5\cdot\psi)(z)) \quad \text{ and } \quad M^- \defeq \max_{w \in \ell_\alpha} \Rea(\Log (5\cdot\psi)(w)).
		\end{equation*}
		Then, by \eqref{eq_Ahl1} and Theorem \ref{thm_ahlfors},
		\begin{equation*}
			M^+-M^-\geq98 a_{n_x+1}-\ln32 \geq  98a_{n_x+1}-a_{n_x+1}\geq  50 a_{n_x+1}.
		\end{equation*}
		Hence, since $\exp\circ \Log (5\cdot\psi) \circ \phi^{-1}_j \vert_{T_j} \equiv F \vert_{T_j}, $ we have that
		\begin{equation}\label{eq_jneq0}
			\vert F(w)\vert \geq \exp(50 a_{n_x+1})\vert F(z)\vert.
		\end{equation}
		
		Recall that since $F(z),F(w) \in s_{j+1}$, the imaginary parts of $F(z)$ and $F(w)$ differ by at most $\pi$, and so for any $j$, by \eqref{eq_j0} and \eqref{eq_jneq0},
		\begin{equation*}\label{eq_real_parts}
			\begin{split}
				\Rea (F(w))&>\vert F(w)\vert-\vert \Ima(F(z))\vert-\pi \\
				&>(\exp(50 a_{n_x+1})-1)\vert F(z)\vert -\pi \\
				&\geq (\exp(50 a_{n_x+1})-1)\Rea(F(z))-\pi.
			\end{split}
		\end{equation*}
		Recall that the set of tracts of $F$ satisfies $\overline{\T}\subset \H_4$. Hence, since $F(z)\in s_{j+1}$, $\Rea F(z) >4$. Moreover, by assumption, $a_n\geq 6$ for all $n\geq 0$. Thus, we have that
		\begin{equation*}\label{eq_real_parts2}
			\exp(50 a_{n_x+1}-1)\Rea (F(z))-\pi>\exp(5a_{n_x+1})\Rea (F(z)).
		\end{equation*}
		Using \eqref{eq_upperFx},
		\begin{equation*} \label{eq_phiReaFz}
			\exp(5a_{n_x+1})\geq \exp(3\cdot 6)\exp(2a_{n_x+1})\geq 100 F(a_{n_x+1}) \geq 100 a_{n_x+2}=y_{n_x+1}.\qedhere
		\end{equation*}
	\end{subproof}
	
It follows from Claims 1 and 2 that if $s_j= T_0+2\pi i m$ for some $m\in \Z$, by definition of $\phi$,
\begin{equation}
		\Rea(F(w))>y_{n_x+1}=\phi(\alpha_{n_x+1})\geq \phi(\Rea F(z)), 
	\end{equation} 
	as we wanted to show.
	
	We are left to study the case $s_j\neq T_0+2\pi i m$ for all $m\in \Z$. First, note that by \eqref{eq_bounds_xy}, $\Rea(w)-\Rea(z)>y_{n_x}-\alpha_{n_x+1}>10$, and that Claim 2 implies that $\Rea(F(w)) > \Rea (F(z))$. Moreover, by definition of $\phi$ and the intervals $\{I_{k,m}\}$ in \eqref{eq_Ik}, $\Rea(F(z))\in I_{k,m}$ for some $k\geq 0$, $m\in \Z$, and if $\Rea(F(w))\in I_{k,m}$, Proposition \ref{prop_preimIk} would lead to a contradiction. Thus, we have that $\Rea(F(w)) > \phi( \Rea (F(z)))$.
	
	Next, consider any two distinct $z,w \in J_{\s}$. We want to show that either $\Rea( F^n(w) ) > \phi( \Rea( F^n(z)))$ or $\Rea( F^n(z) ) > \phi( \Rea(F^n(w)))$ for all $n\geq 0$ sufficiently large. Suppose, by contradiction, that this is not the case, that is, that for all $n\geq 0$, 
	\begin{equation}\label{eq_hsc}
		\Rea(F^n(z)) \leq  \phi(\Rea(F^n(w))) \quad \text{ and } \quad \Rea(F^n(w)) \leq  \phi(\Rea(F^n(z))).
	\end{equation}
	In other words, either $\Rea(F^n(w)) < \Rea(F^n(z)) \leq  \phi(\Rea(F^n(w)))$ or $\Rea(F^n(z)) < \Rea(F^n(w)) \leq  \phi(\Rea(F^n(z)))$, and so, by definition of $\phi$ and the choice of the intervals $I_{k,m}$ in \eqref{eq_Ik}, $\Rea(F^n(w)), \Rea(F^n(z))\in I_{k,m}$ for some $k\geq 0$ and $m\in \Z$. In particular, by our assumption on $\s$, \eqref{eq_hsc} holds for a subsequence of $n_j\to \infty$ such that $s_{n_j}\neq T_0+2\pi i m$ for all $m\in \Z$. Moreover, since both $F^{n_j}(w)$ and $F^{n_j}(z)$ belong to the same tract, their imaginary parts differ at most by $1$. But note that then, by Proposition \ref{prop_preimIk} and \eqref{eq_expansion2}, $\vert w-z \vert<3/2^{n_j}$ for each $n_j$, contradicting $z\neq w$.
\end{proof}

\begin{remark}We have shown in the proof of the previous proposition that all external addresses $\s=s_0s_1s_2\ldots $ such that for infinitely many $j\geq 0$, $s_j\neq T_0+2\pi i m$ for all $m\in \Z$ satisfy the head-start condition for the same non-linear function $\phi$, as defined in \eqref{eq_phi_1}. In particular, the restriction $\tilde{F}$ of $F$ to all tracts that are $2\pi i \Z$ translates of $\{T_n\}_{n\geq 1}$ is an example of a function in $\BlogP$ whose Julia set is a Cantor bouquet (by \cite[Corollary~6.3]{lasseBrushing}, or 
Proposition~\ref{prop_UHSC_CB} below), but does not satisfy a linear uniform head-start condition in the sense of \cite[Section~5]{rrrs}.	
\end{remark}

\begin{cor} \label{cor_crin} The function $F$ is criniferous and each connected component of $J(F)$ is a dynamic ray together with its endpoint. 
\end{cor}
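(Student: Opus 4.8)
The plan is simply to assemble the corollary from results already established. By Proposition~\ref{prop_HSC}, the map $F$ satisfies a head-start condition in the sense of Definition~\ref{defn_HSC_addr}: every admissible external address of $F$ satisfies the head-start condition for some monotonically increasing upper semicontinuous function $\phi$ with $\phi(x)>x$ --- the linear function furnished by \cite[Proposition~5.4]{rrrs} for those addresses that eventually visit only $2\pi i\Z$-translates of $T_0$, and the step function of \eqref{eq_phi_1} for all other addresses. Since $F\in\BlogP$, Theorem~\ref{thm_HS_then_crin} then applies directly and yields that $F$ is criniferous.

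For the statement about the components of $J(F)$, I would recall that $F$ is of disjoint type: by construction $\overline{\T}\subset\H$, as observed immediately after \eqref{eq_Tn_Fn}. Hence Proposition~\ref{obs_disjoint_crin}, applied to $F\in\BlogP$ which is now both of disjoint type and criniferous, gives that every connected component of $J(F)$ is a dynamic ray together with its endpoint, completing the proof.

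There is no real obstacle remaining at this stage: the substantive work --- constructing the tracts, controlling the diameters of preimages of the ``hooks'' (Proposition~\ref{prop_preimIk}), and verifying the head-start inequalities case by case via the Ahlfors distortion estimate (Theorem~\ref{thm_ahlfors}) --- has all been carried out in Propositions~\ref{prop_V}, \ref{prop_preimIk} and~\ref{prop_HSC}. The corollary is a formal consequence of chaining Proposition~\ref{prop_HSC}, Theorem~\ref{thm_HS_then_crin} and Proposition~\ref{obs_disjoint_crin}.
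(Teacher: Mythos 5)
Your argument is correct and coincides with the paper's own proof: criniferousness follows from Proposition~\ref{prop_HSC} together with Theorem~\ref{thm_HS_then_crin}, and since $F$ is of disjoint type, Proposition~\ref{obs_disjoint_crin} gives that each component of $J(F)$ is a dynamic ray with its endpoint. Nothing further is needed.
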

\begin{proof}
	By Proposition \ref{prop_HSC}, that $F$ is criniferous follows from Theorem \ref{thm_HS_then_crin}. Since $F$ is of disjoint type, the rest of the statement is a consequence of Proposition~\ref{prop:disjoint_crin}.
\end{proof}

We have shown that $J(F)$ is a collection of unbounded disjoint curves, and so we can endow $\hat{J}(F)$  with the partial order defined in \eqref{eq_order}; namely, if $z,w\in J(F)$,
\begin{equation*}
	z\preceq w  \quad \text{ if and only if } \quad w\in [z,\infty],
\end{equation*}
where $[z,\infty]$ is the unique arc in $\hat{J}(F)$ connecting $z$ and $\infty$.

\begin{prop} \label{prop_noCB} $J(F)$ is not a Cantor bouquet.
\end{prop}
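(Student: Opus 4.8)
The plan is to produce a bad pair; the conclusion then follows from Corollary~\ref{cor_crin} (which gives that $F$ is criniferous and of disjoint type) together with Proposition~\ref{prop:badbouquets}. The bad pair will be $(a,b)$, lying on the forward-invariant hair $\gamma\defeq[5,\infty)$, i.e.\ the Julia continuum $J_{\s}$ with $\s=T_0T_0T_0\dots$, whose endpoint is $5$. Since $a<b$ are both on $\gamma$, we have $a\prec b$ for the order \eqref{eq_order}. Write $a_n\defeq F_0^n(a)$ and $b_n\defeq F_0^n(b)$, so that $[a_n,b_n]\subset\gamma$ and $a_n,b_n\to\infty$. The approximating sequences will come from the hairs $\gamma_n\defeq F_n^{-1}([5,\infty))=\phi_n([1,\infty))$, i.e.\ the Julia continua with address $T_nT_0T_0\dots$, for $n\ge 1$.

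First I would read off the geometry of $\gamma_n$ from the construction. Its endpoint is $F_n^{-1}(5)=\zeta_n$, which by definition has real part $b_n-1$ and imaginary part between $\pi$ and $\pi(1+1/(3n))$, and therefore lies at uniformly bounded Euclidean distance from $b_n$. Being the sub-curve, starting at $\zeta_n$, of the core curve $\phi_n(\mathbb{R})$ of the hooked tract $\hat T_n$, the hair $\gamma_n$ runs from $\zeta_n$ to the left through the ``base'' of the hook --- which projects exactly onto the interval $(a_n,b_n)$ --- then up the connector at real part in $(a_n,a_n+1)$, and finally out to $+\infty$ along the top branch. Hence $\gamma_n$ contains a point $z_n\ne\zeta_n$ with $|z_n-a_n|$ uniformly bounded (for instance, a point at which the real part along $\gamma_n$ is minimal). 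Since $\zeta_n$ is the endpoint of $\gamma_n$, it is the minimum for the order on $\gamma_n$, so $\zeta_n\prec z_n$.

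Next I would pull back by the branch $F_{T_0}^{-n}$ of $F^{-n}$ corresponding to the tract $T_0$, setting $\tilde w_n\defeq F_{T_0}^{-n}(\zeta_n)$ and $\tilde z_n\defeq F_{T_0}^{-n}(z_n)$; these lie on the hair with address $T_0\cdots T_0T_nT_0T_0\dots$, hence in $J(F)$, and by \eqref{eq_preim_order} the order is preserved, so $\tilde w_n\prec\tilde z_n$. Since $a_{n-1},\dots,a_0=a$ all lie in $T_0\subset\H$ with $F_0(a_{k-1})=a_k$, we get $F_{T_0}^{-n}(a_n)=a$, and likewise $F_{T_0}^{-n}(b_n)=b$. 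Now $F_{T_0}^{-1}$ is a holomorphic self-map of $\H$ whose image is the proper subdomain $T_0$, so it does not expand the hyperbolic metric of $\H$; iterating,
\[ \dist_\H(\tilde z_n,a)=\dist_\H\!\bigl(F_{T_0}^{-n}(z_n),F_{T_0}^{-n}(a_n)\bigr)\le\dist_\H(z_n,a_n). \]
Because $|z_n-a_n|$ stays bounded while $\Rea z_n,\Rea a_n\to\infty$, the standard estimate \eqref{eq_standard_est} gives $\dist_\H(z_n,a_n)\to 0$; as $a$ has fixed distance from $\partial\H$, this forces $|\tilde z_n-a|\to 0$, and the same argument gives $|\tilde w_n-b|\to 0$.

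We conclude that $a\prec b$ while $\tilde z_n\to a$, $\tilde w_n\to b$ and $\tilde w_n\prec\tilde z_n$ for all $n\ge 1$; thus $(a,b)$ is a bad pair, $\{a,b\}\cup\{\tilde z_n,\tilde w_n:n\ge 1\}$ is a bad set contained in $J(F)$, and Proposition~\ref{prop:badbouquets} shows that $J(F)$ is not a Cantor bouquet. The only non-formal step is the geometric one: verifying that $\zeta_n$ and the chosen $z_n$ lie at uniformly bounded Euclidean distance from $b_n$ and $a_n$. This rests on the explicit shape of the hooked tracts $\hat T_n$, the placement of $\zeta_n$ near the tip at real part $b_n-1$, and the defining feature of the construction that the base of the hook sits directly above $[a_n,b_n]\subset\gamma$; the rest is order-preservation and a Schwarz--Pick contraction estimate.
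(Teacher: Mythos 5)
Your proposal is correct and follows essentially the same route as the paper: both exhibit $(a,b)$ on the invariant ray $[5,\infty)$ as a bad pair, taking the hooked hairs $F_{T_n}^{-1}([5,\infty))$ with endpoint $\zeta_n$ near $b_n$ and a point in the connector near $a_n$, pulling back by $F_{T_0}^{-n}$ with order preserved via \eqref{eq_preim_order}, and concluding with Proposition~\ref{prop:badbouquets}. Your convergence step (uniformly bounded Euclidean distances to $a_n,b_n$ plus Schwarz--Pick contraction in $\H$) is a clean, slightly more careful rendering of the paper's appeal to the expansion estimate \eqref{eq_expansion2}, but it is not a different argument.
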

\begin{proof}
	We will show that $J(F)$ is not a Cantor bouquet by showing that $(a,b)$ constitutes a bad pair in the sense of Definition \ref{defn_badpair}. For each $k\geq 0$, consider the external address 
	$$\s_k\defeq \underbrace{ T_0\ldots T_0}_{ k \text{ times }} T_k T_0 T_0 T_0\ldots. $$
	Since, by definition of $F$,  $F([5,\infty))=[5,\infty)$, we have that $a\prec b$ and that for each $k$,
	$$J_{\s_k}=F_{T_0}^{-k}(F^{-1}_{T_k}([5, \infty))).$$ 
	Since $F(5)=5$, see \eqref{eq_preim1}, and by the geometry of tracts,  and in particular \eqref{eq_preim2}, there exist $z_k,w_k\in F^{k}(J_{\s_k})$ such that $w_k\prec z_k$,
	\[\vert w_k-b_k\vert<1/k \quad \text{ and }\quad \vert z_k-a_k\vert<1/k.\]
	Let $\zeta_k\defeq F_{T_0}^{-k}(z_k)$ and $\omega_k\defeq F_{T_0}^{-k}(w_k)$. Then, using \eqref{eq_expansion2}, we have that $\zeta_k\to a$, $\omega_k\to b$ as $k\to \infty$, and that $\omega_k\prec \zeta_k \in J_{\s_k}$ for all $k\geq 0$, see Proposition \ref{prop_properties_bad}. Hence, by constructing approximating sequences, we have shown that $(a,b)$ is a bad pair. The result then follows from Corollary \ref{cor:badbouquets}.
\end{proof}

\subsection{Variation: non-accessible endpoint}\label{subsec_51}\hfill

Note that in the previous construction, the hair that we show to contain a bad pair is invariant. By \cite[Theorem B]{BarKarp_codingtrees}, see also \cite[Proposition 3.10]{lasse_arclike}, if an external address is \textit{bounded}, that is, contains only finitely many different tracts, then its endpoint is accessible from $\C\setminus J(F)$. To prove Theorem \ref{thm_intro_crinNoCantorB}, we must therefore modify the construction. We do so by adding tracts, in such a way that the endpoint of the curve associated to a non-bounded external address has a non-accessible endpoint.

Namely, let $\{T_n\}_{n\geq 0}$ be the collection of tracts described in  \eqref{eq_T0} and \eqref{eq_Tn_Fn}, and let $F_n\colon T_n\to \H$ be the corresponding isomorphisms specified in  \eqref{eq_preim1} and \eqref{eq_preim2}. For each $n\geq 0$ consider the half-strip of width $1/(6n(n+1))$ ``surrounded'' by $T_n$; that is, 
\begin{equation*}
	S_{n}\defeq \{x+iy \colon x>a_n +1 \text{ and } 1/(3n+2)  < y/\pi-1 < 1/(3n+1)\},
\end{equation*}
see Figure \ref{figure_tractsS}, and let $G_{n}$ be the conformal isomorphism from $S_{n}$ to $\H$ such that for $p_n\defeq a_n+1+ (2n+1)i/(6n(n+1))$, 
\begin{equation}
	G_{n}(p_n)=p_{n+1} \quad \text{ and } \quad G_{n}'(p_n)>0. 
\end{equation}

\begin{figure}[htb]
	\begingroup%
	\makeatletter%
	\providecommand\color[2][]{%
		\errmessage{(Inkscape) Color is used for the text in Inkscape, but the package 'color.sty' is not loaded}%
		\renewcommand\color[2][]{}%
	}%
	\providecommand\transparent[1]{%
		\errmessage{(Inkscape) Transparency is used (non-zero) for the text in Inkscape, but the package 'transparent.sty' is not loaded}%
		\renewcommand\transparent[1]{}%
	}%
	\providecommand\rotatebox[2]{#2}%
	\newcommand*\fsize{\dimexpr\f@size pt\relax}%
	\newcommand*\lineheight[1]{\fontsize{\fsize}{#1\fsize}\selectfont}%
	\ifx\svgwidth\undefined%
	\setlength{\unitlength}{394.01574803bp}%
	\ifx\svgscale\undefined%
	\relax%
	\else%
	\setlength{\unitlength}{\unitlength * \real{\svgscale}}%
	\fi%
	\else%
	\setlength{\unitlength}{\svgwidth}%
	\fi%
	\global\let\svgwidth\undefined%
	\global\let\svgscale\undefined%
	\makeatother%
	\begin{picture}(1,0.47482014)%
		\lineheight{1}%
		\setlength\tabcolsep{0pt}%
		\put(0.04934914,0.06014485){\color[rgb]{0,0,0}\makebox(0,0)[lt]{\lineheight{1.25}\smash{\begin{tabular}[t]{l}$a$\end{tabular}}}}%
		\put(0.1704857,0.06110626){\color[rgb]{0,0,0}\makebox(0,0)[lt]{\lineheight{1.25}\smash{\begin{tabular}[t]{l}$b$\end{tabular}}}}%
		\put(0.20892231,0.34353092){\color[rgb]{0,0,0}\makebox(0,0)[lt]{\lineheight{1.25}\smash{\begin{tabular}[t]{l}\fontsize{9pt}{1em}$\pi(1+\frac{1}{3n+3})$\end{tabular}}}}%
		\put(0.92637261,0.3925475){\color[rgb]{0,0,0}\makebox(0,0)[lt]{\lineheight{1.25}\smash{\begin{tabular}[t]{l}\fontsize{9pt}{1em}$S_{n}$\end{tabular}}}}%
		\put(0.92764074,0.28273105){\color[rgb]{0,0,0}\makebox(0,0)[lt]{\lineheight{1.25}\smash{\begin{tabular}[t]{l}\fontsize{9pt}{1em}$S_{n+1}$\end{tabular}}}}%
		\put(0.22953721,0.44947306){\color[rgb]{0,0,0}\makebox(0,0)[lt]{\lineheight{1.25}\smash{\begin{tabular}[t]{l}\fontsize{9pt}{1em}$\pi(1+\frac{1}{3n})$\end{tabular}}}}%
		\put(0,0){\includegraphics[width=\unitlength,page=1]{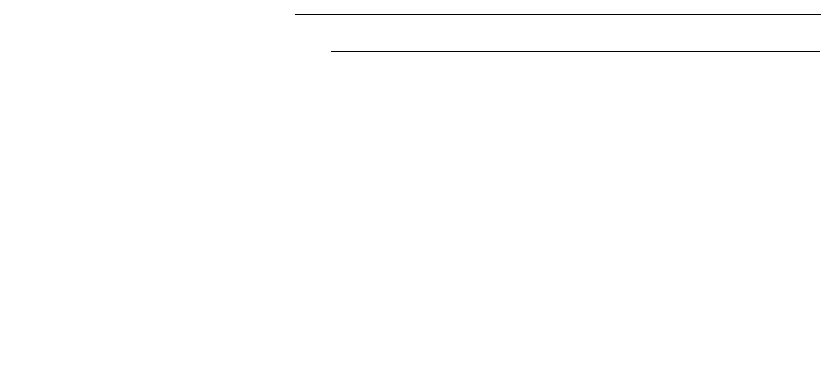}}%
		\put(0.53727177,0.23007006){\color[rgb]{0,0,0}\makebox(0,0)[lt]{\lineheight{1.25}\smash{\begin{tabular}[t]{l}\fontsize{9pt}{1em}$\pi(1+\frac{1}{3(n+2)})$\end{tabular}}}}%
		\put(0,0){\includegraphics[width=\unitlength,page=2]{TractsS.pdf}}%
		\put(0.34245418,0.06146438){\color[rgb]{0,0,0}\makebox(0,0)[lt]{\lineheight{1.25}\smash{\begin{tabular}[t]{l}$a_n$\end{tabular}}}}%
		\put(0.55119296,0.06121453){\color[rgb]{0,0,0}\makebox(0,0)[lt]{\lineheight{1.25}\smash{\begin{tabular}[t]{l}$b_n$\end{tabular}}}}%
		\put(0.67577664,0.06369194){\color[rgb]{0,0,0}\makebox(0,0)[lt]{\lineheight{1.25}\smash{\begin{tabular}[t]{l}$a_{n+1}$\end{tabular}}}}%
		\put(0.9212652,0.0664427){\color[rgb]{0,0,0}\makebox(0,0)[lt]{\lineheight{1.25}\smash{\begin{tabular}[t]{l}$b_{n+1}$\end{tabular}}}}%
		\put(0,0){\includegraphics[width=\unitlength,page=3]{TractsS.pdf}}%
		\put(0.94108033,0.24012196){\color[rgb]{0,0,0}\makebox(0,0)[lt]{\lineheight{1.25}\smash{\begin{tabular}[t]{l}\fontsize{9pt}{1em}$T_{n+1}$\end{tabular}}}}%
		\put(0.58465785,0.35564384){\color[rgb]{0,0,0}\makebox(0,0)[lt]{\lineheight{1.25}\smash{\begin{tabular}[t]{l}\fontsize{9pt}{1em}$T_n$\end{tabular}}}}%
		\put(0.00485849,0.18784052){\color[rgb]{0,0,0}\makebox(0,0)[lt]{\lineheight{1.25}\smash{\begin{tabular}[t]{l}\fontsize{9pt}{1em}$T_0$\end{tabular}}}}%
		\put(0.85951974,0.22037841){\color[rgb]{0,0,0}\makebox(0,0)[lt]{\lineheight{1.25}\smash{\begin{tabular}[t]{l}\fontsize{9pt}{1em}$\zeta_{n+1}$\end{tabular}}}}%
		\put(0.49684966,0.32412687){\color[rgb]{0,0,0}\makebox(0,0)[lt]{\lineheight{1.25}\smash{\begin{tabular}[t]{l}\fontsize{9pt}{1em}$\zeta_{n}$\end{tabular}}}}%
		\put(0.43169186,0.39444573){\color[rgb]{0,0,0}\makebox(0,0)[lt]{\lineheight{1.25}\smash{\begin{tabular}[t]{l}\fontsize{9pt}{1em}$p_{n}$\end{tabular}}}}%
		\put(0,0){\includegraphics[width=\unitlength,page=4]{TractsS.pdf}}%
		\put(0.77402178,0.28340401){\color[rgb]{0,0,0}\makebox(0,0)[lt]{\lineheight{1.25}\smash{\begin{tabular}[t]{l}\fontsize{9pt}{1em}$p_{n+1}$\end{tabular}}}}%
		\put(0,0){\includegraphics[width=\unitlength,page=5]{TractsS.pdf}}%
	\end{picture}%
	\endgroup%
	\caption{The tracts $\{S_{n}\}_{n\geq 0}$.}
	\label{figure_tractsS}
\end{figure}

Let $\T$ be the collection of all tracts $T_n$ and $S_{n}$ together with their $2\pi i $-translates, and consider the corresponding $2\pi i$-periodic logarithmic transform $F\colon \T\to \H$. Since $\overline{\T}\subset \H$, $F$ is by definition of disjoint type.

We claim that $F$ is criniferous and that for the external address 
$\s \defeq S_{1} S_{2}S_{3}S_{4}\ldots,$ the endpoint of $J_{\s}$ is not accessible from $\C\setminus J(F)$, implying that $J(F)$ is not a Cantor bouquet; see Observation \ref{obs_acc_endp} . Since the proofs of these facts are closely related to those in the previous construction, here we will just indicate the changes needed for them to carry over to our new setting.
\begin{prop}\label{prop_newHSC} The map $F$ satisfies a head-start condition. 
\end{prop}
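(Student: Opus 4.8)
The plan is to follow the proof of Proposition~\ref{prop_HSC} closely, the only new ingredient being the tracts $S_n$ together with their $2\pi i\Z$-translates. As there, it suffices to exhibit, for each admissible external address $\s=s_0s_1s_2\ldots$ of $F$, an upper semicontinuous $\phi\colon[0,\infty)\to[0,\infty)$ with $\phi(x)>x$ for which $\s$ satisfies the head-start condition.

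First I would dispose of the ``straight'' addresses: those for which $s_j$ is, for all but finitely many $j$, a $2\pi i\Z$-translate of $T_0$ or of one of the strips $S_n$. All of these tracts are straight horizontal half-strips contained in $\H$, so the restriction $\tilde F$ of $F$ to the union of all their $2\pi i\Z$-translates lies in $\BlogP$ and has bounded slope and uniformly bounded wiggling in the sense of \cite[Definition~5.3]{rrrs} (a suitable curve may be taken inside $T_0$, and straight strips do not wiggle); hence by \cite[Proposition~5.4]{rrrs}, $\tilde F$, and therefore $\s$, satisfies a head-start condition. Note that this case is strictly more permissive than the first case in the proof of Proposition~\ref{prop_HSC}; in particular the non-accessible address $\s=S_1S_2S_3\ldots$ falls under it, so that adjoining the $S_n$ does not create non-arc components of $J(F)$.

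For the remaining addresses --- those that use a $2\pi i\Z$-translate of some hooked tract $T_\ell$, $\ell\ge1$, infinitely often --- I would build a step function $\phi$ of the form \eqref{eq_phi_1}, choosing the locations and heights of its jumps so that: (i)~the hooks of the tracts $T_\ell$, $\ell\ge1$, are cleared, for which the intervals $I_{k,m}$ of \eqref{eq_Ik} together with Proposition~\ref{prop_preimIk} can be invoked exactly as in Proposition~\ref{prop_HSC}; and (ii)~the transitions through the strips $S_n$ are accommodated. For~(ii), the relevant facts are that $F$ maps $S_n$ conformally onto $\H$ and that $S_n$ is a straight strip, so that by the standard estimate on the hyperbolic metric, \eqref{eq_standard_est}, together with the Ahlfors distortion theorem, Theorem~\ref{thm_ahlfors}, the quantity $\log\Rea F$ transforms along $S_n$ essentially like an increasing affine map whose slope is bounded below by a positive constant. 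This yields the implication~\eqref{eq_hs1} for pairs of points in $\overline{S_n}$, and also supplies the contraction needed for the ``distinct points'' part of the head-start condition, which then runs as in Proposition~\ref{prop_HSC}: were the head-start condition to fail at every iterate, then along a subsequence $n_j\to\infty$ with $s_{n_j}$ a $T_\ell$-translate, $\ell\ge1$, one would get $\lvert z-w\rvert<3/2^{n_j}$ from Proposition~\ref{prop_preimIk} and the expansion estimate \eqref{eq_expansion2} --- which still holds for the present $F$ because $\overline{\T}\subset\H$, by \cite[Theorem~1.1]{lasse_constant} --- a contradiction.

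I expect the main obstacle to be case~(ii). Unlike the hooked tracts $T_\ell$, the strips $S_n$ become arbitrarily thin, so the preimages of the intervals $I_{k,m}$ under $F\vert_{S_n}$ need not have uniformly bounded Euclidean diameter, and the head-start implication can no longer simply be read off from a diameter bound as in Proposition~\ref{prop_preimIk}. One must instead exploit the thinness of $S_n$ in the opposite direction --- $F\vert_{S_n}$ expands so strongly that the gain outweighs the loss --- which forces one to choose the jump sequence defining $\phi$ with care and to track the interplay between the expansion along $S_n$ and the position of the intervals $I_{k,m}$ relative to $S_n$. This bookkeeping is the technical heart of the argument.
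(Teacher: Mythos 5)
Your first case is correct, and it takes a genuinely different route from the paper there: the paper never applies the bounded-slope/bounded-wiggling criterion to the strips $S_n$. Instead it treats \emph{every} address that meets translates of the $S_j$ infinitely often (including $\s=S_1S_2S_3\ldots$, which carries the non-accessible endpoint) by exhibiting an explicit step function $\phi$ as in \eqref{eq_phi_1}, while addresses with only finitely many strip entries are referred back to the proof of Proposition~\ref{prop_HSC}. Your observation that $T_0$ together with all the $S_n$ (and their translates) is a subsystem of straight half-strips with bounded slope and uniformly bounded wiggling, so that \cite[Proposition~5.4]{rrrs} applies, disposes of all eventually-straight addresses with no computation, which is arguably cleaner for that family; as in the paper's own treatment of the eventually-$T_0$ addresses, you should enlarge the restricted tract system by the finitely many hooked tracts occurring at the start of such an address (each has bounded wiggling individually), since the head-start condition for the address also constrains those initial pairs.

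The genuine gap is in your second case, which is where the new content of the proposition lies and which you only outline. For an address in which hooked tracts recur infinitely often but strips also occur, you must verify \eqref{eq_hs1} when the current tract is a translate of some $S_j$, for the \emph{same} step function that handles the hooks; an Ahlfors-distortion lower bound on the gap (your point~(ii)) does not suffice, because one also needs an upper bound on $\Rea F(z)$ relative to the jump locations of $\phi$ --- the analogue of Claim~1 in Proposition~\ref{prop_HSC} --- and $F\vert_{S_j}$ expands like $\exp(Cj^{2}\Rea z)$ rather than $\exp(2\Rea z)$, since $S_j$ is a copy of $T_0$ scaled by a factor comparable to $1/j^{2}$. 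The paper resolves exactly this by shrinking the jump locations: it keeps $y_n=100\,a_{n+1}$ but takes $\alpha_n\defeq a_n/n^{3}$ in \eqref{eq_phi_1}, notes the bound $\lvert F(z)\rvert<\exp(C j^{2}\Rea z)$ for $z\in S_j$ with moderate real part, and then checks, using $j\le n_x$ and $\Rea z<\alpha_{n_x+1}$, that $\Rea F(z)\lesssim \exp\bigl(n_x^{2}a_{n_x+1}/(n_x+1)^{3}\bigr)\approx a_{n_x+2}^{1/n_x}<\alpha_{n_x+2}$, exploiting the superexponential growth of $(a_n)$; with this substitute for Claim~1, the rest of the proof of Proposition~\ref{prop_HSC} (Claim~2, the hooked-tract case via Proposition~\ref{prop_preimIk}, and the separation argument along the hooked times) carries over. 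This specific choice of thresholds and its verification are precisely the ``bookkeeping'' you defer, so as written your argument is incomplete for these mixed addresses.
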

\begin{proof}
We only need to argue for the admissible external addresses of the form $\underline{\tau}=\tau_0\tau_1\tau_2\ldots $ such that for infinitely many $k\geq 0$, $\tau_k=S_j+2\pi i m$ for some $j\geq 0$ and $m\in \Z$, since all other cases are already covered or follow from arguments in the proof of Proposition~\ref{prop_HSC}.

We claim that any such $\underline{\tau}$ satisfies the head-start condition for the map $\phi$ defined in \eqref{eq_phi_1}, with the choice of parameters 
\begin{equation}\label{eq_param2}
	\alpha_n\defeq \frac{a_n}{n^3} \quad \text{ and } \quad y_n\defeq 100a_{n+1}, \quad \text{ for each } n\geq 0.
\end{equation}
Indeed, let us fix $z,w\in \overline{\tau_k}$ with $F(z),F(w)\in \tau_{k+1}$ for some $k\geq 0$.  Assume that $\Rea(w) > \phi( \Rea(z))$. Then, in place of \textit{Claim 1} in the proof of Proposition~\ref{prop_HSC}, we must show that if $\tau_k= S_j+2\pi i m$ for some $j\geq 0$ and $m\in \Z$, then $\Rea F(z)<\alpha_{n_x+2}$. For that, note that any $S_n$ equals $T_0$ up to an appropriate translation and scaling by a factor of $1/n^2$. Thus, a calculation shows that whenever $z\in S_j$ for some $j\leq n$ and $\Rea z>2a_j,$ then
\begin{equation}\label{eq_boundFexp}
	\vert F(z) \vert <\exp(C n^2\Rea z)
\end{equation} 
for some constant $C>0$. Then, since $j\leq n_x$ and $\Rea z<\alpha_{n_x+1}$, see \eqref{eq_bounds_xy}, 
$$\Rea F(z)\leq c_1\exp(n_x^2\alpha_{n_x+1})\leq c_1\exp( a_{n_x+1}/n_x)= c_1a_{n_x+2}^{1/n_x}<a_{n_x+2}/(n_x+2)^3=\alpha_{n_x+2},$$
as desired. \textit{Claim 1} in the rest of the proof of Proposition~\ref{prop_HSC} now holds verbatim, as the same calculations apply to the new tracts.
\end{proof}

\begin{prop}\label{prop_nonaccess}Let  $\s \defeq S_{1} S_{2}S_{3}S_{4}\ldots$. Then, the endpoint $e_{\s}$ of $J_{\s}$ is not accessible from $\C\setminus J(F)$. In particular, $J(F)$ is not a Cantor bouquet.
\end{prop}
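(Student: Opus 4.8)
The plan is to mirror the folding argument of Proposition~\ref{prop_noCB}, but now to produce the obstruction \emph{at the endpoint $e_{\s}$ itself} and to convert it into inaccessibility. First, by Proposition~\ref{prop_newHSC} and Theorem~\ref{thm_HS_then_crin}, $F$ is criniferous, so by Proposition~\ref{obs_disjoint_crin} every component of $J(F)$ -- in particular $J_{\s}$ -- is a dynamic ray together with its endpoint; thus $J_{\s}=[e_{\s},\infty]$ is an arc and the order $\preceq$ of \eqref{eq_order} is available on $\hat J(F)$. I argue by contradiction: suppose $e_{\s}$ is accessible and fix a curve $\gamma\colon[0,1)\to\C\setminus J(F)$ with $\gamma(t)\to e_{\s}$ as $t\to1$.

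Next I would set up the hooked hairs. For $k\ge1$ put $\underline{\tau}_k\defeq S_1 S_2\cdots S_{k-1}\,T_k\,T_0 T_0\cdots$, which is admissible since $F$ is of disjoint type. Because $F([5,\infty))=[5,\infty)$ by \eqref{eq_preim1}, so that $J_{T_0T_0\cdots}=[5,\infty)$, we get $F^{k-1}(J_{\underline{\tau}_k})=F_{T_k}^{-1}([5,\infty))$; as $T_k$ is a thin neighbourhood of the spine of the hook $\hat T_k$ which $F_k$ unfolds onto $\H$ (see \eqref{eq_preim2}), this arc traverses the whole hook -- from $\zeta_k$ near the tip of the stubby finger, around the bend near real part $a_k$, back out to $\infty$ -- and therefore \emph{winds around} the left end of the pocket $S_k$, where $F^{k-1}(e_{\s})$ sits, accompanying the escaping orbit $p_k\to\infty$. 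Pulling back by $G_1^{-1}\circ\cdots\circ G_{k-1}^{-1}$ -- a homeomorphism carrying $F^{k-1}(e_{\s})$ to $e_{\s}$ and contracting by \eqref{eq_expansion2} and Proposition~\ref{prop_preimIk} -- yields the hair $J_{\underline{\tau}_k}$ together with a sub-arc $\beta_k$ that winds around $e_{\s}$, with $e_{\underline{\tau}_k}\to e_{\s}$ and $\operatorname{diam}\beta_k\to0$ as $k\to\infty$. Exactly as in the proof of Proposition~\ref{prop_noCB} one then checks, using the two arms of $\hat T_k$ (and, if needed, the translates $T_k+2\pi i m$), that these windings accumulate on $J_{\s}$ from \emph{both} sides at $e_{\s}$, and that they are nested, since the blocks $\underline{\tau}_k$ share ever longer prefixes with $\s$.

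Finally, I would close the argument topologically. Since $J_{\s}$ and the $J_{\underline{\tau}_k}$ are pairwise disjoint arcs meeting only at $\infty$, for $k<\ell$ large one verifies that $J_{\s}\cup J_{\underline{\tau}_k}\cup J_{\underline{\tau}_\ell}\cup\{\infty\}$ contains a triod one of whose complementary components is a bounded Jordan domain $U_{k,\ell}$ with $\partial U_{k,\ell}\subset J(F)\cup\{\infty\}$, such that $e_{\s}\in\partial U_{k,\ell}$, $\operatorname{diam}U_{k,\ell}\to0$, and $\gamma(0)\notin U_{k,\ell}$. A Janiszewski-type separation argument, of the kind carried out in Lemma~\ref{lem:tracts}, then shows that every path from $\gamma(0)$ into $U_{k,\ell}$ must meet $\partial U_{k,\ell}$; but $\gamma(t)\in U_{k,\ell}$ for $t$ close to $1$ while $\gamma([0,1))\cap J(F)=\emptyset$, a contradiction. (A separate, easier point rules out $\gamma$ reaching $e_{\s}$ from the side of $\partial S_1$ away from $S_1$: there, the only route to $e_{\s}$ passes through the connector region of $\hat T_1$, which the thin tract $T_1$ crosses, separating it from $e_{\s}$ by points of $J(F)$.) Hence $e_{\s}$ is inaccessible, and by Observation~\ref{obs_acc_endp} $J(F)$ is not a Cantor bouquet.

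The main obstacle is precisely the planar bookkeeping in the middle two steps: one must verify carefully that pulling the hook arc $F_{T_k}^{-1}([5,\infty))$ back through the thin strips $S_1,\dots,S_{k-1}$ produces sub-arcs that genuinely encircle $e_{\s}$ (from both sides, and nested) rather than degenerating or landing on the wrong side, and that the resulting triod regions $U_{k,\ell}$ really separate $e_{\s}$ from $\gamma(0)$. This is where the explicit geometry of the hooks $\hat T_n$, the placement of $S_n$ inside $\hat T_n$, and the normalisation $G_n(p_n)=p_{n+1}$ with $G_n'(p_n)>0$ are used -- exactly as, but in more detail than, the bad-pair computation in Proposition~\ref{prop_noCB}.
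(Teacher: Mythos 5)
There is a genuine gap at the decisive topological step. Your barrier regions $U_{k,\ell}$ cannot exist: you require a bounded Jordan domain whose boundary is contained in $J_{\s}\cup J_{\underline{\tau}_k}\cup J_{\underline{\tau}_\ell}\cup\{\infty\}$, but this set is a union of three pairwise disjoint arcs on the sphere meeting only at $\infty$, i.e.\ a tree. It contains no simple closed curve, and (by exactly the Janiszewski-type argument you invoke, applied twice: each hair together with $\{\infty\}$ is an arc, arcs do not separate the sphere, and consecutive unions intersect in the connected set $\{\infty\}$) its complement in $\widehat{\C}$ is connected. So no finite collection of hairs, however they ``wind'' or ``nest'' around $e_{\s}$, bounds a domain with boundary inside $J(F)\cup\{\infty\}$; an arc cannot encircle a point. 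This is not a repairable bookkeeping issue in the geometry of the hooks: the obstruction is that any closed fence around (an iterate of) $e_{\s}$ must contain a piece \emph{outside} $J(F)$, and your proof gives you no control over how an accessing curve might slip through such a gap. Note also that Proposition~\ref{prop_noCB} provides no precedent here: it only produces a bad pair via order-reversal of approximating arcs, and never needs any separation of the plane.

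The paper's proof supplies precisely the missing device. It closes the hook hair $J_{T_nT_0T_0\ldots}\subset\overline{T_n}$ with a \emph{short vertical segment} $\hat L_n$ at real part about $b_n$ crossing the pocket $S_n$, obtaining a fence $\hat\gamma_n$ that separates the relevant iterate of $e_{\s}$ from the base point (this is where Janiszewski is legitimately used, since $\overline{T_n}\cap\hat\gamma_n$ is connected). Pulling back along $\s$ gives fences $\gamma_n=F_{\s}^{-n}(\hat\gamma_n)$ whose only non-Julia part is $L_n=F_{\s}^{-n}(\hat L_n)$; an accessing curve must therefore cross $L_n$ for every $n$. The contradiction then comes from quantitative estimates your sketch never engages with: by the expansion \eqref{eq_expansion2} the lengths of $L_n$ tend to $0$ and their limit points lie in $J_{\s}$, while the growth estimate \eqref{eq_boundFexp} shows the orbit of $e_{\s}$ stays well to the left of real part $b_n$, so $\dist(L_n,e_{\s})$ is bounded below; hence the crossing points cannot accumulate at $e_{\s}$, yet they must. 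To repair your argument you would have to replace the $U_{k,\ell}$ step by a fence containing a controlled non-Julia gap and prove such distance/length estimates for that gap.
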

\begin{proof}
To prove that the endpoint of $J_{\s}$ is not accessible from $\mathbb{C}\setminus J(F)$, for each $n\geq 0$, consider the external address
$$\underline{\tau}_n\defeq S_{1} S_{2}S_{3}\ldots S_{n-1}T_nT_0T_0\ldots,$$
and let $\hat{\gamma}_n$ be the union of $F^n(J_{\underline{\tau}_n})=J_{\sigma^n(\underline{\tau}_n)}$ with the vertical segment $\hat{L}_n$ that joins the endpoint of $J_{\sigma^n(\underline{\tau}_n)}$ to a point of $J_{\sigma^n(\underline{\tau}_n)}$ of imaginary part  greater than $\pi(1+1/(3n+1))$. Here, $\sigma$ denotes the shift map on sequences, that is, $\sigma(\tau_0\tau_1\tau_2\ldots)=\tau_1\tau_2\ldots$. Note that $\overline{T_n}\cap \hat{\gamma}_n$ is connected and separates $F^n(e_{\s})$ from $0\in \C\setminus J(F)$. Then, by Janiszewski's Theorem, see  \cite[p. 110]{Newman}, so does $\hat{\gamma}_n$. Next, let $\gamma_n\defeq F_{\s}^{-n}(\hat{\gamma}_n)\subset S_1$ and $L_n\defeq F_{\s}^{-n}(\hat{L}_n)$, and note that $\gamma_n$ separates $e_{\s}$ from $0$. In particular, any curve connecting $e_{\s}$ to $0$ in the complement of $J(F)$ would have to intersect $L_n$ for all $n\geq 0$. However, by \eqref{eq_expansion2}, the length of $L_n$ tends to $0$ as $n\to \infty$, and by construction and \eqref{eq_JF_disj}, any limiting point of   the sequence $(L_n)_n$ must belong to $J(F) \cap J_{\s}.$ Nevertheless, note that if we choose some $n_0\geq 0$ such that $a_1<b_{n_0}/n^3_0$, then, using \eqref{eq_boundFexp}, we have that $F^k(b_{n_0}/n^3_0)\leq b_{n_0+k}/(n_0+k)^3<b_{n_0+k}$ for all $k\geq 0$.
Since $\hat{L}_n$ is a vertical segment at real part $b_n$, we have that the distance between any segment $L_n$ and $e_{\s}$ is uniformly bounded from below, leading to a contradiction. 

We have shown that the endpoint $e_{\s}$ is not accessible from $\C\setminus J(F)$. Since all endpoints of a Cantor bouquet are accessible from its complement, see Observation \ref{obs_acc_endp}, we conclude that $J(F)$ is not a Cantor bouquet. 
\end{proof}

\begin{proof}[Proof of Theorem \ref{thm_intro_crinNoCantorB}] We have constructed $F\in \BlogP$ of disjoint type such that each component of $J(F)$ is an arc connecting a finite endpoint to infinity, Proposition~\ref{prop_newHSC} together with Theorem \ref{thm_HS_then_crin}, one these arcs $\gamma$  has a non-accessible endpoint and $J(F)$ is not a Cantor bouquet, Proposition \ref{prop_nonaccess}. Since by \cite[Theorem~2.3]{lasse_arclike} only endpoints might be accessible from $\C\setminus J(F)$, no point in $\gamma$ is accessible. The result now follows from Theorem \ref{thm_Bishop}, using \cite{lasseRigidity} and noting that all these properties are preserved under homeomorphisms. 
\end{proof}

\subsection{Variation 2: uniformly bounded addresses }
Let $F$ be either of the two functions that we constructed in this section. If we restrict $F$
  to finitely many of its tracts, 
  then these tracts have uniformly bounded slope
  and bounded wiggling in the sense of \cite[Definition~5.3]{rrrs}, and hence satisfy a uniform head-start condition. So the corresponding subsets
  of $J(F)$ are Cantor bouquets. In particular, if we consider a collection of hairs 
  with \emph{uniformly bounded addresses} (all hairs remain in some finite collection of logarithmic tracts under
  iteration), then
  the union of these hairs is homeomorphic to the product of a totally disconnected set with an interval. Sets of hairs at 
  uniformly bounded addresses play an important role in the study of functions with bounded postsingular set
  (see e.g.~\cite{lasse_dreadlocks}), and it is therefore interesting to ask whether the above phenomenon, 
  namely the continuous dependence of ray tails at uniformly bounded addresses on their endpoints, holds for all criniferous
  functions.
  
  This is not the case, as we can show by a modification of our example. Here the addresses of 
    the accumulating hairs involve just two different tracts: $T_0$ is the same as above,
    while $\hat{T}_1$ is a tract that wiggles exactly above $a_n$ and $b_n$ for every $n$;
    see Figure~\ref{fig:var2}. We define
    $T_1$ as before; that is, $T_1 = \phi_1(V)$, where $\phi_1\colon \Sigma\to\hat{T}_1$ is 
    a conformal isomorphism and $V$ is as in Proposition~\ref{prop_V}. The point $\zeta_1\in \hat{T}_1$ may be chosen 
    to have real part $5$. As before, we obtain a conformal isomorphism $F_1\colon T_1\to \HH$ with 
    $F_1(\zeta_1) = 5$. 
    
    Now consider the function
    $F\in\BlogP$ obtained by $2\pi i\Z$-periodic extension of $F_0$ and $F_1$. 
    The proof that $F$ is criniferous goes through verbatim. In the proof of Proposition~\ref{prop_noCB} 
    the address $\s_k$ now has an entry of $T_1$ at time $k$, and
    all other entries are equal to $T_0$. In particular, this collection of addresses is uniformly bounded in the sense discussed above.
    
    The curve $F_{T_1}^{-1}([5,\infty))$ crosses all of the 
    wiggles of $T_1$. In particular, as we traverse this curve from $\zeta_1$ to $\infty$, for every $k\geq 0$ 
    there is a sub-arc that passes
    from real part $b_{k}-1$ to real part $a_{k}+1$. The image of this arc under 
    $F_{T_0}^{-k}$ connects two points $w_k$ and $z_k$ in $J_{\s_k}(F)$, with $w_k\prec z_k$ and 
    $w_k\to b_0$, $z_k\to a_0$ as $k\to\infty$. Hence we have obtained a bad set with uniformly bounded addresses, as claimed.
    (Since the addresses are bounded, 
     the endpoints of $J_{\s_k}(F)$ are all accessible from $\C\setminus J(F)$, in contrast to Variation 1.)

  \begin{figure}[tb]
	\centering \def\svgwidth{\linewidth}
\begingroup%
  \makeatletter%
  \providecommand\color[2][]{%
    \errmessage{(Inkscape) Color is used for the text in Inkscape, but the package 'color.sty' is not loaded}%
    \renewcommand\color[2][]{}%
  }%
  \providecommand\transparent[1]{%
    \errmessage{(Inkscape) Transparency is used (non-zero) for the text in Inkscape, but the package 'transparent.sty' is not loaded}%
    \renewcommand\transparent[1]{}%
  }%
  \providecommand\rotatebox[2]{#2}%
  \newcommand*\fsize{\dimexpr\f@size pt\relax}%
  \newcommand*\lineheight[1]{\fontsize{\fsize}{#1\fsize}\selectfont}%
  \ifx\svgwidth\undefined%
    \setlength{\unitlength}{476.22047244bp}%
    \ifx\svgscale\undefined%
      \relax%
    \else%
      \setlength{\unitlength}{\unitlength * \real{\svgscale}}%
    \fi%
  \else%
    \setlength{\unitlength}{\svgwidth}%
  \fi%
  \global\let\svgwidth\undefined%
  \global\let\svgscale\undefined%
  \makeatother%
  \begin{picture}(1,0.39285714)%
    \lineheight{1}%
    \setlength\tabcolsep{0pt}%
    \put(0,0){\includegraphics[width=\unitlength,page=1]{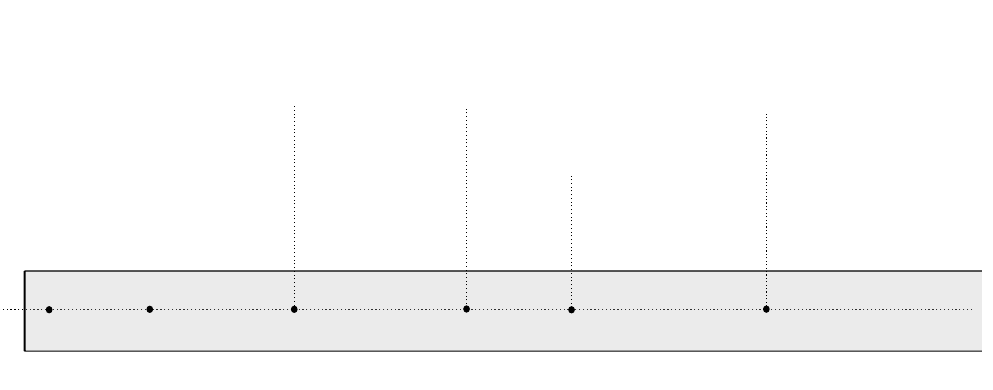}}%
    \put(0.28334007,0.05085446){\color[rgb]{0,0,0}\makebox(0,0)[lt]{\lineheight{1.25}\smash{\begin{tabular}[t]{l}$a_n$\end{tabular}}}}%
    \put(0.45604656,0.05064774){\color[rgb]{0,0,0}\makebox(0,0)[lt]{\lineheight{1.25}\smash{\begin{tabular}[t]{l}$b_n$\end{tabular}}}}%
    \put(0.55912472,0.0526975){\color[rgb]{0,0,0}\makebox(0,0)[lt]{\lineheight{1.25}\smash{\begin{tabular}[t]{l}$a_{n+1}$\end{tabular}}}}%
    \put(0.76223728,0.05497342){\color[rgb]{0,0,0}\makebox(0,0)[lt]{\lineheight{1.25}\smash{\begin{tabular}[t]{l}$b_{n+1}$\end{tabular}}}}%
    \put(0.02605459,0.13677088){\color[rgb]{0,0,0}\makebox(0,0)[lt]{\lineheight{1.25}\smash{\begin{tabular}[t]{l}\fontsize{9pt}{1em}$T_0$\end{tabular}}}}%
    \put(0,0){\includegraphics[width=\unitlength,page=2]{TractsSvariation.pdf}}%
    \put(0.02098988,0.31361889){\color[rgb]{0,0,0}\makebox(0,0)[lt]{\lineheight{1.25}\smash{\begin{tabular}[t]{l}\fontsize{9pt}{1em}$T_1$\end{tabular}}}}%
  \end{picture}%
\endgroup%

	\caption{Sketch of tracts that generate a bad pair in a function with uniformly bounded addresses.}\label{fig:var2}
\end{figure}

\section{Cantor bouquets imply uniform head-start}\label{sec_CB_then_UHSC}

The goal of this section is to prove a more general version of Theorem \ref{thm:headstart}.
 Recall that we defined in Definition~\ref{UHSCentire} what it means for a disjoint-type entire function to satisfy a uniform head-start condition on its Julia set. 
 We now formulate a more general version of this definition, translated to the setting of disjoint-type functions in $\BlogP$.

\begin{defn}[Uniform head-start condition]\label{def_UHSClog} Let $F\colon \T\to H$ in $\BlogP$ of disjoint type, and let $\rho\colon \overline{\T}\to [0,\infty)$ be a continuous function such that $\rho(z)\to+\infty$ as $z\to \infty$. We say that $F$ satisfies a \textit{uniform head-start condition on $J(F)$ with respect to $\rho$} if there is a monotonically increasing
upper semicontinuous function  $\phi \colon [0,\infty)\to [0,\infty)$ with the following properties for all points $z$ and $w$ belonging to the same component of $J(F)$. 
\begin{enumerate}[(i)]
\item If $\rho(w)> \phi(\rho(z))$, then
$\rho(F(w)) > \phi( \rho(F(z))$. 
\item If $z\neq w$, then there is $n \geq 0$ such that either 
$\rho( F^n(w) ) > \phi( \rho( F^n(z)))$ or
$\rho( F^n(z) ) > \phi( \rho(F^n(w)))$. 
\end{enumerate}
\end{defn}

\begin{obs}\label{obs_UHSC} A function $f\in \B$ of disjoint type satisfies a uniform head-start condition  in the sense of Definition~\ref{UHSCentire} if and only if a corresponding disjoint-type logarithmic transform $F$ (as in Proposition~\ref{prop_disjoint_type}) satisfies a uniform head-start condition with respect to $\rho(z)\defeq \Rea z$,  as defined above.
\end{obs}

\begin{prop}\label{prop_UHSC_CB}
Suppose that $F\in \Blog$ is of disjoint type and satisfies a uniform head-start condition on $J(F)$ with respect to $\rho$ for some $\phi$. Then $J(F)$ is a Cantor bouquet.
\end{prop}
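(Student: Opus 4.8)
The plan is to verify the conditions (a)--(d) of Proposition~\ref{prop:bouquetordering} for the set $A \defeq J(F)$, using the partial order $\prec$ defined in~\eqref{eq_order}. First I would note that since $F$ satisfies a uniform head-start condition, by Theorem~\ref{thm_HS_then_crin} (or rather its analogue; the uniform head-start condition on $J(F)$ implies the head-start condition for addresses, hence criniferousness), $F$ is criniferous, so by Proposition~\ref{obs_disjoint_crin} every connected component of $\hat{J}(F)$ is an arc to infinity and the order $\prec$ on $\hat{J}(F)$ is well-defined. Conditions~\ref{item:aprecinfty} and~\ref{item:comparable} are then immediate from the definition of $\prec$: $\infty$ lies in every arc $[z,\infty]$, and two points are comparable precisely when they lie on a common arc, i.e.\ in the same component. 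Condition~\ref{item:accessibility} follows from \cite[Theorem~2.3]{lasse_arclike}, exactly as in the proof of Proposition~\ref{prop:smoothbouquets}: accessible points of $J(F)$ are endpoints of their hairs, hence minimal for $\prec$.

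The substantive step is condition~\ref{item:ordercontinuity}: if $z \prec w$, then there are neighbourhoods $U_z, U_w$ with $\omega \nprec \zeta$ for all $\zeta \in U_z$, $\omega \in U_w$. This is where the head-start function $\phi$ and the function $\rho$ enter. Since $z \prec w$ means $w$ lies strictly beyond $z$ on their common hair, property~(ii) of Definition~\ref{def_UHSClog} gives an $n \geq 0$ with, say, $\rho(F^n(w)) > \phi(\rho(F^n(z)))$ (the other case is symmetric, but note that if $w \succ z$ then it cannot be that $\rho(F^n(z)) > \phi(\rho(F^n(w)))$ for the relevant $n$ — I would need to argue, using property~(i) applied inductively, that the inequality with $w$ on the larger side is the one that must occur, since once $\rho(F^k(w)) > \phi(\rho(F^k(z)))$ holds it propagates forward, and $w \succ z$; this needs a small lemma that a bad "crossing" cannot happen, which is essentially what the head-start condition is designed to preclude). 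Having fixed such an $n$, upper semicontinuity of $\phi$ together with continuity of $\rho$ and of $F^n$ lets me choose neighbourhoods $U_z$ of $z$ and $U_w$ of $w$ small enough that $\rho(F^n(\omega)) > \phi(\rho(F^n(\zeta)))$ persists for all $\zeta \in U_z$, $\omega \in U_w$. I must be careful here: upper semicontinuity of $\phi$ means $\limsup_{t \to t_0} \phi(t) \le \phi(t_0)$, so if $b > \phi(a)$ then for $a'$ near $a$ we still have $b > \phi(a')$ (since $\phi(a') $ cannot jump up), and shrinking further handles the variation of $b$; this is the standard argument and I would spell it out only briefly. Then $\rho(F^n(\omega)) > \phi(\rho(F^n(\zeta)))$ forces $\omega \nprec \zeta$: indeed if $\omega \prec \zeta$ we could apply the contrapositive of property~(i) backwards along $F^n$, or more directly observe that $\omega \prec \zeta$ would (by the same propagation argument) give $\rho(F^n(\zeta)) \geq \rho(F^n(\omega))$ or a head-start inequality in the wrong direction, contradicting $\phi(t) > t$.

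The main obstacle I anticipate is precisely the bookkeeping around property~(i): turning the ``exists $n$'' of property~(ii) into a robust, neighbourhood-stable separation requires knowing that the head-start inequality, once it occurs for the correctly-ordered pair, is the only one that can occur and that it is preserved under the small perturbations allowed by upper semicontinuity. Concretely, the key sub-claim is: if $z \prec w$ lie on the same hair and $\rho(F^n(w)) > \phi(\rho(F^n(z)))$, then for all $m \geq n$ we have $\rho(F^m(w)) > \phi(\rho(F^m(z)))$, and moreover this cannot coexist with $\omega \prec \zeta$ for nearby $\omega, \zeta$. The first half is a direct induction using property~(i). For the second half, I would argue that if $\omega \in U_w$, $\zeta \in U_z$, and $\omega \prec \zeta$, then $F^n(\omega)$ and $F^n(\zeta)$ lie on a common hair with $F^n(\omega) \prec F^n(\zeta)$; since $\rho$ restricted to a single hair is essentially monotone in the order near infinity — or, failing that, since property~(i) applied to the pair $(F^n(\omega), F^n(\zeta))$ and iterated forward would force $\rho(F^{n+k}(\zeta)) > \phi(\rho(F^{n+k}(\omega)))$ for all large $k$ while simultaneously $\rho(F^{n+k}(\omega)) > \phi(\rho(F^{n+k}(\zeta)))$, giving $t < \phi(t') < t < \phi(t')$-style nonsense since $\phi(t) > t$ — we reach a contradiction. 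I would isolate this as a short lemma before the main argument. Once condition~\ref{item:ordercontinuity} is established, Proposition~\ref{prop:bouquetordering} applies verbatim and $J(F)$ is a Cantor bouquet, completing the proof.
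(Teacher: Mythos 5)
Your plan hinges on first establishing that $F$ is criniferous, so that each component of $J(F)$ is an arc and the topological order \eqref{eq_order} is defined; but the step you use for this is not available. Theorem \ref{thm_HS_then_crin} applies to the head-start condition of Definition \ref{defn_HSC_addr}, which is a condition with respect to $\Rea z$ imposed on \emph{all} points of the tract closures $\overline{T}$ (not only points of $J(F)$ lying in a common component), and the paper only records the implication in the opposite direction (the Remark after Definition \ref{defn_HSC_addr}: head-start for addresses $\Rightarrow$ uniform head-start on $J(F)$ with $\rho(z)=\Rea z$). The hypothesis of the proposition is the weaker condition of Definition \ref{def_UHSClog}, and moreover with respect to an arbitrary continuous exhaustion $\rho$, so "hence criniferousness" is a genuine gap: without it you cannot even speak of $[z,\infty]$, and your verification of conditions \ref{item:aprecinfty}--\ref{item:accessibility} for the order \eqref{eq_order} never gets started. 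A secondary problem is that, even granting criniferousness, your condition \ref{item:ordercontinuity} argument needs the "sub-claim" that when $z\prec w$ in the topological order the head-start inequality must occur with $w$ on the larger side; this amounts to proving that the topological order coincides with the order induced by the inequalities, which is additional work you only sketch (and which essentially reproduces the order-topology argument of Proposition \ref{prop:bouquetordering}).

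The paper's proof avoids both issues by \emph{defining} the order dynamically: $z\prec w$ if and only if $\rho(F^n(w))>\phi(\rho(F^n(z)))$ for some $n$ (and $z\prec\infty$ for all $z$). Property (i) of Definition \ref{def_UHSClog} makes this transitive and antisymmetric, property (ii) makes it total on each component, and upper semicontinuity of $\phi$ together with continuity of $\rho$ gives condition \ref{item:ordercontinuity} exactly as in the second half of your argument. The arc structure of the components is then a \emph{conclusion} of Proposition \ref{prop:bouquetordering} (see Observation \ref{obs_arc}), not an input, and minimality of endpoints plus \cite[Theorem~2.3]{lasse_arclike} gives \ref{item:accessibility}. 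If you rework your proof with the dynamically defined order in place of \eqref{eq_order}, the criniferousness step and the order-comparison sub-claim both become unnecessary and the remaining estimates you describe go through.
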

\begin{remark}
 This was proved in \cite[Corollary~6.3]{lasseBrushing}, under a more restrictive notion of head-start condition
  (where, in particular, $\rho(z) = \re (z)$). The proof goes through unchanged
  using our definition, but for completeness we include an argument that uses the  
  characterisation in Proposition~\ref{prop:bouquetordering}. 
\end{remark}
\begin{proof}
Let $F$ be as in the statement. Suppose that $z$ and $w$ are in the same connected component of $J(F)$. Then we define the order
\begin{equation}
z\prec w  \quad \text{ if and only if } \quad \rho(F^n(w)) > \phi(\rho(F^n(z))) \text { for some } n\geq 0.
\end{equation}
We also define $z\prec \infty$ for all $z\in J(F)$. By definition of the uniform head-start condition, this is a well-defined transitive relation, and total on each component of $J(F)$; compare to \cite[Definition and Lemma~4.3]{rrrs}. If we show that this order satisfies properties \ref{item:aprecinfty}-\ref{item:accessibility} in Proposition \ref{prop:bouquetordering}, it follows that $J(F)$ is a Cantor bouquet.

Properties \ref{item:aprecinfty} and \ref{item:comparable} are a direct consequence of the definition of the order. Let us fix $z\prec w$ such that $\rho(F^n(w)) > \phi(\rho(F^n(z)))$ for all $n$ large enough. Moreover, let $z_j\to z$ and $w_j\to w$ be sequences such that for each $j \geq 0$, both $z_j$ and $w_j$ belong to the same component of $J(F)$. Then, by upper semicontinuity of $\phi$ and continuity of $\rho$,
$$\limsup_{j\to\infty} \phi(\rho(F^n(z_j))) \leq \phi(\rho( F^n(z)) ).$$ 
This implies that for large $n$, we have $\rho(F^n(w_j)) > \phi(\rho(F^n(z_j)))$, and hence $z_j \prec w_j$. In particular, $w_j \nprec z_j$, and so \ref{item:ordercontinuity} follows. Thus, see Observation \ref{obs_arc}, 
each connected component of $J(F)$ is an arc whose endpoint is a minimal point. Since, by \cite[Theorem 2.3]{lasse_arclike}, only endpoints of $J(F)$ are accessible from $\C\setminus J(F)$, we deduce \ref{item:accessibility}. 
\end{proof}

 We also introduce the following geometric property of logarithmic tracts, which generalises the bounded-slope condition and is closely related to \cite[Definition~7.1]{lasse_arclike}.
\begin{defn}[Uniformly anguine] \label{defn_anguine} Let $F\colon \mathcal{T}\to H$ in $\BlogP$ be of disjoint type, let $\rho\colon \overline{\T}\to [0,\infty)$ be a continuous function such that $\rho(z)\to+\infty$ as $\re z\to \infty$ and let $\Gamma$ be a curve connecting the boundary of $H$ to $+\infty$ without intersecting $\overline{\mathcal{T}}$. We say that $F$ is \emph{uniformly anguine} with respect to $\rho$ if, for all $t\geq 0$, the set of points of $\rho^{-1}(t)$ that sits between two given adjacent $2\pi i\Z$-translates of $\Gamma$ has uniformly bounded hyperbolic diameter in $H$.
\end{defn}

\begin{remark}
 By $2\pi i$-periodicity of $F$ and $\mathcal{T}$, the definition is independent of the curve $\Gamma$ chosen.
\end{remark}

\begin{obs}[Bounded slope implies uniformly anguine] \label{obs_bslope} In \cite[Definition~7.2]{lasse_arclike}, a disjoint-type $F\colon \mathcal{T}\to H$ in $\Blog$ is defined to have \emph{bounded slope} if there exists a curve $\Gamma \colon [0,\infty) \to \mathcal{T}$ and a constant $K>0$ such that $\Rea \Gamma(t) \to +\infty$ as $t \to \infty$ and
\[\vert \Ima \Gamma(t) \vert \leq K \cdot \Rea \Gamma(t)\] for all $t\geq 0$. In other words, all tracts of $F$ eventually lie within some fixed sector of opening angle less than $\pi$ around the positive real axis. In particular, $F$ is uniformly anguine with respect to $\rho(z)\defeq \Rea z$; see \cite[Remark 7.5]{lasse_arclike} for details.
\end{obs}

With this terminology in place, we can state our strengthened version of Theorem~\ref{thm:headstart}, for the class $\BlogP$. 

\begin{thm} \label{thm_CBthenHS} Suppose that $F\colon \T\to H$ in $\Blog$ is of disjoint type, uniformly anguine with respect to some $\rho$, and that $J(F)$ is a Cantor bouquet. Then, $F$ satisfies a uniform head-start condition on $J(F)$ with respect to $\rho$. 
\end{thm}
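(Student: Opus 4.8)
The plan is to combine the structural facts about Cantor bouquet Julia sets already assembled in the paper with the uniform geometric control provided by the ``uniformly anguine'' hypothesis. By Proposition~\ref{prop:smoothbouquets}, since $J(F)$ is a Cantor bouquet, $\hat{J}(F) = J(F)\cup\{\infty\}$ is a smooth dendroid; in fact, being of disjoint type, each component $J_{\s}$ is an arc to infinity and $\hat{J}(F)$ is a Lelek fan with top $\infty$. This lets us use the partial order $\preceq$ of \eqref{eq_order} on $\hat{J}(F)$, with $z\preceq w$ iff $w$ lies on the arc from $z$ to $\infty$. Along each such arc $\rho$ need not be monotone, but the definition of uniform head-start condition (Definition~\ref{def_UHSClog}) only asks for control via some upper semicontinuous $\phi$ with $\phi(t) > t$; I will produce $\phi$ directly from the ordering and the anguine bound.

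**Constructing $\phi$.**

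First I would show, for each tract $T$ and each $t\ge 0$, that the set $\rho^{-1}(t)\cap \overline{T}$ has uniformly bounded hyperbolic diameter in $H$ — this is immediate from the uniformly anguine hypothesis, since a single tract lies between two adjacent $2\pi i\Z$-translates of the separating curve $\Gamma$ (after a translation). Call this bound $\Delta$, independent of $t$ and $T$. Next, fix two points $z\prec w$ in the same component $J_{\s}$. The key comparison is: if $\rho(w)$ is much larger than $\rho(z)$, then the arc $[z,w]\subset J_{\s}$ cannot ``come back'' to small values of $\rho$ under $F$, because $F$ is expanding on $\overline{\T}$ (after normalizing as in Theorem~\ref{thm_arcs_infinity}, or directly using the standard estimate \eqref{eq_standard_est} together with $\cl{\T}\subset H$) while the anguine bound controls how much the level sets of $\rho$ can spread. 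Concretely, I would define
\[
\phi(t) \defeq \sup\{\,\rho(w) : w\in\overline{\T},\ \exists\, z\in\overline{\T},\ z\prec w \text{ in some } J_{\s},\ \rho(z)\le t,\ [z,w]\cap\rho^{-1}([0,t])\ne\emptyset\,\}
\]
or a suitable variant, and then verify that the anguine condition forces $\phi(t)<\infty$ for each $t$, that $\phi$ is monotone and upper semicontinuous (replacing it by its upper semicontinuous monotone envelope if needed), and that $\phi(t)>t$ (enlarge if necessary). Property~(i) of Definition~\ref{def_UHSClog} then says: if $\rho(w) > \phi(\rho(z))$ then the arc $[z,w]$ stays in $\rho^{-1}((\rho(z),\infty))$ beyond the first level; applying $F$ and the fact that $F$ maps $[z,w]$ to the arc $[F(z),F(w)]$ (using that $F_T^{-1}$ preserves arcs to infinity, Proposition~\ref{prop_properties_bad}\eqref{item:preimofBad}) together with expansion, one gets $\rho(F(w)) > \phi(\rho(F(z)))$.

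**Property (ii): separating distinct points.**

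For property~(ii), take distinct $z,w$ in the same component $J_{\s}$; without loss of generality $z\prec w$. I need $n\ge0$ with $\rho(F^n(w)) > \phi(\rho(F^n(z)))$ or the reverse. Here I would argue by contradiction: if no such $n$ exists, then for every $n$ both $F^n(z)$ and $F^n(w)$ lie in the common tract $T_n$ with $\rho$-values so close that they sit in a region of bounded hyperbolic diameter in $H$ — precisely, in $\rho^{-1}([0,\phi(\rho(F^n(z)))])\cap\overline{T_n}$, which by the anguine bound and expansion pulls back under $F_{\s}^{-n}$ to a set of Euclidean diameter shrinking geometrically in $n$. Since $F$ is expanding (after normalization), the preimages $F_{\s}^{-n}$ of uniformly bounded-diameter pieces of $H$ have diameter tending to $0$; hence $|z-w| = 0$, a contradiction. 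This is the same mechanism used in the last paragraph of the proof of Proposition~\ref{prop_HSC}. The uniformity of $\phi$ across all external addresses is automatic because the anguine bound $\Delta$ and the expansion constant are uniform over all tracts.

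**Main obstacle.**

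The hard part will be making the definition of $\phi$ precise and proving $\phi(t)<\infty$ together with the implication~(i) cleanly: one must show that the anguine hypothesis, which controls hyperbolic diameter of level sets \emph{in $H$}, translates into control of how far apart two ordered points on a Julia arc can be in the $\rho$-scale once one of them has already ``overtaken'' the relevant level set — and that this survives one application of $F$. This requires combining the arc-preservation property of the branches $F_T^{-1}$, the expansion estimate, and a careful bookkeeping of which level set $\rho^{-1}(t)$ an arc must cross; it is essentially a quantitative version of the qualitative fact that on a smooth fan the arcs $[x_0,y_n]$ converge, transported to the $\rho$-coordinate. Once~(i) is in place, upper semicontinuity and monotonicity of $\phi$ are routine (take the monotone u.s.c.\ envelope), and~(ii) follows from the contraction-of-preimages argument above. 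Finally, Theorem~\ref{thm:headstart} follows by Observation~\ref{obs_bslope} (bounded slope implies uniformly anguine with respect to $\rho(z)=\Rea z$) and Observation~\ref{obs_UHSC} (translating between the entire-function and logarithmic formulations).
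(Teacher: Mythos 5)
There is a genuine gap, and it sits exactly where you locate your ``main obstacle'': the definition of $\phi$ and the proof of property (i). As written, your $\phi$ is identically $+\infty$: you require $z\prec w$, i.e.\ $w$ lies on the arc from $z$ to $\infty$, and you only constrain $\rho(z)\le t$ (the clause $[z,w]\cap\rho^{-1}([0,t])\neq\emptyset$ is automatic, since $z$ itself lies in that set), so letting $w$ run out to $\infty$ along the hair of a point $z$ with $\rho(z)\le t$ makes $\rho(w)$ unbounded. The relevant extremal configuration is the opposite one: a point of large $\rho$-value that is order-\emph{below} (closer to the endpoint than) a point whose $\rho$-value equals $R$~-- this is what detects the ``hooks''. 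Moreover, to make (i) survive one application of $F$, the paper must \emph{fatten} this relation: $\phi(R)$ is defined over quadruples $z,z',w',w$ in a common tract closure with $w'\preceq z'$ and with $z,z'$ (resp.\ $w,w'$) joined in $\overline{T}$ by curves of hyperbolic diameter at most $\alpha\defeq B/(\Lambda-1)$, where $\Lambda>1$ is the uniform hyperbolic expansion constant and $B$ the anguine bound. Property (i) is then proved by contrapositive: if $\rho(F(w))\le\phi(\rho(F(z)))$, the witnessing quadruple lives in some (possibly different) tract, and pulling the comparable pair back by the appropriate branch $F_T^{-1}$ yields points only \emph{near} $z$ and $w$, at hyperbolic distance at most $(\alpha+B)/\Lambda=\alpha$, which is exactly what the fattened definition tolerates. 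Your sketch of (i) does not confront this, and the finiteness of $\phi(R)$ is likewise not a consequence of the anguine bound alone: the paper proves it by a compactness argument using the straight-brush structure (the set of points order-below a compact subset of the bouquet is compact) together with the fact that only finitely many tracts modulo $2\pi i\Z$ meet $\{\rho\le R\}$.

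Your argument for property (ii) also fails as stated. From $\rho(F^n(z))\le\phi(\rho(F^n(w)))$ and $\rho(F^n(w))\le\phi(\rho(F^n(z)))$ for all $n$ you cannot conclude that $F^n(z)$ and $F^n(w)$ lie in a region of uniformly bounded hyperbolic diameter: the anguine hypothesis bounds single level sets $\rho^{-1}(t)$, not slabs $\rho^{-1}([0,s])\cap\overline{T}$, and $\phi$ may grow arbitrarily fast, so the ``geometrically shrinking preimage'' mechanism you import from Proposition~\ref{prop_HSC} (which relied there on the explicit intervals $I_{k,m}$ and the special tract geometry) does not transfer. The paper argues differently: by expansion, $\dist_H(F^n(z),F^n(w))>3B+2\alpha$ for all large $n$; Lemma~\ref{lem:hookbetweenphi} together with Observation~\ref{obs:secondpiece} then produces order-comparable points $\omega_n\preceq\zeta_n$ within hyperbolic distance $2B+\alpha$ of $F^n(w)$ and $F^n(z)$; pulling these back along the external address gives sequences $z_n\to z$, $w_n\to w$ with $w_n\preceq z_n$ while $z\prec w$, contradicting the order-continuity of the bouquet, Proposition~\ref{prop:bouquetordering}\ref{item:ordercontinuity}. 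So the Cantor bouquet hypothesis enters (ii) through smoothness of the fan, not through a diameter estimate; this, together with the fattened definition of $\phi$, is the substance of the theorem and is missing from your proposal.
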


\begin{proof}[Proof of Theorem \ref{thm:headstart} using Theorem \ref{thm_CBthenHS}]
The fact that the Julia set of any $f\in \B$ of disjoint type that satisfies a uniform head-start condition for $z \mapsto \vert z \vert$ on $J(f)$ is a Cantor bouquet is \cite[Corollary~6.3]{lasseBrushing}; see also Proposition \ref{prop_UHSC_CB}. Next, suppose that $f\in \B$ is of disjoint type and $J(f)$ is a Cantor bouquet. Then, by Proposition \ref{prop_disjoint_type}, possibly after applying an affine conjugacy, $f$ has a disjoint type logarithmic transform $F$. Note that  $J(F)$ is a Cantor bouquet; see the proof of \cite[Corollary~6.3]{lasseBrushing}. Assume additionally that $F$ has bounded slope as defined in Observation~\ref{obs_bslope}. This implies that $F$ is uniformly anguine with respect to  $\rho(z)\defeq \Rea z$. Then, by Theorem \ref{thm_CBthenHS}, $F$ satisfies a uniform head-start condition on $J(F)$ with respect to $\rho$, and so $f$ satisfies a uniform head-start condition with respect to $\lvert z\rvert$, as desired. 
\end{proof}

For the remainder of the section we assume the hypotheses of Theorem~\ref{thm_CBthenHS}. Since $J(F)$ is by assumption a Cantor bouquet, we can induce $\hat{J}(F)\defeq J(F)\cup \{\infty\}$ with the partial order ``$\prec$'' provided by Proposition \ref{prop:bouquetordering}. Since $F$ is of disjoint type, it follows from \eqref{eq_standard_est} that there exists a universal constant $\Lambda>1$ such that for any tract $T$ in $\T$ and $z,w\in T$,
\begin{equation}\label{eq_expansion}
\dist_H(F(z),F(w)) \geq \Lambda \dist_H(z,w), 
\end{equation} 
where $\dist_H$ denotes the distance on the hyperbolic metric in $H$; see \cite[Lemma~2.1]{rrrs} or \cite[Lemma 3.3]{BarKarp_codingtrees} for details. 

Since $F$ is uniformly anguine with respect to $\rho$, there exists a constant $B>0$ such that for all $t\geq 0$, the set of points of $\rho^{-1}(t)$ that sits between two given adjacent $2\pi i\Z$-translates of $\Gamma$ has hyperbolic diameter in $H$ at most $B$. We set
\begin{equation*}
x_0 \defeq \min_{z\in \overline{\T}} \Rea(z), \qquad R_0\defeq \min_{z\in J(F)} \rho(z) \qquad  \text{ and }\qquad \alpha \defeq  \frac{B}{\Lambda-1}.
\end{equation*}
In particular, since $\rho$ moves continuously from any point $w\in J(F)$ with $\rho(w)=R_0$ to infinity, $J(F)$ contains a point $z$ with $\rho(z) = R$ for every $R\geq R_0$. 

For each $R\geq R_0$, we define $\phi(R)$ as the largest number $t$ for which there exist points $z,z',w',w$, all belonging to the closure $\overline{T}$ of the same tract $T$, and such that
      \begin{enumerate}[(a)]
         \item $\rho(z) = R$ and $\rho(w) = t$; \label{item:phi_a}
         \item $z$ and $z'$ can be connected by a curve contained in $\overline{T}$
           whose diameter with respect to the hyperbolic metric of $H$ is at most 
            $\alpha$, as can $w$ and $w'$;\label{item:phi_b}
          \item  $w'\preceq z'$. \label{item:phi_c}
      \end{enumerate}

\begin{remark}In the definition of $\phi(R)$, the points $w',z'$ belong to 
$J(F)$, but $z, w$ need not.
\end{remark}
\begin{lem}
  For all $R\geq R_0$, $\phi(R)$ is defined. Moreover, the function $ \phi\colon R\mapsto \phi(R)$ is non-decreasing and upper
    semicontinuous.        
\end{lem}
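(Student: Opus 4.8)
The plan is to set $\phi(R)\defeq\sup\mathcal{T}(R)$, where $\mathcal{T}(R)$ denotes the set of those $t\ge 0$ that are realised by some configuration $z,z',w',w$ satisfying \ref{item:phi_a}--\ref{item:phi_c}, and then to verify, in this order, that $\mathcal{T}(R)$ is non-empty, that it is bounded, that $\phi$ is non-decreasing, and that the supremum is attained and $\phi$ is upper semicontinuous. Non-emptiness is immediate, and also shows $\phi(R)\ge R$: since $\rho$ is continuous on $J(F)$, attains the value $R_0$, and tends to $\infty$, it takes every value $R\ge R_0$ on $J(F)$; choosing $p\in J(F)$ with $\rho(p)=R$ and putting $z=z'=w'=w=p$ gives a valid configuration with $t=R$, using reflexivity of $\preceq$ and $\alpha>0$.

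For boundedness I would use two facts. First, $\rho$ is \emph{proper} on $\overline{\T}$: since $\rho(z)\to\infty$ as $z\to\infty$ and $\overline{\T}$ is closed, $\rho^{-1}([0,M])\cap\overline{\T}$ is compact, and by \ref{item:accumulatingatinfty} it meets only finitely many tracts. Second, closed hyperbolic balls of $H$ are compact subsets of $\C$ lying in $H$ (isometric copies of closed hyperbolic balls of $\D$). Consequently, in any configuration realising $t$, the point $z$ lies in the compact set $\rho^{-1}(R)\cap\overline{\T}$, and $z'$, being within hyperbolic distance $\alpha$ of $z$ in $H$, lies in a compact set $\mathcal{K}_R\subset\overline{\T}$ depending only on $R$. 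If $t$ were unbounded over configurations, then by properness of $\rho$ the corresponding points $w$, and hence also $w'$, would tend to $\infty$; since $\hat{J}(F)$ is a smooth fan with top $\infty$ (Proposition~\ref{prop:smoothbouquets}), the arcs $[w',\infty]$ would then converge to $\{\infty\}$ in the Hausdorff metric, contradicting $z'\in[w',\infty]$ with $z'$ confined to $\mathcal{K}_R$, which is bounded away from $\infty$. Hence $w'$, and therefore $w$, stays in a compact set depending only on $R$, so $\mathcal{T}(R)$ is bounded and $\phi(R)<\infty$. This use of smoothness of the fan is the conceptual heart of the finiteness statement.

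For monotonicity, let $R_1,R_2$ with $R_0\le R_1\le R_2$ and let $t\in\mathcal{T}(R_1)$ be realised by $(z,z',w',w)$ inside $\overline{T}$ with $\rho(z)=R_1$; I would keep $w,w'$ (hence the value $t$) fixed and modify the pair $(z,z')$. The hair $\gamma$ containing $w'\preceq z'$ lies in $\overline{T}$, and $\rho\to\infty$ along its approach to $\infty$; let $m\defeq\min\rho([z',\infty]\setminus\{\infty\})\ (\ge R_0)$. If $R_2\ge m$, the intermediate value theorem yields $z''\in[z',\infty]\subset\overline{T}$ with $\rho(z'')=R_2$, and then $w'\preceq z'\preceq z''$, so $(z'',z'',w',w)$ realises $t$ for $R_2$. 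If $R_2<m$, then $R_1\le R_2<m\le\rho(z')$, so $R_2$ lies between $\rho(z)=R_1$ and $\rho(z')$ and is therefore taken by $\rho$ at some point $z''$ of the connecting curve from $z$ to $z'$; the relevant sub-curve joins $z''$ to $z'$ inside $\overline{T}$ with hyperbolic diameter still at most $\alpha$, so $(z'',z',w',w)$ realises $t$ for $R_2$. In both cases $t\in\mathcal{T}(R_2)$, whence $\phi(R_1)\le\phi(R_2)$.

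Finally, I would prove that the supremum is attained and that $\phi$ is upper semicontinuous by a single compactness argument. Take configurations realising values $t_n\to t_\infty$, with base points $R_n\to R$ (taking $R_n\equiv R$ for the first assertion); by monotonicity and finiteness the $R_n$, and hence the $t_n$, are bounded, so by the boundedness estimates above all four points of the $n$-th configuration lie in fixed compact subsets of $\overline{\T}$ and only finitely many tracts occur. After passing to a subsequence, the configurations converge, inside the closure $\overline{T}$ of one tract, to points $z_*,z'_*,w'_*,w_*$; continuity of $\rho$ gives $\rho(z_*)=R$ and $\rho(w_*)=t_\infty$, continuity of $\dist_H$ gives $\dist_H(z_*,z'_*)\le\alpha$ and $\dist_H(w_*,w'_*)\le\alpha$, closedness of $J(F)$ gives $z'_*,w'_*\in J(F)$, and smoothness of $\hat{J}(F)$ gives $w'_*\preceq z'_*$ because $z'_n\in[w'_n,\infty]\to[w'_*,\infty]$. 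The one delicate point, which I expect to be the main obstacle, is to recover from the limit the \emph{curves} of hyperbolic diameter at most $\alpha$ in $\overline{T}$ required by \ref{item:phi_b}; I would handle this using that $\overline{T}\cup\{\infty\}$ is a closed Jordan domain in $\widehat{\C}$, hence a Peano continuum, so that $\overline{T}$ is locally connected, with the hyperbolic and Euclidean metrics comparable on the relevant compact set, together with a Blaschke selection applied directly to the connecting curves of the configurations. This gives $t_\infty\in\mathcal{T}(R)$, proving simultaneously that $\phi(R)=\sup\mathcal{T}(R)$ is attained and that $\limsup_{R'\to R}\phi(R')\le\phi(R)$.
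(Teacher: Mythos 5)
Your proposal is correct in substance and follows the same overall scheme as the paper (non-emptiness giving $\phi(R)\ge R$, a compactness/boundedness step that uses the Cantor bouquet hypothesis, monotonicity by modifying a realising configuration via the intermediate value theorem, and upper semicontinuity by passing to limits of configurations), but several of your mechanisms are genuinely different. For boundedness, the paper confines $z,z'$ to finitely many compact sets as you do, but then bounds $w',w$ by observing that in a straight brush the ``downward shadow'' $\{b\in J(F)\colon b\preceq a \text{ for some } a\in A\}$ of a compact set $A$ is compact; you instead rule out $t\to\infty$ by smoothness of the fan at $\infty$ (if $w'_n\to\infty$ then $[w'_n,\infty]\to\{\infty\}$, which is incompatible with $z'_n\in[w'_n,\infty]$ staying in a fixed compact set). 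Both exploit the bouquet structure legitimately. Your monotonicity argument keeps $(w,w')$ fixed and moves $(z,z')$ either along the hair $[z',\infty]$ or along the connecting curve, and in fact proves the stronger inclusion $\mathcal{T}(R_1)\subseteq\mathcal{T}(R_2)$; the paper instead picks $\zeta$ with $\rho(\zeta)=R'$ on $\gamma_w\cup[w',z']\cup\gamma_z$ and re-chooses the quadruple according to which piece contains $\zeta$, treating $R'\in[R,\phi(R)]$ and using $\phi(R')\ge R'$ for larger $R'$. Finally, the paper obtains attainment of the maximum directly from compactness of the configuration space and proves upper semicontinuity by a separate limiting argument, invoking order-continuity (Proposition~\ref{prop:bouquetordering}\ref{item:ordercontinuity}) where you use Hausdorff convergence of the arcs $[w'_n,\infty]$; these are interchangeable here.

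The one incomplete step is exactly the one you flag: recovering condition~\ref{item:phi_b} in the limit. Blaschke selection applied to the connecting curves yields a limit \emph{continuum} in $\overline{T}$ of hyperbolic diameter at most $\alpha$ containing $z_*$ and $z'_*$, but upgrading this to a genuine \emph{curve} of diameter at most $\alpha$ does not follow from local connectedness of $\overline{T}$ alone: the chaining argument only produces curves of diameter at most $\alpha+\epsilon$ for each $\epsilon>0$, and attainment of the exact bound is not justified by your sketch. Be aware, though, that the paper's own proof passes over this point silently (both for the existence of the maximum and for upper semicontinuity), and the issue disappears if ``curve'' in the definition of $\phi$ is read as ``connected set'', which is all that the later arguments (Lemma~\ref{lem:hookbetweenphi}, Corollary~\ref{cor1}) actually use; so this is a shared wrinkle rather than a defect specific to your route, but if you insist on the literal definition you should either prove the attainment statement or rephrase the definition.
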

\begin{proof}
Let us fix $R\geq R_0$. Note that item \ref{item:accumulatingatinfty} in the definition of the class $\Blog$ implies that for each $R > 0$, there are only finitely many tracts in $\T$, up to translates by integer multiples of $2\pi i$, that intersect the vertical line at real part $R$. In particular, since $\rho$ is continuous and $\rho(z)\to \infty$ as $z\to \infty$, there are only finitely many tracts whose closure contains a point $z$ with $\rho(z)\leq R$. For each such tract $T$, the set $X$ of points in $\overline{T}$ that can be connected to a point $z$ with $\rho(z)\leq R$ by a curve of hyperbolic diameter at most $\alpha$ is compact. Since $J(F)$ is closed, it follows
 that the intersection $A\defeq X\cap J(F)$ is a compact set. Moreover, since $J(F)$ is a Cantor bouquet, and so homeomorphic to a straight brush, by the properties of the latter, the set
$$C\defeq \{b\in J(F) \colon b\preceq a \text{ for some } a\in A \} $$
is compact. Finally, the set $D$ of points in $\overline{T}$ that can be connected to a point of $C$ by a curve of hyperbolic diameter at most $\alpha$ is compact. Since we have deduced that the points $z,w,z',w'$ in the definition of $\phi(R)$ are chosen among those in finitely many compact sets, the maximum in the definition of $\phi(R)$ does indeed exist.
    
To see that $\phi$ is non-decreasing, observe that \ref{item:phi_a}, \ref{item:phi_b} and \ref{item:phi_c} in the definition of $\phi$ hold taking $z=w=z'=w'$, and so $\phi(R)\geq R$ for all $R$. 
    Now let $R\geq R_0$, and let $R'\in [R,\phi(R)]$. Let $z,z',w',w$ be as in the definition of $\phi$, and
    let $\gamma_z, \gamma_w\subset \overline{T}$ be the curves connecting 
    $z$ to $z'$ and $w$ to $w'$. By continuity of $\rho$,
    the curve $\Gamma \defeq  \gamma_w \cup [w',z'] \cup \gamma_z$ contains a point $\zeta$ with $\rho(\zeta)=R'$. It follows that $\phi(R')\geq \phi(R)$:
    \begin{itemize}
      \item if $\zeta \in \gamma_z$, use $\zeta,z',w',w$;
      \item if $\zeta \in [w',z']$, use $\zeta, \zeta, w',w$; 
      \item if $\zeta \in \gamma_w$, use $\zeta,w',w',w$; 
    \end{itemize}
 these choices of points show that $\phi(R)$ is a lower bound for $\phi(R')$.
On the other hand, when $R'\geq \phi(R)$, then $\phi(R')\geq R'\geq \phi(R)$. 
 We conclude that $\phi(R')\geq \phi(R)$ for all $R'\geq R$.
  
To see that $\phi$ is upper semicontinuous, let $R_k\to R$ be a sequence of positive real numbers; we must show that $\phi(R) \geq \rho_0 \defeq \limsup \phi(R_k)$. To see this, for each $k>0$, let us take points $z_k,z_k', w_k',w_k$ as in the definition of $\phi(R_k)$; by passing to a subsequence we may assume that $\phi(R_k)=\rho(w_k) \to \rho_0$ and that each of the above sequences of points converges, with limits $z,z',w,w'$. Then, by Proposition \ref{prop:bouquetordering}\ref{item:ordercontinuity}, $w'\preceq z'$, and so we see that $z,z',w,w'$ satisfy the assumptions on the definition of $\phi(R)$. Hence, $\phi(R)\geq \rho(w) = \rho_0$, as required.
\end{proof}
      
Our goal is to show that $F$ satisfies the uniform head-start condition on $J(F)$ for $\phi$. We begin with the following.

   \begin{obs}\label{obs:secondpiece}
    Suppose that $w\prec z$ and $\rho(z)<\rho(w)$. Then there is $w'$ such that $\rho(w')=\rho(w)$ and $z\prec w'$. In particular, $\dist_H(w,w')\leq B$. 
   \end{obs}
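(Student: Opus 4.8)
The statement asks: if $w\prec z$ but $\rho(z)<\rho(w)$, then there is a point $w'$ on the hair through $z$ (equivalently, on the hair through $w$, since $\prec$-comparability means they lie in the same component) with $\rho(w')=\rho(w)$ and $z\prec w'$; moreover $\dist_H(w,w')\leq B$. The plan is to use the fact that $\rho$ restricted to the arc $[w,\infty]$ attains all values between $\rho(w)$ and $+\infty$ (since $\rho$ is continuous, $\rho\to\infty$ at $\infty$, and the arc is connected), together with $2\pi i\mathbb Z$-periodicity and the uniform-anguine hypothesis to get the distance bound.

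First I would observe that $w$ and $z$ lie on the same hair $\gamma$ of $J(F)$, with $w\prec z$, so $z\in[w,\infty]$ and the subarc $[z,\infty]\subset[w,\infty]$ connects $z$ to $\infty$. Since $\rho$ is continuous on $\overline{\T}$ (hence on $\gamma$) with $\rho(\zeta)\to\infty$ as $\zeta\to\infty$, and since $\rho(z)<\rho(w)$, the intermediate value theorem applied along the arc $[z,\infty]$ produces a point $w'\in[z,\infty]$ with $\rho(w')=\rho(w)$. By construction $z\preceq w'$; and in fact $z\prec w'$ because $\rho(z)<\rho(w)=\rho(w')$ forces $w'\neq z$. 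This establishes the first assertion.

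For the distance bound, I would argue as follows. Both $w$ and $w'$ lie on the level set $\rho^{-1}(\rho(w))$, and both lie in $J(F)$, which is contained in the union of the tracts $\overline{\T}$; moreover $J(F)$ is disjoint from (a suitable $2\pi i\mathbb Z$-translate of) the curve $\Gamma$ from the definition of uniformly anguine, since $\Gamma\subset H\setminus\overline{\T}$. The key point is that $w$ and $w'$ belong to the same component $\gamma$ of $J(F)$, hence to the same connected subset of $H\setminus\bigcup_{m}(\Gamma+2\pi i m)$; therefore they sit between the same pair of adjacent $2\pi i\mathbb Z$-translates of $\Gamma$. Since $F$ is uniformly anguine with respect to $\rho$, the portion of $\rho^{-1}(\rho(w))$ lying between those two adjacent translates of $\Gamma$ has hyperbolic diameter in $H$ at most $B$, and $w,w'$ both lie in this portion, so $\dist_H(w,w')\leq B$.

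The main obstacle is the middle claim in the distance argument: verifying that $w$ and $w'$ really do lie between the \emph{same} adjacent pair of $2\pi i\mathbb Z$-translates of $\Gamma$. This needs that the hair $\gamma$, being connected and disjoint from every translate of $\Gamma$ (as $\Gamma$ avoids $\overline{\T}\supset J(F)$), is contained in a single connected component of the complement of $\bigcup_m(\Gamma+2\pi i m)$ in $H$; each such component is precisely the strip-like region bounded by two consecutive translates of $\Gamma$, and $\rho^{-1}(\rho(w))$ meets it in exactly the set of uniformly bounded hyperbolic diameter given by the definition. One should take a moment to note that $\Gamma$ may be chosen (by the remark after Definition~\ref{defn_anguine}) so that it does not pass through $w$ or $w'$, which is automatic since $\Gamma\cap\overline{\T}=\emptyset$. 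With this in hand the bound $\dist_H(w,w')\leq B$ follows immediately, completing the proof.
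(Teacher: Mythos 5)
Your proposal is correct and follows essentially the same route as the paper: existence of $w'$ via continuity of $\rho$ along the arc $[z,\infty)$ (which increases to $\infty$), and the bound $\dist_H(w,w')\leq B$ directly from the uniformly anguine hypothesis, since $w$ and $w'$ lie on the same level set of $\rho$ and between the same adjacent translates of $\Gamma$. The paper states this in two sentences; you have merely filled in the same details.
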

\begin{proof}
 The existence of $w'$ is an immediate consequence of the fact that $\rho(\zeta)$ moves continuously from $\rho(z)$ to
 $\infty$ as $\zeta$ moves along $[z,\infty)$. The bound on the hyperbolic distance between $w$ and $w'$ is a direct consequence of $F$ being uniformly anguine.
\end{proof}
 For each $z\in J(F)$, we denote by $J_z$ the hair of $J(F)$ containing $z$, and recall from \eqref{eq_JF_disj} that $J_z\subset T$ for some tract $T$ in $\T$.
\begin{lem}\label{lem:hookbetweenphi}
 Suppose that $z\in J(F)$, $w\in J_z$ and $\rho(z) \leq \rho(w) \leq \phi(\rho(z))$. Then there are
   $z',w'\in J(F)$ such that $\dist_H(z,z'), \dist_H(w,w') \leq \alpha + B$ and $w'\preceq z'$. Moreover, if $\dist_H(z,w)>3B+2\alpha$, then $\rho(z')< \rho(w')$.
\end{lem}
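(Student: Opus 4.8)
\emph{The plan} is to read the desired pair $(z',w')$ directly off the definition of $\phi$ at the level $R\defeq\rho(z)$, exploiting the hypothesis $\rho(w)\le\phi(R)$, and then to translate the resulting ``reversal configuration'' by a suitable multiple of $2\pi i$ so that it comes to lie within hyperbolic distance $\alpha+B$ of both $z$ and $w$; every distance estimate is to be obtained from uniform anguineness together with $2\pi i$-periodicity of $F$, $H$, $\rho$ and of the partial order. One fact I would use repeatedly is that each hair of $J(F)$, being a connected subset of $\mathcal{T}$, is disjoint from every $2\pi i\mathbb{Z}$-translate of the curve $\Gamma$ and hence lies between a single adjacent pair of them; so, by uniform anguineness, any two points of one hair that share a $\rho$-level are at hyperbolic distance at most $B$. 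This already settles the case $w\preceq z$: one takes $z'\defeq z$, $w'\defeq w$, so $w'\preceq z'$ with vanishing distances, and the final assertion follows since $\rho(z)\le\rho(w)$ gives $\rho(z')\le\rho(w')$, while equality together with $\dist_H(z,w)>3B+2\alpha>B$ is impossible by the displayed fact. So from now on I assume $z\prec w$ and set $R\defeq\rho(z)$.

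\emph{Extracting a configuration at the right levels.} Since $\phi(R)\ge\rho(w)$, the definition of $\phi$ furnishes points $\zeta_1,\zeta_2,\zeta_3,\zeta_4$ in a single tract closure $\overline{T}$ with $\rho(\zeta_1)=R$, $\rho(\zeta_4)=\phi(R)$, $\dist_H(\zeta_1,\zeta_2)\le\alpha$, $\dist_H(\zeta_3,\zeta_4)\le\alpha$ and $\zeta_3\preceq\zeta_2$. I keep $\zeta_1$ as the first point and lower the last one from level $\phi(R)$ to level $\rho(w)$: the set consisting of the hair-arc from the endpoint of $J_{\zeta_2}$ up to $\zeta_2$ (which contains $\zeta_3$ and on all of which every point is $\preceq\zeta_2$), together with a curve of hyperbolic diameter $\le\alpha$ joining $\zeta_3$ to $\zeta_4$, is connected, lies in $\overline{T}$, and the continuous function $\rho$ takes on it a value $\le\rho(w)$ as well as the value $\phi(R)\ge\rho(w)$; by the intermediate value theorem it takes the value $\rho(w)$ at some point $\eta$, and according to which piece contains $\eta$ one obtains a reversal configuration $(\zeta_1,\zeta_2,\eta,\eta)$ or $(\zeta_1,\zeta_2,\zeta_3,\eta)$ whose first point is at level $R$, whose last point is at level $\rho(w)$, and which (with a little care in the case analysis) may be arranged so as ultimately to give $\rho(z')\le\rho(w')$. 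Call the configuration $(\eta_1,\eta_2,\eta_3,\eta_4)$, with $\eta_2,\eta_3\in J(F)$.

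\emph{Transporting and concluding.} By periodicity, $(\eta_1+2\pi ik,\dots,\eta_4+2\pi ik)$ is again such a configuration, in $\overline{T}+2\pi ik$, for every $k\in\mathbb{Z}$. I choose $k$ with $|\Ima(\eta_1+2\pi ik)-\Ima z|\le\pi$; since $\eta_1+2\pi ik$ and $z$ share the level $R$ and can be bracketed by one adjacent pair of $2\pi i\mathbb{Z}$-translates of $\Gamma$, uniform anguineness yields $\dist_H(z,\eta_1+2\pi ik)\le B$, whence $z'\defeq\eta_2+2\pi ik$ satisfies $\dist_H(z,z')\le B+\alpha$. The points $\eta_4+2\pi ik$ and $w$ share the level $\rho(w)$, and since $\eta_1,\eta_4$ lie in the single tract $T$ while $z,w$ lie in the single tract containing $J_z$, the same $k$ brings $\eta_4+2\pi ik$ within one adjacent pair of $\Gamma$-translates of $w$; hence $\dist_H(w,\eta_4+2\pi ik)\le B$ and $w'\defeq\eta_3+2\pi ik$ satisfies $\dist_H(w,w')\le B+\alpha$, and $w'\preceq z'$. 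Finally, if $\dist_H(z,w)>3B+2\alpha$ then $\dist_H(z',w')>3B+2\alpha-2(B+\alpha)=B$, so by the displayed fact $z'$ and $w'$ cannot share a $\rho$-level; together with $\rho(z')\le\rho(w')$ this forces $\rho(z')<\rho(w')$.

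\emph{Where the difficulty lies.} The crux is the transporting step: a single integer $k$ must simultaneously bring the configuration's first point near $z$ and its last point near $w$, which forces one to control the imaginary spread of a connected configuration inside a single tract --- this is exactly the point at which \emph{uniform} anguineness, and the choice $\alpha=B/(\Lambda-1)$, are used, as opposed to mere anguineness. By comparison, the configuration-extraction step is bookkeeping, and once the distance bounds are in place the ``moreover'' follows cleanly from the observation that each hair lies in a single $\Gamma$-strip.
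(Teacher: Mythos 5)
There is a genuine gap at the extraction step, i.e.\ precisely where the point $\eta$ at level $\rho(w)$ is produced. You apply the intermediate value theorem to the connected set $K$ consisting of the hair-arc from the endpoint of $J_{\zeta_2}$ up to $\zeta_2$ together with the diameter-$\alpha$ curve joining $\zeta_3$ to $\zeta_4$, asserting that $\rho$ attains on $K$ some value $\leq\rho(w)$. Nothing guarantees this: the only point of the configuration with a controlled low level is $\zeta_1$ (where $\rho(\zeta_1)=R\leq\rho(w)$), and you have excluded both $\zeta_1$ and the curve joining it to $\zeta_2$ from $K$. Uniform anguineness bounds the hyperbolic diameter of level sets, not the variation of $\rho$ along short curves, so $\rho(\zeta_2)$, $\rho(\zeta_3)$ and $\rho$ at the hair's endpoint are completely uncontrolled; for instance with $\rho=\Rea$ the whole hair $J_{\zeta_2}$ and the curve from $\zeta_3$ to $\zeta_4$ may lie entirely at levels strictly above $\rho(w)$, and then no $\eta$ exists. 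This is not cosmetic: it is exactly the existence of a point at level \emph{equal} to $\rho(w)$, lying in the same strip as $w$ after translation, that lets you invoke anguineness to get $\dist_H(w,\cdot)\leq B$ and hence $\dist_H(w,w')\leq B+\alpha$. The paper's proof avoids the problem by running the intermediate value theorem on the full connecting curve $\gamma_{\zeta_1\zeta_2}\cup[\zeta_3,\zeta_2]\cup\gamma_{\zeta_3\zeta_4}$, which joins $\zeta_1$ (level $R\leq\rho(w)$) to $\zeta_4$ (level $\phi(R)\geq\rho(w)$); every point of that curve lies within hyperbolic distance $\alpha$ of the arc $[\zeta_3,\zeta_2]$, all of whose points are $\preceq\zeta_2$, so one sets $z'=\zeta_2$ and takes $w'$ to be the nearby point of $[\zeta_3,\zeta_2]$.

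The ``moreover'' part has a related hole: you deduce $\rho(z')\neq\rho(w')$ correctly from the distance bound and the fact that a hair lies in a single strip, but the inequality $\rho(z')\leq\rho(w')$ is only asserted (``with a little care in the case analysis''), and it does not follow from your construction --- e.g.\ when $\eta$ lies on the hair-arc, $\rho(z')=\rho(\zeta_2)$ is uncontrolled and may exceed $\rho(w)=\rho(w')$. The paper instead argues as follows: the two short curves (the one joining $z'$ to the level-$R$ reference point, and the one joining $w'$ to the level-$\rho(w)$ reference point) have $\rho$-images that are disjoint intervals, because a common level would force the reference points within $2\alpha+B$ of each other, contradicting the separation $>B+2\alpha$ obtained from $\dist_H(z,w)>3B+2\alpha$; since one interval contains $R$ and the other contains $\rho(w)\geq R$, every value of the first is strictly smaller than every value of the second, giving $\rho(z')<\rho(w')$. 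Your remaining architecture (reading the configuration off the definition of $\phi$, translating by $2\pi i k$ so that the tract of the configuration and the tract containing $J_z$ lie in the same strip between adjacent translates of $\Gamma$, and the trivial case $w\preceq z$) matches the paper; note only that the correct criterion for choosing $k$ is alignment of strips, not that imaginary parts differ by at most $\pi$, since strips bounded by translates of $\Gamma$ need not be horizontal.
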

\begin{proof}
      By definition of $\phi(\rho(z))$, there are $\zeta,\omega,\zeta',\omega'$, all belonging to the closure of the same tract $T$, but not necessarily the tract $z$ belongs to, such that
        \begin{itemize}
           \item
            $\rho(\zeta) = \rho(z)$;
            \item $\rho(\omega)=\phi(\rho(z)) \geq \rho(w)$;
            \item $\zeta$ and $\zeta'$ can be joined by a curve 
               $\gamma_{\zeta}$ whose 
                hyperbolic diameter with respect to the hyperbolic metric of $H$
                is at most $\alpha$;
            \item $\omega$ and $\omega'$ can be joined by a curve $\gamma_{\omega}\subset \overline{T}$ whose diameter 
              with respect to the hyperbolic metric of $H$ is at most $\alpha$;
            \item $\omega'\preceq \zeta'$.
          \end{itemize}  

Let $\Gamma$ be a curve connecting the boundary of $H$ to $+\infty$ without intersecting $\overline{\mathcal{T}}$. Since everything is invariant under translation by $2\pi i$, we may assume that $T$ lies between the same two translates of $\Gamma$ as $J_z$ does. Note that $\gamma_{\omega}\cup [\omega',\zeta'] \cup \gamma_{\zeta}$ is a curve connecting $\omega$ and $\zeta$ and, by assumption, $\rho(\zeta)=\rho(z)\leq \rho(w)\leq \rho(\omega)$. Hence, since $\rho$ is continuous, this curve contains a point $x$ with $\rho(x)=\rho(w)$. By assumption, $x$ is at hyperbolic distance at most $\alpha$ from some point $w'\in [\omega',\zeta']$. Set $z' \defeq \zeta'$. By definition, since $F$ is uniformly anguine with respect to $\rho$, the points $z,\zeta, x$ and $w$ sit between two given adjacent $2\pi i\Z$-translates of $\Gamma$ and $\rho(z)= \rho(\zeta)$, $\rho(x)= \rho(w)$, we have that 
\begin{equation}\label{eq_maxdist}
\max\{\dist_H(z,\zeta),\dist_H(w,x)\}\leq B.
\end{equation}
Hence, $\dist_H(z,z'),\dist_H(w,w')\leq B + \alpha$, as claimed. 

If $\dist_H(z,w)>3B+2\alpha$, then, using \eqref{eq_maxdist}, we have $\dist_H(x,\zeta)>B+2\alpha$. Let $\gamma_x$ be the subcurve in $\gamma_{\omega}\cup [\omega',\zeta'] \cup \gamma_{\zeta}$ joining $x$ and $w'$, and note that $\gamma_x$ might be a singleton. Then, as $F$ is uniformly anguine, we have that $\rho(\gamma_{x})\cap \rho(\gamma_{\zeta})=\emptyset$. Since $\rho(\zeta)<\rho(x)$, $z'\in \gamma_{\zeta}$, $w'\in \gamma_{x}$ and the function $\rho$ is continuous in the connected sets $\gamma_{x}$ and $\gamma_{\zeta}$, we have that $\rho(z')<\rho(w')$, as we wanted to show.
 \end{proof}

 \begin{cor}\label{cor1}
   Let $z\in J(F)$ and $w\in J_z$. If $\rho(w) > \phi(\rho(z))$, then $\rho(F(w)) > \phi(\rho(F(z)))$.
 \end{cor}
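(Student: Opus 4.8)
The plan is to prove the contrapositive of Corollary~\ref{cor1}: assuming $\rho(F(w)) \le \phi(\rho(F(z)))$, I will deduce $\rho(w) \le \phi(\rho(z))$. Fix the tract $T$ of $\T$ with $J_z \subset T$, so that $z, w \in T$ and $F(z), F(w) \in H = F(T)$; I will use the hyperbolic expansion estimate~\eqref{eq_expansion} together with the identity $(\alpha + B)/\Lambda = \alpha$, which holds precisely because $\alpha = B/(\Lambda - 1)$. I will also use repeatedly the trivial remark that, choosing the four points $z,z',w',w$ of~\ref{item:phi_a}--\ref{item:phi_c} to be $a,a,b,b$, one gets $\rho(b) \le \phi(\rho(a))$ whenever $a \in J(F)$ and $b \in J_a$ with $b \preceq a$. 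Since $z$ and $w$ lie on the common hair $J_z$ they are comparable, and if $w \preceq z$ this remark already yields $\rho(w) \le \phi(\rho(z))$. So assume from now on $z \prec w$; then $F(z) \prec F(w)$ by~\eqref{eq_preim_order}, and I distinguish two cases according to the sizes of $\rho(F(z))$ and $\rho(F(w))$.

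Suppose first $\rho(F(z)) \le \rho(F(w))$. Together with the standing hypothesis this gives $\rho(F(z)) \le \rho(F(w)) \le \phi(\rho(F(z)))$, and since $F(w) \in J_{F(z)}$ I apply Lemma~\ref{lem:hookbetweenphi} to the pair $(F(z), F(w))$, obtaining $z', w' \in J(F)$ with $\dist_H(F(z),z') \le \alpha + B$, $\dist_H(F(w),w') \le \alpha + B$ and $w' \preceq z'$. As $z', w' \in J(F) \subset H$, the points $\zeta \defeq F_T^{-1}(z')$ and $\omega \defeq F_T^{-1}(w')$ lie in $T$ (and in $J(F)$, since $F$ carries them into $J(F)$ and $T \subset \overline\T$), and $\omega \preceq \zeta$ by~\eqref{eq_preim_order}. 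The $F_T^{-1}$-image of a hyperbolic geodesic of $H$ from $F(z)$ to $z'$ is a curve in $T$ from $z$ to $\zeta$ whose hyperbolic length in $H$ is, by~\eqref{eq_expansion}, at most $(\alpha + B)/\Lambda = \alpha$, hence of $H$-diameter at most $\alpha$; likewise for $w$ and $\omega$. Thus $z, \zeta, \omega, w \in \overline T$ verify~\ref{item:phi_a}--\ref{item:phi_c} with $R = \rho(z)$, so $\phi(\rho(z)) \ge \rho(w)$, as required.

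In the remaining case $\rho(F(w)) < \rho(F(z))$, I apply Observation~\ref{obs:secondpiece} with $(F(z), F(w))$ playing the roles of $(w,z)$ there (legitimate since $F(z) \prec F(w)$ and $\rho(F(w)) < \rho(F(z))$), obtaining $v \in J(F)$ with $\rho(v) = \rho(F(z))$, $F(w) \prec v$, and $\dist_H(F(z), v) \le B$. Setting $\omega \defeq F_T^{-1}(v) \in T \cap J(F)$, equation~\eqref{eq_preim_order} gives $w \prec \omega$, and the $F_T^{-1}$-image of an $H$-geodesic from $F(z)$ to $v$ is a curve in $T$ from $z$ to $\omega$ of $H$-diameter at most $B/\Lambda \le \alpha$. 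Taking the four points $z,z',w',w$ of the definition of $\phi$ to be $z,\omega,w,w$ — $R = \rho(z)$; the curve just built for~\ref{item:phi_b} on the $z$-side and a constant curve on the $w$-side; and $w \prec \omega$ for~\ref{item:phi_c} — gives once more $\phi(\rho(z)) \ge \rho(w)$, which completes the argument.

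The hyperbolic bookkeeping for the pulled-back geodesics is routine; the one genuinely delicate point, and the crux, is to see that the configurations furnished by Lemma~\ref{lem:hookbetweenphi} and Observation~\ref{obs:secondpiece} are designed exactly so that, after being contracted by $F_T^{-1}$, they land at the threshold $\alpha$ built into the definition of $\phi$ — which is why $\alpha$ was chosen to be $B/(\Lambda-1)$ — and that the comparabilities survive the pull-back via~\eqref{eq_preim_order}. It is also worth recording that $F_T^{-1}(z'), F_T^{-1}(w'), F_T^{-1}(v)$ genuinely lie in $J(F)$, so that $\rho$ is defined on them and~\eqref{eq_preim_order} is applicable.
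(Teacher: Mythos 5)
Your proof is correct and follows essentially the same route as the paper: the contrapositive, a case split on whether $\rho(F(z))\leq\rho(F(w))$ or not, using Lemma~\ref{lem:hookbetweenphi} in the first case and Observation~\ref{obs:secondpiece} in the second, then pulling back by $F_T^{-1}$ so that the expansion factor $\Lambda$ turns the bound $\alpha+B$ (resp.\ $B$) into $\alpha$ and exhibits a quadruple witnessing $\rho(w)\leq\phi(\rho(z))$. The only cosmetic difference is that you dispose of the subcase $w\preceq z$ at the outset via the trivial quadruple $z,z,w,w$, whereas the paper does this inside the second case.
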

 \begin{proof}
   We show the contrapositive. Suppose that $\rho(F(w))\leq \phi(\rho(F(z)))$ and let $T$ be the tract containing $z$ and $w$.
    If $\rho(F(z))\leq\rho(F(w))$, since $F(w)\in J_{F(z)}$, we can apply Lemma~\ref{lem:hookbetweenphi} to $F(z)$ and $F(w)$ and obtain $z',w'$ as in the statement of the lemma. Consider the points $\zeta \defeq F_T^{-1}(z')$ and
   $\omega \defeq F_T^{-1}(w')$. Then, since $w' \prec z'$, $\omega \preceq \zeta$. Furthermore, by \eqref{eq_expansion}, $z$ and $\zeta$ can be connected by a curve in $T$ of hyperbolic diameter (in $H$) at most
      \[ \frac{\alpha + B}{\Lambda} =  \frac{B}{\Lambda}\cdot \left(1+\frac{1}{\Lambda-1}\right) = \frac{B}{\Lambda-1} = \alpha. \]
   The same holds for $w$ and $\omega$. Hence, $z,\zeta,\omega,w$ satisfy the conditions on the definition of $\phi(\rho(z))$, and so we have that $\rho(w) \leq \phi(\rho(z))$.
   
We are left to consider the case $\rho(F(w))<\rho(F(z))$. Note that if $w\prec z$, then the quadruple of points $z,z,w,w$ satisfies the conditions on the definition of $\phi(\rho(z))$, and so $\rho(w) \leq \phi(\rho(z))$. Suppose then that $z\prec w$, and so $F(z)\prec F(w)$. Then, by Observation \ref{obs:secondpiece}, there exists $z'$ such that $F(z)\prec F(w)\prec z'$, $\rho(F(z))=\rho(z')$ and $\dist_H(F(z),z')\leq B$. Hence, if $\zeta \defeq F_T^{-1}(z')$, then the hyperbolic length (in the metric of $H$) of the 
  geodesic of $T$ connecting $z$ and $\zeta$ is bounded by $B/ \Lambda< \alpha$, see \eqref{eq_expansion}. Therefore, the points $z,\zeta,w,w$ satisfy the conditions in the definition of $\phi(\rho(z))$, and so $\rho(w) \leq \phi(\rho(z))$.
\end{proof}

\begin{cor}\label{cor2}
Let $z,w\in J(F)$, where $w\in J_z$ and $z\neq w$. Then there is $n\geq 0$ such that $\rho(F^n(z)) > \phi(\rho(F^n(w)))$, or vice versa. 
\end{cor}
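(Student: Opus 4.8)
The plan is to argue by contradiction, manufacturing a bad pair near $(z,w)$ from the failure of the head‑start inequality and deriving a contradiction with the order‑continuity property of the Cantor bouquet. First I would record the basic setup: since $z$ and $w$ lie on a common hair, which has some external address $\underline{s}=s_0s_1\dots$, we have $F^j(z),F^j(w)\in\overline{s_j}$ for every $j$, so by \eqref{eq_expansion} the hyperbolic distances $\dist_H(F^n(z),F^n(w))$ grow at least geometrically in $\Lambda$ and in particular tend to $\infty$. Exchanging $z$ and $w$ if necessary — the conclusion of Corollary~\ref{cor2} is symmetric — I may assume $z\prec w$, and then $F^n(z)\prec F^n(w)$ for all $n$ by \eqref{eq_preim_order}.

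The next step is a preliminary observation with no reference to $\phi$: \emph{$\rho(F^n(z))\le\rho(F^n(w))$ for all sufficiently large $n$}. If not, there are infinitely many $n$ with $\rho(F^n(w))<\rho(F^n(z))$, and for each such $n$ Observation~\ref{obs:secondpiece}, applied to $F^n(z)\prec F^n(w)$, produces $p_n$ on the hair of $F^n(z)$ with $\rho(p_n)=\rho(F^n(z))$, $F^n(w)\prec p_n$, and $\dist_H(F^n(z),p_n)\le B$. Pulling $p_n$ back $n$ times along the branches $F_{s_{n-1}}^{-1},\dots,F_{s_0}^{-1}$ keeps us inside $J(F)$, preserves $\prec$ by \eqref{eq_preim_order}, and contracts $\dist_H$ by the factor $\Lambda^{-n}$ by \eqref{eq_expansion}; since this composition sends $F^n(z)$ to $z$, it yields $\tilde p_n\in J_z$ with $w\prec\tilde p_n$ and $\dist_H(\tilde p_n,z)\le B\Lambda^{-n}\to 0$. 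Thus $\tilde p_n\to z$ along an infinite set of indices while $w\prec\tilde p_n$, contradicting Proposition~\ref{prop:bouquetordering}\ref{item:ordercontinuity} for the pair $z\prec w$.

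Now I would run the main argument. Suppose, for contradiction, that for every $n$ neither $\rho(F^n(z))>\phi(\rho(F^n(w)))$ nor $\rho(F^n(w))>\phi(\rho(F^n(z)))$ holds. For all large $n$ the preliminary observation then gives $\rho(F^n(z))\le\rho(F^n(w))\le\phi(\rho(F^n(z)))$, and also $\dist_H(F^n(z),F^n(w))>3B+2\alpha$. Applying Lemma~\ref{lem:hookbetweenphi} with $F^n(z),F^n(w)$ in the roles of $z,w$ produces $z_n',w_n'\in J(F)$ with $\dist_H(F^n(z),z_n')\le\alpha+B$, $\dist_H(F^n(w),w_n')\le\alpha+B$, and $w_n'\prec z_n'$ (strict, since the ``moreover'' clause gives $\rho(z_n')<\rho(w_n')$). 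Pulling $z_n'$ and $w_n'$ back along $F_{s_{n-1}}^{-1},\dots,F_{s_0}^{-1}$ gives $\tilde z_n,\tilde w_n\in J(F)$ with $\tilde w_n\prec\tilde z_n$ by \eqref{eq_preim_order}, and $\dist_H(\tilde z_n,z),\dist_H(\tilde w_n,w)\le(\alpha+B)\Lambda^{-n}\to 0$, so $\tilde z_n\to z$ and $\tilde w_n\to w$. Hence $(z,w)$ is a bad pair with approximating sequences $(\tilde w_n),(\tilde z_n)$, again contradicting Proposition~\ref{prop:bouquetordering}\ref{item:ordercontinuity}. This contradiction shows the desired $n$ exists.

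The step I expect to be the main obstacle is the preliminary observation that $\rho$‑inversions along the orbit of a comparable pair eventually disappear. Without it, the application of Lemma~\ref{lem:hookbetweenphi} could, at infinitely many times, be forced onto the pair in the opposite $\rho$‑order, and the pulled‑back points would then satisfy $\tilde z_n\prec\tilde w_n$ with $\tilde z_n\to z$, $\tilde w_n\to w$ — a configuration entirely compatible with $z\prec w$, yielding no contradiction. The saving grace is that this observation requires nothing about $\phi$ and follows from the same ``pull back a nearby witness'' device, using only Observation~\ref{obs:secondpiece} and the expansion estimate \eqref{eq_expansion}; everything else is bookkeeping about branches of $F^{-1}$ staying in $J(F)$ and contracting the hyperbolic metric.
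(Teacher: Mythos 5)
Your argument is correct, and it uses the same toolkit as the paper (Lemma~\ref{lem:hookbetweenphi}, Observation~\ref{obs:secondpiece}, the expansion estimate \eqref{eq_expansion}, pullback along the address via \eqref{eq_preim_order}, and the order-continuity property of Proposition~\ref{prop:bouquetordering}\ref{item:ordercontinuity}), but it organises the case analysis differently. The paper works directly inside the contradiction: for each large $n$ it distinguishes whether $\rho(F^n(z))<\rho(F^n(w))$ or $\rho(F^n(w))<\rho(F^n(z))$; in the first case Lemma~\ref{lem:hookbetweenphi} (via \eqref{eq_l2}) already yields a pair in the reversed order, while in the second case the lemma is applied with the roles swapped (via \eqref{eq_l1}), producing a same-order pair with a $\rho$-inversion, which is then converted into a reversed-order pair by Observation~\ref{obs:secondpiece} at the cost of the constant $2B+\alpha$. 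You instead front-load a $\phi$-free claim -- for $z\prec w$ on a common hair one has $\rho(F^n(z))\le\rho(F^n(w))$ for all large $n$ -- proved by the same ``pull back a nearby witness'' device (Observation~\ref{obs:secondpiece}, contraction by $\Lambda^{-n}$, order-continuity), which kills the inverted case once and for all; your main argument then needs only \eqref{eq_l2} and one orientation of Lemma~\ref{lem:hookbetweenphi}. This buys a cleaner structure and in fact a marginally stronger conclusion (if $z\prec w$ then already $\rho(F^n(w))>\phi(\rho(F^n(z)))$ for some $n$, in line with the order used in Proposition~\ref{prop_UHSC_CB}), whereas the paper's in-line case split avoids stating the auxiliary claim and uses both hypotheses \eqref{eq_l1} and \eqref{eq_l2}. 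All the steps you flag as delicate do go through: the pulled-back points lie in $J(F)$ on the hair of $z$, the contraction bound applies since the relevant inverse branches are defined on all of $H$, and the strictness $w_n'\prec z_n'$ follows from $\rho(z_n')<\rho(w_n')$ exactly as you say.
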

\begin{proof}
Suppose, by contradiction, that this is not the case, that is, that for all $n\geq 0$,
\begin{align}
& \rho(F^n(z)) \leq  \phi(\rho(F^n(w)) \quad  \text { and } \label{eq_l1} \\
& \rho(F^n(w)) \leq  \phi(\rho(F^n(z)). \label{eq_l2}
\end{align}
We may assume without loss of generality that $z\prec w$. By \eqref{eq_expansion}, there is $N\geq 0$ such that $\dist_H(F^n(z), F^n(w))>3B+2\alpha$ for all $n\geq N$. For each $n\geq N$, if $\rho(F^n(z)) <\rho(F^n(w))$, then we use \eqref{eq_l2} and apply Lemma~\ref{lem:hookbetweenphi} to obtain points $\zeta_n$ and $\omega_n$ with 
\begin{equation}\label{eq_nbound}
\dist_H(\zeta_n,F^n(z)), \dist_H(\omega_n,F^n(w))\leq 2B + \alpha \quad \text{ and } \quad \omega_n \preceq \zeta_n.
\end{equation}

Otherwise, if $\rho(F^n(w)) <\rho(F^n(z))$, then we use \eqref{eq_l1} and apply Lemma~\ref{lem:hookbetweenphi}. This time, we obtain a pair of points $z_n'$ and $w_n'$ such that $z_n'\prec w_n'$, $\rho(w_n')<\rho(z_n')$ and 
\begin{equation*}
\dist_H(z_n',F^n(z)), \dist_H(w'_n,F^n(w))\leq B + \alpha.
\end{equation*}
Then, by Observation \ref{obs:secondpiece}, there exists $\zeta_n$ such that $w_n' \prec \zeta_n$ and so that \eqref{eq_nbound} holds with $\omega_n\defeq w'_n$. 

Let $z_n$ and $w_n$ be the pullback of $\zeta_n$ and $\omega_n$ along the external address $\s$ of $z$; that is, $z_n \defeq F_{\s}^{-n}(\zeta_n)$ and $w_n \defeq F_{\s}^{-n}(\omega_n)$. Then, by \eqref{eq_nbound} combined with \eqref{eq_expansion}, we have that $z_n\to z$, $w_n\to w$ as $n\to \infty$. Moreover, $w_n\preceq z_n$ for all $n$, while $z \prec w$; see \eqref{eq_preim_order}. This contradicts Proposition~\ref{prop:bouquetordering}\ref{item:ordercontinuity}.
\end{proof}
 
 Combining Corollaries \ref{cor1} and \ref{cor2}, we conclude that $F$ satisfies a uniform head-start condition on $J(F)$ with respect to $\rho$, and so we have proved Theorem \ref{thm_CBthenHS}.

\bibliographystyle{alpha}
\bibliography{biblioComplex}

\end{document}